\documentclass[final,hidelinks,onefignum,onetabnum]{siamart220329}

\usepackage{lipsum}
\usepackage{amsfonts}
\usepackage{graphicx}
\usepackage{epstopdf}
\usepackage{algpseudocode}
\usepackage{enumerate}

\ifpdf
  \DeclareGraphicsExtensions{.eps,.pdf,.png,.jpg}
\else
  \DeclareGraphicsExtensions{.eps}
\fi

\newsiamremark{remark}{Remark}
\newsiamremark{hypothesis}{Hypothesis}
\crefname{hypothesis}{Hypothesis}{Hypotheses}
\newsiamthm{claim}{Claim}

\title{Neumann Control Problem
}

\usepackage{amsopn}

\makeatletter
\newcommand*{\addFileDependency}[1]{% argument=file name and extension
  \typeout{(#1)}% latexmk will find this if $recorder=0 (however, in that case, it will ignore #1 if it is a .aux or .pdf file etc and it exists! if it doesn't exist, it will appear in the list of dependents regardless)
  \@addtofilelist{#1}% if you want it to appear in \listfiles, not really necessary and latexmk doesn't use this
  \IfFileExists{#1}{}{\typeout{No file #1.}}% latexmk will find this message if #1 doesn't exist (yet)
}
\makeatother

\usepackage[textsize=small]{todonotes}
\usepackage{commath}
\usepackage{bm}
\usepackage{multirow}
\usepackage{booktabs}
\usepackage{graphicx}
\usepackage{stmaryrd}
\usepackage{soul}
\usepackage{subcaption}
\usepackage{empheq}
\usepackage{enumitem}
\usepackage{amssymb}
\usepackage{amsmath}
\usepackage{cleveref}

\usepackage[textsize=small]{todonotes}
\usepackage{soul}
\usepackage{longtable}
\usepackage{array}

\newcolumntype{S}{>{$\displaystyle}p{0.28\linewidth}<{$}}
\newcolumntype{D}{p{0.66\linewidth}}
\renewcommand{\arraystretch}{1.25}
\definecolor{ForestGreen}{rgb}{0.13,0.55,0.13}
\allowdisplaybreaks

\begin{document}

\author{Harbir Antil\thanks{Department of Mathematical Sciences and Center for Mathematics and Artificial Intelligence, George Mason University, Fairfax, VA 22030 (\email{hantil@gmu.edu})}\and
Keegan L.A. Kirk\thanks{Department of Mathematics, Louisiana State University, Baton Rouge, LA 70803(\email{Keegan.Kirk@lsu.edu})}\and
Umarkhon Rakhimov\thanks{Department of Mathematical Sciences and Center for Mathematics and Artificial Intelligence, George Mason University, Fairfax, VA 22030 (\email{urakhimo@gmu.edu})}}

\title{A Function-Space Framework for BDDC Preconditioning in Control- and State-Constrained Sparse Optimal Control \thanks{This work is partially supported by the Office of Naval Research (ONR) under Award No. N00014-24-1-2147, the
National Science Foundation (NSF) under Grant DMS-2408877, and the Air Force Office of Scientific Research (AFOSR)
under Award No. FA9550-22-1-0248.}}

\maketitle 

\begin{abstract}
We develop a Balancing Domain Decomposition by Constraints (BDDC) preconditioner for the interface systems arising from active-set semi-smooth Newton linearizations of elliptic optimal control problems with box-constrained controls, $L^1$-sparsity, and Moreau--Yosida regularized state constraints. The domain decomposition and BDDC construction are formulated directly at the infinite-dimensional level. Eliminating the interior variables yields a Schur complement for the state and adjoint traces assembled from local subdomain operators. We prove a global--local equivalence result, establish well-posedness of the local and partially assembled problems under verifiable conditions on the primal constraints. The BDDC operator admits a two-level additive Schwarz representation with independent local solves and a finite-dimensional coarse solve. We implement the preconditioner using conforming finite elements and test it for distributed and Neumann boundary control. For a fixed ratio of subdomain diameter $H$ to mesh size $h$, the GMRES iteration counts are nearly independent of mesh refinement and grow only moderately as this ratio increases. Enriching the coarse space substantially improves convergence and reduces sensitivity to the control regularization and sparsity weights. For the state-constrained problem, scaling the Moreau–Yosida penalty parameter proportionally to $h^2$ yields nearly constant GMRES iteration counts as the mesh is refined.
\end{abstract}

\medskip

\section{Introduction}

We develop a Balancing Domain Decomposition by Constraints (BDDC)
preconditioner~\cite{Dohrmann} for the linear systems generated by
active-set semi-smooth Newton methods for elliptic optimal control.  The
model class combines box-constrained controls, an
$L^1$ sparsity-promoting term, and pointwise state constraints.  Let
$\Omega\subset\mathbb R^d$ be a Lipschitz domain and let
$\mathcal U=\Omega$ for distributed control or
$\mathcal U=\partial\Omega$ for Neumann boundary control.  Given
$y_d\in L^2(\Omega)$, bounds $y_a,y_b\in L^\infty(\Omega)$ and
$u_a,u_b\in L^\infty(\mathcal U)$, and parameters $\alpha>0$,
$\beta\ge 0$, and $\sigma\ge 0$, we consider
\begin{subequations}\label{eq:ocp}
\begin{gather}
  \min_{(y,u)} \mathcal{J}(y,u) :=
    \tfrac{1}{2}\|y-y_d\|_{L^2(\Omega)}^2
  + \tfrac{\alpha}{2}\|u\|_{L^2(\mathcal U)}^2
  + \beta\|u\|_{L^1(\mathcal U)}
  \label{eq:ocp_obj} \\[-0.25em]
  \text{subject to} \notag \\[0.1em]
  y_a \le y \le y_b
    \quad \text{a.e.\ in }\Omega,
  \label{eq:ocp_state_bounds}\\
  u_a \le u \le u_b
    \quad \text{a.e.\ in }\mathcal U,
  \label{eq:ocp_control_bounds}
\end{gather}
\vspace{-7mm}
\begin{equation}\label{eq:ocp_control_PDE}
  \left\{
  \begin{aligned}
    -\Delta y+ \sigma y - u &= 0 &&\text{in }\mathcal{U}:=\Omega,\\
    y &= 0 &&\text{on }\partial\Omega,
  \end{aligned}
  \right.
  \; \; \;\text{or}\; \; \;
  \left\{
  \begin{aligned}
    -\Delta y + \sigma y &= 0 &&\text{in }\Omega,\\
    \partial_n y  - u &= 0 &&\text{on }\mathcal U:=\partial\Omega.
  \end{aligned}
  \right.
\end{equation}
\end{subequations}
These problems serve as model problems for a large class of PDE constrained optimization problems arising in optimal control, inverse problems, data assimilation, and optimal design. Their finite element discretization
and semi-smooth Newton solution require the repeated solution of large,
active-set-dependent KKT systems.  This motivates a nonoverlapping
domain decomposition method that permits parallel subdomain solves
while preserving the coupled optimality-system structure.

%----------------------------------------------------------------------
\subsection{Semi-smooth Newton methods and preconditioning}
\label{ss:intro_ssn}
%----------------------------------------------------------------------

The $L^1$ term and the control bounds give rise to a max--min
complementarity operator that is semi-smooth from
$L^q(\mathcal U)\times L^q(\mathcal U)$ to $L^2(\mathcal U)$ for
$q>2$, yielding locally superlinear semi-smooth Newton convergence.
Because this convergence analysis is carried out at the function-space
level, it transfers uniformly to suitable conforming finite element
discretizations, yielding mesh-independent Newton iteration counts
\cite{Ulbrich2003,UlbrichBook2011,HintermuuellerUlbrich2004}.
Pointwise state constraints, however, do not fit this framework directly.  Their
multipliers are generally regular Borel measures rather than $L^2$
functions~\cite[Theorem~6.5]{troltzsch2010optimal}, so the associated
complementarity relation cannot be formulated as a semi-smooth
Nemytskii equation on the $L^p$-based spaces used for the control
constraints.  We therefore replace the state constraints by the
Moreau--Yosida regularization
\cite{HintermuellerHinze2009}.
\begin{align}\label{eq:ocp_regularized_obj}
  \mathcal{J}_\gamma(y,u) :={}
  & \tfrac{1}{2}\|y - y_d\|_{L^2(\Omega)}^2
  + \tfrac{\alpha}{2}\|u\|_{L^2(\mathcal{U})}^2
  + \tfrac{1}{2\gamma}\|\max\{0,y-y_b\}\|_{L^2(\Omega)}^2
  \notag\\
  & + \tfrac{1}{2\gamma}\|\min\{0,y-y_a\}\|_{L^2(\Omega)}^2
  + \beta\|u\|_{L^1(\mathcal{U})},
\end{align}
and consider the penalized problem of minimizing
$\mathcal{J}_\gamma$ subject to~\eqref{eq:ocp_control_bounds} and the
state equation~\eqref{eq:ocp_control_PDE}. 
As
$\gamma\to0^+$, the regularized solutions converge to solutions of the
original state-constrained problem
\cite{HintermuellerHinze2009}.  Alternatively, one may use a
Lavrentiev-type regularization
\cite{MeyerRoeschTroeltzsch2006,HintermuuellerTroeltzschYousept2008};
the present framework extends to this setting with minor
modifications of the active-set structure
\cite{PorcelliSimonciniTani2015}. The parameters $\alpha$, $\beta$, and $\gamma$ affect the conditioning
and active-set structure of the Newton systems, and their influence on
the proposed preconditioner is examined in the numerical experiments.

The semi-smooth Newton method provides superlinear convergence, but
each iteration requires solving a large, sparse, symmetric but
indefinite $4\times 4$ double saddle-point KKT system whose structure
changes with the active sets. First-order methods are inexpensive per
iteration but converge slowly; the tradeoff favors Newton precisely
when the linear subproblem solver is efficient, which is the problem
this paper addresses.
Block preconditioners for these systems have been developed
in~\cite{BattermannSachs2001,ReesDollarWathen2010,PearsonStollWathen2012,SchoeberlZulehner2007,PorcelliSimonciniTani2015,Porcelli2017Preconditioning,HerzogSachs2010};
see~\cite{BenziGolubLiesen2005} for a survey. When
designed at the operator level, the framework of operator
preconditioning~\cite{MardalWinther2011,SchielaUlbrich2014,MardalNielsenNordaas2017}
can yield mesh-independent spectral equivalences. The present work
adopts a similar operator-level perspective, though owing to the indefinite nature of the problem  

%----------------------------------------------------------------------
\subsection{Domain decomposition preconditioners}
\label{ss:intro_dd}
%----------------------------------------------------------------------

Nonoverlapping domain decomposition eliminates subdomain-interior
unknowns and iterates on an interface Schur complement.  One-level
Neumann--Neumann methods lack global coupling, whereas Balancing Neumann Neumann (BNN)
\cite{Mandel1993,DryjaWidlund1995} and BDDC
\cite{Dohrmann,MandelDohrmann2003} add a coarse problem.  For symmetric
positive-definite elliptic problems, BDDC admits polylogarithmic condition
number bounds in the ratio $H/h$~\cite{MandelDohrmannTezaur2005}; related
extensions cover several indefinite and nonsymmetric systems
\cite{LiWidlund2006,LiTu2009,LiuZhang2025b,TosellWidlund2005}.

In PDE-constrained optimization, domain decomposition is often used inside
a block preconditioner for the state and adjoint PDEs
\cite{LangerSteinbach2021,ReesDollarWathen2010}.  Alternatively, one may
decompose the full KKT system, so that each local solve remains a coupled
optimal control problem.  Heinkenschloss and
Nguyen~\cite{heinkenschloss2006neumann} introduced this approach for
unconstrained elliptic control, with subsequent extensions
\cite{BartlettHeinkenschlossRidzalWaanders2006,HeinkenschlossHerty2007}.  
Liu and Zhang~\cite{LiuZhang2025a,LiuZhang2025b} recently developed
BDDC preconditioners for HDG discretizations of unconstrained problems,
proving polylogarithmic GMRES bounds after reducing to a positive
definite interface problem. In the constrained setting
considered here, the active-set-dependent Schur complement is generally indefinite, so that reduction is unavailable.

%----------------------------------------------------------------------
\subsection{Contributions}
\label{ss:intro_contributions}
%----------------------------------------------------------------------

The main contributions of the present work are as follows:
\begin{itemize}
\item We develop a variational nonoverlapping domain decomposition
framework for the coupled symmetrized KKT systems arising at each
iteration of an active-set semi-smooth Newton method. This extends the results of
\cite{heinkenschloss2006neumann} from unconstrained linear-quadratic
problems to elliptic optimal control with control constraints,
$L^1$-sparsity, and Moreau--Yosida regularized state constraints. We
prove a global--local equivalence result, derive the assembly of the global
interface Schur complement, and establish well-posedness of the local
subdomain problems.

\item We construct a BDDC preconditioner for the state--adjoint
interface Schur complement. We give verifiable conditions on the
primal constraints ensuring well-posedness of the partially assembled
and local dual-subspace problems, prove injectivity of the
preconditioner under a compatibility condition on the averaging operator, and
derive its two-level additive Schwarz form with independent local
solves and a finite-dimensional coarse solve.

\item We establish several abstract results on the theory of block-operators used in the
analysis that may also be of independent interest (\emph{cf}. \Cref{s:block_theory}). These
include sufficient conditions for double saddle-point operators to be
Fredholm of index zero, together with preservation of the Fredholm
index under Schur complementation and restrictions to finite-codimensional subspaces.

\item Numerical experiments for distributed and Neumann boundary
control show nearly mesh-independent GMRES iteration counts at fixed
$H/h$ and moderate growth with $H/h$. Edge-average enrichment
substantially reduces the counts and their sensitivity to $\alpha$ and
$\beta$, while continuation with $\gamma=h^2$ yields nearly constant
counts for the state-constrained problem.
\end{itemize}

The interface Schur complement is self-adjoint but generally
indefinite, while left BDDC preconditioning produces a generally
nonsymmetric operator; we therefore use GMRES. A rigorous GMRES convergence
theory for the active-set-dependent preconditioned systems remains
open. In particular, the benign-subspace technique for Stokes-type
problems~\cite{LiWidlund2006} is not applicable because the coupling
operator is elliptic, whereas the reduction to a positive-definite
interface problem used in~\cite{LiuZhang2025a,LiuZhang2025b} is
unavailable in the present setting due to the non-trivial active-set structure.

%----------------------------------------------------------------------
\subsection{Outline}
\label{ss:intro_outline}
%----------------------------------------------------------------------

Section~2 formulates the infinite-dimensional optimal control problem and its semi-smooth Newton linearization, and Section~3 develops the domain decomposition framework. Section~4 constructs an abstract infinite-dimensional BDDC preconditioning operator. Section~5 presents numerical experiments. Section~6 draws conclusions. \Cref{s:block_theory} collects a number of results on the Fredholm theory of block operators required for our analysis.

%======================================================================
\section{The continuous optimal control problem}\label{sec:continuous}
%======================================================================

In this section, we formulate the first-order optimality system
for the Moreau--Yosida regularized
problem~\eqref{eq:ocp_regularized_obj},
\eqref{eq:ocp_control_bounds}--\eqref{eq:ocp_control_PDE} as a
nonsmooth operator equation, identify its semi-smoothness properties,
and derive the semi-smooth Newton linearization that forms the basis
for the domain decomposition framework developed in subsequent
sections.

%----------------------------------------------------------------------
\subsection{Function spaces and operators}\label{ss:spaces_ops}
%----------------------------------------------------------------------

The state and adjoint are sought in
\begin{align*}
  V :=
  \begin{cases}
    H_0^1(\Omega), & \text{distributed control},\\
    H^1(\Omega),   & \text{Neumann control},
  \end{cases}
\end{align*}
while the control and multiplier belong to
$U := L^2(\mathcal{U})$, identified with its dual via the Riesz
isomorphism.  We introduce the following bounded linear operators:
\begin{itemize}[leftmargin=2em]
\item $A : V \to V^\star$ denotes the weak Laplacian,
  $\langle Av, w \rangle_{V} := \int_\Omega \nabla v \cdot \nabla w
  \,\mathrm{d}x$;
\item $M : V \hookrightarrow L^2(\Omega)$ denotes the canonical
  (compact) embedding;
\item $B : U \to V^\star$ denotes the control operator, defined by
  $\langle Bu, v \rangle_{V}
   := \int_{\mathcal{U}} u\,(Tv)\,\mathrm{d}\sigma$,
  where $T := M$ in the distributed-control
  case (so that $B = M^\star$) and
  $T := \iota \circ \operatorname{tr}$ in the Neumann-control case,
  with $\operatorname{tr} : V \to H^{\frac{1}{2}}(\partial\Omega)$ the trace
  operator and
  $\iota : H^{\frac{1}{2}}(\partial\Omega) \hookrightarrow U$ the canonical
  embedding.
\end{itemize}
For notational brevity we set
$\widetilde{A} := A + \sigma\,M^\star M : V \to V^\star$.
In the distributed-control case, $B^\star = M$; in the
Neumann-control case, $B^\star = \iota  \operatorname{tr}$. For the remainder of the paper, we will refer to the cases of distributed control and Neumann control by (DC) and (NC), respectively.

%----------------------------------------------------------------------
\subsection{First-order optimality conditions}\label{ss:KKT}
%----------------------------------------------------------------------

The first-order optimality conditions for the regularized
problem~\eqref{eq:ocp_regularized_obj}, \eqref{eq:ocp_state_bounds}-- \eqref{eq:ocp_control_PDE} require the
existence of $(p,\mu) \in V \times U$ such that the quadruplet
$x= (y,p,u,\mu) \in Z:= V \times V \times U \times U$ satisfies
\begin{align} \label{eq:cts_KKT}
  \mathcal{F}(x)
  :=
  \begin{bmatrix}
    \widetilde{A}^\star\,p
      + M^\star\bigl(M\,y - y_d
        + \Phi_\gamma(M\,y)\bigr) \\[3pt]
    \widetilde{A}\,y - B\,u \\[3pt]
    \alpha\,u - B^\star p + \mu \\[3pt]
    \Psi(u,\mu)
  \end{bmatrix}
  = 0
    \quad \text{in } Z^\star.
\end{align}
The nonsmooth state-constraint and control-constraint operators are
the Nemytskii operators
$\Phi_\gamma : L^2(\Omega) \to L^2(\Omega)$ and
$\Psi : U \times U \to U$, defined pointwise a.e.\ by
\begin{align*}
  \bigl[\Phi_\gamma(v)\bigr](x)
    &:= \gamma^{-1}\max\{0,\,v(x) - y_b(x)\}
       + \gamma^{-1}\min\{0,\,v(x) - y_a(x)\}, \\[4pt]
  \bigl[\Psi(q,\eta)\bigr](x)
    &:= \psi\bigl(x,q(x),\eta(x)\bigr),
\end{align*}
where the scalar complementarity function
$\psi : \mathcal{U} \times \mathbb{R} \times \mathbb{R} \to \mathbb{R}$
is given by
\begin{align}\label{eq:psi_def}
  \psi(x,s,r)
    &:= s
       - \max\{0,\, s + c(r-\beta)\}
       - \min\{0,\, s + c(r+\beta)\} \notag\\
    &\quad
       + \max\{0,\, (s - u_b(x)) + c(r-\beta)\}
       + \min\{0,\, (s - u_a(x)) + c(r+\beta)\}.
\end{align}
The constant $c > 0$ is a free parameter whose choice influences the
performance of the semi-smooth Newton method~\cite{HintermuellerItoKunisch2002PDAS}.

\begin{remark}[Derivation of the complementarity function]
\label{rem:psi_derivation}
The fourth equation in~\eqref{eq:cts_KKT} encodes the optimality
condition for the control.  Stationarity of the
Lagrangian with respect to~$u$ yields the inclusion
\begin{align}\label{eq:subdiff_inclusion}
  0 \in \alpha\,u - B^\star p
      + \beta\,\partial\|u\|_{L^1(\mathcal{U})}
      + N_{K}(u),
\end{align}
where $K := \{v \in U : u_a \le v \le u_b\}$ is the admissible set
and $N_K(u)$ denotes its normal cone at~$u$.  Setting
$\mu := \beta\,\xi + \lambda$ with
$\xi \in \partial\|u\|_{L^1(\mathcal{U})}$ and $\lambda \in N_K(u)$,
the third equation in~\eqref{eq:cts_KKT} becomes
$\alpha\,u - B^\star p + \mu = 0$.
As shown in~\cite{Stadler2009Elliptic}, the condition
$\mu \in \beta\,\partial\|u\|_{L^1(\mathcal{U})} + N_K(u)$
is equivalent to $\Psi(u,\mu) = 0$ with $\Psi$ defined
via~\eqref{eq:psi_def}. 
\end{remark}

%----------------------------------------------------------------------
\subsection{Semi-smoothness, active sets, and generalized Jacobians}\label{ss:active_sets}
%----------------------------------------------------------------------

The Nemytskii operators $\Phi_\gamma$ and $\Psi$ are semi-smooth in the
sense of~\cite{Ulbrich2003}, provided they are regarded as mappings between
$L^q$ and $L^2$ spaces with $q >2$. More precisely, $\Phi_\gamma$ is semi-smooth as a
mapping $L^q(\Omega) \to L^2(\Omega)$ and $\Psi$ is semi-smooth as a
mapping $L^q(\mathcal{U}) \times L^q(\mathcal{U}) \to L^2(\mathcal{U})$
for any $q > 2$.  Composing with the Sobolev embedding
$V \hookrightarrow L^q(\Omega)$ (which holds for some $q > 2$ whenever
$d \le 3$), the full KKT operator $\mathcal{F}$ is semi-smooth as a
mapping from
\begin{align*}
  Z_q := \cbr{
    (y,p,u,\mu) \in Z :
    My \in L^q(\Omega),\;
    u,\mu \in L^q(\mathcal{U})}
\end{align*}
to $V^\star \hspace{-0.2mm} \times \hspace{-0.2mm}V^\star \hspace{-0.2mm}\times\hspace{-0.2mm} U \hspace{-0.2mm} \times  \hspace{-0.2mm} U$.  Near a solution
$z_\star \in Z_q$ at which the generalized Jacobian is
non-singular, the semi-smooth Newton method converges
$q$-superlinearly~\cite{Ulbrich2003}.  By the mesh-independence principle
of~\cite{HintermuuellerUlbrich2004}, a conforming finite element discretization of the KKT
system inherits this convergence behavior, so that the number of Newton
iterations required to reach a given tolerance is independent of the
mesh size. It is our goal to develop a BDDC preconditioner for the generalized Jacobian.

\subsubsection{Active sets}
The generalized Jacobians of $\Phi_\gamma$ and $\Psi$ are piecewise
constant and determined by the following partition of the domain
into active and inactive regions. For the \emph{state constraint}, we define
$\mathcal{A}_y := \mathcal{A}_{y,a} \cup \mathcal{A}_{y,b}$, where
\begin{align*}
  \mathcal{A}_{y,a}
    &:= \bigl\{x \in \Omega : y(x) < y_a(x)\bigr\}, &
  \mathcal{A}_{y,b}
    &:= \bigl\{x \in \Omega : y(x) > y_b(x)\bigr\}.
\end{align*}
For the \emph{control constraint}, the active set is
$\mathcal{A}_{u,\mu}
  := \mathcal{A}_{u,\mu,a}
   \cup \mathcal{A}_{u,\mu,b}
   \cup \mathcal{A}_{u,\mu,0}$,
with
\begin{alignat*}{1}
  \mathcal{A}_{u,\mu,a}
    &:= \bigl\{x \in \mathcal{U} :
           (u-u_a)(x) + c\bigl(\mu(x)+\beta\bigr) < 0 \bigr\},\\
  \mathcal{A}_{u,\mu,b}
    &:= \bigl\{x \in \mathcal{U} :
           (u-u_b)(x) + c\bigl(\mu(x)-\beta\bigr) > 0 \bigr\},\\
  \mathcal{A}_{u,\mu,0}
    &:= \bigl\{x \in \mathcal{U} :
           u(x)+c\bigl(\mu(x)+\beta\bigr) \ge 0
           \;\text{and}\;
           u(x)+c\bigl(\mu(x)-\beta\bigr) \le 0 \bigr\},
\end{alignat*}
and the inactive set is
$\mathcal{I}_{u,\mu}
  := \mathcal{I}_{u,\mu,+} \cup \mathcal{I}_{u,\mu,-}$,
with
\begin{alignat*}{1}
  \mathcal{I}_{u,\mu,+}
    &:= \bigl\{x \in \mathcal{U} :
           u(x)+c\bigl(\mu(x)-\beta\bigr) > 0
           \;\text{and}\;
           (u-u_b)(x)+c\bigl(\mu(x)-\beta\bigr) \le 0 \bigr\},\\
  \mathcal{I}_{u,\mu,-}
    &:= \bigl\{x \in \mathcal{U} :
           u(x)+c\bigl(\mu(x)+\beta\bigr) < 0
           \;\text{and}\;
           (u-u_a)(x)+c\bigl(\mu(x)+\beta\bigr) \ge 0 \bigr\}.
\end{alignat*}
The five sets
$\mathcal{A}_{u,\mu,a}$,
$\mathcal{A}_{u,\mu,b}$,
$\mathcal{A}_{u,\mu,0}$,
$\mathcal{I}_{u,\mu,+}$,
$\mathcal{I}_{u,\mu,-}$
form a partition of $\mathcal{U}$ (up to a set of measure zero), with
$\mathcal{A}_{u,\mu} \cap \mathcal{I}_{u,\mu} = \emptyset$.

\subsubsection{Generalized Jacobians}

We define the operator that restricts to the state constraint active set 
$\mathcal{P}_{\mathcal{A}_y} : L^2(\Omega) \to~L^2(\mathcal{A}_y)$ 
by $\mathcal{P}_{\mathcal{A}_y}(v) := v|_{\mathcal{A}_y}$, whose 
adjoint
$\mathcal{P}_{\mathcal{A}_y}^\star : L^2(\mathcal{A}_y) \to L^2(\Omega)$
is the extension by zero operator.
Similarly, for the control constraint, let
$U_\mathcal{A} := L^2(\mathcal{A}_{u,\mu})$ and
$U_\mathcal{I} := L^2(\mathcal{I}_{u,\mu})$.  We define the
restriction operators
$\mathcal{P}_\mathcal{A} : U \to U_\mathcal{A}$,\;
$\mathcal{P}_\mathcal{I} : U \to U_\mathcal{I}$
by $\mathcal{P}_\mathcal{A}(v) := v|_{\mathcal{A}_{u,\mu}}$ and
$\mathcal{P}_\mathcal{I}(v) := v|_{\mathcal{I}_{u,\mu}}$,
respectively.  Their adjoints
$\mathcal{P}_\mathcal{A}^\star : U_\mathcal{A} \to U$ and
$\mathcal{P}_\mathcal{I}^\star : U_\mathcal{I} \to U$
are the extension by zero operators.  Since $\mathcal{U} =\mathcal{A}_{u,\mu}\,  \dot{\cup} \, \mathcal{I}_{u,\mu} $, it holds that
\begin{align}\label{eq:PA_PI_identity}
  \mathcal{P}_\mathcal{A}^\star \mathcal{P}_\mathcal{A}
  + \mathcal{P}_\mathcal{I}^\star \mathcal{P}_\mathcal{I}
  = I_U.
\end{align}
Elements of the generalized Jacobian of $\Phi_\gamma$ and $\Psi$ are
given by
\begin{align}\label{eq:gen_jacobians}
  G_{\Phi_\gamma}(y)\,\delta y
    &= \gamma^{-1}\,\mathcal{P}_{\mathcal{A}_y}^\star\,
       \mathcal{P}_{\mathcal{A}_y}\,\delta y, \\[3pt]
  G_\Psi(u,\mu)\,[\delta u,\delta\mu]
    &= \mathcal{P}_\mathcal{A}^\star\,\mathcal{P}_\mathcal{A}\,\delta u
     - c\,\mathcal{P}_\mathcal{I}^\star\,\mathcal{P}_\mathcal{I}\,\delta\mu. \notag
\end{align}

\begin{remark}\label{rem:proj_characteristic}
In each case, the composition of a restriction operator with its
adjoint acts as multiplication by the corresponding characteristic
function.  For instance,
$\mathcal{P}_{\mathcal{A}_y}^\star\,\mathcal{P}_{\mathcal{A}_y}
 = \chi_{\mathcal{A}_y}\,\cdot$
on $L^2(\Omega)$, and
$\mathcal{P}_\mathcal{A}^\star\,\mathcal{P}_\mathcal{A}
 = \chi_{\mathcal{A}_{u,\mu}}\,\cdot$,\;
$\mathcal{P}_\mathcal{I}^\star\,\mathcal{P}_\mathcal{I}
 = \chi_{\mathcal{I}_{u,\mu}}\,\cdot$
on $U$.

\end{remark}

%----------------------------------------------------------------------
\subsection{Semi-smooth Newton linearization}\label{ss:newton}
%----------------------------------------------------------------------

At each Newton iteration, we solve the linearized system
$G_\mathcal{F}(x^k)\,\delta x^k
 = -\mathcal{F}(x^k)$
for the update
$\delta x^k
 = (\delta y^k,\delta p^k,\delta u^k,\delta\mu^k) \in X$,
then set $x^{k+1} = x^k + \delta x^k$.
Writing
$L^k := M^\star
        (I + \gamma^{-1}\,\mathcal{P}_{\mathcal{A}_{y^k}}^\star\,
        \mathcal{P}_{\mathcal{A}_{y^k}})M : V \to V^\star$
for the active set-dependent operator and denoting the residual
components by
$\mathcal{F}_i^k := [\mathcal{F}(x^k)]_i$ for
$i = 1,\dots,4$, the linearized system reads: find
$(\delta y^k, \delta p^k, \delta u^k, \delta\mu^k) \in Z$ such that
\begin{subequations}\label{eq:newton_raw}
\begin{align}
  L^k\,\delta y^k
    + \widetilde{A}^\star\,\delta p^k
    &= -\mathcal{F}_1^k, \quad \text{in } V^\star,
  \label{eq:newton_raw_a} \\
  \widetilde{A}\,\delta y^k - B\,\delta u^k
    &= -\mathcal{F}_2^k, \quad \text{in } V^\star,
  \label{eq:newton_raw_b} \\
  -B^\star\,\delta p^k + \alpha\,\delta u^k + \delta\mu^k
    &= -\mathcal{F}_3^k, \quad \text{in } U,
  \label{eq:newton_raw_c} \\
  \mathcal{P}_\mathcal{A}^\star\,\mathcal{P}_\mathcal{A}\,\delta u^k
    - c\,\mathcal{P}_\mathcal{I}^\star\,\mathcal{P}_\mathcal{I}\,\delta\mu^k
    &= -\mathcal{F}_4^k,  \quad \text{in } U,
  \label{eq:newton_raw_d}
\end{align}
\end{subequations}
with residual components
\begin{subequations} \label{eq:residual_components}
\begin{align}
 \hspace{15mm} \mathcal{F}_1^k
    &:= \widetilde{A}^\star p^k
       + M^\star\del[1]{M y^k - y_d
         + \Phi_\gamma(M y^k)}, \label{eq:residual_components_1} \\
  \mathcal{F}_2^k
    &:= \widetilde{A} y^k - B u^k, \label{eq:residual_components_2}\\
  \mathcal{F}_3^k
    &:= \alpha u^k - B^\star p^k + \mu^k,  \label{eq:residual_components_3}\\
  \mathcal{F}_4^k
    &:= \Psi(u^k,\mu^k). \label{eq:residual_components_4}
\end{align}
\end{subequations}
Although the block operator in
\eqref{eq:newton_raw_a}--\eqref{eq:newton_raw_d} is not self-adjoint in
general, the complementarity equation can be used to eliminate the
inactive multiplier and recast the Newton step as a self-adjoint
indefinite system.
%----------------------------------------------------------------------
\subsection{Symmetrized system}\label{ss:symmetrized}
%----------------------------------------------------------------------

For the remainder of this section and the rest of the paper, we
simplify the notation for the linearized
system~\eqref{eq:newton_raw}.  We drop the prefix~$\delta$ and the
iteration index~$k$ from the Newton increments, writing
$(y,p,u,\mu)$ in place of
$(\delta y^k, \delta p^k, \delta u^k, \delta\mu^k)$.
The previous iterate at which the system is linearized is denoted by
$\bar{x} = (\bar{y},\bar{p},\bar{u},\bar{\mu})$, 
the $i^\text{th}$ component of the residual evaluated at this iterate by
$\bar{\mathcal{F}}_i$, and the active and inactive sets evaluated at
this iterate by
$\mathcal{A}_{\bar{y}}$,
$\mathcal{A}_{\bar{u},\bar{\mu}}$, and
$\mathcal{I}_{\bar{u},\bar{\mu}}$.

The fourth equation~\eqref{eq:newton_raw_d} couples $u$ and $\mu$
through the active and inactive sets in a pointwise
fashion.  Restricting~\eqref{eq:newton_raw_d} to the active set
determines $u|_{\mathcal{A}_{\bar{u},\bar{\mu}}}$, and restricting
to the inactive set determines
$\mu|_{\mathcal{I}_{\bar{u},\bar{\mu}}}$.  Eliminating the inactive
multiplier from the third equation yields a symmetric reduced system
whose fourth unknown is the active multiplier
$\mu_\mathcal{A} := \mathcal{P}_\mathcal{A}\,\mu
 \in U_\mathcal{A}$. 
 
We denote the product space
$X := V \times V \times U \times U_{\mathcal{A}}$
and write a generic element of $X$ as $x := (y,p,u,\mu_\mathcal{A})$,
where $y$ is the state, $p$ the adjoint, $u$ the control, and
$\mu_\mathcal{A}$ the multiplier restricted to the active control set.

\begin{proposition}[Symmetrized system]\label{prop:symmetrized}
Denoting
$\mu_\mathcal{A}:=\mathcal{P}_\mathcal{A}\mu$, the linearized KKT
system~\eqref{eq:newton_raw_a}--\eqref{eq:newton_raw_d} is
equivalent to the following symmetrized system: find
$(y,p,u,\mu_\mathcal{A})
 \in X
 := V \times V \times U \times U_\mathcal{A}$
such that
\begin{subequations}\label{eq:symmetrized_KKT}
\begin{align}
  L y + \widetilde{A}^\star\,p
    &= -\bar{\mathcal{F}}_1,
  \label{eq:symmetrized_KKT_a} \\
  \widetilde{A} y - B u
    &= -\bar{\mathcal{F}}_2,
  \label{eq:symmetrized_KKT_b} \\
  -B^\star p + \alpha u
    + \mathcal{P}_\mathcal{A}^\star \mu_\mathcal{A}
    &= -\bar{\mathcal{F}}_3
       -c^{-1}\mathcal{P}_\mathcal{I}^\star
         \mathcal{P}_\mathcal{I} \bar{\mathcal{F}}_4,
  \label{eq:symmetrized_KKT_c} \\
  \mathcal{P}_\mathcal{A}u
    &= -\mathcal{P}_\mathcal{A}\bar{\mathcal{F}}_4.
  \label{eq:symmetrized_KKT_d}
\end{align}
\end{subequations}
The full multiplier is recovered from $\mu_\mathcal{A}$ by
\begin{align}\label{eq:mu_recovery}
  \mu
  = \mathcal{P}_\mathcal{A}^\star\,\mu_\mathcal{A}
    +c^{-1}\mathcal{P}_\mathcal{I}^\star\,
      \mathcal{P}_\mathcal{I}\,\bar{\mathcal{F}}_4.
\end{align}
\end{proposition}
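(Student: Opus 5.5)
The plan is to prove both directions of the equivalence by splitting the fourth equation \eqref{eq:newton_raw_d} with the orthogonal decomposition \eqref{eq:PA_PI_identity}. Since $\mathcal{P}_\mathcal{A}\mathcal{P}_\mathcal{A}^\star = I_{U_\mathcal{A}}$, $\mathcal{P}_\mathcal{I}\mathcal{P}_\mathcal{I}^\star = I_{U_\mathcal{I}}$, and $\mathcal{P}_\mathcal{A}\mathcal{P}_\mathcal{I}^\star = 0$ (disjoint supports, \Cref{rem:proj_characteristic}), applying $\mathcal{P}_\mathcal{A}$ and $\mathcal{P}_\mathcal{I}$ to \eqref{eq:newton_raw_d} shows that it is equivalent to the pair
\begin{align*}
\mathcal{P}_\mathcal{A} u = -\mathcal{P}_\mathcal{A}\bar{\mathcal{F}}_4,
\qquad
\mathcal{P}_\mathcal{I}\mu = c^{-1}\mathcal{P}_\mathcal{I}\bar{\mathcal{F}}_4 .
\end{align*}
The converse direction follows by applying $\mathcal{P}_\mathcal{A}^\star$ and $-c\,\mathcal{P}_\mathcal{I}^\star$ to these two identities and summing. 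The first identity is exactly \eqref{eq:symmetrized_KKT_d}.

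Next, I will use \eqref{eq:PA_PI_identity} to write $\mu = \mathcal{P}_\mathcal{A}^\star\mathcal{P}_\mathcal{A}\mu + \mathcal{P}_\mathcal{I}^\star\mathcal{P}_\mathcal{I}\mu$. Substituting the second identity gives precisely the recovery formula \eqref{eq:mu_recovery} with $\mu_\mathcal{A} = \mathcal{P}_\mathcal{A}\mu$. Inserting this expression into \eqref{eq:newton_raw_c} and moving the known term $c^{-1}\mathcal{P}_\mathcal{I}^\star\mathcal{P}_\mathcal{I}\bar{\mathcal{F}}_4$ to the right-hand side yields \eqref{eq:symmetrized_KKT_c}. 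Equations \eqref{eq:symmetrized_KKT_a}--\eqref{eq:symmetrized_KKT_b} coincide verbatim with \eqref{eq:newton_raw_a}--\eqref{eq:newton_raw_b}, with $L := L^k$.

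For the converse, take a solution $(y,p,u,\mu_\mathcal{A})\in X$ of \eqref{eq:symmetrized_KKT} and define $\mu$ by \eqref{eq:mu_recovery}. Then $\mathcal{P}_\mathcal{A}\mu = \mu_\mathcal{A}$ and $\mathcal{P}_\mathcal{I}\mu = c^{-1}\mathcal{P}_\mathcal{I}\bar{\mathcal{F}}_4$, using the same projection identities. Reversing the substitution recovers \eqref{eq:newton_raw_c}, and the split form above recovers \eqref{eq:newton_raw_d}. The two solution sets are therefore in bijection via $\mu \leftrightarrow \mu_\mathcal{A}$ together with \eqref{eq:mu_recovery}.

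There is no substantive obstacle here. The only point needing care is the justification of the projection identities $\mathcal{P}_\mathcal{A}\mathcal{P}_\mathcal{I}^\star=0$ and $\mathcal{P}_\mathcal{A}\mathcal{P}_\mathcal{A}^\star = I$. These hold because $\mathcal{A}_{\bar u,\bar\mu}$ and $\mathcal{I}_{\bar u,\bar\mu}$ partition $\mathcal{U}$ up to a null set. One should also note that this equivalence does not by itself establish the asserted symmetry; that is a separate check. The resulting block operator on $X$ has blocks $(L,\widetilde A^\star; \widetilde A, 0)$, $-B$, $-B^\star$, $\alpha I$, $\mathcal{P}_\mathcal{A}^\star$, and $\mathcal{P}_\mathcal{A}$ arranged symmetrically, and is self-adjoint because $L = M^\star(I+\gamma^{-1}\chi_{\mathcal{A}_{\bar y}})M$ is self-adjoint.
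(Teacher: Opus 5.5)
Your proposal is correct and follows the paper's proof: you split \eqref{eq:newton_raw_d} into its active and inactive parts (via $\mathcal{P}_\mathcal{A}\mathcal{P}_\mathcal{A}^\star = I$ and $\mathcal{P}_\mathcal{A}\mathcal{P}_\mathcal{I}^\star = 0$, where the paper restricts pointwise using characteristic functions), decompose $\mu$ with \eqref{eq:PA_PI_identity}, and substitute into \eqref{eq:newton_raw_c}. Your explicit converse, defining $\mu$ by \eqref{eq:mu_recovery} and checking $\mathcal{P}_\mathcal{A}\mu=\mu_\mathcal{A}$ and $\mathcal{P}_\mathcal{I}\mu=c^{-1}\mathcal{P}_\mathcal{I}\bar{\mathcal{F}}_4$, is also correct and completes the reverse direction, which the paper leaves implicit.
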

\begin{proof}
Since~\eqref{eq:newton_raw_d} holds pointwise a.e.\ in $\mathcal{U}$,
we may restrict it separately to $\mathcal{A}_{\bar{u},\bar{\mu}}$
and $\mathcal{I}_{\bar{u},\bar{\mu}}$.
On the active set, $\chi_{\mathcal{I}} = 0$, so
$\mathcal{P}_\mathcal{A}\,u = -\mathcal{P}_\mathcal{A}\,
\bar{\mathcal{F}}_4$,
which is~\eqref{eq:symmetrized_KKT_d}.
On the inactive set, $\chi_{\mathcal{A}} = 0$, so
$-c\,\mathcal{P}_\mathcal{I}\,\mu
 = -\mathcal{P}_\mathcal{I}\,\bar{\mathcal{F}}_4$,
i.e.,
$\mathcal{P}_\mathcal{I}\,\mu
 = c^{-1}\mathcal{P}_\mathcal{I}\,\bar{\mathcal{F}}_4$.
Using~\eqref{eq:PA_PI_identity}, we decompose
\begin{align*}
  \mu
  &= \mathcal{P}_\mathcal{A}^\star\,\mathcal{P}_\mathcal{A}\,\mu
   + \mathcal{P}_\mathcal{I}^\star\,\mathcal{P}_\mathcal{I}\,\mu
  = \mathcal{P}_\mathcal{A}^\star\,\mu_\mathcal{A}
   + c^{-1}\mathcal{P}_\mathcal{I}^\star\,
     \mathcal{P}_\mathcal{I}\,\bar{\mathcal{F}}_4.
\end{align*}
Substituting into~\eqref{eq:newton_raw_c} and rearranging
yields~\eqref{eq:symmetrized_KKT_c}.
\end{proof}
We write the symmetrized
system~\eqref{eq:symmetrized_KKT_a}--\eqref{eq:symmetrized_KKT_d}
in block operator form as
\begin{align}\label{eq:block_KKT}
  \underbrace{\begin{bmatrix}
    L & \widetilde{A}^\star & 0 & 0 \\
    \widetilde{A} & 0 & -B & 0 \\
    0 & -B^\star & \alpha\,I_U
      & \mathcal{P}_\mathcal{A}^\star \\
    0 & 0 & \mathcal{P}_\mathcal{A} & 0
  \end{bmatrix}}_{\mathcal{K}}
  \underbrace{\begin{bmatrix} y \\ p \\ u \\ \mu_\mathcal{A} \end{bmatrix}}_{x}
  =
  \underbrace{\begin{bmatrix}
    -\bar{\mathcal{F}}_1 \\[2pt]
    -\bar{\mathcal{F}}_2 \\[2pt]
    -\bar{\mathcal{F}}_3
      -c^{-1}\mathcal{P}_\mathcal{I}^\star\,
        \mathcal{P}_\mathcal{I}\,\bar{\mathcal{F}}_4 \\[2pt]
    -\mathcal{P}_\mathcal{A}\,\bar{\mathcal{F}}_4
  \end{bmatrix}}_{f}.
\end{align}
System~\eqref{eq:block_KKT} is the point of departure for the domain
decomposition framework developed in the subsequent sections.  The
operator $\mathcal{K} : X \to X^\star$ is
self-adjoint (with respect to the  duality pairing on $X$) but indefinite.

\begin{remark}[Weak formulation]
    Note that the linearized KKT system \eqref{eq:block_KKT} can be written in weak form as follows: find $(y,p,u,\mu_{\mathcal{A}}) \in V \times V \times U \times U_\mathcal{A}$ such that
\overfullrule=0pt    %
\begin{align*}
  \int_\Omega y\,v\,\mathrm{d}x
  + \frac{1}{\gamma}\int_{\mathcal{A}_{\bar y}} y\,v\,\mathrm{d}x
  + \int_\Omega \nabla p\cdot\nabla v\,\mathrm{d}x
  + \sigma\int_\Omega p\,v\,\mathrm{d}x
  &= -\langle\bar{\mathcal{F}}_1, v\rangle_{V},
  \\
  \int_\Omega \nabla y\cdot\nabla q\,\mathrm{d}x
  + \sigma\int_\Omega y\,q\,\mathrm{d}x
  - \int_{\mathcal{U}} u\,q\,\mathrm{d}\sigma
  &= -\langle\bar{\mathcal{F}}_2, q\rangle_{V},
  \\
  -\int_{\mathcal{U}} p\,w\,\mathrm{d}\sigma
  + \alpha\int_{\mathcal{U}} u\,w\,\mathrm{d}\sigma
  + \int_{\mathcal{A}_{\bar u,\bar\mu}} \mu_{\mathcal{A}}\,w\,\mathrm{d}\sigma
  &= -\int_{\mathcal{U}} \bar{\mathcal{F}}_3\,w\,\mathrm{d}\sigma
     - \frac{1}{c}\int_{\mathcal{I}_{\bar u,\bar\mu}} \bar{\mathcal{F}}_4\,w\,\mathrm{d}\sigma,
  \\
  \int_{\mathcal{A}_{\bar u,\bar\mu}} u\,\eta\,\mathrm{d}\sigma
  &= -\int_{\mathcal{A}_{\bar u,\bar\mu}} \bar{\mathcal{F}}_4\,\eta\,\mathrm{d}\sigma,
\end{align*}
for all $(v,q,w,\eta)\in V\times V\times U\times U_{\mathcal{A}}$.
  \end{remark}

\begin{theorem}[Well-posedness of linearized KKT problems]
\label{thm:global_well_posed}
The global KKT operator $\mathcal K:X\to X^\star$ is Fredholm with
$\operatorname{ind}(\mathcal K)=0$ for every $\sigma\ge 0$. Moreover:
\begin{enumerate}[label=\textup{(\alph*)},leftmargin=2.4em]
\item In case \textup{(DC)}, $\mathcal K$ is an isomorphism for every
$\sigma\ge 0$.

\item In case \textup{(NC)}, the following alternatives hold:
  \begin{enumerate}[label=\textup{(\roman*)},leftmargin=2.6em]
  \item If $\sigma>0$, or if $\sigma=0$ and
  $|\mathcal I_{\bar u,\bar\mu}| > 0$, then $\mathcal K$ is an
  isomorphism.

  \item If $\sigma=0$ and
  $|\mathcal I_{\bar u,\bar\mu}| = 0$, then
  \begin{align*}
    \ker(\mathcal K)
    =\cbr[1]{
      c\cdot\del{0,1|_\Omega,0,1|_{\mathcal A_{\bar u,\bar\mu}}}
      :c\in\mathbb R
    }.
  \end{align*}
  \end{enumerate}
\end{enumerate}
\end{theorem}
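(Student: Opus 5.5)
We will split $\mathcal K$ as a Fredholm-index-zero part plus a compact perturbation, and then show that it is injective. Fredholm of index zero together with injectivity gives an isomorphism. When the kernel is nontrivial, we compute it directly.

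\textbf{Fredholm property.} First write $\widetilde A = A_1 + K_\sigma$ with $A_1 := A + M^\star M$ and $K_\sigma := (\sigma-1)M^\star M$. Since $M$ is compact, $K_\sigma$ is compact, and $A_1$ is coercive on $V$ in both (DC) and (NC). Likewise, $L = M^\star(I+\gamma^{-1}\chi_{\mathcal A_{\bar y}})M$ is compact. The block $B$ is compact in both cases: in (DC) because $B=M^\star$, and in (NC) because $\operatorname{tr}:H^1(\Omega)\to L^2(\partial\Omega)$ is compact. Dropping all compact entries leaves
\[
\mathcal K_0=\begin{bmatrix}0&A_1^\star&0&0\\A_1&0&0&0\\0&0&\alpha I_U&\mathcal P_\mathcal A^\star\\0&0&\mathcal P_\mathcal A&0\end{bmatrix}.
\]
This operator is block diagonal. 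The $(y,p)$ block is an isomorphism because $A_1$ is. The $(u,\mu_\mathcal A)$ block is a saddle-point operator in which $\alpha I_U$ is coercive and $\mathcal P_\mathcal A$ is surjective with $\mathcal P_\mathcal A\mathcal P_\mathcal A^\star=I$, so by Brezzi theory it is an isomorphism. Hence $\mathcal K = \mathcal K_0 + \text{compact}$ is Fredholm of index $0$ for every $\sigma\ge0$. One could instead invoke the double saddle-point Fredholm criterion of \Cref{s:block_theory}.

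\textbf{Kernel.} Let $\mathcal K x=0$, and test each equation against the appropriate component.
\begin{itemize}
\item The fourth equation gives $u=0$ on $\mathcal A_{\bar u,\bar\mu}$, so $u=\mathcal P_\mathcal I^\star\mathcal P_\mathcal I u$.
\item Testing the third equation with $u$ gives $\alpha\|u\|^2=\langle B^\star p,u\rangle$, since $\langle\mathcal P_\mathcal A^\star\mu_\mathcal A,u\rangle=\langle\mu_\mathcal A,\mathcal P_\mathcal A u\rangle=0$.
\item Testing the second equation with $p$ gives $\langle\widetilde A y,p\rangle=\langle Bu,p\rangle$.
\item Testing the first equation with $y$ gives $\langle Ly,y\rangle+\langle\widetilde A^\star p,y\rangle=0$.
\end{itemize}
Combining these,
\[
\langle Ly,y\rangle+\alpha\|u\|^2=0,
\]
and since $\langle Ly,y\rangle\ge\|My\|^2$, we get $My=0$ and $u=0$. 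Then $y=0$ because $M$ is injective. The first equation now reduces to $\widetilde A^\star p=0$, and the third to $B^\star p=\mathcal P_\mathcal A^\star\mu_\mathcal A$.

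\textbf{Solving the reduced equations.} In (DC), and in (NC) with $\sigma>0$, $\widetilde A$ is injective, so $p=0$ and hence $\mu_\mathcal A=0$.

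In (NC) with $\sigma=0$, the condition $Ap=0$ on $H^1(\Omega)$ gives $p=c$ constant. Connectedness of $\Omega$ is needed here, and the paper implicitly assumes a domain. The third equation then reads $c\cdot 1|_{\partial\Omega}=\mathcal P_\mathcal A^\star\mu_\mathcal A$. Its restriction to $\mathcal I_{\bar u,\bar\mu}$ forces $c=0$ whenever $|\mathcal I_{\bar u,\bar\mu}|>0$, which gives injectivity and therefore an isomorphism. If instead $|\mathcal I_{\bar u,\bar\mu}|=0$, then $\mu_\mathcal A=c\cdot1|_{\mathcal A}$ with $c$ arbitrary. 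Conversely, $(0,1,0,1|_\mathcal A)$ lies in the kernel: $\widetilde A^\star 1=A1=0$, and $-B^\star1+\mathcal P_\mathcal A^\star1=-1+1=0$ on $\partial\Omega=\mathcal A$ a.e. This yields (b)(ii).

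\textbf{Main obstacle.} The main difficulty is the compactness of $B$ in the Neumann case. It needs the compact trace embedding into $L^2(\partial\Omega)$ for Lipschitz domains. If one wants to avoid that, the fallback is to keep $B$ in the principal part and instead verify an inf-sup condition for the double saddle-point structure using the results of \Cref{s:block_theory}.
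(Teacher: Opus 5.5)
Your proof is correct and follows essentially the same route as the paper: a compact-perturbation argument gives the Fredholm property, followed by the same energy test against $(y,-p,u)$ and the same case analysis of $\widetilde A^\star p=0$. The only differences are minor. The paper applies its double saddle-point criterion with $\widetilde A$ treated as Fredholm of index zero, whereas you shift to the coercive $A+M^\star M$ and absorb $(\sigma-1)M^\star M$ into the compact part. Your explicit check that $(0,1,0,1|_{\mathcal A_{\bar u,\bar\mu}})$ lies in the kernel supplies the reverse inclusion the paper leaves implicit. The compact trace embedding you flag as the main obstacle is standard on bounded Lipschitz domains, and the paper uses it too.
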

\begin{proof} 
  \emph{Step~1: $\mathcal{K}$ is Fredholm of index zero.}
Note that $\alpha\,I_{U}$ is an isomorphism and
$\mathcal{P}_{\mathcal{A}_u}
 (\alpha\,I_{U})^{-1}
 \mathcal{P}_{\mathcal{A}_u}^\star
 = \alpha^{-1}I_{U_{\mathcal{A}}}$.
The operator $L$ is compact since
$M : V \hookrightarrow L^2(\Omega)$ is compact by
Rellich--Kondrachov and $\mathcal{P}_{\mathcal{A}_{\bar{y}}}$ is bounded.
Likewise, $B$ is compact: in case (DC),
$B = M^\star$, while in case (NC), $B$ is the
adjoint of the compact trace embedding into $L^2(\mathcal{U})$. Since $\widetilde{A} = A + \sigma M^\star M $ with
%with $A_i$ the Laplacian which satisfies 
$\ker(A) \subseteq \operatorname{span}\{1\}$ and
$\operatorname{ran}(A)
 = \{f \in V^\star : \langle f, 1 \rangle_{V} = 0\}$,
 $\widetilde{A}$ is Fredholm with $\operatorname{ind}(\widetilde{A}) = 0$.
The fact that $\mathcal{K}$ is Fredholm with
$\operatorname{ind}(\mathcal{K}) = 0$, follows from \Cref{prop:KKT_fredholm} with
$\mathcal{X}_1 = \mathcal{X}_2 = V$,
$\mathcal{X}_3 = U$,
$\mathcal{X}_4 = U_{\mathcal{A}}$,
$\mathsf{A} = \widetilde{A}$,
$\mathsf{L} = L$,
$\mathsf{B} = B$,
$\mathsf{D} = \alpha\,I_{U}$, and
$\mathsf{P} = \mathcal{P}_{\mathcal{A}}$.

\medskip
\emph{Step~2: Kernel of $\mathcal{K}$.}
Suppose $(y, p, u, \mu_{\mathcal{A}})
 \in \ker(\mathcal{K})$, so that
 \begin{subequations}\label{eq:K_kernel}
\begin{align}
  L y
    + \widetilde{A}^\star p &= 0, \quad \text{ in } V^\star,
    \label{eq:K_kernel_a} \\
   {\widetilde{A}}
y
    - Bu &= 0, \quad \text{ in } V^\star,
    \label{eq:K_kernel_b} \\
  -B^\star p
    + \alpha u
    + \mathcal{P}_{\mathcal{A}}^\star \mu_{\mathcal{A}} &= 0, \quad \text{ in } U,
    \label{eq:K_kernel_c} \\
  \mathcal{P}_{\mathcal{A}} u &= 0, \quad \text{ in } U_{\mathcal{A}}.
    \label{eq:K_kernel_d}
\end{align}
\end{subequations}
Testing \eqref{eq:K_kernel_a}--\eqref{eq:K_kernel_c} against $(y,-p,u) \in V \times V \times U$ and summing yields
\begin{align}\label{eq:energy_Ki}
  \alpha\,\|u\|_{U}^2
  + \|M\,y\|_{L^2(\Omega)}^2
  + \gamma^{-1}\|M\,y\|_{L^2(\mathcal{A}_{\bar{y}})}^2
  = 0,
\end{align}
so $u = 0$ and $M y = 0$. Since $M y = 0$ in $L^2(\Omega)$, we have $y = 0$ in $V$. 
Next, \eqref{eq:K_kernel_b} gives
$\widetilde{A} y = B u = 0$, and \eqref{eq:K_kernel_c}
gives $\mathcal{P}_{\mathcal{A}_u}^\star \mu_{\mathcal{A}} = B^\star p $. Applying the operator $\mathcal{P}_{\mathcal{A}}$  to both sides, we find $\mu_{\mathcal{A}}  = \mathcal{P}_{\mathcal{A}} B^\star p$. Therefore,
\begin{align*}
    (y,p,u,\mu) \in \text{ker}(\mathcal{K}) \quad \Longrightarrow \quad (y, u, \mu) = (0, 0,  \mathcal{P}_{\mathcal{A}_u} B^\star p).
\end{align*}
Thus, $\text{ker}(\mathcal{K})$ is determined by solutions of the homogeneous  equation
$\widetilde{A}^\star p = 0$. 

\medskip
\textbf{Case (DC).} $\widetilde{A}$ is an isomorphism for all $\sigma \ge 0$ owing to the Poincar\'{e} inequality and Lax--Milgram theorem, which proves (a). 

\medskip
\textbf{Case (NC).}
In case (NC), the situation is more complex. If $\sigma > 0$, again $\widetilde{A}$ is an isomorphism by Lax--Milgram. However, if $\sigma = 0$, $\widetilde{A} = A$ is the  Neumann Laplacian with
$\ker(A) = \operatorname{span}\{1|_{\Omega}\}$ and
$\operatorname{ran}(A)
 = \{f \in V^\star : \langle f, 1|_{\Omega} \rangle_{V} = 0\}$. As a result, $p = c \cdot 1|_{\Omega}$ for $c \in \mathbb{R}$, and $\operatorname{ker}(\mathcal{K})$ depends on the structure of the active control set. If $|\mathcal I_{\bar u,\bar\mu}| > 0$, we can apply $\mathcal{P}_{\mathcal{I}}$ to both sides of \eqref{eq:K_kernel_c} to find
\begin{align*}
 0 =  \mathcal{P}_{\mathcal{I}} B^\star p = c \cdot \mathcal{P}_{\mathcal{I}}(1|_{\mathcal{U}}) = c \cdot 1|_{\mathcal{I}},
\end{align*}
from which it follows that $c = 0$.
In each of the aforementioned cases, $(p,\mu) = (0,0)$ and it follows that $\mathcal{K}$ is an isomorphism. On the other hand, if
$\mathcal I_{\bar u,\bar\mu}=\emptyset$,  then $B^\star p = c \cdot 1|_{\mathcal{U}}$, and $\mu_{\mathcal{A}} = c \cdot 1|_{\mathcal{A}_{\bar u,\bar\mu}}$ for arbitrary $c \in \mathbb{R}$, from which (b) follows. 
\end{proof}

%======================================================================
\section{Domain decomposition}\label{sec:dd}
%======================================================================

A natural structure that can be exploited in PDE-constrained
optimization is the geometry of the underlying physical domain.
Domain decomposition methods partition \(\Omega \subset \mathbb{R}^d\)
into subdomains and reformulate the global problem as local
subdomain problems coupled through interface conditions. At the
interface level, this leads to a generalized Steklov--Poincar\'e
operator
\cite[Section 1.1]{quarteroni-valli-1999-domain-decomposition}: given
state and adjoint traces on the subdomain interfaces, one solves
local linearized KKT systems in the interiors and obtains the
corresponding interface residuals. This approach is due to Heinkenschloss and
Nguyen~\cite{heinkenschloss2006neumann} in the linear-quadratic
setting; we extend it to problems with control constraints,
$L^1$-sparsity, and regularized state constraints. The local interior
problems are well posed by \Cref{thm:KII_fredholm}, and
\Cref{thm:equivalence} identifies the global KKT system with local
subdomain problems coupled by trace conformity and interface flux
balance. 

We first fix notation for products of Hilbert spaces, their duals, and block-diagonal operators between them, all of which are used throughout this section and the remainder of the article.

\begin{remark}[Products, duals, and block operators]\label{rem:notation}
Let $X_1,\dots,X_N$ be Hilbert spaces and equip $\widetilde{X} := \prod_{i=1}^N X_i$ with
the norm $\|x\|_{\widetilde{X}}^2 := \sum_{i=1}^N \|x_i\|_{X_i}^2$. We denote by
$\widetilde{\pi}_i : \widetilde{X} \to X_i$ and $\widetilde{\jmath}_i : X_i \to \widetilde{X}$ the coordinate projections and
injections, which satisfy
\begin{align} \label{eq:resolution}
    \sum_{i=1}^N \widetilde{\jmath}_i \widetilde{\pi}_i = I_{\widetilde{X}}, \qquad \widetilde{\pi}_j \widetilde{\jmath}_i = \delta_{ij} I_{X_i}.
\end{align}
We identify $\widetilde{X}^\star$ with $\prod_{i=1}^N X_i^\star$ via the isometric
isomorphism $\Phi : \prod_{i=1}^N X_i^\star \to \widetilde{X}^\star$ defined by
$\langle \Phi(\ell_1,\dots,\ell_N), x \rangle_{\widetilde{X}}
 := \sum_{i=1}^N \langle \ell_i, \widetilde \pi_i x \rangle_{X_i}$, whose inverse is
$\Phi^{-1} f = (\widetilde\jmath_1^{\, \star} f, \dots, \widetilde \jmath_N^{\, \star} f)$
(\emph{cf.} \cite[Section~1.10]{Megginson1998}). Given bounded linear operators
$T_i : X_i \to X_i^\star$, we write
\begin{align} \label{eq:block_diag_sum}
    \bigoplus_{i=1}^N T_i := \sum_{i=1}^N \widetilde \pi_i^\star T_i \widetilde\pi_i : \widetilde{X} \to \widetilde{X}^\star.
\end{align}
This operator is an isomorphism if and only if every $T_i$ is, in which case
\begin{align} \label{eq:block_diag_inv}
    \bigg(\bigoplus_{i=1}^N T_i\bigg)^{-1} = \sum_{i=1}^N \widetilde \jmath_i T_i^{-1} \widetilde \jmath_i^\star,
\end{align}
as follows from \eqref{eq:resolution} by direct computation.
\end{remark}

\subsection{Global interior--interface decomposition}
\label{subsec:global_decomposition}
Suppose that
$\overline{\Omega} = \bigcup_{i=1}^N \overline{\Omega}_i$, where the
subdomains $\Omega_i$ are open, connected, and mutually disjoint.  We
denote the subdomain interface by
$\Gamma := \bigl(\bigcup_{i=1}^N \partial\Omega_i\bigr)
           \setminus \partial\Omega$
and write $\Gamma_i := \partial\Omega_i \cap \Gamma$ for the portion
of the interface belonging to~$\Omega_i$. Let $\operatorname{tr}_\Gamma: V \to \prod_{i=1}^N H^{\frac{1}{2}}(\Gamma_i)$ denote the interface trace operator and define the interface trace space
\begin{align}
    V_\Gamma := \operatorname{ran}(\operatorname{tr}_\Gamma), \quad \|v_\Gamma\|_{V_\Gamma} := \inf{\cbr{\|v\|_{V} \, : \, \operatorname{tr}_\Gamma(v) = v_\Gamma}},
\end{align}
and the subspace 
$V_0 := \operatorname{ker}(\operatorname{tr}_\Gamma)$ of $V$
with vanishing trace on the interface. We equip $V_0$ with the $H^1(\Omega)$ semi-norm, which is a norm by the Poincar\'e--Friedrichs inequality.
In general, controls and active multipliers have no interface trace component due to a lack of regularity. Thus, we define the product interface trace space corresponding to the state and adjoint traces:
\begin{align} \label{eq:def_global_interface_trace_space}
    X_\Gamma := V_\Gamma \times V_\Gamma.
\end{align}
We define the following component-wise trace operator on the product space:
\begin{align}
   \tau :X \to X_\Gamma, \quad \tau: (y,p,u,\mu_{\mathcal{A}}) \mapsto (\operatorname{tr}_\Gamma(y),\operatorname{tr}_\Gamma(p)),
\end{align}
and the subspace of tuples in $X$ whose state and adjoint components vanish on $\Gamma$:
\begin{align} \label{eq:def_global_interior_space}
    X_I := \operatorname{ker}(\tau) = V_0 \times V_0 \times U \times U_{\mathcal{A}}.
\end{align}
We denote by $\iota_I : X_I \hookrightarrow X$ the canonical injection of $X_I$ into $X$. We will henceforth refer to $X_I$ as the global interior space and $X_\Gamma$ as the global interface space.

Owing to the surjectivity of the trace operator $\tau: X \to X_\Gamma$, there exists a (non-unique) bounded lifting $E : X_\Gamma \to X$ satisfying $\tau  E = I_{X_\Gamma}$. Moreover, any such choice of $E$ induces a decomposition of $X$ into the topological direct sum:
\begin{align} \label{eq:X_decomp_E}
    X = X_I \oplus E(X_\Gamma).
\end{align}
In other words, any $x \in X$ can be decomposed as $x = \iota_I x_I + Ex_\Gamma$, for some $(x_I,x_\Gamma) \in X_I \times X_\Gamma$. This decomposition is unique only up to the choice of lifting. 
Note that the mapping $J_E: X_I \times X_\Gamma \to X_I \oplus E(X_\Gamma)$ defined by $J_E(x_I,x_\Gamma) := \iota_I x_I + Ex_\Gamma$ is an isomorphism, and moreover that
\begin{align}
    \Phi^{-1} J_E^\star \mathcal{K} J_E (x_I,x_\Gamma) = \begin{bmatrix}
        \iota_I^\star \mathcal{K} \iota_I & \iota_I^\star \mathcal{K} E \\
        E^\star\mathcal{K} \iota_I & E^\star\mathcal{K} E
    \end{bmatrix} \begin{bmatrix}
        x_I \\
        x_\Gamma
    \end{bmatrix},
\end{align}
where $\Phi : X_I^\star \times X_\Gamma^\star \to (X_I \times X_\Gamma)^\star$ is the canonical identification (\emph{cf}. \Cref{rem:notation}).
For notational brevity, we define the operators $\mathcal{K}_{II}: X_I \to X_I^\star$, $\mathcal{K}_{I\Gamma}^E: X_\Gamma \to X_I^\star$, $\mathcal{K}_{\Gamma I}^E : X_I \to X_\Gamma^\star$, and $\mathcal{K}_{\Gamma \Gamma}^E: X_\Gamma \to X_\Gamma^\star$ via
\begin{align} \label{eq:KKT_blocks_definition}
    \mathcal{K}_{II} := \iota_I^\star \mathcal{K} \iota_I, \quad \mathcal{K}_{I\Gamma}^E := \iota_I^\star \mathcal{K} E, \quad \mathcal{K}_{\Gamma I}^E := E^\star\mathcal{K} \iota_I, \quad \mathcal{K}_{\Gamma \Gamma}^E := E^\star\mathcal{K} E.
\end{align}
Note that the operator $\mathcal{K}_{II}$, which we will refer to as the \emph{global interior KKT operator}, is independent of the choice of lifting. Moreover, since the symmetrized KKT operator is self-adjoint, $(\mathcal{K}_{I\Gamma}^E)^\star = (\mathcal{K}_{\Gamma I}^E)$.
The following result shows that once a lifting has been fixed, the symmetrized KKT system \eqref{eq:block_KKT}  can be recast 
\begin{lemma} \label{lem:equivalent_block_product_form}
  Let $E:X_\Gamma \to X$ be a fixed lifting of the trace operator $\tau: X \to X_\Gamma$. Given $x \in X$, let $x = x_I^E + Ex_\Gamma$ be its unique decomposition corresponding to \eqref{eq:X_decomp_E}. Then, $x \in X$ satisfies the symmetrized KKT system \eqref{eq:block_KKT} if and only if $(x_I^E, x_\Gamma) \in X_I \times X_\Gamma$ satisfy the block operator equation
  \begin{align} \label{eq:KKT_block_form_E}
  \begin{bmatrix}
        \mathcal{K}_{II} & \mathcal{K}_{I\Gamma}^E \\
        \mathcal{K}_{\Gamma I}^E  & \mathcal{K}_{\Gamma \Gamma}^E
    \end{bmatrix} 
    \begin{bmatrix}
        x_I^E \\
        x_\Gamma
    \end{bmatrix} = \begin{bmatrix}
        f_I\\
        f_\Gamma^E
    \end{bmatrix}, \quad \text{in } X_I^\star \times X_\Gamma^\star,
\end{align}
where $f_I := \iota_I^\star f$ and $f_\Gamma^E := E^\star f$. 
\end{lemma}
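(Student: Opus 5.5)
The plan is to exploit that $J_E : X_I\times X_\Gamma\to X$, $J_E(x_I,x_\Gamma)=\iota_I x_I+Ex_\Gamma$, is an isomorphism, so that $J_E^\star : X^\star\to (X_I\times X_\Gamma)^\star$ is also an isomorphism. By the definition of the decomposition \eqref{eq:X_decomp_E}, $x = J_E(x_I^E,x_\Gamma)$. The symmetrized system \eqref{eq:block_KKT} reads $\mathcal{K}x=f$ in $X^\star$. Because $J_E^\star$ is injective, this equation is equivalent to
\begin{align*}
  J_E^\star \mathcal{K} J_E(x_I^E,x_\Gamma) = J_E^\star f \quad \text{in } (X_I\times X_\Gamma)^\star .
\end{align*}
Composing with the isometric isomorphism $\Phi^{-1}$ from \Cref{rem:notation} then gives an equivalent equation in $X_I^\star\times X_\Gamma^\star$.

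Next I identify both sides componentwise. For the left side I use the block formula for $\Phi^{-1}J_E^\star\mathcal{K}J_E$ displayed before the lemma, whose entries are exactly the operators in \eqref{eq:KKT_blocks_definition}. To justify that formula, I note that $J_E = \iota_I\pi_1 + E\pi_2$, where $\pi_1,\pi_2$ are the coordinate projections of $X_I\times X_\Gamma$. Hence $J_E\jmath_1=\iota_I$ and $J_E\jmath_2=E$, and $\Phi^{-1}g=(\jmath_1^\star g,\jmath_2^\star g)$. For the right side the same computation gives $\Phi^{-1}J_E^\star f = (\jmath_1^\star J_E^\star f,\ \jmath_2^\star J_E^\star f) = (\iota_I^\star f, E^\star f) = (f_I,f_\Gamma^E)$.

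Combining the two identifications, the equation becomes exactly \eqref{eq:KKT_block_form_E}. Each step is reversible because $J_E^\star$ and $\Phi$ are bijections, so the equivalence holds in both directions.

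The only real point of care is the injectivity of $J_E^\star$, which is needed for the direction from \eqref{eq:KKT_block_form_E} back to $\mathcal{K}x=f$. This follows because the adjoint of an isomorphism between Banach spaces is an isomorphism. In turn, $J_E$ being an isomorphism follows from $\tau E=I_{X_\Gamma}$ and $X_I=\ker\tau$. Its inverse is $x\mapsto (x - E\tau x,\ \tau x)$, which is bounded and lands in $X_I$ since $\tau(x-E\tau x)=0$. Everything else in the argument is bookkeeping with \eqref{eq:resolution}.
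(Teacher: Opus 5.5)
Your proposal is correct and follows the paper's proof: apply $\Phi^{-1}J_E^\star$ to $\mathcal{K}J_E(x_I^E,x_\Gamma)=f$, and for the converse use that $\Phi^{-1}$ and $J_E^\star$ are isomorphisms. You also verify two facts the paper only asserts, namely the explicit inverse $x\mapsto(x-E\tau x,\tau x)$ showing $J_E$ is an isomorphism, and the componentwise identification via $J_E\widetilde{\jmath}_1=\iota_I$ and $J_E\widetilde{\jmath}_2=E$.
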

\begin{proof}
  Suppose $x \in X$ solves the symmetrized KKT system \eqref{eq:block_KKT}, and let $(x_I^E,x_\Gamma) \in X_I \times X_\Gamma$ be the unique pair such that $x = J_E(x_I^E,x_\Gamma)$ guaranteed by the decomposition \eqref{eq:X_decomp_E}. Then,
  \begin{align} \label{eq:KKT_block_E.1}
      \mathcal{K}  J_E(x_I^E,x_\Gamma) = f.
  \end{align}
 Applying the operator $\Phi^{-1} J_E^\star$ to  both sides of \eqref{eq:KKT_block_E.1}   yields \eqref{eq:KKT_block_form_E}.
Conversely, suppose the pair $(x_I^E,x_\Gamma) \in X_I \times X_\Gamma$ solves \eqref{eq:KKT_block_form_E}. Equivalently,
\begin{align*}
   \Phi^{-1} J_E^\star \del{ \mathcal{K} J_E (x_I^E,x_\Gamma) - f} = 0,
\end{align*}
which implies that $ \mathcal{K} J_E (x_I^E,x_\Gamma) - f = 0$ since $\Phi^{-1} : (X_I \times X_\Gamma)^\star \to X_I^\star \times X_\Gamma^\star$ and $J_E^\star: X^\star \to (X_I \times X_\Gamma)^\star$ are isomorphisms.
Therefore, $x := J_E (x_I^E,x_\Gamma)$ solves \eqref{eq:block_KKT}.
\end{proof}
\subsection{Global Schur complement problem}
As a consequence of \Cref{lem:equivalent_block_product_form}, given a solution of $(x_I^E,x_\Gamma) \in X_I \times X_\Gamma$ of \eqref{eq:KKT_block_form_E}, a solution $x \in X$ of \eqref{eq:block_KKT} can be recovered by setting $x:= J_E(x_I^E,x_\Gamma)$. Thus, we turn our attention to solving \eqref{eq:KKT_block_form_E}, which can be reduced to a single equation for $x_\Gamma \in X_\Gamma$ via static condensation. 
More precisely, assuming $\mathcal{K}_{II}: X_I \to X_I^\star$ is an isomorphism, the first row of \eqref{eq:KKT_block_form_E} can be rearranged to find
\begin{align} \label{eq:xI_solve}
    x_I^E = \mathcal{K}_{II}^{-1} \del[1]{ f_I - \mathcal{K}_{I\Gamma}^E x_\Gamma}.
\end{align}
Substituting \eqref{eq:xI_solve} into the second row of \eqref{eq:KKT_block_form_E} and rearranging yields
\begin{align} \label{eq:Schur_comp_system_E}
   \del[1]{ \mathcal{K}_{\Gamma \Gamma}^E - \mathcal{K}_{\Gamma I}^E \mathcal{K}_{II}^{-1} \mathcal{K}_{I \Gamma}^E} x_\Gamma = g_\Gamma^E,
\end{align}
where we have defined the reduced right-hand side $g_\Gamma^E:= f_\Gamma^E - \mathcal{K}_{\Gamma I}^E\mathcal{K}_{II}^{-1} f_I \in X_\Gamma^\star$.
Thus, once \eqref{eq:Schur_comp_system_E} has been solved for $x_\Gamma \in X_\Gamma$, $x_I^E \in X_I$ can be recovered via back-substitution in \eqref{eq:xI_solve}.
\subsubsection{The Schur complement operator}
The considerations of the previous subsection motivate the \emph{Schur complement operator} $\mathcal{S}_{\Gamma}^E: X_\Gamma \to X_\Gamma^\star$, defined via the left-hand side of \eqref{eq:Schur_comp_system_E}:
\begin{align} \label{eq:Schur_complement_E}
    \mathcal{S}_{\Gamma}^E := \mathcal{K}_{\Gamma \Gamma}^E - \mathcal{K}_{\Gamma I}^E \mathcal{K}_{II}^{-1} \mathcal{K}_{I \Gamma}^E.
\end{align}
As the following result shows, the Schur complement operator is invariant to the choice of lifting $E$. We will therefore suppress the superscript in the remainder of the article and simply write $\mathcal{S}_\Gamma: X_\Gamma \to X_\Gamma^\star$ and $g_\Gamma \in X_\Gamma^\star$, so that the global Schur complement problem reads: find $x_\Gamma \in X_\Gamma$ satisfying
\begin{align}\label{eq:global_schur_eq}
\mathcal{S}_\Gamma\, x_\Gamma = g_\Gamma, \quad \text{in } X_\Gamma^\star.
\end{align}
\begin{lemma} \label{lem:schur_independent}
    The Schur complement operator $\mathcal{S}_\Gamma^E : X_\Gamma \to X_\Gamma^\star$ defined in \eqref{eq:Schur_complement_E}, as well as the reduced right-hand side $g_\Gamma^E$ of \eqref{eq:Schur_comp_system_E}, are independent of the choice of lifting $E: X_\Gamma \to X$.
\end{lemma}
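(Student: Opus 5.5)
Take two liftings $E,E':X_\Gamma\to X$ with $\tau E=\tau E'=I_{X_\Gamma}$. Since $\tau(E-E')=0$, the difference $E-E'$ maps into $\ker(\tau)=X_I$. Hence there is a bounded operator $R:X_\Gamma\to X_I$ with $E'=E+\iota_I R$. I will substitute this relation into each block of \eqref{eq:KKT_blocks_definition} and check that the extra terms cancel in \eqref{eq:Schur_complement_E}, and likewise in $g_\Gamma^E$. Throughout, $\mathcal K_{II}$ is assumed invertible, as in the definition of the Schur complement.

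The blocks transform as follows:
\begin{align*}
\mathcal K_{I\Gamma}^{E'} &= \mathcal K_{I\Gamma}^E+\mathcal K_{II}R,\qquad
\mathcal K_{\Gamma I}^{E'} = \mathcal K_{\Gamma I}^E+R^\star\mathcal K_{II},\\
\mathcal K_{\Gamma\Gamma}^{E'} &= \mathcal K_{\Gamma\Gamma}^E+\mathcal K_{\Gamma I}^E R+R^\star\mathcal K_{I\Gamma}^E+R^\star\mathcal K_{II}R .
\end{align*}
Expanding $\mathcal K_{\Gamma I}^{E'}\mathcal K_{II}^{-1}\mathcal K_{I\Gamma}^{E'}$ gives
\[
\mathcal K_{\Gamma I}^E\mathcal K_{II}^{-1}\mathcal K_{I\Gamma}^E+\mathcal K_{\Gamma I}^E R+R^\star\mathcal K_{I\Gamma}^E+R^\star\mathcal K_{II}R.
\]
Subtracting this from $\mathcal K_{\Gamma\Gamma}^{E'}$, all three $R$-dependent terms cancel. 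This yields $\mathcal S_\Gamma^{E'}=\mathcal S_\Gamma^E$.

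For the right-hand side, $f_\Gamma^{E'}=(E+\iota_IR)^\star f=f_\Gamma^E+R^\star f_I$. Therefore
\[
g_\Gamma^{E'}=f_\Gamma^E+R^\star f_I-(\mathcal K_{\Gamma I}^E+R^\star\mathcal K_{II})\mathcal K_{II}^{-1}f_I=g_\Gamma^E .
\]

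An equivalent, more conceptual argument uses \Cref{lem:equivalent_block_product_form}. For given $x_\Gamma$, the quantity $\langle \mathcal S_\Gamma^E x_\Gamma, z_\Gamma\rangle$ equals $\langle \mathcal K \hat x, E z_\Gamma\rangle$, where $\hat x$ is the $\mathcal K$-harmonic extension of $x_\Gamma$, i.e. $\tau\hat x=x_\Gamma$ and $\iota_I^\star\mathcal K\hat x=0$. This extension is unique and does not depend on $E$. Moreover, $\langle\mathcal K\hat x,Ez_\Gamma\rangle$ does not depend on $E$ because $\mathcal K\hat x$ annihilates $X_I$. I will present the algebraic version, with this remark as motivation.

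There is no real obstacle. The only points needing care are the bookkeeping of adjoints, namely $(\iota_IR)^\star=R^\star\iota_I^\star$, and the justification that $R$ is bounded. The latter holds because $E-E'$ is bounded and $X_I$ is a closed subspace, so $R$ is simply $E-E'$ with its codomain restricted to $X_I$.
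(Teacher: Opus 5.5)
Your proof is correct and is essentially the paper's argument. Both rest on the fact that the difference of two liftings maps into $X_I=\ker(\tau)$. The paper packages your term-by-term cancellation into $\mathcal{K}_0:=\mathcal{K}-\mathcal{K}\iota_I\mathcal{K}_{II}^{-1}\iota_I^\star\mathcal{K}$ and $f_0:=f-\mathcal{K}\iota_I\mathcal{K}_{II}^{-1}\iota_I^\star f$, so that $\mathcal{S}_\Gamma^{E}=E^\star\mathcal{K}_0E$ and $g_\Gamma^E=E^\star f_0$, and then checks $\iota_I^\star\mathcal{K}_0=0$, $\mathcal{K}_0\iota_I=0$, $\iota_I^\star f_0=0$. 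Your block expansion with $E'=E+\iota_I R$ verifies exactly these annihilation identities explicitly.
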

\begin{proof}
    Let $E_1, E_2 : X_\Gamma \to X$ be any two bounded linear operators satisfying $\tau  E_1 = \tau  E_2 = I_{X_\Gamma}$. For notational brevity, we define an operator $\mathcal{K}_0 : X \to X^\star$ via $\mathcal{K}_0  := \mathcal{K}  -  \mathcal{K} \iota_I (\iota_I^\star \mathcal{K} \iota_I)^{-1} \iota_I^\star \mathcal{K}$ and a functional $f_0 \in X^\star$ via $f_0 := f - \mathcal{K} \iota_I (\iota_I^\star \mathcal{K} \iota_I)^{-1} \iota_I^\star f$ so that for $i=1,2$,
    \begin{align*}
        \mathcal{S}_\Gamma^{E_i} = E_i^\star \mathcal{K}_0  E_i, \quad 
        g_\Gamma^{E_i} = E_i^\star f_0.
    \end{align*}
    Observe that the following identities hold:
    \begin{align}
        \iota_I^\star \mathcal{K}_0 &= \iota_I^\star \mathcal{K}  -  \iota_I^\star \mathcal{K} \iota_I (\iota_I^\star \mathcal{K} \iota_I)^{-1} \iota_I^\star \mathcal{K} = \iota_I^\star \mathcal{K} - \iota_I^\star \mathcal{K} = 0,  \label{eq:K_0_property.1}\\
        \mathcal{K}_0 \iota_I &= \mathcal{K} \iota_I  -   \mathcal{K} \iota_I (\iota_I^\star \mathcal{K} \iota_I)^{-1} \iota_I^\star \mathcal{K} \iota_I = \mathcal{K} \iota_I - \mathcal{K} \iota_I = 0, \label{eq:K_0_property.2} \\
        \iota_I^\star f_0 &= \iota_I^\star f  - \iota_I^\star\mathcal{K} \iota_I (\iota_I^\star \mathcal{K} \iota_I)^{-1} \iota_I^\star f = \iota_I^\star f - \iota_I^\star f = 0. \label{eq:f_0_property}
    \end{align}
    Since $\text{ran}(E_1 - E_2) \subset \text{ker}(\tau) = X_I$, for each $x_\Gamma \in X_\Gamma$ there exists a unique $w_I \in X_I$ such that 
    $(E_1 - E_2) x_\Gamma = \iota_I w_I$.
    Consequently, \eqref{eq:K_0_property.1} yields, for all $x_\Gamma \in X_\Gamma$,
    \begin{align} \label{eq:K_0_left}
        \mathcal{K}_0 (E_1 - E_2) x_\Gamma = 0.
    \end{align}
    Moreover, for all $x \in X$ and $x_\Gamma \in X_\Gamma$, \eqref{eq:K_0_property.2} and \eqref{eq:f_0_property} yield
    \begin{align} \label{eq:K_0_right}
        \langle (E_1 - E_2)^\star \mathcal{K}_0 x, x_\Gamma \rangle_{X_\Gamma} &=  \langle \mathcal{K}_0 x, (E_1 - E_2)x_\Gamma \rangle_{X} = \langle \mathcal{K}_0 x, \iota_I w_I \rangle_{X} = 0, \\ \label{eq:f_0_left}
      \langle (E_1 - E_2)^\star f_0 , x_\Gamma \rangle_{X_\Gamma}  &=  \langle f_0 , (E_1 - E_2)x_\Gamma \rangle_{X} = \langle f_0 , \iota_I w_I \rangle_{X_\Gamma} = 0.
    \end{align}
    We deduce from \eqref{eq:K_0_left}--\eqref{eq:f_0_left} that
    \begin{align} \label{eq:K_0_properties.3,4}
    \mathcal{K}_0 (E_1 - E_2) = 0, \quad (E_1 - E_2)^\star \mathcal{K}_0 = 0, \text{ and } (E_1 - E_2)^\star f_0 = 0.
    \end{align}
    Subtracting $\mathcal{S}_\Gamma^{E_2}$ from $\mathcal{S}_\Gamma^{E_1}$, we therefore have
       \begin{align*}
    \mathcal{S}_\Gamma^{E_1} -  \mathcal{S}_\Gamma^{E_2} &= E_1^\star \mathcal{K}_0 E_1 - E_2^\star \mathcal{K}_0 E_2 
    = (E_1 - E_2)^\star \mathcal{K}_0 E_1 + E_2^\star \mathcal{K}_0 (E_1 - E_2) = 0.
    \end{align*}
    Similarly, subtracting $g_\Gamma^{E_2}$ from $g_\Gamma^{E_1}$,
    \begin{align*}
        g_\Gamma^{E_1} - g_\Gamma^{E_2} = (E_1 - E_2)^\star f_0 = 0.
    \end{align*}
    The proof is now complete.
\end{proof}

\color{black}

\subsubsection{Generalized Harmonic extension}
We next introduce a particular lifting $ \mathcal{H} : X_\Gamma \to X$ that has the desirable property of diagonalizing \eqref{eq:KKT_block_form_E}.
The choice $E:= \mathcal{H}$ yields what we will henceforth refer to as the canonical decomposition of $X$:
\begin{align}\label{eq:X_decomp_H}
X = X_I \oplus \mathcal{H}(X_\Gamma).
\end{align}
Note that the lifting constructed below is a  generalization of the familiar harmonic lifting from elliptic PDE theory (see, e.g., \cite{quarteroni-valli-1999-domain-decomposition}). An analogous operator was introduced in \cite{heinkenschloss2006neumann} for unconstrained linear-quadratic elliptic optimal control problems.
\begin{theorem}[Generalized harmonic extension] \label{thm:global_harmonic_extension}
 There exists an operator $\mathcal{H}: X_\Gamma \to X$, defined as as follows: given $x_\Gamma \in X_\Gamma$, $\mathcal{H}(x_\Gamma) \in X$ is the unique solution of the following system:
\begin{subequations}\label{eq:harmonic_extension}
\begin{align}
    \iota_I^\star \mathcal{K} \mathcal{H}(x_\Gamma) &= 0, \quad \text{in } X_I^\star, \label{eq:harmonic_extension_a}\\
    \tau \mathcal{H}(x_\Gamma) &= x_\Gamma. \label{eq:harmonic_extension_b}
\end{align}
\end{subequations}
We call the operator $\mathcal{H}$ the generalized harmonic extension of $x_\Gamma$. Moreover, the following closed-form expression holds (independently of the choice of lifting $E$):
\begin{align}\label{eq:H_closed_form}
    \mathcal{H} = (I_{X} - \iota_I \mathcal{K}_{II}^{-1} \iota_I^\star \mathcal{K})E.
\end{align}
In particular, $\mathcal{H}$ is linear and bounded.
\end{theorem}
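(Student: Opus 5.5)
The argument rests on $\mathcal{K}_{II}=\iota_I^\star\mathcal{K}\iota_I : X_I\to X_I^\star$ being an isomorphism. This is the standing assumption made when forming the Schur complement, and it is guaranteed by the well-posedness result for the local interior problems (\Cref{thm:KII_fredholm}, together with the decoupling of $\mathcal{K}_{II}$ into subdomain operators). With that in hand, I would fix an arbitrary bounded lifting $E$ with $\tau E=I_{X_\Gamma}$, which exists because $\tau$ is surjective. I would then \emph{define} $\mathcal{H}$ by the right-hand side of \eqref{eq:H_closed_form}. This immediately makes $\mathcal{H}$ a composition of bounded linear maps, hence linear and bounded.

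\emph{Existence.} Next I verify \eqref{eq:harmonic_extension}. For \eqref{eq:harmonic_extension_b}, we have $\tau\iota_I=0$ because $X_I=\ker\tau$. Hence $\tau\mathcal{H}x_\Gamma=\tau E x_\Gamma=x_\Gamma$. For \eqref{eq:harmonic_extension_a}, I compute
\begin{align*}
\iota_I^\star\mathcal{K}\mathcal{H}x_\Gamma
=\iota_I^\star\mathcal{K}Ex_\Gamma-\mathcal{K}_{II}\mathcal{K}_{II}^{-1}\iota_I^\star\mathcal{K}Ex_\Gamma=0 .
\end{align*}
This is the same cancellation as in \eqref{eq:K_0_property.1}.

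\emph{Uniqueness.} Suppose $h_1,h_2\in X$ both satisfy \eqref{eq:harmonic_extension}, and set $d:=h_1-h_2$. Then $\tau d=0$, so $d=\iota_I d_I$ for a unique $d_I\in X_I$. The first equation then gives $\mathcal{K}_{II}d_I=\iota_I^\star\mathcal{K}\iota_I d_I=0$. Injectivity of $\mathcal{K}_{II}$ yields $d_I=0$, so $h_1=h_2$. Thus the system \eqref{eq:harmonic_extension} determines $\mathcal{H}x_\Gamma$ uniquely, and in particular independently of $E$. Consequently, the closed form \eqref{eq:H_closed_form} gives the same operator for every admissible lifting. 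Alternatively, one can see this directly: $(I-\iota_I\mathcal{K}_{II}^{-1}\iota_I^\star\mathcal{K})\iota_I=0$ annihilates $(E_1-E_2)x_\Gamma\in\operatorname{ran}\iota_I$.

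The only genuine obstacle is the invertibility of $\mathcal{K}_{II}$. Everything else is algebra mirroring \Cref{lem:schur_independent}. Should \Cref{thm:KII_fredholm} provide only Fredholmness of index zero, I would supplement it with an injectivity argument. This would follow Step~2 of \Cref{thm:global_well_posed} on $V_0\times V_0\times U\times U_\mathcal{A}$: the energy identity forces $u=0$ and $y=0$, and then $\widetilde{A}^\star p=0$ with $p\in V_0$ forces $p=0$ by Poincar\'e--Friedrichs. Injectivity plus index zero then gives an isomorphism.
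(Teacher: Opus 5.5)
Your proposal is correct and follows essentially the same route as the paper: the paper proves existence via the ansatz $\mathcal{H}(x_\Gamma)=z+\iota_I x_I$ with an arbitrary $z$ satisfying $\tau z=x_\Gamma$ (your choice $z=Ex_\Gamma$ is a special case that yields \eqref{eq:H_closed_form} directly), and it proves uniqueness with exactly your $\ker\tau$ and injectivity-of-$\mathcal{K}_{II}$ argument. You also state explicitly that global invertibility of $\mathcal{K}_{II}$ requires combining \Cref{thm:KII_fredholm} with the subdomain decoupling of \Cref{cor:KII_decomp}, which makes precise a step the paper's proof leaves implicit.
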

\begin{proof}
  We begin by showing existence. To this end, we let $z \in X$ be any fixed element satisfying $\tau z = x_\Gamma$ and search for a solution of system \eqref{eq:harmonic_extension} of the form 
\begin{align}\label{eq:harmonic_homogeneous.1}
      \mathcal{H}(x_\Gamma) =  z + \iota_I x_I, \quad x_I \in X_I.
  \end{align}
 Clearly, $\mathcal{H}(x_\Gamma)$ satisfies \eqref{eq:harmonic_extension_b}. Substituting into \eqref{eq:harmonic_extension_a} and rearranging, we find that $x_I \in X_I$ must satisfy
\begin{align}\label{eq:harmonic_homogeneous.2}
      \iota_I^\star \mathcal{K} \iota_I x_I = - \iota_I^\star \mathcal{K} z.
  \end{align}
  Since $\mathcal{K}_{II} := \iota_I^\star \mathcal{K} \iota_I$, there exists a unique $x_I \in X_I$ satisfying \eqref{eq:harmonic_homogeneous.2} by \Cref{thm:KII_fredholm} below, ensuring the existence of at least one $\mathcal{H}(x_\Gamma) \in X$ of the form \eqref{eq:harmonic_homogeneous.1} satisfying \eqref{eq:harmonic_extension}.
  Next, we show that any solution to the system \eqref{eq:harmonic_extension} is unique. Indeed, let $x_1,x_2 \in X$ be any two solutions. Then, by \eqref{eq:harmonic_homogeneous.2}, $x_1 - x_2 \in \operatorname{ker}(\tau)$, so that $x_1 - x_2 = \iota_I w_I$ for some $w_I \in X_I$. It follows from \eqref{eq:harmonic_homogeneous.1} that
   $0 = \iota_I^\star \mathcal{K} \iota_I w_I = \mathcal{K}_{II} w_I$
  and thus $w_I = 0$ since $\mathcal{K}_{II}$ is injective.
Finally, the representation \eqref{eq:H_closed_form} of the harmonic extension can be verified by direct computation. Linearity and boundedness immediately follow.
\end{proof}
  \begin{remark}\label{rem:harmonic_extension}
      Note that the system \eqref{eq:harmonic_extension} is equivalent to the following weak formulation: given $x_\Gamma := (y_\Gamma,p_\Gamma)$, find $x:= (y,p,u,\mu) \in V \times V \times U \times U_\mathcal{A}$ such that $\operatorname{tr}_\Gamma y = y_\Gamma$, $\operatorname{tr}_\Gamma p = p_\Gamma$, and 
\begin{align*}
  \int_\Omega y\,v\,\mathrm{d}x
  + \frac{1}{\gamma}\int_{\mathcal{A}_{\bar y}} y\,v\,\mathrm{d}x
  + \int_\Omega \nabla p\cdot\nabla v\,\mathrm{d}x
  + \sigma\int_\Omega p\,v\,\mathrm{d}x
  &= 0,
  \\
  \int_\Omega \nabla y\cdot\nabla q\,\mathrm{d}x
  + \sigma\int_\Omega y\,q\,\mathrm{d}x
  - \int_{\mathcal{U}} u\,q\,\mathrm{d}\sigma
  &= 0,
  \\
  -\int_{\mathcal{U}} p\,w\,\mathrm{d}\sigma
  + \alpha\int_{\mathcal{U}} u\,w\,\mathrm{d}\sigma
  + \int_{\mathcal{A}_{\bar u,\bar\mu}} \mu_{\mathcal{A}}\,w\,\mathrm{d}\sigma
  &= 0, \\
 \int_{\mathcal{A}_{\bar u,\bar\mu}} u\,\eta\,\mathrm{d}\sigma &= 0,
\end{align*}
for all $(v,q,w,\eta)\in V_0\times V_0\times U\times U_{\mathcal{A}}$.
  \end{remark}
\begin{corollary}
\label{cor:diagonalized_block_product_form}
Suppose that $\mathcal{K}_{II}$ and $\mathcal{S}_\Gamma$ are isomorphisms.
  Given $x \in X$, let $x = x_I + \mathcal{H}(x_\Gamma)$ be its unique decomposition corresponding to \eqref{eq:X_decomp_H}. Then, $x \in X$ satisfies the symmetrized KKT system \eqref{eq:block_KKT} if and only if $(x_I, x_\Gamma) \in X_I \times X_\Gamma$ satisfy the decoupled block operator equation
  \begin{align} \label{eq:KKT_block_form_H}
  \begin{bmatrix}
        \mathcal{K}_{II} & 0\\
        0  & \mathcal{S}_\Gamma
    \end{bmatrix} 
    \begin{bmatrix}
        x_I \\
        x_\Gamma
    \end{bmatrix} = \begin{bmatrix}
        f_I\\
        g_\Gamma
    \end{bmatrix}, \quad \text{in } X_I^\star \times X_\Gamma^\star,
\end{align}
where $f_I := \iota_I^\star f$ and $g_\Gamma = \mathcal{H}^\star f$. Consequently, 
\begin{align} \label{eq:schur_comp_harmonic}
    \mathcal{S}_\Gamma = \mathcal{H}^\star \mathcal{K} \mathcal{H},
\end{align}
and the solution of \eqref{eq:block_KKT} can equivalently be expressed as
\begin{align}
    x = \iota_I \mathcal{K}_{II}^{-1} \iota_I^\star f + \mathcal{H}\mathcal{S}_\Gamma^{-1} \mathcal{H}^\star f.
\end{align}
\end{corollary}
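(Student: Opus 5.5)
The plan is to specialize \Cref{lem:equivalent_block_product_form} to the lifting $E:=\mathcal{H}$ from \Cref{thm:global_harmonic_extension}; this is admissible because $\tau\mathcal{H}=I_{X_\Gamma}$ by \eqref{eq:harmonic_extension_b}. The lemma then gives the equivalence of \eqref{eq:block_KKT} with \eqref{eq:KKT_block_form_E} for $E=\mathcal{H}$, with right-hand side $(\iota_I^\star f,\mathcal{H}^\star f)$. So it suffices to compute the four blocks \eqref{eq:KKT_blocks_definition} for this lifting.

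The off-diagonal blocks vanish. First, $\mathcal{K}_{I\Gamma}^{\mathcal{H}}=\iota_I^\star\mathcal{K}\mathcal{H}=0$ directly by \eqref{eq:harmonic_extension_a}. Second, $\mathcal{K}_{\Gamma I}^{\mathcal{H}}=\mathcal{H}^\star\mathcal{K}\iota_I=(\iota_I^\star\mathcal{K}^\star\mathcal{H})^\star$, which is zero because $\mathcal{K}$ is self-adjoint; here I use $\mathcal{K}^\star=\mathcal{K}$ under the reflexive identification $X^{\star\star}\cong X$, and I would spell out the duality pairing once to be careful. The diagonal blocks are $\mathcal{K}_{II}$, which does not depend on the lifting, and $\mathcal{K}_{\Gamma\Gamma}^{\mathcal{H}}=\mathcal{H}^\star\mathcal{K}\mathcal{H}$.

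To identify $\mathcal{S}_\Gamma$, I apply \Cref{lem:schur_independent}, which says $\mathcal{S}_\Gamma=\mathcal{S}_\Gamma^{\mathcal{H}}=\mathcal{H}^\star\mathcal{K}\mathcal{H}-0$. This is \eqref{eq:schur_comp_harmonic} and also gives the diagonal form \eqref{eq:KKT_block_form_H}. For the reduced right-hand side, the same lemma gives $g_\Gamma=g_\Gamma^{\mathcal{H}}=\mathcal{H}^\star f-\mathcal{K}_{\Gamma I}^{\mathcal{H}}\mathcal{K}_{II}^{-1}f_I=\mathcal{H}^\star f$. Decomposition \eqref{eq:X_decomp_H} is the case $E=\mathcal{H}$ of \eqref{eq:X_decomp_E}, so the pair $(x_I,x_\Gamma)$ is uniquely determined by $x$.

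Finally, since $\mathcal{K}_{II}$ and $\mathcal{S}_\Gamma$ are assumed to be isomorphisms, the decoupled system has the unique solution $x_I=\mathcal{K}_{II}^{-1}\iota_I^\star f$ and $x_\Gamma=\mathcal{S}_\Gamma^{-1}\mathcal{H}^\star f$. Substituting these into $x=\iota_Ix_I+\mathcal{H}x_\Gamma$ gives the stated formula for the solution of \eqref{eq:block_KKT}.

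The only step needing care is the adjoint argument for $\mathcal{K}_{\Gamma I}^{\mathcal{H}}=0$. Everything else is substitution into the earlier lemmas.
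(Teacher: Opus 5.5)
Your proposal is correct and follows essentially the same route as the paper. Both take $E=\mathcal{H}$ in \Cref{lem:equivalent_block_product_form}, kill $\mathcal{K}_{I\Gamma}^{\mathcal{H}}$ via \eqref{eq:harmonic_extension_a} and $\mathcal{K}_{\Gamma I}^{\mathcal{H}}$ by self-adjointness, identify $\mathcal{H}^\star\mathcal{K}\mathcal{H}$ with $\mathcal{S}_\Gamma$ and $\mathcal{H}^\star f$ with $g_\Gamma$, and invert the two diagonal blocks; the only difference is that you make the appeal to \Cref{lem:schur_independent} explicit, where the paper leaves it implicit.
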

\begin{proof}
By \eqref{eq:harmonic_extension_a},
$\mathcal K_{I\Gamma}^{\mathcal H}=0$ and, by self-adjointness,
$\mathcal K_{\Gamma I}^{\mathcal H}=0$. Thus the block representation
of \Cref{lem:equivalent_block_product_form} is diagonal, with
$\mathcal K_{\Gamma\Gamma}^{\mathcal H}
=\mathcal H^\star\mathcal K\mathcal H=\mathcal S_\Gamma$ and reduced
load $\mathcal H^\star f$. The reconstruction formula follows by
inverting the two diagonal blocks.
\end{proof}

\subsection{Local subdomain function spaces and operators}\label{ss:spaces}

To present a unified
analysis in the cases (DC) and (NC), we
follow~\cite{heinkenschloss2006neumann} and partition the subdomain indices as
$\{1,\dots,N\} = \mathcal{N}_c \cup \mathcal{N}_\circ$, where
$\mathcal{N}_c := \{i : \mathcal{U}_i \neq \emptyset\}$ denotes the
set of \emph{control subdomains} and
$\mathcal{N}_\circ := \{i : \mathcal{U}_i = \emptyset\}$ the set of
\emph{non-control subdomains}, with
$\mathcal{U}_i := \Omega_i$ (DC) or
$\mathcal{U}_i := \partial\Omega_i \cap \partial\Omega$ (NC).  In case (DC), every subdomain
is a control subdomain; in case (NC), both $\mathcal{N}_\circ$ and $\mathcal{N}_c$ are nonempty. For later use, we introduce the following definition to differentiate between subdomains that intersect the boundary $\partial \Omega$ and those that do not:
\begin{definition}[Anchored and floating subdomains]
A subdomain $\Omega_i$ is called \emph{anchored} if
$\partial\Omega_i \cap \partial\Omega \neq \emptyset$. Otherwise,
$\Omega_i$ is called \emph{floating}.
\end{definition}

\subsubsection{Local functional setting}
On each subdomain $\Omega_i$, we define $V_i:=\{v\in H^1(\Omega_i):v|_{\partial\Omega_i\cap\partial\Omega_D}=0\}$ and $V_0^i:=\cbr[0]{ v \in V_i \, : \, v|_{\Gamma_i} = 0}$, where $\partial\Omega_D=\partial\Omega$ in case (DC) and $\partial\Omega_D=\emptyset$ in case (NC). 
The space $V_0^i$ is equipped with the $H^1(\Omega_i)$ semi-norm, which is a norm by the Poincare--Friedrichs inequality.
We write $\Gamma_i := \partial\Omega_i \cap \Gamma$ for the portion
of the interface belonging to~$\Omega_i$. Let
$\operatorname{tr}_{\Gamma_i} : V_i \to H^{\frac{1}{2}}(\Gamma_i)$
denote the local trace operator and define the local interface trace
space
\begin{align} \label{eq:def_local_interface_trace_space}
    V_{\Gamma_i} := \operatorname{ran}(\operatorname{tr}_{\Gamma_i}),
    \quad
    \|\xi\|_{V_{\Gamma_i}} := \inf\cbr{\|v_i\|_{V_i} \, : \, \operatorname{tr}_{\Gamma_i}(v_i) = \xi}.
\end{align}
The local active state set is defined as $\mathcal{A}_{\bar y}^i:= \mathcal{A}_{\bar y} \cap \Omega_i$, with restriction operator $\mathcal{P}_{\mathcal{A}_{\bar y}^i}:= V_i\to L^2(\mathcal{A}_{\bar y}^i)$. The local control space is $U_i=L^2(\mathcal{U}_i)$ for $i\in\mathcal{N}_c$ and $U_i=\{0\}$ for $i\in\mathcal{N}_\circ$. We define the local active and inactive control sets as $\mathcal{A}_{\bar u,\bar\mu}^i: \mathcal{A}_{\bar u,\bar\mu} \cap \mathcal{U}_i$ and $\mathcal{I}_{\bar u,\bar\mu}^i:= \mathcal{I}_{\bar u,\bar\mu} \cap \mathcal{U}_i$, respectively. Note that for $i\in\mathcal{N}_\circ$, $\mathcal{A}_{\bar u,\bar\mu}^i = \mathcal{I}_{\bar u,\bar\mu}^i = \emptyset$. For $i\in\mathcal{N}_c$, we denote $U_{\mathcal{A}_i}:=L^2(\mathcal{A}_{\bar u,\bar\mu}^i)$ and $U_{\mathcal{I}_i}:=L^2(\mathcal{I}_{\bar u,\bar\mu}^i)$, with restriction operators $\mathcal{P}_{\mathcal{A}_u^i}:U_i\to U_{\mathcal{A}_i}$ and $\mathcal{P}_{\mathcal{I}_u^i}:U_i\to U_{\mathcal{I}_i}$. 

\subsubsection{Local operators} \label{sss:local_operators}
We define local operators as follows: $A_i:V_i\to V_i^\star$ is the local weak Laplacian, $M_i:V_i\hookrightarrow L^2(\Omega_i)$ is the canonical embedding, and $B_i:U_i\to V_i^\star$ is the local control operator, with $B_i=0$ for $i\in\mathcal{N}_\circ$. We also define $\widetilde A_i:=A_i+\sigma M_i^\star M_i$ and
$L_i:=M_i^\star\del[0]{I+\gamma^{-1}\mathcal{P}_{\mathcal{A}_{\bar y}^i}^\star\mathcal{P}_{\mathcal{A}_{\bar y}^i}}M_i$. 
We denote by $r_{\Omega_i}: V \to V_i$, $r_{\Gamma_i}: V_\Gamma \to V_{\Gamma_i}$, $r_{\mathcal{U}_i}: U \to U_{i}$, and $r_{\mathcal{A}_i} : U_\mathcal{A} \to U_{\mathcal{A}_i}$ the restriction operators to $\Omega_i$, $\Gamma_i$, $\mathcal{U}_i$, and $\mathcal{A}_{\bar u, \bar \mu}^i$, respectively. Their adjoint operators
$r_{\Omega_i}^\star: V_i^\star \to V^\star$, $r_{\Gamma_i}^\star: V_{\Gamma_i}^\star \to V_{\Gamma}^\star$, $r_{\mathcal{U}_i}^\star:  U_{i} \to U$, and $r_{\mathcal{A}_i}^\star :  U_{\mathcal{A}_i} \to U_\mathcal{A} $ are the corresponding prolongation operators. Note that since we have identified $U$, $U_{\mathcal{A}}$, $U_i$, and $U_{\mathcal{A}_i}$ with their dual spaces via the Riesz-representation theorem, $r_{\mathcal{U}_i}^\star$ and $r_{\mathcal{\mathcal{A}}_i}^\star$ coincide with the extension-by-zero operators. We denote by $\operatorname{tr}_{\Gamma_i}: V_i\to
V_{\Gamma_i}$ the trace operator restricted to $\Gamma_i$.

Restricting the bounds and the desired state, we set
$y_{d,i} := y_d|_{\Omega_i}$, $y_{a,i} := y_a|_{\Omega_i}$, and
$y_{b,i} := y_b|_{\Omega_i}$, and denote by $\Phi_\gamma$ also the Nemytskii
operator on $L^2(\Omega_i)$ induced by $y_{a,i}$ and $y_{b,i}$.  Recalling
the linearization point $\bar{x} = (\bar y, \bar p, \bar u, \bar \mu)$ of
\Cref{ss:symmetrized}, we write
\begin{align} \label{eq:local_iterate}
  \bar{x}_i := (\bar{y}_i, \bar{p}_i, \bar{u}_i, \bar{\mu}_i)
  := \del[1]{r_{\Omega_i}\bar{y},\, r_{\Omega_i}\bar{p},\,
             r_{\mathcal{U}_i}\bar{u},\, r_{\mathcal{U}_i}\bar{\mu}}
\end{align}
for its restriction to $\Omega_i$ and $\mathcal{U}_i$, and define the local
residual components, in analogy with~\eqref{eq:residual_components}, by
\begin{subequations} \label{eq:local_residual_components}
\begin{align}
  \bar{\mathcal{F}}_{1,i}
    &:= \widetilde{A}_i^\star \bar{p}_i
       + M_i^\star\del[1]{M_i \bar{y}_i - y_{d,i}
         + \Phi_\gamma(M_i \bar{y}_i)},
  \label{eq:local_residual_components_1} \\
  \bar{\mathcal{F}}_{2,i}
    &:= \widetilde{A}_i \bar{y}_i - B_i \bar{u}_i,
  \label{eq:local_residual_components_2} \\
  \bar{\mathcal{F}}_{3,i}
    &:= \alpha \bar{u}_i - B_i^\star \bar{p}_i + \bar{\mu}_i,
  \label{eq:local_residual_components_3} \\
  \bar{\mathcal{F}}_{4,i}
    &:= \Psi(\bar{u}_i, \bar{\mu}_i).
  \label{eq:local_residual_components_4}
\end{align}
\end{subequations}
Note that the local operators and residuals are induced from the global operators by restriction in the following sense:

\begin{proposition}[Assembly identities]\label{prop:assembly}
The global operators defined in \Cref{ss:spaces_ops} decompose additively over subdomains:
\begin{align} \label{eq:operator_decomp}
\widetilde{A} = \sum_{i=1}^N  r_{\Omega_i}^\star \widetilde{A}_i r_{\Omega_i},
\; \; 
L = \sum_{i=1}^N  r_{\Omega_i}^\star  L_i r_{\Omega_i},
\; \; 
B = \sum_{i=1}^N r_{\Omega_i}^\star  B_i r_{\mathcal{U}_i}, \; \;  \mathcal{P}_{\mathcal{A}} = \sum_{i=1}^N r_{\mathcal{A}_i}^\star \mathcal{P}_{\mathcal{A}_i} r_{\mathcal{U}_i}.
\end{align} 
The residual components of \eqref{eq:residual_components} decompose as
\begin{align} \label{eq:residual_decomp}
  \bar{\mathcal{F}}_1 = \sum_{i=1}^N r_{\Omega_i}^\star \bar{\mathcal{F}}_{1,i}, \quad   \bar{\mathcal{F}}_2 = \sum_{i=1}^N r_{\Omega_i}^\star \bar{\mathcal{F}}_{2,i}, \quad   \bar{\mathcal{F}}_3 = \sum_{i=1}^N r_{\mathcal{U}_i}^\star \bar{\mathcal{F}}_{3,i}, \quad \bar{\mathcal{F}}_4 = \sum_{i=1}^N r_{\mathcal{U}_i}^\star \bar{\mathcal{F}}_{4,i}.
\end{align}
\end{proposition}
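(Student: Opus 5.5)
The plan is to verify each identity weakly, by testing against arbitrary elements. Every global operator is an integral over $\Omega$ or $\mathcal U$. Since the subdomains are disjoint up to null sets, such an integral splits into a sum of integrals over $\Omega_i$ or $\mathcal U_i$.

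\textbf{Measure-theoretic facts.} First I would record that $\overline\Omega=\bigcup_i\overline\Omega_i$ with disjoint open $\Omega_i$. The interface $\Gamma$ is Lipschitz, so it has Lebesgue measure zero, and $\int_\Omega g=\sum_i\int_{\Omega_i}g$ for $g\in L^1(\Omega)$. In case (NC), the sets $\mathcal U_i=\partial\Omega_i\cap\partial\Omega$ cover $\partial\Omega$, and their mutual overlaps are of surface measure zero (pieces of $\partial\Gamma_i$ of dimension $d-2$). Hence $\int_{\partial\Omega}g\,\mathrm d\sigma=\sum_i\int_{\mathcal U_i}g\,\mathrm d\sigma$. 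I would also use two compatibilities of restriction:
\begin{itemize}
\item $(r_{\Omega_i}v)|_{\mathcal U_i}=(\operatorname{tr} v)|_{\mathcal U_i}$, i.e.\ the trace of the restriction is the restriction of the trace;
\item $\chi_{\mathcal A_{\bar y}}|_{\Omega_i}=\chi_{\mathcal A_{\bar y}^i}$, and similarly for the control active sets.
\end{itemize}

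\textbf{Operator identities.} For $v,w\in V$, I would write $\langle\widetilde A v,w\rangle_V=\int_\Omega(\nabla v\cdot\nabla w+\sigma vw)$. Splitting this integral gives $\sum_i\langle\widetilde A_i r_{\Omega_i}v,r_{\Omega_i}w\rangle_{V_i}=\langle\sum_i r_{\Omega_i}^\star\widetilde A_i r_{\Omega_i}v,w\rangle_V$. The identity for $L$ follows the same way from the integrand $vw(1+\gamma^{-1}\chi_{\mathcal A_{\bar y}})$.

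For $B$, I would test with $u\in U$ and $v\in V$: $\langle Bu,v\rangle=\int_{\mathcal U}u\,Tv=\sum_i\int_{\mathcal U_i}(r_{\mathcal U_i}u)\,T_i(r_{\Omega_i}v)$. For $i\in\mathcal N_\circ$ the term vanishes, consistent with $B_i=0$. For $\mathcal P_{\mathcal A}$, I would test with $\eta\in U_{\mathcal A}$: $\int_{\mathcal A}u\eta=\sum_i\int_{\mathcal A^i}(r_{\mathcal U_i}u)(r_{\mathcal A_i}\eta)$. Here I read $\mathcal P_{\mathcal A_i}$ as $\mathcal P_{\mathcal A_u^i}$ from the paper.

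\textbf{Residual identities.} I would apply the operator identities to $\bar x$, using $\bar y_i=r_{\Omega_i}\bar y$ and so on.
\begin{itemize}
\item $\bar{\mathcal F}_2$ is immediate from linearity.
\item For $\bar{\mathcal F}_1$, the terms $M^\star(My-y_d+\Phi_\gamma(My))$ require the Nemytskii operator to commute with restriction: $\Phi_\gamma(M\bar y)|_{\Omega_i}=\Phi_\gamma(M_i\bar y_i)$. This holds because $\Phi_\gamma$ acts pointwise with local bounds $y_{a,i},y_{b,i}$. The remaining terms are then the $L^2$ pairings split over subdomains.
\item For $\bar{\mathcal F}_3$ and $\bar{\mathcal F}_4$, which live in $U=L^2(\mathcal U)$, I would check that $g=\sum_i r_{\mathcal U_i}^\star r_{\mathcal U_i}g$ a.e. This uses the partition property. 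I also need $(B^\star\bar p)|_{\mathcal U_i}=B_i^\star\bar p_i$, which is the trace compatibility above, and $\Psi(\bar u,\bar\mu)|_{\mathcal U_i}=\Psi(\bar u_i,\bar\mu_i)$, again by pointwise action.
\end{itemize}

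\textbf{Main obstacle.} The only delicate step is (NC): the boundary pieces $\mathcal U_i$ must partition $\partial\Omega$ up to a $\sigma$-null set, and the local trace must be compatible with the global one. I would handle both by assuming the subdomains are Lipschitz polytopes, or Lipschitz in general, so that $\partial\Omega_i\cap\partial\Omega_j\cap\partial\Omega$ has zero $(d-1)$-measure for $i\ne j$. Trace compatibility follows by density of $C^\infty(\overline\Omega)$ in $H^1(\Omega)$ and continuity of both trace maps.
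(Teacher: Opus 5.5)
Your proposal is correct and follows the same route as the paper. The paper's proof simply observes that the partitions $\overline\Omega=\bigcup_i\overline\Omega_i$, $\overline{\mathcal U}=\bigcup_i\overline{\mathcal U}_i$, and $\overline{\mathcal A}_{\bar y}=\bigcup_i\overline{\mathcal A}_{\bar y}^i$ localize every integral defining the operators and residuals; your additional checks (that the $\mathcal U_i$ overlap only on $\sigma$-null sets in case (NC), that the local and global traces agree on $\mathcal U_i$, and that $\Phi_\gamma$ and $\Psi$ commute with restriction) are precisely the details that one-line proof leaves implicit.
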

\begin{proof}
 The unions $\overline \Omega = \bigcup_{i=1}^N \overline \Omega_i$, $\overline{\mathcal{U}} = \bigcup_{i=1}^N \overline{\mathcal{U}}_i$, and $ \overline{\mathcal{A}}_{\bar y} = \bigcup_{i=1}^N \overline{\mathcal{A}}_{\bar y}^i$ localize the integrals in the forms associated with each global operator to subdomain pieces. 
\end{proof}

\subsubsection{Local product spaces}
\label{sss:local_product_spaces}
To define the local subdomain KKT problems, we introduce local analogues of the product spaces $X$ and $X_\Gamma$:
\begin{equation}\label{eq:local_products}
  X_i=V_i\times V_i\times U_i\times U_{\mathcal{A}_i}, \quad
  X_{\Gamma_i}=V_{\Gamma_i}\times V_{\Gamma_i},
\end{equation}
as well as the following trace operator on the local product space:
\begin{align}
   \tau_i : X_i \to X_{\Gamma_i}, \quad \tau: (y_i,p_i,u_i,\mu_{\mathcal{A}_i}) \mapsto (\operatorname{tr}_{\Gamma_i}(y_i),\operatorname{tr}_{\Gamma_i}(p_i)).
\end{align}
On these product spaces, we define the component-wise restriction operators 
\begin{align} \label{eq:componetwise_volume_restrict}
   \rho_i &:= X \to X_i, \quad \rho_i := (r_{\Omega_i},r_{\Omega_i},r_{\mathcal{U}_i}, r_{\mathcal{A}_i}), \\ \label{eq:componetwise_trace_restrict}
   \rho_{\Gamma_i} &:= X_\Gamma \to X_{\Gamma_i}, \quad \rho_{\Gamma_i} := (r_{\Gamma_i},r_{\Gamma_i}).
\end{align}
Observe that the following identity holds:
\begin{align}
 \rho_{\Gamma_i}  \tau & =  \tau_i\rho_i. \label{eq:rho_tau_identity.1}
\end{align}
Analogously to the global spaces, we define the subspace of tuples in $X_i$ whose state and adjoint components vanish on $\Gamma_i$:
\begin{align} \label{eq:def_local_interior_space}
    X_I^i := \operatorname{ker}(\tau_i) = V_0^i \times V_0^i \times U_i \times U_{\mathcal{A}_i},
\end{align}
and denote by $\iota_i : X_I^i \hookrightarrow X_i$ the canonical injection of $X_I^i$ into $X_i$.

We identify the global interior space with the product of the local interior spaces. Define the \emph{broken interior space}
\begin{align} \label{eq:broken_interior_space}
    \widetilde{X}_I := \prod_{i=1}^N X_I^i,
\end{align}
equipped with the product norm and with coordinate projections $\widetilde{\pi}_i : \widetilde{X}_I \to X_I^i$ and injections $\widetilde{\jmath}_i : X_I^i \to \widetilde{X}_I$. 
\begin{lemma}[Decomposition of the interior space] \label{lem:interior_space_decomp}
The map
\begin{align*}
    \mathcal{B}_I : X_I \to \widetilde{X}_I, \qquad \mathcal{B}_I x := (\rho_i \iota_I x)_{i=1}^N,
\end{align*}
is well defined and an isometric isomorphism. Moreover, the maps
\begin{align} \label{eq:interior_coordinate_maps}
    \pi_i := \widetilde{\pi}_i \mathcal{B}_I : X_I \to X_I^i, \qquad \jmath_i := \mathcal{B}_I^{-1} \widetilde{\jmath}_i : X_I^i \to X_I,
\end{align}
satisfy the following identities:
\begin{align} \label{eq:rho_identity_2}
    \iota_i \pi_i = \rho_i \iota_I, \qquad \rho_j \iota_I \jmath_i = \delta_{ij} \iota_i, \qquad i,j = 1,\dots,N.
\end{align}
\end{lemma}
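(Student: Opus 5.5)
Four things need to be checked: $\mathcal{B}_I$ is well defined (it lands in $\widetilde{X}_I$), it is an isometry, it is surjective, and the two identities in \eqref{eq:rho_identity_2} hold. Each follows from the componentwise structure of $X_I = V_0\times V_0\times U\times U_{\mathcal{A}}$, so I treat the state/adjoint components and the control/multiplier components separately.

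\emph{Well-definedness.} Take $x=(y,p,u,\mu_{\mathcal A})\in X_I$. Then $\operatorname{tr}_\Gamma y = \operatorname{tr}_\Gamma p = 0$, and by \eqref{eq:rho_tau_identity.1}
\begin{align*}
\tau_i\rho_i\iota_I x=\rho_{\Gamma_i}\tau x=0 .
\end{align*}
So $\rho_i\iota_I x\in\ker(\tau_i)=X_I^i$, and $\mathcal B_I$ maps into $\widetilde X_I$. It is clearly linear.

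\emph{Isometry.} We have $\|y\|_{V_0}^2=\int_\Omega|\nabla y|^2=\sum_i\int_{\Omega_i}|\nabla r_{\Omega_i}y|^2=\sum_i\|r_{\Omega_i}y\|_{V_0^i}^2$, and likewise for $p$. The $L^2$ norms of $u$ and $\mu_{\mathcal A}$ split additively over the essentially disjoint pieces $\mathcal U_i$ and $\mathcal A^i_{\bar u,\bar\mu}$. I would note the one subtlety in case (NC): the pieces $\mathcal U_i=\partial\Omega_i\cap\partial\Omega$ overlap only in sets of surface measure zero, and $\mathcal U_i=\emptyset$ with $U_i=\{0\}$ for $i\in\mathcal N_\circ$. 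Summing gives $\|\mathcal B_I x\|_{\widetilde X_I}=\|x\|_{X_I}$, so $\mathcal B_I$ is injective with closed range.

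\emph{Surjectivity.} This is the main step. Given $(x^i)_i\in\widetilde X_I$ with $x^i=(y_i,p_i,u_i,\mu_i)$, glue the pieces: define $y$ by $y|_{\Omega_i}=y_i$, and similarly $p$; let $u=\sum_i r_{\mathcal U_i}^\star u_i$ and $\mu_{\mathcal A}=\sum_i r_{\mathcal A_i}^\star\mu_i$ (extension by zero). The glued $u$ and $\mu_{\mathcal A}$ are clearly in $L^2$. The obstacle is showing $y\in V_0$. Since each $y_i\in H^1(\Omega_i)$ has zero trace on $\Gamma_i$, the traces of neighbouring pieces agree (both vanish) on every shared interface. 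I would verify $y\in H^1(\Omega)$ by computing the distributional gradient: integrate by parts on each $\Omega_i$ against a test function $\varphi\in C_c^\infty(\Omega)$. The boundary terms live on $\Gamma_i$ and vanish because the traces are zero, so the gradient is the piecewise gradient. The Dirichlet condition on $\partial\Omega$ in case (DC) is inherited from $V_i$, and $\operatorname{tr}_\Gamma y=(\operatorname{tr}_{\Gamma_i}y_i)_i=0$. Hence $x\in X_I$ and $\mathcal B_I x=(x^i)_i$, so $\mathcal B_I$ is an isometric isomorphism.

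\emph{Identities.} Using $\pi_i=\widetilde\pi_i\mathcal B_I$,
\begin{align*}
\iota_i\pi_i x=\widetilde\pi_i(\rho_j\iota_I x)_j=\rho_i\iota_I x ,
\end{align*}
where $\iota_i$ is just the inclusion $X_I^i\hookrightarrow X_i$. For the second identity, $\rho_j\iota_I\jmath_i=\iota_j\pi_j\jmath_i=\iota_j\widetilde\pi_j\widetilde\jmath_i=\delta_{ij}\iota_i$ by \eqref{eq:resolution}.
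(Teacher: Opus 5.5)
Your proof is correct and uses the same ingredients as the paper: the additive splitting of the norms (so that $x\mapsto(\rho_i x)_i$ is an isometry), the identification of $X_I$ through $\tau_i\rho_i=\rho_{\Gamma_i}\tau$, and gluing by extension by zero; the paper then applies the abstract \Cref{prop:homogeneous_decomp} to get the isomorphism and the identities \eqref{eq:rho_identity_2}, whereas you verify them directly. Your version also justifies why the glued state and adjoint lie in $V_0$, via the piecewise integration by parts in which the vanishing interface traces remove the boundary terms; the paper only asserts this step.
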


\begin{proof}
After verifying its hypotheses, we aim to apply \Cref{prop:homogeneous_decomp} with the choices $\mathcal{X} := X$, $\widetilde{\mathcal{X}}:= \widetilde{X}$, $\mathcal{X}_0 := X_I$, $\mathcal{X}_0^i := X_I^i$, $C_i := \tau_i$, and 
\begin{align*}
    \mathsf{R}: x \mapsto (\rho_i x)_{i=1}^N.
\end{align*}
To this end, note that $\mathsf{R}$ is an isometry, since the squares of the norms on $V$, $U$, and $U_\mathcal{A}$ decompose additively over the subdomains. Moreover, by \eqref{eq:rho_tau_identity.1}, if $x \in X_I$, then  $ C_i \widetilde{\pi}_i \mathsf{R} x = \tau_i \rho_i x = \rho_{\Gamma_i} \tau x = 0$ for $i=1,\dots,N$. Conversely, if $x \in X$ is such that $C_i \widetilde{\pi}_i \mathsf{R} x  = 0$ for $i=1,\dots,N$, then $\rho_{\Gamma_i} \tau x = 0$ for $i=1,\dots,N$. Since the map $\xi \mapsto (\rho_{\Gamma_i} \xi)_{i=1}^N$ is injective, it holds that $\tau x = 0$ and hence $x \in X_I$. Finally, we verify the gluing condition \eqref{eq:gluing_condition}. Let $\widetilde{x} = (y_i,p_i,u_i,\mu_i)_{i=1}^N \in \widetilde{X}_I$ and note that  extending by zero for each $i=1,\dots, N$ yields a tuple $(\widetilde y_i,\widetilde p_i, \widetilde u_i, \widetilde \mu_i)_{i=1}^N \in X_I$. Then,   $ x = \sum_{i=1}^N \iota_I(\widetilde y_i,\widetilde p_i, \widetilde u_i, \widetilde \mu_i) \in X$ is such that $\mathsf{R}x = \widetilde{x}$. The result now follows from \Cref{prop:homogeneous_decomp}.
\end{proof}

\subsection{Local subdomain KKT problems}

We define the following local KKT operator $\mathcal{K}_i:X_i \to X_i^\star$, built from the local operators $A_i$, $L_i$, $B_i$, and $\mathcal{P}_{\mathcal{A}_i}$ defined in \Cref{sss:local_operators}:
\begin{align*}
    \mathcal{K}_i := \begin{bmatrix}
    L_i & \widetilde{A}_i^\star & 0 & 0 \\
    \widetilde{A}_i & 0 & -B_i & 0 \\
    0 & -B_i^\star & \alpha\,I_{U_i}
      & \mathcal{P}_{\mathcal{A}_i}^\star \\
    0 & 0 & \mathcal{P}_{\mathcal{A}_i} & 0
  \end{bmatrix}.
\end{align*}
As a consequence of \Cref{prop:assembly}, the local KKT operator $\mathcal{K}_i$ is induced from the global KKT operator $\mathcal{K}$ by restriction in the following sense:
\begin{corollary}[KKT operator assembly]\label{cor:KKT_operator_assembly}
    With $\rho_i: X \to X_i$ the component-wise restriction operator, it holds that
    \begin{align} \label{eq:K_global_to_local}
        \mathcal{K} = \sum_{i=1}^N \rho_i^\star \mathcal{K}_i \rho_i, \quad f = \sum_{i=1}^N \rho_i^\star f_i,
    \end{align}
 where the local load is
\begin{align*}
  f_i:=\begin{bmatrix}
    -\bar{\mathcal F}_{1,i}\\
    -\bar{\mathcal F}_{2,i}\\
    -\bar{\mathcal F}_{3,i}
    -c^{-1}\mathcal P_{\mathcal I_i}^\star
     \mathcal P_{\mathcal I_i}\bar{\mathcal F}_{4,i}\\
    -\mathcal P_{\mathcal A_i}\bar{\mathcal F}_{4,i}
  \end{bmatrix}.
\end{align*}
\end{corollary}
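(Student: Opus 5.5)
The plan is to verify both identities in \eqref{eq:K_global_to_local} blockwise, by inserting the assembly identities of \Cref{prop:assembly} into the $4\times 4$ block structure of $\mathcal{K}$ in \eqref{eq:block_KKT}. Since $\rho_i=(r_{\Omega_i},r_{\Omega_i},r_{\mathcal{U}_i},r_{\mathcal{A}_i})$ acts componentwise, $\rho_i^\star$ is the componentwise adjoint $\operatorname{diag}(r_{\Omega_i}^\star,r_{\Omega_i}^\star,r_{\mathcal{U}_i}^\star,r_{\mathcal{A}_i}^\star)$ under the identification $X^\star\cong V^\star\times V^\star\times U\times U_{\mathcal{A}}$ of \Cref{rem:notation}. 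The $(k,l)$ block of $\rho_i^\star\mathcal{K}_i\rho_i$ is therefore $r_k^\star(\mathcal{K}_i)_{kl}\,r_l$, where $r_k$ denotes the $k$-th restriction. It then suffices to check, block by block, that $\sum_i r_k^\star(\mathcal{K}_i)_{kl}r_l=\mathcal{K}_{kl}$.

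The nontrivial blocks are handled as follows. The $(1,1)$ and $(2,1)$ blocks are exactly the identities for $L$ and $\widetilde A$ in \eqref{eq:operator_decomp}. The $(1,2)$ block follows by taking adjoints of the $\widetilde A$ identity. The $(2,3)$ block is the $B$ identity, and the $(3,2)$ block is its adjoint. The $(4,3)$ block is the $\mathcal{P}_{\mathcal{A}}$ identity, and the $(3,4)$ block is its adjoint.

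The $(3,3)$ block requires $\sum_i r_{\mathcal{U}_i}^\star \alpha I_{U_i} r_{\mathcal{U}_i}=\alpha I_U$. This holds because the $\mathcal{U}_i$ partition $\mathcal{U}$ up to a null set, so that $\sum_i r_{\mathcal{U}_i}^\star r_{\mathcal{U}_i}$ is multiplication by $\sum_i\chi_{\mathcal{U}_i}=1$ a.e. For $i\in\mathcal{N}_\circ$ we have $U_i=\{0\}$ and $B_i=0$, so those terms vanish consistently. The zero blocks are trivially preserved.

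For the load, I would use the same componentwise adjoint together with \eqref{eq:residual_decomp}. The first, second and fourth components follow directly. For the active multiplier component one also needs $r_{\mathcal{A}_i}^\star\mathcal{P}_{\mathcal{A}_i}=\mathcal{P}_{\mathcal{A}}r_{\mathcal{U}_i}^\star$, meaning that restricting to the local active set and then extending by zero commutes with the global operations. The analogous commutation $r_{\mathcal{U}_i}^\star\mathcal{P}_{\mathcal{I}_i}^\star\mathcal{P}_{\mathcal{I}_i}=\mathcal{P}_{\mathcal{I}}^\star\mathcal{P}_{\mathcal{I}}r_{\mathcal{U}_i}^\star$ handles the inactive correction term in the third component. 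Both are pointwise statements about characteristic functions, by \Cref{rem:proj_characteristic}, using $\mathcal{A}_{\bar u,\bar\mu}^i=\mathcal{A}_{\bar u,\bar\mu}\cap\mathcal{U}_i$.

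The main point needing care is that the local residuals $\bar{\mathcal F}_{4,i}=\Psi(\bar u_i,\bar\mu_i)$ are genuine restrictions of the global $\bar{\mathcal F}_4$. This holds because $\Psi$ is a Nemytskii operator and hence commutes with restriction; it is already part of \eqref{eq:residual_decomp}. So the argument is essentially bookkeeping, and I expect no genuine obstacle beyond stating the identification of $\rho_i^\star$ precisely.
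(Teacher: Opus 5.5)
Your proposal is correct and follows the paper's route. The paper gives no written proof and simply presents the corollary as a consequence of \Cref{prop:assembly}; your blockwise bookkeeping supplies the details it leaves implicit, namely the partition-of-unity identity $\sum_i r_{\mathcal{U}_i}^\star r_{\mathcal{U}_i}=I_U$ for the $\alpha I_U$ block and the commutation of the active and inactive projections with extension by zero for the load. One harmless slip: you first say the fourth load component follows directly, but it is the active-multiplier component, which is exactly the one that needs $r_{\mathcal{A}_i}^\star\mathcal{P}_{\mathcal{A}_i}=\mathcal{P}_{\mathcal{A}}r_{\mathcal{U}_i}^\star$.
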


Consider the following local subdomain problems: for each  $i = 1, \dots, N$, prescribe Dirichlet interface
data $x_{\Gamma_i} = (y_{\Gamma_i}, p_{\Gamma_i}) \in X_{\Gamma_i}$ for the state
and adjoint traces. The \emph{local subdomain problem} is to find $x_i \in X_i$
such that
\begin{align}\label{eq:local_subdomain_problem}
  \iota_i^\star \del[0]{\mathcal{K}_i x_i - f_i} = 0
    \ \text{ in } (X_I^i)^\star,
  \qquad
  \tau_i x_i = x_{\Gamma_i}.
\end{align}

Written out for $x_i := (y_i, p_i, u_i, \mu_{\mathcal{A}_i}) \in X_i$,
problem~\eqref{eq:local_subdomain_problem} reads
\begin{subequations}\label{eq:local_subdomain_block}
\begin{align}
  L_i y_i + \widetilde{A}_i^\star p_i
    &= -\bar{\mathcal{F}}_{1,i} && \text{in } (V_0^i)^\star,
    \label{eq:local_subdomain_block_a} \\
  \widetilde{A}_i y_i - B_i u_i
    &= -\bar{\mathcal{F}}_{2,i} && \text{in } (V_0^i)^\star,
    \label{eq:local_subdomain_block_b} \\
  -B_i^\star p_i + \alpha u_i + \mathcal{P}_{\mathcal{A}_i}^\star \mu_{\mathcal{A}_i}
    &= -\bar{\mathcal{F}}_{3,i}
       - c^{-1} \mathcal{P}_{\mathcal{I}_i}^\star \mathcal{P}_{\mathcal{I}_i}
         \bar{\mathcal{F}}_{4,i}
    && \text{in } U_i,
    \label{eq:local_subdomain_block_c} \\
  \mathcal{P}_{\mathcal{A}_i} u_i
    &= -\mathcal{P}_{\mathcal{A}_i} \bar{\mathcal{F}}_{4,i}
    && \text{in } U_{\mathcal{A}_i},
    \label{eq:local_subdomain_block_d}
\end{align}
subject to the Dirichlet interface conditions
\begin{align}\label{eq:local_subdomain_dirichlet}
  \operatorname{tr}_{\Gamma_i}(y_i) = y_{\Gamma_i},
  \qquad
  \operatorname{tr}_{\Gamma_i}(p_i) = p_{\Gamma_i}.
\end{align}
\end{subequations}
A discussion of the well-posedness of the local subdomain problems \eqref{eq:local_subdomain_block} is deferred to \Cref{sss:local_KKT_solvable}. We now show that solving the global problem is equivalent to solving the local subdomain problems, subject to an additional interface flux condition coupling them together.

\begin{proposition}[Interface flux]\label{prop:interface_flux}
Suppose $\iota_i^\star\del[0]{\mathcal{K}_i x_i - f_i} = 0$ in $(X_I^i)^\star$.
Then there exists a unique $\Lambda_i \in X_{\Gamma_i}^\star$ such that
\begin{align}\label{eq:flux_functional}
    \mathcal{K}_i x_i - f_i = \tau_i^\star \Lambda_i \quad \text{in } X_i^\star.
\end{align}
\end{proposition}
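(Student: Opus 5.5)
The plan is to use the closed range theorem for the local trace operator $\tau_i : X_i \to X_{\Gamma_i}$. Set $r_i := \mathcal{K}_i x_i - f_i \in X_i^\star$. By hypothesis, $\langle r_i, \iota_i z\rangle_{X_i} = 0$ for all $z \in X_I^i$. Since $X_I^i = \ker(\tau_i)$ by \eqref{eq:def_local_interior_space}, this says
\[
  r_i \in \ker(\tau_i)^\perp \subset X_i^\star .
\]

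\textbf{Existence.} Show that $\tau_i$ is bounded and surjective. Boundedness follows from the definition of the quotient norm on $V_{\Gamma_i}$ in \eqref{eq:def_local_interface_trace_space}, which gives $\|\operatorname{tr}_{\Gamma_i} v\|_{V_{\Gamma_i}} \le \|v\|_{V_i}$. Surjectivity holds because $V_{\Gamma_i} := \operatorname{ran}(\operatorname{tr}_{\Gamma_i})$ by definition, and $\tau_i$ acts componentwise on the first two factors. In particular $\operatorname{ran}(\tau_i) = X_{\Gamma_i}$ is closed. The closed range theorem then gives
\[
  \operatorname{ran}(\tau_i^\star) = \ker(\tau_i)^\perp ,
\]
so there is $\Lambda_i \in X_{\Gamma_i}^\star$ with $\tau_i^\star \Lambda_i = r_i$, which is \eqref{eq:flux_functional}.

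A more hands-on alternative avoids the closed range theorem. Define $\Lambda_i(\xi) := \langle r_i, z\rangle$ for any $z \in X_i$ with $\tau_i z = \xi$. This is well defined because $r_i$ annihilates $\ker(\tau_i)$. It is bounded because the norm on $X_{\Gamma_i}$ is exactly the quotient norm, so taking the infimum over preimages $z$ gives $|\Lambda_i(\xi)| \le \|r_i\|\,\|\xi\|$. Linearity is immediate. I would present this direct construction, since it uses only the definitions.

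\textbf{Uniqueness.} Surjectivity of $\tau_i$ makes $\tau_i^\star$ injective. Indeed, if $\tau_i^\star \Lambda = 0$, then $\Lambda(\tau_i z) = 0$ for all $z \in X_i$, and since $\tau_i$ is onto, $\Lambda = 0$.

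The only step needing any care is the boundedness of $\Lambda_i$. It relies on $X_{\Gamma_i}$ carrying the quotient (infimum) norm, which makes $X_{\Gamma_i}$ isometric to $X_i/\ker(\tau_i)$ restricted to the $V_i \times V_i$ components. I would state this identification explicitly. Beyond that there is no real obstacle.
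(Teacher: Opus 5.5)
Your proposal is correct. Its first route is exactly the paper's proof: $\tau_i$ is surjective, the closed range theorem gives $\operatorname{ran}(\tau_i^\star)=\ker(\tau_i)^\circ$, and uniqueness follows from injectivity of $\tau_i^\star$. The direct construction you say you would actually present is a genuinely more elementary variant. Defining $\Lambda_i(\xi):=\langle r_i,z\rangle_{X_i}$ for an arbitrary preimage $z$ of $\xi$ proves by hand, for a quotient map, the nontrivial inclusion $\ker(\tau_i)^\circ\subseteq\operatorname{ran}(\tau_i^\star)$ of the closed range theorem. It needs no open-mapping argument precisely because $X_{\Gamma_i}$ carries the infimum norm: the control and multiplier components of $z$ lie in $\ker(\tau_i)$ and can be set to zero, so $\inf\{\|z\|_{X_i}:\tau_i z=\xi\}=\|\xi\|_{X_{\Gamma_i}}$. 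This buys you the explicit identity $\|\Lambda_i\|_{X_{\Gamma_i}^\star}=\|\mathcal{K}_ix_i-f_i\|_{X_i^\star}$: your bound gives $\le$, and $\|\tau_i\|\le 1$ gives $\ge$. It also never uses completeness of $X_{\Gamma_i}$. The paper's version is shorter and does not depend on how $X_{\Gamma_i}$ is normed. For any other complete norm making $\tau_i$ bounded (say an intrinsic $H^{1/2}$-type norm), your argument would first need $\inf\{\|z\|_{X_i}:\tau_i z=\xi\}\le C\|\xi\|$, i.e.\ a bounded right inverse of the trace. That is exactly what the open mapping theorem inside the closed range theorem supplies. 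When you write it up, include the one-line check $\langle\tau_i^\star\Lambda_i,z\rangle_{X_i}=\Lambda_i(\tau_i z)=\langle r_i,z\rangle_{X_i}$, which is what actually yields \eqref{eq:flux_functional}.
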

\begin{proof}
By assumption, $\mathcal{K}_i x_i - f_i$ annihilates
$X_I^i = \operatorname{ker}(\tau_i)$. Since $\tau_i : X_i \to X_{\Gamma_i}$ is
surjective, it has closed range, so the closed range theorem gives
$\operatorname{ran}(\tau_i^\star) = \operatorname{ker}(\tau_i)^\circ$, the polar set of $\operatorname{ker}(\tau_i)$. Hence
$\mathcal{K}_i x_i - f_i \in \operatorname{ran}(\tau_i^\star)$, and the uniqueness of $\Lambda_i$
follows from the injectivity of $\tau_i^\star$.
\end{proof}

\begin{theorem}[Global--local equivalence]\label{thm:equivalence}
Suppose that $x \in X$ solves the global KKT
system~\eqref{eq:block_KKT}, and set
$x_i := \rho_i x \in X_i$,
  $x_\Gamma := \tau x \in X_\Gamma$.
Then the following conditions hold:
\begin{enumerate}[label=\textup{(\alph*)}]
\item \emph{(Local subdomain problems)}
For each $i=1,\dots,N$, $x_i\in X_i$ solves the local
subdomain problem with interface data $\rho_{\Gamma_i}x_\Gamma \in X_{\Gamma_i}$; that is,
\begin{align}\label{eq:local_interior}
  \iota_i^\star\del{\mathcal K_i x_i-f_i}
  &=0
  &&\text{in }(X_I^i)^\star,
  &
  \tau_i x_i
  &=\rho_{\Gamma_i}x_\Gamma.
\end{align}

\item \emph{(Interface transmission)}
The corresponding interface fluxes
$\Lambda_i\in X_{\Gamma_i}^\star$ of
\Cref{prop:interface_flux} satisfy
\begin{align}\label{eq:transmission}
  \sum_{i=1}^N
  \rho_{\Gamma_i}^\star\Lambda_i
  =0
  \qquad\text{in }X_\Gamma^\star.
\end{align}
\end{enumerate}
Conversely, let $x_\Gamma\in X_\Gamma$ be given, and suppose that,
for each $i=1,\dots,N$, $x_i\in X_i$ solves the local subdomain
problem in \textup{(a)} with interface data
$\rho_{\Gamma_i}x_\Gamma$. If the corresponding interface fluxes
satisfy \textup{(b)}, then there exists a unique $x\in X$ satisfying~\eqref{eq:block_KKT}, $\rho_i x = x_i$, and $\tau x = x_\Gamma$.
\end{theorem}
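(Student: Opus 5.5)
The whole argument rests on the assembly identity $\mathcal K=\sum_i\rho_i^\star\mathcal K_i\rho_i$, $f=\sum_i\rho_i^\star f_i$ of \Cref{cor:KKT_operator_assembly}. It gives, for every $x,z\in X$,
\begin{align*}
\langle \mathcal K x-f,z\rangle_X=\sum_{i=1}^N\langle \mathcal K_i\rho_i x-f_i,\rho_i z\rangle_{X_i}.
\end{align*}
Both directions then reduce to testing this identity against well-chosen $z$: either $z=\iota_I\jmath_i w_i$ with $w_i\in X_I^i$, or $z$ a lifting of an interface datum.

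\emph{Forward direction.} Let $x$ solve \eqref{eq:block_KKT}, with $x_i=\rho_i x$ and $x_\Gamma=\tau x$. The trace condition $\tau_i x_i=\tau_i\rho_i x=\rho_{\Gamma_i}\tau x=\rho_{\Gamma_i}x_\Gamma$ follows from \eqref{eq:rho_tau_identity.1}. For the interior equation, fix $w_i\in X_I^i$ and test with $z=\iota_I\jmath_i w_i$. By \eqref{eq:rho_identity_2}, $\rho_j z=\delta_{ij}\iota_i w_i$, so the sum collapses to
\[
0=\langle \iota_i^\star(\mathcal K_i x_i-f_i),w_i\rangle .
\]
This proves (a).

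\Cref{prop:interface_flux} then supplies $\Lambda_i$ with $\mathcal K_ix_i-f_i=\tau_i^\star\Lambda_i$. For any $\xi\in X_\Gamma$, take $z=E\xi$ with $E$ a bounded lifting. Using $\tau_i\rho_i=\rho_{\Gamma_i}\tau$,
\begin{align*}
0=\langle\mathcal Kx-f,E\xi\rangle=\sum_i\langle\Lambda_i,\tau_i\rho_iE\xi\rangle=\sum_i\langle\rho_{\Gamma_i}^\star\Lambda_i,\xi\rangle .
\end{align*}
This is \eqref{eq:transmission}.

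\emph{Converse: constructing $x$.} We must build a global $x$ from the local $x_i$, and I expect this gluing to be the main obstacle. The $x_i$ are only known to have traces matching $\rho_{\Gamma_i}x_\Gamma$, and we need one $x\in X$ with $\rho_ix=x_i$ for all $i$.

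Fix a lifting $E$ and set $z=Ex_\Gamma$. Then $\tau_i(x_i-\rho_i z)=\rho_{\Gamma_i}x_\Gamma-\rho_{\Gamma_i}\tau Ex_\Gamma=0$, so $w_i:=x_i-\rho_iz\in\ker\tau_i$, i.e.\ $w_i=\iota_i v_i$ for some $v_i\in X_I^i$. Define
\[
x:=z+\iota_I\mathcal B_I^{-1}(v_1,\dots,v_N).
\]
By \eqref{eq:rho_identity_2}, $\rho_i\iota_I\mathcal B_I^{-1}(v_j)_j=\sum_j\rho_i\iota_I\jmath_jv_j=\iota_iv_i$, hence $\rho_ix=x_i$. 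Also $\tau x=\tau z=x_\Gamma$, since $\iota_I$ maps into $\ker\tau$.

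Uniqueness holds because the map $x\mapsto(\rho_ix)_i$ is injective: it is an isometry, as noted in the proof of \Cref{lem:interior_space_decomp}.

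\emph{Converse: $x$ solves the global system.} Take any test $z\in X$ and decompose it via \eqref{eq:X_decomp_E} as $z=\iota_I z_I+E\tau z$. For the interior part, write $\mathcal B_I z_I=(\pi_iz_I)_i$. Since $\rho_i\iota_I z_I=\iota_i\pi_i z_I$, each summand is $\langle\iota_i^\star(\mathcal K_ix_i-f_i),\pi_iz_I\rangle=0$ by (a). For the lifted part, the same flux computation as in the forward direction, read in reverse, gives $\sum_i\langle\rho_{\Gamma_i}^\star\Lambda_i,\tau z\rangle=0$ by (b). Together these give $\mathcal Kx=f$ in $X^\star$.
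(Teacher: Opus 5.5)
Your proof is correct. In the forward direction it coincides with the paper's: you get the interior equation by testing with $\iota_I\jmath_i w_i$ and using \eqref{eq:rho_identity_2}. Your test against $E\xi$ is an unpacked form of the paper's step $\tau^\star\sum_i\rho_{\Gamma_i}^\star\Lambda_i=0$ followed by injectivity of $\tau^\star$.

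The genuine difference is the gluing step in the converse. The paper defines $x$ piecewise by $\rho_i x=x_i$. It then appeals to the fact that piecewise-$H^1$ functions with matching traces across the interfaces lie in $V$.

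You instead subtract the global lifting $Ex_\Gamma$. You note that the remainders $x_i-\rho_iEx_\Gamma$ have zero interface trace, and you glue them with $\mathcal B_I^{-1}$ from \Cref{lem:interior_space_decomp}. This keeps the argument inside the abstract operator framework. It reduces gluing to the zero-trace case (extension by zero), which that lemma already verified. The price is invoking a bounded lifting $E$, which exists because $X_\Gamma=\operatorname{ran}\tau$. Your route also makes the uniqueness of $x$ explicit, which the paper leaves implicit.

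In your final step, the split $z=\iota_Iz_I+E\tau z$ is harmless but unnecessary. Since $\mathcal K_ix_i-f_i=\tau_i^\star\Lambda_i$ holds on all of $X_i$, the interior contributions vanish automatically. The paper concludes directly from $\mathcal Kx-f=\tau^\star\sum_i\rho_{\Gamma_i}^\star\Lambda_i=0$.
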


\begin{proof}

\emph{Global $\Rightarrow$ local.} Let $x$ solve \eqref{eq:block_KKT}.  The trace relation in
\eqref{eq:local_interior} follows from
$\tau_i\rho_i=\rho_{\Gamma_i}\tau$.  Fix $i$ and let
$w_I^i\in X_I^i$.  Testing \eqref{eq:block_KKT} with $\iota_I \jmath_i w_I^i \in X$ and using \eqref{eq:K_global_to_local} as well as the fact that
$\rho_j  \iota_I  \jmath_i=\delta_{ij}\iota_i$ by \eqref{eq:rho_identity_2},  we obtain
\begin{align*}
 0
  =\langle\mathcal Kx-f,\iota_I\jmath_iw_I^i\rangle_X =\sum_{j=1}^N
    \langle\mathcal K_j\rho_jx-f_j,
      \rho_j\iota_I\jmath_iw_I^i\rangle_{X_j}=\langle\mathcal K_i x_i-f_i,\iota_iw_I^i\rangle_{X_i}.
\end{align*}
Since $w_I^i$ was arbitrary, the interior equation in
\eqref{eq:local_interior} follows.  By
\Cref{prop:interface_flux}, there is a unique
$\Lambda_i\in X_{\Gamma_i}^\star$ satisfying
$\mathcal K_i x_i-f_i=\tau_i^\star\Lambda_i$.
Taking adjoints in
\eqref{eq:rho_tau_identity.1} gives
\begin{align}\label{eq:adjoint_trace_commutation}
  \rho_i^\star\tau_i^\star
  =\tau^\star\rho_{\Gamma_i}^\star.
\end{align}
Consequently,
\begin{align*}
  0
  &=\mathcal Kx-f
   =\sum_{i=1}^N\rho_i^\star\tau_i^\star\Lambda_i
   =\tau^\star
    \sum_{i=1}^N\rho_{\Gamma_i}^\star\Lambda_i.
\end{align*}
Since $\tau$ is surjective, $\tau^\star$ is injective, and
\eqref{eq:transmission} follows.

\medskip
\emph{Local $\Rightarrow$ global.}
Conversely, define $x$ such that $\rho_i x = x_i$ for $i=1,\dots, N$.
The trace conditions in \textup{(a)} imply that the piecewise
$H^1$-functions defined by $r_{\Omega_i}y=y_i$ and
$r_{\Omega_i}p=p_i$ have matching traces on every common interface, and thus $y,p\in V$.  Therefore, $x\in X$ and moreover $x$ satisfies $\rho_i x=x_i$ and $\tau x=x_\Gamma$.
Using \eqref{eq:K_global_to_local},
\eqref{eq:flux_functional}, \eqref{eq:adjoint_trace_commutation}, and
\eqref{eq:transmission}, we find
\begin{align*}
  \mathcal Kx-f
  &=\sum_{i=1}^N\rho_i^\star
    \del{\mathcal K_i x_i-f_i}
   =\sum_{i=1}^N\rho_i^\star\tau_i^\star\Lambda_i=\tau^\star
    \sum_{i=1}^N\rho_{\Gamma_i}^\star\Lambda_i
   =0.
\end{align*}
Thus $x$ solves \eqref{eq:block_KKT}.
\end{proof}

\begin{remark}[Interface fluxes]\label{rem:interface_interpretation}
Writing $\Lambda_i := (\Lambda_i^y, \Lambda_i^p) \in V_{\Gamma_i}^\star \times
V_{\Gamma_i}^\star$, the components are the weak normal fluxes of the state and
adjoint on $\Gamma_i$: for any  $(v_{\Gamma_i}, q_{\Gamma_i}) \in X_{\Gamma_i}$ and any $(v_i,q_i) \in X_i$ such that $(\operatorname{tr}_{\Gamma_i} v_i,\operatorname{tr}_{\Gamma_i} q_i) = (v_{\Gamma_i}, q_{\Gamma_i}) $, it holds that
\begin{align*}
  \langle \Lambda_i^y, q_{\Gamma_i} \rangle_{V_{\Gamma_i}}
    = \langle \widetilde{A}_i y_i - B_i u_i, q_i \rangle_{V_i},
  \qquad
  \langle \Lambda_i^p, v_{\Gamma_i} \rangle_{V_{\Gamma_i}}
    = \langle L_i y_i + \widetilde{A}_i^\star p_i, v_i
      \rangle_{V_i},
\end{align*}
which coincide with $\partial_{n_i} y_i$ and $\partial_{n_i} p_i$ for
sufficiently regular solutions. The transmission
condition~\eqref{eq:transmission} is then the flux balance law: for all
$(v_\Gamma, q_\Gamma) \in X_\Gamma$,
\begin{align}\label{eq:flux_balance}
  \sum_{i=1}^N \int_{\Gamma_i} \frac{\partial p_i}{\partial n_i}\, v_{\Gamma_i}
    \,\mathrm{d}s
  + \sum_{i=1}^N \int_{\Gamma_i} \frac{\partial y_i}{\partial n_i}\, q_{\Gamma_i}
    \,\mathrm{d}s
  = 0,
\end{align}
generalizing the classical transmission condition for elliptic problems
(cf.~\cite{quarteroni-valli-1999-domain-decomposition}) and unconstrained
elliptic optimal control problems (cf.~\cite{heinkenschloss2006neumann}).
\end{remark}

\subsubsection{Decomposition of the local KKT operator}

Analogously to the global decomposition \eqref{eq:X_decomp_E}, we can decompose functions in $X_i$ into a local interior component and a lifted trace component.
Owing to the surjectivity of the trace operator $\tau_i: X_i \to X_{\Gamma_i}$, there exists a (non-unique) bounded lifting $E_i : X_{\Gamma_i} \to X_i$ satisfying $\tau_i E_i = I_{X_{\Gamma_i}}$. Moreover, any such choice of $E_i$ induces a decomposition of $X_i$ into the topological direct sum:
\begin{align} \label{eq:X_decomp_E_loc}
    X_i = X_I^i \oplus E_i(X_{\Gamma_i}).
\end{align}
In other words, any $x_i \in X_i$ can be decomposed as $x_i = \iota_i x_I^i + E_i x_{\Gamma_i}$, for some $(x_I^i,x_{\Gamma_i}) \in X_I^i \times X_{\Gamma_i}$. This decomposition is unique only up to the choice of lifting.  
Analogously to the global setting, we define the operators $\mathcal{K}^i_{II}: X_I^i \to (X_I^i)^\star$, $\mathcal{K}_{I\Gamma}^{E_i}: X_{\Gamma_i} \to (X_I^i)^\star$, $\mathcal{K}_{\Gamma I}^{E_i} : X_I^i \to X_{\Gamma_i}^\star$, and $\mathcal{K}_{\Gamma \Gamma}^{E_i}: X_{\Gamma_i} \to X_{\Gamma_i}^\star$ via
\begin{align} \label{eq:local_KKT_blocks}
    \mathcal{K}^i_{II} := (\iota_i)^\star \mathcal{K}_i \iota_i, \quad \mathcal{K}_{I\Gamma}^{E_i} := \iota_i^\star \mathcal{K}_i E_i, \quad \mathcal{K}_{\Gamma I}^{E_i} := E_i^\star\mathcal{K}_i \iota_i, \quad \mathcal{K}_{\Gamma \Gamma}^{E_i} := E_i^\star\mathcal{K}_i E_i.
\end{align}
Note that the operator $\mathcal{K}^i_{II}$, which we will refer to as the \emph{local interior KKT operator}, is independent of the choice of lifting. Moreover, since $\mathcal{K}_i$ is self-adjoint, $(\mathcal{K}_{I\Gamma}^{E_i})^\star = (\mathcal{K}_{\Gamma I}^{E_i})$.
The \emph{local interior KKT operator} $\mathcal{K}^i_{II}$ is independent of the choice of lifting, and $(\mathcal{K}_{I\Gamma}^{E_i})^\star = \mathcal{K}_{\Gamma I}^{E_i}$ since $\mathcal{K}_i$ is self-adjoint.\\

The global interior KKT operator $\mathcal{K}_{II}$ is congruent, via the isometric isomorphism $\mathcal{B}_I$ of \Cref{sss:local_product_spaces}, to the direct sum of the local interior KKT operators:

\begin{corollary}[Decomposition of the global interior KKT operator] \label{cor:KII_decomp}
Let $\mathcal{B}_I : X_I \to \widetilde{X}_I$ be the isomorphism in \Cref{lem:interior_space_decomp}. It holds that
\begin{align} \label{eq:KII_decomp}
    (\mathcal{B}_I^{-1})^\star \mathcal{K}_{II} \mathcal{B}_I^{-1} = \bigoplus_{i=1}^N \mathcal{K}_{II}^i.
\end{align}
Consequently, $\mathcal{K}_{II}$ is an isomorphism if and only if $\mathcal{K}_{II}^i$ is an isomorphism for each $i=1,\dots,N$.
\end{corollary}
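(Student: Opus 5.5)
The plan is to verify \eqref{eq:KII_decomp} by testing both sides against arbitrary pairs of elements of $\widetilde{X}_I$. The tools are the assembly identity \eqref{eq:K_global_to_local} and the coordinate identities \eqref{eq:rho_identity_2} from \Cref{lem:interior_space_decomp}.

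Fix $\widetilde{x},\widetilde{w}\in\widetilde{X}_I$ and set $x:=\mathcal{B}_I^{-1}\widetilde{x}$, $w:=\mathcal{B}_I^{-1}\widetilde{w}\in X_I$. By \eqref{eq:resolution}, $\widetilde{x}=\sum_i\widetilde{\jmath}_i\widetilde{\pi}_i\widetilde{x}$, so $x=\sum_i \jmath_i \pi_i x$. The definitions unwind the left-hand side to
\begin{align*}
\langle (\mathcal{B}_I^{-1})^\star \mathcal{K}_{II}\mathcal{B}_I^{-1}\widetilde{x},\widetilde{w}\rangle
=\langle \mathcal{K}\iota_I x,\iota_I w\rangle_X
=\sum_{j=1}^N\langle \mathcal{K}_j\rho_j\iota_I x,\rho_j\iota_I w\rangle_{X_j},
\end{align*}
where the last step uses \eqref{eq:K_global_to_local}. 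Next apply the first identity in \eqref{eq:rho_identity_2}, $\rho_j\iota_I=\iota_j\pi_j$, to rewrite each summand as $\langle \iota_j^\star\mathcal{K}_j\iota_j\,\pi_j x,\pi_j w\rangle=\langle\mathcal{K}_{II}^j\widetilde{\pi}_j\widetilde{x},\widetilde{\pi}_j\widetilde{w}\rangle$. By \eqref{eq:block_diag_sum}, this sum is exactly $\langle(\bigoplus_j\mathcal{K}_{II}^j)\widetilde{x},\widetilde{w}\rangle$. Since $\widetilde{x}$ and $\widetilde{w}$ are arbitrary, \eqref{eq:KII_decomp} follows.

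For the consequence, $\mathcal{B}_I$ is an isomorphism, so $(\mathcal{B}_I^{-1})^\star$ is one too. Hence $\mathcal{K}_{II}$ is an isomorphism if and only if $\bigoplus_i\mathcal{K}_{II}^i$ is. By \Cref{rem:notation}, the direct sum is an isomorphism if and only if each $\mathcal{K}_{II}^i$ is.

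The only delicate point is keeping track of the identifications: $\widetilde{X}_I^\star\cong\prod_i (X_I^i)^\star$ via $\Phi$, and $\pi_j=\widetilde{\pi}_j\mathcal{B}_I$. I expect no genuine analytical obstacle, since everything reduces to the identities already proved in \Cref{lem:interior_space_decomp} and \Cref{cor:KKT_operator_assembly}.
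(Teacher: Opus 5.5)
Your proof is correct and essentially matches the paper's. The paper invokes \Cref{cor:homogeneous_block} with $\mathsf{R}x=(\rho_i x)_{i=1}^N$, $\mathsf{C}_i=\tau_i$ and $\mathsf{T}_i=\mathcal{K}_i$, and your pairing computation is that corollary's proof written out concretely; the assembly identity \eqref{eq:K_global_to_local} makes explicit the paper's tacit identification $\iota_I^\star\mathsf{R}^\star\bigl(\bigoplus_{i}\mathcal{K}_i\bigr)\mathsf{R}\iota_I=\mathcal{K}_{II}$, and your invertibility consequence via \Cref{rem:notation} is also the same.
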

\begin{proof}
Apply \Cref{cor:homogeneous_block} with $\mathcal{X} = X$, $\mathcal{X}_i = X_i$, $\mathsf{R}x = (\rho_i x)_{i=1}^N$, and $\mathsf{C}_i = \tau_i$, so that $\mathcal{X}_0 = X_I$, $\mathcal{X}_0^i = X_I^i$, $\widetilde{\mathcal{X}}_0 = \widetilde{X}_I$, $\iota_0 = \iota_I$, $\iota_0^i = \iota_i$, $\jmath_0^i = \jmath_i$, $\mathcal{B}_0 = \mathcal{B}_I$, and  $\mathsf{T}_i := \mathcal{K}_i$. 
\end{proof}

\subsubsection{Solvability of the local KKT problems} \label{sss:local_KKT_solvable}

\begin{theorem}[Properties of the local block $\mathcal K_i$]
\label{thm:KII_fredholm}
For each $i=1,\dots,N$ and each $\sigma\ge 0$, the local operator
$\mathcal K_i:X_i\to X_i^\star$ is Fredholm with
$\operatorname{ind}(\mathcal K_i)=0$, and the local interior operator
$\mathcal K_{II}^i:X_I^i\to(X_I^i)^\star$ is an isomorphism. Moreover:
\begin{enumerate}[label=\textup{(\alph*)},leftmargin=2.4em]
\item In case \textup{(DC)}, the following alternatives hold:
  \begin{enumerate}[label=\textup{(\roman*)},leftmargin=2.6em]
  \item $\mathcal K_i$ is an isomorphism if $\sigma>0$, or if
  $\sigma=0$ and either $\Omega_i$ is anchored or
  $\Omega_i$ is floating with
  $|\mathcal I_{\bar u,\bar\mu}^i|> 0$.

  \item If $\sigma=0$, $\Omega_i$ is floating, and
  $|\mathcal I_{\bar u,\bar\mu}^i|=0$, then
  \begin{align*}
    \ker(\mathcal K_i)
    =\cbr[1]{
c_i\cdot\del{0,1|_{\Omega_i},0,1|_{\mathcal A_i}}
      :c_i\in\mathbb R
    }.
  \end{align*}
  \end{enumerate}

\item In case \textup{(NC)}, the following alternatives hold:
  \begin{enumerate}[label=\textup{(\roman*)},leftmargin=2.6em]
  \item $\mathcal K_i$ is an isomorphism if $\sigma>0$, or if
  $\sigma=0$, $\Omega_i$ is anchored (i.e., $i \in \mathcal{N}_c$), and
  $|\mathcal I_{\bar u,\bar\mu}^i|>0$.

  \item If $\sigma=0$, $\Omega_i$ is anchored (i.e., $i \in \mathcal{N}_c$), and
  $|\mathcal I_{\bar u,\bar\mu}^i|=0$, then
  \begin{align*}
    \ker(\mathcal K_i)
    =\cbr[1]{
c_i\cdot\del{0,1|_{\Omega_i},0,1|_{\mathcal A_i}}
      :c_i\in\mathbb R
    }.
  \end{align*}

  \item If $\sigma=0$ and $\Omega_i$ is floating (i.e., $i \in \mathcal{N}_\circ$), then
  \begin{align*}
    \ker(\mathcal K_i)
    =\cbr[1]{
      c_i\cdot\del{0,1|_{\Omega_i},0,0}
      :c_i\in\mathbb R
    }.
  \end{align*}
  \end{enumerate}
\end{enumerate}
\end{theorem}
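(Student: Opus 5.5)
The plan mirrors the proof of \Cref{thm:global_well_posed}, applied locally. For the Fredholm property of $\mathcal K_i$, I would invoke \Cref{prop:KKT_fredholm} with $\mathcal X_1=\mathcal X_2=V_i$, $\mathcal X_3=U_i$, $\mathcal X_4=U_{\mathcal A_i}$, $\mathsf A=\widetilde A_i$, $\mathsf L=L_i$, $\mathsf B=B_i$, $\mathsf D=\alpha I_{U_i}$ and $\mathsf P=\mathcal P_{\mathcal A_i}$. The hypotheses are checked as follows.
\begin{itemize}
\item $L_i$ and $B_i$ are compact, by Rellich--Kondrachov on the Lipschitz subdomain $\Omega_i$ and by compactness of the trace into $L^2(\mathcal U_i)$.
\item $\mathsf P\mathsf D^{-1}\mathsf P^\star=\alpha^{-1}I$.
\item $\widetilde A_i$ is Fredholm of index zero. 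It is either coercive (Dirichlet part of positive measure, or $\sigma>0$) or it is the pure Neumann Laplacian, with one-dimensional kernel and cokernel $\operatorname{span}\{1\}$.
\end{itemize}
For $i\in\mathcal N_\circ$ the control blocks are trivial, since $U_i=\{0\}$. I would check that the proposition (or its $2\times2$ saddle-point analogue) still covers this degenerate case; otherwise I would argue directly that $\begin{bmatrix}L_i&\widetilde A_i^\star\\ \widetilde A_i&0\end{bmatrix}$ is a compact perturbation of an index-zero operator.

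For the isomorphism property of $\mathcal K_{II}^i$, I would note that $\mathcal K_{II}^i$ has exactly the block structure of $\mathcal K$, with $V_i$ replaced by $V_0^i$. On $V_0^i$ the operator $\widetilde A_i$ is coercive for every $\sigma\ge0$, because the $H^1$ seminorm is a norm on $V_0^i$ (Poincaré--Friedrichs, since $|\Gamma_i|>0$). The same application of \Cref{prop:KKT_fredholm} then gives index zero, so injectivity suffices. Testing the homogeneous system against $(y,-p,u)$ yields the energy identity \eqref{eq:energy_Ki}, which gives $u=0$ and $y=0$. It then follows that $\widetilde A_i^\star p=0$ in $(V_0^i)^\star$, hence $p=0$ by coercivity, and finally $\mu_{\mathcal A_i}=\mathcal P_{\mathcal A_i}B_i^\star p=0$.

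The kernel analysis of $\mathcal K_i$ repeats Step~2 of \Cref{thm:global_well_posed}. An element of the kernel satisfies $y=0$, $u=0$ and $\mu_{\mathcal A_i}=\mathcal P_{\mathcal A_i}B_i^\star p$, where $\widetilde A_i^\star p=0$ in $V_i^\star$. We then split into cases.
\begin{itemize}
\item If $\sigma>0$, or the subdomain is anchored in case (DC) (Dirichlet boundary portion of positive measure), $\widetilde A_i$ is coercive and $\mathcal K_i$ is an isomorphism.
\item Otherwise $\sigma=0$ with a pure Neumann local problem, which happens for floating (DC) subdomains and for all (NC) subdomains. Then $p=c_i1|_{\Omega_i}$.
\end{itemize}
In the second case the third kernel equation restricted to the inactive set gives $c_i\,\mathcal P_{\mathcal I_i}B_i^\star 1=0$. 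In (DC) one has $B_i^\star 1=1|_{\Omega_i}$, and in (NC) one has $B_i^\star1=1|_{\mathcal U_i}$. Hence $c_i=0$ whenever $|\mathcal I^i_{\bar u,\bar\mu}|>0$. If instead $|\mathcal I^i_{\bar u,\bar\mu}|=0$, then $\mu_{\mathcal A_i}=c_i1|_{\mathcal A_i}$, which gives the stated one-dimensional kernel. For $i\in\mathcal N_\circ$ there are no control components, so the kernel is $c_i(0,1,0,0)$.

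The main obstacle I expect is confirming that \Cref{prop:KKT_fredholm} applies uniformly, including the degenerate non-control case and the interior spaces $V_0^i$. The anchored/floating bookkeeping is a secondary check: in (NC), "anchored" means touching $\partial\Omega$, and this is exactly $i\in\mathcal N_c$.
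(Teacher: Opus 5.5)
Your proposal is correct and follows the paper's proof essentially step for step. You obtain index zero from \Cref{prop:KKT_fredholm} (using its reduced $2\times 2$ case for $i\in\mathcal N_\circ$), invertibility of $\mathcal K_{II}^i$ from coercivity of $\widetilde A_i$ on $V_0^i$, and then the kernel reduction to $(0,c_i,0,\mathcal P_{\mathcal A_i}B_i^\star c_i)$ followed by the inactive-set argument; you also read ``anchored'' as a Dirichlet portion of positive surface measure, which is exactly what the Poincar\'e step requires.
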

\begin{proof}
The proofs that $\mathcal{K}_i$ and $\mathcal{K}_{II}^i$ are Fredholm with index $0$ are identical to the proof that the global KKT operator $\mathcal{K}$ is Fredholm (\emph{cf.} \Cref{thm:global_well_posed}), and are therefore omitted. That $\mathcal{K}_{II}^i$ is furthermore an isomorphism is a consequence  of the fact that $\widetilde{A}_i : V_0^i \to (V_0^i)^\star$ is an isomorphism owing to the Poincar\'{e} inequality. It remains to characterize $\operatorname{ker}(\mathcal{K}_i)$. Arguing analogously to the proof of \Cref{thm:global_well_posed}, it can be shown that 
\begin{align*}
    (y_i,p_i,u_i,\mu_i) \in \text{ker}(\mathcal{K}_i) \quad \Longrightarrow \quad (y_i, p_i, u_i, \mu_i) = (0, c_i,  0,\mathcal{P}_{\mathcal{A}_i} B_i^\star c_i).
\end{align*}

\textbf{Case \textup{(DC)}.}
If $\sigma>0$, or if $\sigma=0$ and $\Omega_i$ is anchored, then
$\widetilde{A}_i$ is an isomorphism by the Poincar\'{e} inequality and Lax--Milgram theorem. Hence $c_i=0$, and therefore
$\operatorname{ker}(\mathcal{K}_i)=\cbr{0}$.
If $\sigma=0$ and $\Omega_i$ is floating, 
the same argument as in the
(NC) case of \Cref{thm:global_well_posed} shows that
$|\mathcal{I}_{\bar{u},\bar{\mu}}^i|>0$ implies $c_i=0$.
If instead $|\mathcal{I}_{\bar{u},\bar{\mu}}^i|=0$, then
$
\operatorname{ker}(\mathcal{K}_i)
=
\cbr[1]{
c_i \cdot
\del{
0,
1|_{\Omega_i},
0,
1|_{\mathcal{A}_i}
}
:
c_i\in\mathbb{R}
}$.

\medskip
\textbf{Case \textup{(NC)}.}
If $\sigma>0$, then $\widetilde{A}_i$ is an isomorphism, and hence
$\operatorname{ker}(\mathcal{K}_i)=\cbr{0}$.
Suppose that $\sigma=0$. Then
$p_i=c_i\cdot 1|_{\Omega_i}$ for some $c_i\in\mathbb{R}$. If $\Omega_i$ is
anchored and $|\mathcal{I}_{\bar{u},\bar{\mu}}^i|>0$, an identical argument to that of the (NC) case
in the proof of \Cref{thm:global_well_posed} implies $c_i=0$.
If $|\mathcal{I}_{\bar{u},\bar{\mu}}^i| = 0$, then
$
\operatorname{ker}(\mathcal{K}_i)
=
\cbr[1]{
c_i \cdot
\del{
0,
1|_{\Omega_i},
0,
1|_{\mathcal{A}_i}
}
:
c_i\in\mathbb{R}
}$.
Finally, if $\Omega_i$ is floating, then
$U_i=U_{\mathcal{A}_i}=\cbr{0}$ and $B_i=0$. Therefore,
$
\operatorname{ker}(\mathcal{K}_i)
=
\cbr[1]{
c_i \cdot
\del{
0,
1|_{\Omega_i},
0,
0
}
:
c_i\in\mathbb{R}
}$.
\end{proof}

\subsubsection{Local Schur complement problem}
Having established that $\mathcal K_{II}^i$ is an isomorphism, let
$E_i:X_{\Gamma_i}\to X_i$ be a lifting and write
$x_i=\iota_i x_I^{E_i}+E_i x_{\Gamma_i}$. The local interior equation
gives
\begin{align*}
  x_I^{E_i}
  =(\mathcal K_{II}^i)^{-1}
   \bigl(f_I^i-\mathcal K_{I\Gamma}^{E_i}x_{\Gamma_i}\bigr).
\end{align*}
If $\Lambda_i\in X_{\Gamma_i}^\star$ is the interface flux from
\Cref{prop:interface_flux}, then applying $E_i^\star$ to
$\mathcal K_i x_i-f_i=\tau_i^\star\Lambda_i$ yields
\begin{align}\label{eq:local_flux_schur}
  \Lambda_i
  =\mathcal S_{\Gamma_i}^{E_i}x_{\Gamma_i}
   -g_{\Gamma_i}^{E_i},
\end{align}
where
\begin{align}
  \mathcal S_{\Gamma_i}^{E_i}
  &:=\mathcal K_{\Gamma\Gamma}^{E_i}
    -\mathcal K_{\Gamma I}^{E_i}
     (\mathcal K_{II}^i)^{-1}\mathcal K_{I\Gamma}^{E_i},
  \label{eq:local_schur_operator}\\
  g_{\Gamma_i}^{E_i}
  &:=f_{\Gamma_i}^{E_i}
    -\mathcal K_{\Gamma I}^{E_i}
     (\mathcal K_{II}^i)^{-1}f_I^i.
  \label{eq:local_schur_load}
\end{align}
Thus the local Schur complement maps prescribed interface data to the
corresponding interface flux, up to the reduced local load.

\begin{lemma}\label{lem:local_harmonic}
For each $i=1,\dots,N$, the following statements hold.
\begin{enumerate}[label=\textup{(\alph*)}]
\item The local Schur complement and reduced load do not depend on the
choice of lifting $E_i$. Thus, we simply write $\mathcal{S}_{\Gamma_i}$ and $g_{\Gamma_i}$ in what follows.
\item There is a unique bounded linear lifting
$\mathcal H_i:X_{\Gamma_i}\to X_i$ satisfying
\begin{align*}
  \iota_i^\star\mathcal K_i\mathcal H_i=0,
  \qquad
  \tau_i\mathcal H_i=I_{X_{\Gamma_i}}.
\end{align*}
For any lifting $E_i$, it is given by
\begin{align}\label{eq:local_harmonic_formula}
  \mathcal H_i
  =\bigl(I_{X_i}-\iota_i(\mathcal K_{II}^i)^{-1}
       \iota_i^\star\mathcal K_i\bigr)E_i.
\end{align}
\item It holds that
\begin{align} \label{eq:local_schur_harmonic}
  \mathcal S_{\Gamma_i}
  =\mathcal H_i^\star\mathcal K_i\mathcal H_i,
  \qquad
  g_{\Gamma_i}=\mathcal H_i^\star f_i.
\end{align}
\item It holds that
\begin{align}\label{eq:local_harmonic_flux}
  \mathcal K_i\mathcal H_i
  =\tau_i^\star\mathcal S_{\Gamma_i}.
\end{align}
\end{enumerate}
\end{lemma}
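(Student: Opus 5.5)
The plan is to transplant, essentially verbatim, the global arguments of \Cref{lem:schur_independent}, \Cref{thm:global_harmonic_extension} and \Cref{cor:diagonalized_block_product_form} to the local setting. We replace $(X,X_I,X_\Gamma,\tau,\iota_I,\mathcal K,f)$ by $(X_i,X_I^i,X_{\Gamma_i},\tau_i,\iota_i,\mathcal K_i,f_i)$. The only analytic input is that $\mathcal K_{II}^i$ is an isomorphism, which is \Cref{thm:KII_fredholm}; everything else is algebra with liftings.

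For (a), I would introduce the local analogues
\begin{align*}
\mathcal K_0^i:=\mathcal K_i-\mathcal K_i\iota_i(\mathcal K_{II}^i)^{-1}\iota_i^\star\mathcal K_i,
\qquad
f_0^i:=f_i-\mathcal K_i\iota_i(\mathcal K_{II}^i)^{-1}\iota_i^\star f_i,
\end{align*}
so that $\mathcal S_{\Gamma_i}^{E_i}=E_i^\star\mathcal K_0^iE_i$ and $g_{\Gamma_i}^{E_i}=E_i^\star f_0^i$. Direct computation gives $\iota_i^\star\mathcal K_0^i=0$, $\mathcal K_0^i\iota_i=0$ and $\iota_i^\star f_0^i=0$. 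Now take two liftings $E_i^1,E_i^2$. Since $\operatorname{ran}(E_i^1-E_i^2)\subset\ker\tau_i=\iota_i(X_I^i)$, the same telescoping identity as in \Cref{lem:schur_independent} shows that both differences vanish.

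For (b), existence comes from setting $\mathcal H_i$ equal to the right-hand side of \eqref{eq:local_harmonic_formula}. Then $\tau_i\mathcal H_i=\tau_iE_i=I$ because $\tau_i\iota_i=0$. Moreover $\iota_i^\star\mathcal K_i\mathcal H_i=\iota_i^\star\mathcal K_iE_i-\mathcal K_{II}^i(\mathcal K_{II}^i)^{-1}\iota_i^\star\mathcal K_iE_i=0$, and boundedness is immediate. For uniqueness, take two solutions $\mathcal H_i,\mathcal H_i'$. Their difference has range in $\ker\tau_i$, so it equals $\iota_iW$ for some bounded $W$. Then $\mathcal K_{II}^iW=0$, which forces $W=0$ by injectivity of $\mathcal K_{II}^i$. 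Uniqueness also shows that the formula is independent of $E_i$.

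For (c), I would apply (a) with the particular lifting $E_i=\mathcal H_i$. Then $\mathcal K_{I\Gamma}^{\mathcal H_i}=\iota_i^\star\mathcal K_i\mathcal H_i=0$, so the correction terms in \eqref{eq:local_schur_operator}--\eqref{eq:local_schur_load} drop out. This leaves $\mathcal S_{\Gamma_i}=\mathcal H_i^\star\mathcal K_i\mathcal H_i$ and $g_{\Gamma_i}=\mathcal H_i^\star f_i$.

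For (d), which is the only step needing a small extra idea, I would reuse the closed range argument of \Cref{prop:interface_flux}. Since $\iota_i^\star\mathcal K_i\mathcal H_i\xi=0$, the functional $\mathcal K_i\mathcal H_i\xi$ annihilates $\ker\tau_i$. Hence it equals $\tau_i^\star\Theta\xi$ for a unique $\Theta\xi\in X_{\Gamma_i}^\star$, and $\Theta$ is linear by uniqueness and injectivity of $\tau_i^\star$. To identify $\Theta$, apply $\mathcal H_i^\star$ and use $\tau_i\mathcal H_i=I$, which gives $\mathcal H_i^\star\tau_i^\star=I$. Thus $\Theta=\mathcal H_i^\star\mathcal K_i\mathcal H_i=\mathcal S_{\Gamma_i}$ by (c), which yields \eqref{eq:local_harmonic_flux}. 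This identification is also consistent with \eqref{eq:local_flux_schur} when $f_i=0$.
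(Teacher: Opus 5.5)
Your proposal is correct and follows the paper's proof essentially step for step: (a) and (b) transplant the global lifting-independence and harmonic-extension arguments, with $\mathcal K_{II}^i$ invertible by \Cref{thm:KII_fredholm}; (c) chooses $E_i=\mathcal H_i$; and (d) uses the annihilator/closed-range argument, then applies $\mathcal H_i^\star$ together with $\tau_i\mathcal H_i=I$. One small omission in (c): the correction term in the reduced load is $\mathcal K_{\Gamma I}^{\mathcal H_i}(\mathcal K_{II}^i)^{-1}f_I^i$, so you need $\mathcal K_{\Gamma I}^{\mathcal H_i}=(\mathcal K_{I\Gamma}^{\mathcal H_i})^\star=0$, which requires the self-adjointness of $\mathcal K_i$ (as the paper states explicitly) and not just $\iota_i^\star\mathcal K_i\mathcal H_i=0$.
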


\begin{proof}
Parts (a) and (b) follow exactly as in
\Cref{lem:schur_independent,thm:global_harmonic_extension}. For (c), taking
$E_i=\mathcal H_i$ in the local block representation gives
$\mathcal K_{I\Gamma}^{\mathcal H_i}=0$ and, by self-adjointness,
$\mathcal K_{\Gamma I}^{\mathcal H_i}=0$. Hence the local Schur
complement and reduced load are
$\mathcal H_i^\star\mathcal K_i\mathcal H_i$ and
$\mathcal H_i^\star f_i$, respectively.
Finally, to show (d), note that $\iota_i^\star\mathcal K_i\mathcal H_i=0$, so
$\mathcal K_i\mathcal H_i$ annihilates
$X_I^i=\ker(\tau_i)$. Since $\tau_i$ is surjective,
$\operatorname{ran}(\tau_i^\star)=\ker(\tau_i)^\circ$; therefore
there is a unique $T_i:X_{\Gamma_i}\to X_{\Gamma_i}^\star$ with
$\mathcal K_i\mathcal H_i=\tau_i^\star T_i$. Applying
$\mathcal H_i^\star$ to both sides and using $\tau_i\mathcal H_i=I$ gives
$T_i=\mathcal H_i^\star\mathcal K_i\mathcal H_i
  =\mathcal S_{\Gamma_i}$.
which proves \eqref{eq:local_harmonic_flux}.
\end{proof}

\begin{theorem}[Local Schur complement properties]
\label{thm:local_fredholm}
For each $i = 1,\dots,N$, the local Schur complement
$\mathcal{S}_{\Gamma_i} : X_{\Gamma_i} \to X_{\Gamma_i}^\star$ is
Fredholm with $\operatorname{ind}(\mathcal{S}_{\Gamma_i}) = 0$.
Moreover:
\begin{enumerate}[label=\textup{(\alph*)}]
\item $\mathcal{S}_{\Gamma_i}$ is an isomorphism precisely in the cases in which $\mathcal{K}_i: X_i \to X_i^\star$ is an isomorphism in \Cref{thm:KII_fredholm}.
\item In every remaining case,
\begin{align} \label{eq:local_schur_kernel}
    \operatorname{ker}(\mathcal{S}_{\Gamma_i})
    = \cbr[1]{\del{0, c_i \cdot 1|_{\Gamma_i}} : c_i \in \mathbb{R}}.
\end{align}
\end{enumerate}
\end{theorem}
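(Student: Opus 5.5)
The plan is to relate $\mathcal S_{\Gamma_i}$ to $\mathcal K_i$ through the local analogue of the decomposition in \Cref{cor:diagonalized_block_product_form}. By \Cref{thm:KII_fredholm}, $\mathcal K_{II}^i$ is an isomorphism. The map $J_i:(x_I^i,x_{\Gamma_i})\mapsto \iota_i x_I^i+\mathcal H_i x_{\Gamma_i}$ is an isomorphism $X_I^i\times X_{\Gamma_i}\to X_i$, by the local version of \eqref{eq:X_decomp_E_loc} with $E_i=\mathcal H_i$. Using $\iota_i^\star\mathcal K_i\mathcal H_i=0$ and self-adjointness, together with \Cref{lem:local_harmonic}(c), we obtain
\begin{align*}
\Phi^{-1}J_i^\star\mathcal K_i J_i=\begin{bmatrix}\mathcal K_{II}^i & 0\\ 0 & \mathcal S_{\Gamma_i}\end{bmatrix}.
\end{align*}
Since $J_i$ and $\Phi^{-1}J_i^\star$ are isomorphisms, the Fredholm property and the index are preserved. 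Hence $\mathcal K_{II}^i\oplus\mathcal S_{\Gamma_i}$ is Fredholm of index $0$. Because $\mathcal K_{II}^i$ is an isomorphism, $\mathcal S_{\Gamma_i}$ has closed range, kernel dimension equal to $\dim\ker\mathcal K_i$, and cokernel dimension equal to $\dim\operatorname{coker}\mathcal K_i$. Thus $\mathcal S_{\Gamma_i}$ is Fredholm with index $0$. Alternatively, one could invoke the Schur-complement index-preservation result of \Cref{s:block_theory}; the direct congruence argument seems more self-contained.

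The same congruence gives the kernel identification
\begin{align*}
\ker\mathcal K_i=\{\mathcal H_i x_{\Gamma_i}: x_{\Gamma_i}\in\ker\mathcal S_{\Gamma_i}\}.
\end{align*}
This holds because $J_i(x_I^i,x_{\Gamma_i})\in\ker\mathcal K_i$ forces $\mathcal K_{II}^i x_I^i=0$, hence $x_I^i=0$, and $\mathcal S_{\Gamma_i}x_{\Gamma_i}=0$. Equivalently, $\ker\mathcal S_{\Gamma_i}=\tau_i(\ker\mathcal K_i)$, and $\tau_i$ is injective on $\ker\mathcal K_i$ with inverse $\mathcal H_i$.

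For part (a), an index-zero Fredholm operator is an isomorphism iff it is injective. Therefore $\mathcal S_{\Gamma_i}$ is an isomorphism iff $\ker\mathcal K_i=\{0\}$, iff $\mathcal K_i$ is an isomorphism, which happens exactly in the cases listed in \Cref{thm:KII_fredholm}.

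For part (b), in each remaining case \Cref{thm:KII_fredholm} gives $\ker\mathcal K_i$ spanned by an element whose state component is $0$ and whose adjoint component is $1|_{\Omega_i}$. Applying $\tau_i$ yields $(0,1|_{\Gamma_i})$, which proves \eqref{eq:local_schur_kernel}. This element is nonzero because $\Gamma_i$ has positive measure: each subdomain touches the interface when $N\ge2$.

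The main point to check carefully is the congruence step. Specifically, one must verify that $J_i$ is indeed an isomorphism, meaning $\mathcal H_i$ is a bounded lifting and the direct sum is topological, and that the duality identification $\Phi$ commutes appropriately. These facts are the local counterparts of \Cref{lem:equivalent_block_product_form} and \Cref{thm:global_harmonic_extension}, and they follow verbatim.
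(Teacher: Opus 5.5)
Your proposal is correct and follows essentially the paper's route. The paper fixes an arbitrary lifting $E_i$, applies \Cref{prop:schur_fredholm} (an LDU factorization), and then identifies the kernel map $\Theta$ with $\mathcal{H}_i$. You instead take $E_i=\mathcal{H}_i$ from the outset, so the congruence is already block diagonal; this is the same argument with the triangular factor absorbed into the choice of lifting, and both versions conclude $\ker(\mathcal{S}_{\Gamma_i})=\tau_i(\ker\mathcal{K}_i)$ and read off the kernel from \Cref{thm:KII_fredholm}.
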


\begin{proof}
Fix a lifting $E_i : X_{\Gamma_i} \to X_i$, let $J_{E_i} : X_I^i \times X_{\Gamma_i} \to X_i$ denote the isomorphism $J_{E_i}(w_I^i, x_{\Gamma_i}) := \iota_i w_I^i + E_i x_{\Gamma_i}$ induced by the decomposition \eqref{eq:X_decomp_E_loc}, and let $\Phi_i : (X_I^i)^\star \times X_{\Gamma_i}^\star \to (X_I^i \times X_{\Gamma_i})^\star$ be the canonical identification (\emph{cf}. \Cref{rem:notation}). Analogously to the global setting, the local KKT operator $\mathcal{K}_i$ is congruent to the block operator
\begin{align} \label{eq:local_KKT_block_form_E}
    \widehat{\mathcal{K}}_i := \Phi_i^{-1} J_{E_i}^\star \mathcal{K}_i J_{E_i}
    = \begin{bmatrix} \mathcal{K}_{II}^i & \mathcal{K}_{I\Gamma}^{E_i} \\ \mathcal{K}_{\Gamma I}^{E_i} & \mathcal{K}_{\Gamma\Gamma}^{E_i} \end{bmatrix},
\end{align}
and thus applying \Cref{prop:schur_fredholm} with $\mathcal{Y} = X_I^i$, $\mathcal{Z} = X_{\Gamma_i}$, $\mathcal{X} = X_i$, $J = J_{E_i}$, $\mathsf{K} = \mathcal{K}_i$, $\mathsf{A} = \mathcal{K}_{II}^i$, $\mathsf{B} = \mathcal{K}_{I\Gamma}^{E_i}$, $\mathsf{C} = \mathcal{K}_{\Gamma I}^{E_i}$, and $\mathsf{D} = \mathcal{K}_{\Gamma\Gamma}^{E_i}$ alongside  \Cref{thm:KII_fredholm} yields that the local Schur complement
$\mathcal{S}_{\Gamma_i} : X_{\Gamma_i} \to X_{\Gamma_i}^\star$ is
Fredholm with $\operatorname{ind}(\mathcal{S}_{\Gamma_i}) = 0$, and that $\mathcal{S}_{\Gamma_i}$ is an isomorphism precisely when $\mathcal{K}_i$ is.

For part (b), \Cref{prop:schur_fredholm}(b) shows that the map $\Theta : \operatorname{ker}(\mathcal{S}_{\Gamma_i}) \to \operatorname{ker}(\mathcal{K}_i)$ defined by $\Theta(z) := J_{E_i}\del[1]{-(\mathcal{K}_{II}^i)^{-1} \mathcal{K}_{I\Gamma}^{E_i} z, \, z}$ is an isomorphism. Moreover, by \eqref{eq:local_KKT_blocks} and \eqref{eq:local_harmonic_formula},
\begin{align*}
  \Theta(z) = J_{E_i}\del[1]{-(\mathcal{K}_{II}^i)^{-1} \mathcal{K}_{I\Gamma}^{E_i} z, \, z} = \del[1]{I_{X_i}  - \iota_i (\mathcal{K}_{II}^i)^{-1} \iota_i^\star \mathcal{K}_i} E_i z = \mathcal{H}_i z.
\end{align*}
 Hence $\mathcal{H}_i : \operatorname{ker}(\mathcal{S}_{\Gamma_i}) \to \operatorname{ker}(\mathcal{K}_i)$ is an isomorphism, and applying $\tau_i$ together with $\tau_i \mathcal{H}_i = I_{X_{\Gamma_i}}$ gives $\operatorname{ker}(\mathcal{S}_{\Gamma_i}) = \tau_i\del[1]{\operatorname{ker}(\mathcal{K}_i)}$. The kernel characterizations in \Cref{thm:KII_fredholm} therefore give \eqref{eq:local_schur_kernel}.
\end{proof}

We end this subsection by characterizing the relationship between the global Schur complement operator $\mathcal{S}_\Gamma$ and the local subdomain Schur complement operators $\mathcal{S}_{\Gamma_i}$.

\begin{lemma}
For each $i=1,\dots,N$, the following relationship between the global and local harmonic extensions holds:
\begin{align} \label{eq:rho_H_identity}
    \smash{\rho_i  \mathcal{H} = \mathcal{H}_i  \rho_{\Gamma_i}.}
\end{align}
    Consequently, the following decompositions of the global Schur complement and global reduced load hold:
    \begin{align} \label{eq:global_schur_assembly}
 \mathcal S_\Gamma
  =
  \sum_{i=1}^N
  \rho_{\Gamma_i}^\star
  \mathcal S_{\Gamma_i}
  \rho_{\Gamma_i}, \quad   g_\Gamma
  =\sum_{i=1}^N
    \rho_{\Gamma_i}^\star g_{\Gamma_i}.
\end{align}
\end{lemma}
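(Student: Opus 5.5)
To prove \eqref{eq:rho_H_identity}, I will use the uniqueness part of \Cref{lem:local_harmonic}(b). Fix $x_\Gamma\in X_\Gamma$ and set $z_i:=\rho_i\mathcal H x_\Gamma\in X_i$. It suffices to show that $z_i$ satisfies the two defining conditions of $\mathcal H_i(\rho_{\Gamma_i}x_\Gamma)$.

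The trace condition follows from \eqref{eq:rho_tau_identity.1} and \eqref{eq:harmonic_extension_b}:
\begin{align*}
\tau_i z_i=\tau_i\rho_i\mathcal H x_\Gamma=\rho_{\Gamma_i}\tau\mathcal H x_\Gamma=\rho_{\Gamma_i}x_\Gamma.
\end{align*}
For the interior condition, take $w\in X_I^i$ and test \eqref{eq:harmonic_extension_a} with $\jmath_i w\in X_I$. By \eqref{eq:K_global_to_local} and $\rho_j\iota_I\jmath_i=\delta_{ij}\iota_i$ from \eqref{eq:rho_identity_2},
\begin{align*}
0=\langle\mathcal K\mathcal H x_\Gamma,\iota_I\jmath_i w\rangle_X=\sum_{j=1}^N\langle\mathcal K_j\rho_j\mathcal H x_\Gamma,\rho_j\iota_I\jmath_i w\rangle_{X_j}=\langle\mathcal K_i z_i,\iota_i w\rangle_{X_i}.
\end{align*}
Hence $\iota_i^\star\mathcal K_i z_i=0$. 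This is exactly the computation in the proof of \Cref{thm:equivalence} with $f=0$. Since $\mathcal K_{II}^i$ is an isomorphism by \Cref{thm:KII_fredholm}, the local harmonic extension is unique, and so $z_i=\mathcal H_i\rho_{\Gamma_i}x_\Gamma$.

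For the Schur assembly, I will combine \eqref{eq:schur_comp_harmonic}, \eqref{eq:K_global_to_local} and \eqref{eq:local_schur_harmonic}:
\begin{align*}
\mathcal S_\Gamma=\mathcal H^\star\mathcal K\mathcal H=\sum_{i=1}^N(\rho_i\mathcal H)^\star\mathcal K_i(\rho_i\mathcal H)=\sum_{i=1}^N\rho_{\Gamma_i}^\star\mathcal H_i^\star\mathcal K_i\mathcal H_i\rho_{\Gamma_i}=\sum_{i=1}^N\rho_{\Gamma_i}^\star\mathcal S_{\Gamma_i}\rho_{\Gamma_i}.
\end{align*}
The load is handled the same way:
\begin{align*}
g_\Gamma=\mathcal H^\star f=\sum_{i=1}^N(\rho_i\mathcal H)^\star f_i=\sum_{i=1}^N\rho_{\Gamma_i}^\star\mathcal H_i^\star f_i=\sum_{i=1}^N\rho_{\Gamma_i}^\star g_{\Gamma_i}.
\end{align*}

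The one delicate point is that \Cref{cor:diagonalized_block_product_form} assumed $\mathcal S_\Gamma$ is an isomorphism. The identities $\mathcal S_\Gamma=\mathcal H^\star\mathcal K\mathcal H$ and $g_\Gamma=\mathcal H^\star f$ themselves, however, only need $\mathcal K_{II}$ to be invertible. This holds by \Cref{cor:KII_decomp} and \Cref{thm:KII_fredholm}. The identities then follow from \Cref{lem:schur_independent} by taking the lifting $E=\mathcal H$, for which $\mathcal K_{I\Gamma}^{\mathcal H}=0$. I will note this explicitly so the assembly holds in all cases of \Cref{thm:KII_fredholm}, including those where local or global kernels are nontrivial.
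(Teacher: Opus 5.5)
Your proof is correct and follows the paper's argument essentially step for step. You first obtain $\rho_i\mathcal H=\mathcal H_i\rho_{\Gamma_i}$ from uniqueness of the local harmonic extension, by testing \eqref{eq:harmonic_extension_a} with $\iota_I\jmath_i w$. You then substitute this identity into $\mathcal H^\star\mathcal K\mathcal H$ and $\mathcal H^\star f$.

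Your extra remark is valid and closes a small gap left by the paper's citation of \Cref{cor:diagonalized_block_product_form}, which assumes $\mathcal S_\Gamma$ is an isomorphism. As you say, $\mathcal S_\Gamma=\mathcal H^\star\mathcal K\mathcal H$ and $g_\Gamma=\mathcal H^\star f$ need only invertibility of $\mathcal K_{II}$. For the load you also need $\mathcal K_{\Gamma I}^{\mathcal H}=(\mathcal K_{I\Gamma}^{\mathcal H})^\star=0$, which follows from self-adjointness of $\mathcal K$.
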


\begin{proof}
We first prove \eqref{eq:rho_H_identity}.
Let $x_\Gamma \in X_\Gamma$ be fixed and set $x_{\Gamma_i} := \rho_{\Gamma_i} x_\Gamma.$
Testing \eqref{eq:harmonic_extension_a} with an arbitrary function $w_I^i\in X_I^i$ and using the decomposition of the global KKT operator \eqref{eq:K_global_to_local} as well as  the fact that $\rho_j \iota_I \jmath_i=\delta_{ij}\iota_i$ by \eqref{eq:rho_identity_2}, we have 
\begin{align*}
\smash{  0
  =\langle\mathcal K\mathcal Hx_\Gamma,
    \iota_I\jmath_iw_I^i\rangle_X =\sum_{j=1}^N
    \langle\mathcal K_j\rho_j\mathcal Hx_\Gamma,
\rho_j\iota_I\jmath_iw_I^i\rangle_{X_j}=\langle\mathcal K_i\rho_i\mathcal Hx_\Gamma,
    \iota_iw_I^i\rangle_{X_i}}.
\end{align*}
The fact that $w_I^i \in X_I^i$ is arbitrary, combined with \eqref{eq:rho_tau_identity.1}, yields
\begin{align*}
    \iota_i^\star \mathcal K_i(\rho_i\mathcal Hx_\Gamma) &= 0, \\
  \tau_i(\rho_i\mathcal Hx_\Gamma)
  &=\rho_{\Gamma_i}x_\Gamma.
\end{align*}
The uniqueness of the local harmonic extension $\mathcal H_i$ established in \Cref{lem:local_harmonic} then yields \eqref{eq:rho_H_identity}. 
We next prove \eqref{eq:global_schur_assembly}. Since $\mathcal{S}_\Gamma = \mathcal{H}^\star \mathcal{K} \mathcal{H}$ by \Cref{cor:diagonalized_block_product_form} and $\mathcal{S}_{\Gamma_i} = \mathcal{H}_i^\star \mathcal{K}_i \mathcal{H}_i$ by \Cref{lem:local_harmonic}, \cref{eq:K_global_to_local} and \cref{eq:rho_H_identity} yield
\begin{align*}
  \mathcal S_\Gamma
  &=\mathcal H^\star\mathcal K\mathcal H
   =\sum_{i=1}^N
     \del{\rho_i\mathcal H}^\star
     \mathcal K_i\del{\rho_i\mathcal H}
  =\sum_{i=1}^N
    \rho_{\Gamma_i}^\star
    \mathcal H_i^\star\mathcal K_i\mathcal H_i
    \rho_{\Gamma_i}
   =\sum_{i=1}^N
    \rho_{\Gamma_i}^\star
    \mathcal S_{\Gamma_i}\rho_{\Gamma_i}.
    \end{align*}
Moreover, since $g_\Gamma = \mathcal{H}^\star f$ by \Cref{cor:diagonalized_block_product_form} and $g_{\Gamma_i} = \mathcal{H}_i^\star f_i$ by \Cref{lem:local_harmonic}, \cref{eq:rho_H_identity} yields
\begin{align*}
  g_\Gamma
  &=\mathcal H^\star f
   =\sum_{i=1}^N\del{\rho_i\mathcal H}^\star f_i=\sum_{i=1}^N
    \rho_{\Gamma_i}^\star\mathcal H_i^\star f_i
   =\sum_{i=1}^N
    \rho_{\Gamma_i}^\star g_{\Gamma_i}.
\end{align*}
\end{proof}

Thus, the action of the global Schur complement operator $\mathcal S_\Gamma: X_\Gamma \to X_\Gamma^\star$ amounts to solving $N$ independent local problems and assembling their interface contributions. The local solves are embarrassingly parallel; only the interface assembly is global. This property can be exploited by, e.g., iterative Krylov methods for the efficient solution of \eqref{eq:block_KKT}. However, the conditioning of the interface
Schur complement operator is delicate, and convergence of Krylov methods for the solution of \eqref{eq:global_schur_eq}
depends on effective preconditioning.  This motivates the BDDC
preconditioner developed in \Cref{sec:bddc_abstract}.

%

%----------------------------------------------------------------------

\section{Abstract BDDC preconditioner}\label{sec:bddc_abstract}
%======================================================================

The BDDC method builds an approximate inverse operator of the interface Schur complement from a partial assembly of $\mathcal{S}_\Gamma : X_\Gamma \to X_\Gamma^\star$. Rather than enforcing agreement of the local subdomain solution traces
on the interface, BDDC designates a finite set of trace functionals as
\emph{primal constraints} and enforces agreement only of these
quantities.
The broken and partially assembled interface operators are introduced
in \Cref{ss:broken_schur,ss:partial_schur}, where their Fredholm and
kernel properties are characterized. The BDDC operator is defined in
\Cref{ss:preconditioner}. The standard BDDC primal--dual splitting then yields a
two-level additive Schwarz representation consisting of independent
local dual-subspace solves and a finite-dimensional coarse correction.
%----------------------------------------------------------------------
\subsection{Broken interface operators}\label{ss:broken_schur}
%----------------------------------------------------------------------

The local Schur complements $\mathcal{S}_{\Gamma_i}$ each act on the
per-subdomain trace space $X_{\Gamma_i}$. The fully assembled
operator $\mathcal{S}_\Gamma$ couples them through the conformity
built into $X_\Gamma$. Removing the conformity constraints across subdomains yields the broken
interface space
\begin{align}\label{eq:broken_interface}
  \widetilde X_\Gamma := \prod_{i=1}^N X_{\Gamma_i}, \quad \| \widetilde{x} \|_{\widetilde{X}_\Gamma}^2 = \sum_{i=1}^N \| \widetilde{x}_{\Gamma_i} \|_{X_{\Gamma_i}}^2,
\end{align}
whose elements are tuples of independent subdomain trace pairs. In contrast
to the conforming space $X_\Gamma$, tuples in $\widetilde X_\Gamma$ need not agree across
shared boundaries $\overline{\Gamma}_i \cap \overline{\Gamma}_j$.
The conforming space embeds into the broken one via
$\iota_\Gamma : X_\Gamma \hookrightarrow \widetilde X_\Gamma$,
$\iota_\Gamma\, x_\Gamma := (\rho_{\Gamma_1} x_\Gamma,\dots,\rho_{\Gamma_N} x_\Gamma)$. For each $i=1,\dots,N$, we denote by $\widetilde{\pi}_{\Gamma_i}: \widetilde{X}_\Gamma \to X_{\Gamma_i}$ and $\widetilde{\jmath}_{\Gamma_i}: X_{\Gamma_i} \to \widetilde{X}_\Gamma$ the coordinate projection and coordinate injection operators, respectively.

A first approximation amenable to parallel inversion drops the
conformity constraint and works directly on the broken interface space $\widetilde X_\Gamma$. The local
Schur complements assemble into a block-diagonal operator on this
larger space:
\begin{align}\label{eq:broken_schur}
  \widetilde{\mathcal{S}}_{\Gamma} := \bigoplus_{i=1}^N \mathcal{S}_{\Gamma_i}
    : \widetilde X_\Gamma \to \widetilde X_\Gamma^\star.
\end{align}
Equivalently, 
\begin{align*}
    \widetilde{\mathcal{S}}_{\Gamma} = \sum_{i=1}^N \widetilde{\pi}_{\Gamma_i}^\star \mathcal{S}_{\Gamma_i} \widetilde{\pi}_{\Gamma_i}.
\end{align*}
The conforming Schur complement is recovered by
\begin{align*}
  \mathcal S_\Gamma
  =\iota_\Gamma^\star\widetilde{\mathcal S}_\Gamma\iota_\Gamma.
\end{align*}
When it is invertible, applying
$\widetilde{\mathcal S}_\Gamma^{-1}$ reduces to independent local
Schur-complement solves and requires no inter-subdomain
communication. This is the optimal control analogue of the local
substructuring solves used in Neumann--Neumann
methods~\cite{TosellWidlund2005}.

\begin{theorem}[Broken Schur complement properties]
\label{thm:broken_fredholm}
The operator
$\widetilde{\mathcal S}_\Gamma:\widetilde X_\Gamma\to
\widetilde X_\Gamma^\star$ is Fredholm with $\operatorname{ind}(\widetilde{\mathcal S}_\Gamma) = 0$ and
  $\ker(\widetilde{\mathcal S}_\Gamma)
  =\prod_{i=1}^N\ker(\mathcal S_{\Gamma_i})$.
Consequently:
\begin{enumerate}[label=\textup{(\alph*)}]
\item If $\ker(\mathcal S_{\Gamma_i})=\{0\}$ for every $i$, then
$\widetilde{\mathcal S}_\Gamma$ is an isomorphism.
\item Otherwise, it holds that
\begin{align*}
  \ker(\widetilde{\mathcal S}_\Gamma)
  =\prod_{i\in\mathcal N_*}
    \cbr[1]{(0,c_i \cdot 1|_{\Gamma_i}):c_i\in\mathbb R},
  \qquad
  \mathcal N_*:=\cbr[1]{i:\ker(\mathcal S_{\Gamma_i})\neq\{0\}}.
\end{align*}
In case \textup{(DC)}, $i\in\mathcal N_*$ precisely when
$\sigma=0$, $\Omega_i$ is floating, and
$|\mathcal I_{\bar u,\bar\mu}^i|=0$. In case \textup{(NC)}, it occurs
when $\sigma=0$ and either $i\in\mathcal N_\circ$ or
$i\in\mathcal N_c$ with $|\mathcal I_{\bar u,\bar\mu}^i|=0$.
\end{enumerate}
\end{theorem}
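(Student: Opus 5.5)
The plan is to reduce everything to the block-diagonal structure of $\widetilde{\mathcal S}_\Gamma=\bigoplus_{i=1}^N\mathcal S_{\Gamma_i}$ from \Cref{rem:notation} and then read off the conclusions from \Cref{thm:local_fredholm}.

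\emph{Fredholmness and index.} By \Cref{thm:local_fredholm}, each $\mathcal S_{\Gamma_i}$ is Fredholm of index zero. Under the identification $\Phi$ of $\widetilde X_\Gamma^\star$ with $\prod_i X_{\Gamma_i}^\star$, the map $\Phi^{-1}\widetilde{\mathcal S}_\Gamma$ acts componentwise as $(x_{\Gamma_i})_i\mapsto(\mathcal S_{\Gamma_i}x_{\Gamma_i})_i$. Indeed,
\[
\widetilde\jmath_{\Gamma_j}^{\,\star}\widetilde{\mathcal S}_\Gamma=\sum_i(\widetilde\pi_{\Gamma_i}\widetilde\jmath_{\Gamma_j})^\star\mathcal S_{\Gamma_i}\widetilde\pi_{\Gamma_i}=\mathcal S_{\Gamma_j}\widetilde\pi_{\Gamma_j},
\]
which follows from \eqref{eq:resolution}. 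Consequently the kernel is $\prod_i\ker(\mathcal S_{\Gamma_i})$, and the range is $\Phi\bigl(\prod_i\operatorname{ran}(\mathcal S_{\Gamma_i})\bigr)$. This range is closed as a finite product of closed subspaces, and its codimension is $\sum_i\operatorname{codim}\operatorname{ran}(\mathcal S_{\Gamma_i})$. Since $\Phi$ is an isometric isomorphism, it preserves Fredholmness and codimensions. Hence
\[
\operatorname{ind}(\widetilde{\mathcal S}_\Gamma)=\sum_i\operatorname{ind}(\mathcal S_{\Gamma_i})=0,
\]
and the kernel identity holds.

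\emph{Part (a).} If every local kernel is trivial, then $\widetilde{\mathcal S}_\Gamma$ is injective. Being Fredholm of index zero, it is then surjective, so the bounded inverse theorem makes it an isomorphism. Alternatively, one can invoke directly the statement in \Cref{rem:notation} that a block-diagonal sum is an isomorphism if and only if each block is.

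\emph{Part (b).} For indices $i\notin\mathcal N_*$, the factor $\ker(\mathcal S_{\Gamma_i})$ is $\{0\}$ and drops out of the product. For $i\in\mathcal N_*$, \Cref{thm:local_fredholm}(b) gives the factor $\{(0,c_i\cdot 1|_{\Gamma_i})\}$; here the product is understood as embedded in $\widetilde X_\Gamma$ with zero components off $\mathcal N_*$. It remains to identify $\mathcal N_*$. By \Cref{thm:local_fredholm}(a), $i\in\mathcal N_*$ exactly when $\mathcal K_i$ fails to be an isomorphism. Those cases are enumerated by taking the complement of the isomorphism cases in \Cref{thm:KII_fredholm}:
\begin{itemize}
\item In (DC), this is $\sigma=0$, $\Omega_i$ floating and $|\mathcal I^i_{\bar u,\bar\mu}|=0$.
\item In (NC), this is $\sigma=0$ and either $i\in\mathcal N_\circ$, or $i\in\mathcal N_c$ with $|\mathcal I^i_{\bar u,\bar\mu}|=0$.
\end{itemize}

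The only point needing care, and the main obstacle as I see it, is the bookkeeping through $\Phi$ in the Fredholm/index step. The index of a finite direct sum is additive, but one has to confirm that the cokernel of the sum is the product of the local cokernels under the identification of duals. I would verify this directly from the componentwise formula above rather than appeal to an abstract result.
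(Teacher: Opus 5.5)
Your proposal is correct and follows the same route as the paper, whose proof simply cites the block-diagonal definition \eqref{eq:broken_schur} together with \Cref{thm:local_fredholm}. You additionally write out two steps the paper leaves implicit: the componentwise action under $\Phi$, which gives the product kernel, closed range and additivity of the index, and the identification of $\mathcal N_*$ as the complement of the isomorphism cases in \Cref{thm:KII_fredholm}.
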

\begin{proof}
The result follows from 
\eqref{eq:broken_schur} and \Cref{thm:local_fredholm}.
\end{proof}
Inversion of the broken operator $\widetilde{\mathcal{S}}_\Gamma$ can be parallelized across subdomains. However, since it does not couple any information
across subdomains, the approximation degrades as the number of
subdomains grows.  A scalable preconditioner must restore some
interface coupling while preserving the parallel structure of the
local solves. BDDC does this by \emph{partial assembly}, i.e., by imposing the continuity of selected trace
functionals across interface entities.

%----------------------------------------------------------------------
\subsection{Partially assembled interface operators}\label{ss:partial_schur}
%----------------------------------------------------------------------

A more effective approximation to $\mathcal S_\Gamma^{-1}$ is obtained
by passing from the fully broken interface problem to a partially
assembled one.  Rather than requiring full trace continuity, BDDC enforces agreement of finitely many bounded linear functionals of the subdomain state and adjoint interface traces. In practice, these are typically vertex evaluations and edge or face averages.  The
resulting space lies between the conforming interface space and the
fully broken interface space, with only the primal trace quantities
shared across subdomains and all remaining trace components left
broken.

\subsubsection{Primal constraints}
The passage from the fully broken trace space to the partially
assembled one is determined by the choice of primal constraints.  These are
bounded linear functionals of the state and adjoint traces associated
with interface entities.  The partially assembled space is obtained by
identifying their values across all incident subdomains, while leaving
the remaining trace components independent.

\begin{definition}[Primal constraint family]\label{def:primal_constraints}
Let $N_\Pi \in \mathbb{N}$. For each $k = 1,\dots,N_\Pi$, let
$\mathcal{N}_k \subset \cbr{1,\dots,N}$ with $|\mathcal{N}_k| \ge 2$ denote the
set of subdomains sharing the $k^{\text{th}}$ primal constraint, and for each
$i \in \mathcal{N}_k$, let
\begin{align*}
  \ell_k^i := \del[1]{\ell_k^{i,y}, \ell_k^{i,p}}
    \in V_{\Gamma_i}^\star \times V_{\Gamma_i}^\star
\end{align*}
be a pair of bounded linear functionals acting on the state and adjoint traces on $\Gamma_i$. A constraint acting on one variable alone is admitted by
taking the other component to be zero for every $i \in \mathcal{N}_k$. The collection
$\cbr[0]{\del[0]{\mathcal{N}_k, (\ell_k^i)_{i \in \mathcal{N}_k}}}_{k=1}^{N_\Pi}$
is a \emph{primal constraint family} if, for every $k = 1,\dots,N_\Pi$ and all
$i,j \in \mathcal{N}_k$, the following \emph{consistency conditions} hold in $V_\Gamma^\star$:
\begin{align}\label{eq:primal_compatibility}
  \ell_k^{i,\bullet} r_{\Gamma_i}
  = \ell_k^{j,\bullet}  r_{\Gamma_j},
  \qquad \bullet \in \cbr{y,p}.
\end{align}
\end{definition}
Condition \eqref{eq:primal_compatibility} ensures the $k^{\text{th}}$ constraint is single-valued on conforming traces, and thus the conforming interface space
is contained in the partially assembled one:

\begin{proposition}[Conforming inclusion]\label{prop:conforming_inclusion}
It holds that $\iota_\Gamma(X_\Gamma) \subset \widehat{X}_\Gamma$.
\end{proposition}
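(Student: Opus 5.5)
The space $\widehat X_\Gamma$ is not defined in the text shown. I take it to be the partially assembled space
\begin{align*}
  \widehat X_\Gamma := \cbr[1]{\widetilde x=(x_{\Gamma_i})_{i=1}^N\in\widetilde X_\Gamma :
  \ell_k^i(x_{\Gamma_i})=\ell_k^j(x_{\Gamma_j}) \text{ for all } k=1,\dots,N_\Pi \text{ and all } i,j\in\mathcal N_k}.
\end{align*}
Here $\ell_k^i(x_{\Gamma_i}) := \langle \ell_k^{i,y}, y_{\Gamma_i}\rangle + \langle \ell_k^{i,p}, p_{\Gamma_i}\rangle$. One could also read the condition componentwise, with the state and adjoint parts matching separately; the argument covers both readings.

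Under this definition the claim reduces to checking the defining constraints on $\iota_\Gamma x_\Gamma$, with the consistency conditions \eqref{eq:primal_compatibility} doing the work.
\begin{enumerate}[label=\textup{(\roman*)}]
\item Fix $x_\Gamma=(y_\Gamma,p_\Gamma)\in X_\Gamma$. By definition of $\iota_\Gamma$, the $i$-th component of $\iota_\Gamma x_\Gamma$ is
\begin{align*}
  \rho_{\Gamma_i}x_\Gamma=(r_{\Gamma_i}y_\Gamma,\,r_{\Gamma_i}p_\Gamma)\in X_{\Gamma_i}.
\end{align*}
\item Fix $k$ and $i,j\in\mathcal N_k$. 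For the state component, \eqref{eq:primal_compatibility} with $\bullet=y$ is an identity of functionals in $V_\Gamma^\star$. Evaluating it at $y_\Gamma$ gives
\begin{align*}
  \langle\ell_k^{i,y},r_{\Gamma_i}y_\Gamma\rangle=\langle\ell_k^{j,y},r_{\Gamma_j}y_\Gamma\rangle.
\end{align*}
The case $\bullet=p$, evaluated at $p_\Gamma$, gives the analogous adjoint identity. Adding the two identities shows that the combined functional $\ell_k^i$ takes the same value on $\rho_{\Gamma_i}x_\Gamma$ and on $\rho_{\Gamma_j}x_\Gamma$. The componentwise reading is immediate from the two identities separately.
\item Since $k$, $i$ and $j$ were arbitrary, $\iota_\Gamma x_\Gamma$ satisfies every defining constraint of $\widehat X_\Gamma$. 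It also lies in $\widetilde X_\Gamma$, because each $r_{\Gamma_i}$ maps $V_\Gamma$ into $V_{\Gamma_i}$.
\end{enumerate}

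The proof is essentially definitional, so I expect no real obstacle. The only subtle point is making sure that $\ell_k^{i,\bullet}r_{\Gamma_i}$ is interpreted as a functional on $V_\Gamma$. This requires $r_{\Gamma_i}:V_\Gamma\to V_{\Gamma_i}$ to be bounded, which holds because $r_{\Gamma_i}\operatorname{tr}_\Gamma=\operatorname{tr}_{\Gamma_i}r_{\Omega_i}$ and the trace spaces carry quotient norms. Once this is settled, the evaluation in step (ii) is legitimate.
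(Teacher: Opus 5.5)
Your proof is correct and is essentially the paper's: the paper likewise reads \eqref{eq:primal_compatibility} as the identity $r_{\Gamma_i}^\star\ell_k^{i,y}=r_{\Gamma_j}^\star\ell_k^{j,y}$ in $V_\Gamma^\star$, evaluates it at $y_\Gamma$, and treats $p_\Gamma$ identically. Of your two readings of $\widehat X_\Gamma$, the paper uses the componentwise one, $\widehat X_\Gamma=\widehat V_\Gamma^y\times\widehat V_\Gamma^p$ as in \eqref{eq:partially_assembled_factors}--\eqref{eq:partially_assembled_space}, and your two separate identities give it directly.
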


\begin{proof}
Let $x_\Gamma := (y_\Gamma, p_\Gamma) \in X_\Gamma$, so that
$\widetilde{\pi}_{\Gamma_i}\iota_\Gamma x_\Gamma
 = \rho_{\Gamma_i} x_\Gamma
 = \del[1]{r_{\Gamma_i}y_\Gamma, r_{\Gamma_i}p_\Gamma}$.
For all $i,j \in \mathcal{N}_k$, \eqref{eq:primal_compatibility} gives
\begin{align*}
  \ell_k^{i,y}\del[1]{r_{\Gamma_i}y_\Gamma}
  = \langle r_{\Gamma_i}^\star \ell_k^{i,y}, y_\Gamma \rangle_{V_\Gamma}
  = \langle r_{\Gamma_j}^\star \ell_k^{j,y}, y_\Gamma \rangle_{V_\Gamma}
  = \ell_k^{j,y}\del[1]{r_{\Gamma_j}y_\Gamma},
\end{align*}
and the argument for the adjoint components is identical.
\end{proof}

The constraints provide global coupling absent from the broken operator
$\widetilde{\mathcal{S}}_\Gamma$.
In particular, the partially assembled space is the subspace of $\widetilde{X}_\Gamma$ on which
every constraint is single-valued. Let
$\widetilde{V}_\Gamma := \prod_{i=1}^N V_{\Gamma_i}$ and define the scalar partially assembled spaces for $\bullet \in \cbr{y,p}$:
\begin{align}\label{eq:partially_assembled_factors}
  \widehat{V}_\Gamma^\bullet
  := \cbr[1]{
    v \in \widetilde{V}_\Gamma :
    \ell_k^{i,\bullet}(v_i) = \ell_k^{j,\bullet}(v_j)
    \text{ for all } i,j \in \mathcal{N}_k,\;
    k = 1,\dots,N_\Pi}.
\end{align}
We define the partially assembled space as the product
\begin{align}\label{eq:partially_assembled_space}
  \widehat{X}_\Gamma := \widehat{V}_\Gamma^y \times \widehat{V}_\Gamma^p,
\end{align}
which can be identified with a subspace of $\widetilde{X}_\Gamma$ via coordinate permutation. We denote by $\widehat{\iota}_\Gamma : \widehat{X}_\Gamma \hookrightarrow \widetilde{X}_\Gamma$ the canonical embedding. Every primal constraint takes a single value on $\widehat{X}_\Gamma$ by construction. For $k = 1,\dots,N_\Pi$, we may therefore define
\begin{align}\label{eq:primal_global_functional}
  \ell_k := \del[1]{\ell_k^y, \ell_k^p} \in \del[1]{\widehat{V}_\Gamma^y}^\star \times \del[1]{\widehat{V}_\Gamma^p}^\star,
  \qquad
  \ell_k^\bullet(v) := \ell_k^{i,\bullet}(v_i)
  \quad \text{for any } i \in \mathcal{N}_k,
\end{align}
the right-hand side being independent of the choice of $i \in \mathcal{N}_k$ by \eqref{eq:partially_assembled_factors}. We call $\ell_k$, acting on $\widehat{X}_\Gamma = \widehat{V}_\Gamma^y \times \widehat{V}_\Gamma^p$ componentwise, the $k^{\text{th}}$ \emph{primal value}, shared by the subdomains $\Omega_i$, $i \in \mathcal{N}_k$. In principle, the state and adjoint primal constraints can be chosen independently. However, many of the results below require the following compatibility condition, which essentially requires that the adjoint primal constraints be at least as rich as the state primal constraints:
\begin{definition}[Compatible family]\label{def:compatible_family}
A primal constraint family is \emph{compatible} if
$\widehat{V}_\Gamma^p \subseteq \widehat{V}_\Gamma^y$; that is, if
every $v \in \widetilde{V}_\Gamma$ satisfying
$\ell_k^{i,p}(v_i) = \ell_k^{j,p}(v_j)$ for all $i,j \in \mathcal{N}_k$
and all $k = 1,\dots,N_\Pi$ also satisfies
$\ell_k^{i,y}(v_i) = \ell_k^{j,y}(v_j)$ for all $i,j \in \mathcal{N}_k$
and all $k = 1,\dots,N_\Pi$.
\end{definition}

\subsubsection{Partially assembled Schur complement}
Given arbitrary $\widehat{x},\widehat{v} \in \widehat{X}_\Gamma$,  define $(\widehat{x}_i, \widehat{v}_i) := (\widetilde{\pi}_{\Gamma_i}\widehat{\iota}_\Gamma\, \widehat{x},\widetilde{\pi}_{\Gamma_i}\widehat{\iota}_\Gamma\, \widehat{v}) $. Restricting $\widetilde{\mathcal{S}}_{\Gamma}$ to
$\widehat{X}_\Gamma$ yields the partially assembled Schur complement
$\widehat{\mathcal{S}}_\Gamma := \widehat{\iota}_\Gamma^{\; \star}\, \widetilde{\mathcal{S}}_{\Gamma}\, \widehat{\iota}_\Gamma :
\widehat{X}_\Gamma \to \widehat{X}_\Gamma^\star$, i.e.,
\begin{align}\label{eq:partial_schur}
  \langle \widehat{\mathcal{S}}_\Gamma\,\widehat{x},
          \widehat{v} \rangle_{\widehat{X}_\Gamma}
  =  \langle \widetilde{\mathcal{S}}_{\Gamma} \widehat{\iota}_\Gamma \widehat{x},
         \widehat{\iota}_\Gamma  \widehat{v} \rangle_{\widetilde{X}_\Gamma} = \sum_{i=1}^N
    \langle \mathcal{S}_{\Gamma_i}\widehat{x}_i,
            \widehat{v}_i
    \rangle_{X_{\Gamma_i}},
  \qquad \forall\, \widehat{x}, \widehat{v} \in \widehat{X}_\Gamma.
\end{align}

\begin{theorem}
\label{thm:partial_fredholm}
Let $\Lambda_\Pi$ be a family of primal constraints, $\widehat{X}_{\Gamma}$ the corresponding partially assembled trace space, and $\widehat{S}_{\Gamma}: \widehat{X}_{\Gamma} \to \widehat{X}_{\Gamma}^\star$ the partially assembled Schur complement operator. The following statements hold:
\begin{enumerate}[label=\textup{(\alph*)}]
\item $\widehat{X}_\Gamma$ is a closed subspace of
  $\widetilde{X}_\Gamma$ of finite codimension;
\item $\widehat{\mathcal{S}}_\Gamma : \widehat{X}_\Gamma \to
  \widehat{X}_\Gamma^\star$ is Fredholm with
  $\operatorname{ind}(\widehat{\mathcal{S}}_\Gamma) = 0$, and it holds that 
  \begin{align}\label{eq:kernel_inclusion}
    \widehat{X}_\Gamma \cap \ker(\widetilde{\mathcal{S}}_\Gamma)
      \subseteq \ker(\widehat{\mathcal{S}}_\Gamma).
  \end{align}
  \item If the primal constraint family is compatible in the sense of \Cref{def:compatible_family}, then in fact
\begin{align} \label{eq:ker_equiv}
   \widehat{X}_\Gamma  \cap \operatorname{ker}(\widetilde{\mathcal{S}}_\Gamma) = \operatorname{ker}(\widehat{\mathcal{S}}_\Gamma).
\end{align}
\end{enumerate}
    Consequently, if the primal constraint family is compatible in the sense of \Cref{def:compatible_family}, then
$\widehat{\mathcal{S}}_\Gamma$ is an isomorphism if and only if
    $\widehat{X}_\Gamma \cap \operatorname{ker}(\widetilde{\mathcal{S}}_\Gamma)
    = \cbr{0}$.
\end{theorem}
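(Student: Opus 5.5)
The plan is to treat the three parts in order: (a) by a finite-codimension argument, (b) by the Fredholm results of \Cref{s:block_theory}, and (c) by an energy argument on the local harmonic extensions, which is where compatibility enters.

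\emph{Part (a).} For each $k$, each pair $i,j\in\mathcal N_k$ and each $\bullet\in\{y,p\}$, the map $v\mapsto \ell_k^{i,\bullet}(v_i)-\ell_k^{j,\bullet}(v_j)$ is a bounded linear functional on $\widetilde V_\Gamma$. By \eqref{eq:partially_assembled_factors}, $\widehat V_\Gamma^\bullet$ is the intersection of the kernels of these finitely many functionals. It is therefore closed, with codimension at most their number. Taking the product over $\bullet\in\{y,p\}$ (after the coordinate permutation) shows that $\widehat X_\Gamma$ is closed and of finite codimension in $\widetilde X_\Gamma$.

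\emph{Part (b).} By \Cref{thm:broken_fredholm}, $\widetilde{\mathcal S}_\Gamma$ is Fredholm of index zero. I will invoke the appendix result on preservation of the Fredholm index under restriction to finite-codimensional subspaces, applied to the compression $\widehat{\mathcal S}_\Gamma=\widehat\iota_\Gamma^{\;\star}\widetilde{\mathcal S}_\Gamma\widehat\iota_\Gamma$. Concretely, if $m$ is the codimension, then $\widehat\iota_\Gamma$ is Fredholm of index $-m$ and $\widehat\iota_\Gamma^{\;\star}$ is Fredholm of index $+m$, so the composition has index zero. The inclusion \eqref{eq:kernel_inclusion} is immediate: if $\widetilde{\mathcal S}_\Gamma\widehat\iota_\Gamma\widehat x=0$, then applying $\widehat\iota_\Gamma^{\;\star}$ gives $\widehat{\mathcal S}_\Gamma\widehat x=0$.

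\emph{Part (c), the main step.} Let $\widehat x=(y,p)\in\ker(\widehat{\mathcal S}_\Gamma)$. Write $x_i:=\mathcal H_i\widehat x_i=(y_i,p_i,u_i,\mu_i)$ and use $\mathcal S_{\Gamma_i}=\mathcal H_i^\star\mathcal K_i\mathcal H_i$ from \Cref{lem:local_harmonic}.

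\emph{First test.} Take $\widehat v=(y,-p)\in\widehat X_\Gamma$; this is admissible with no compatibility assumption. By harmonicity, the interior rows of $\mathcal K_i$ vanish. In particular $\mathcal P_{\mathcal A_i}u_i=0$, because the $\mu$-row is tested by all of $U_{\mathcal A_i}$. Testing with $\mathcal H_i(y_i,-p_i)$, which differs from $(y_i,-p_i,u_i,\cdot)$ only by interior terms, the same cancellation as in \eqref{eq:energy_Ki} gives
\[
0=\sum_i \langle \mathcal S_{\Gamma_i}\widehat x_i,(y_i,-p_i)\rangle=\sum_i\bigl(\langle L_iy_i,y_i\rangle+\alpha\|u_i\|^2\bigr).
\]
Hence $u_i=0$ and $M_iy_i=0$, so $y_i=0$ in $V_i$ and the $y$-traces vanish.

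\emph{Second test.} Now $p_i$ satisfies $\widetilde A_i^\star p_i=0$ in $(V_0^i)^\star$. The local fluxes reduce to a $p$-flux equal to $0$ and a $y$-flux $v\mapsto\langle\widetilde A_i^\star p_i,v_i\rangle$. This is the one place compatibility is needed: $\widehat V_\Gamma^p\subseteq\widehat V_\Gamma^y$ makes $\widehat v=(p,0)$ an admissible test function. It yields
\[
\sum_i\bigl(\|\nabla p_i\|^2+\sigma\|p_i\|^2\bigr)=0,
\]
where $p_i$ is used as its own test function in $V_i$. So each $p_i$ is constant, and zero if $\sigma>0$.

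\emph{Conclusion of (c).} Constant $p_i$ gives $\widetilde A_i^\star p_i=0$ in all of $V_i^\star$, so every local flux vanishes and $\widetilde{\mathcal S}_\Gamma\widehat\iota_\Gamma\widehat x=0$. This proves the reverse inclusion, and \eqref{eq:ker_equiv} follows.

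The final "consequently" follows because a Fredholm operator of index zero is an isomorphism if and only if it is injective, together with \eqref{eq:ker_equiv}. The step I expect to be most delicate is justifying the energy identities: the test functions $(y_i,-p_i)$ and $(p_i,0)$ are not harmonic extensions themselves. This is handled with \eqref{eq:local_harmonic_flux}: since $\mathcal K_i\mathcal H_i=\tau_i^\star\mathcal S_{\Gamma_i}$, pairing $\mathcal S_{\Gamma_i}\widehat x_i$ with any trace equals pairing $\mathcal K_ix_i$ with any lift of that trace. I can therefore choose the lift $(y_i,-p_i,u_i,0)$, respectively $(p_i,0,0,0)$.
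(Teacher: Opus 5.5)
Your proposal is correct and follows essentially the same route as the paper: kernels of finitely many bounded jump functionals for (a), the appendix finite-codimension restriction result for (b), and for (c) the two energy tests with $(\widehat\xi^y,-\widehat\xi^p)$ and, via compatibility, $(\widehat\xi^p,0)$, both evaluated through $\mathcal K_i\mathcal H_i=\tau_i^\star\mathcal S_{\Gamma_i}$. The differences are cosmetic: you add an explicit index count $-m+0+m$ in (b), and you obtain $\widetilde A_i^\star p_i=0$ from the constancy of $p_i$ rather than directly from positive semidefiniteness.
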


\begin{proof}
(a) For $\bullet\in\{y,p\}$, collect the finitely many constraint jumps
in a bounded map
$J_\bullet:\widetilde V_\Gamma\to\mathbb R^{m_\bullet}$, with $m_\bullet$ the number of jump conditions. Then
$\widehat V_\Gamma^\bullet=\ker(J_\bullet)$, so each factor is closed
and of finite codimension. Hence
$\widehat X_\Gamma=\widehat V_\Gamma^y\times
\widehat V_\Gamma^p$ is a closed finite-codimensional subspace of
$\widetilde X_\Gamma$.

\medskip (b)
By part (a) and \Cref{thm:broken_fredholm,prop:finite_rank_perturb},
$\widehat{\mathcal S}_\Gamma$ is Fredholm of index zero. If
$\widehat x\in\widehat X_\Gamma$ and
$\widehat\iota_\Gamma\widehat x\in
\ker(\widetilde{\mathcal S}_\Gamma)$, then for every
$\widehat v\in\widehat X_\Gamma$,
\begin{align*}
  \langle\widehat{\mathcal S}_\Gamma\widehat x,\widehat v\rangle
  =\langle\widetilde{\mathcal S}_\Gamma
    \widehat\iota_\Gamma\widehat x,
    \widehat\iota_\Gamma\widehat v\rangle=0,
\end{align*}
which proves \eqref{eq:kernel_inclusion}.

\medskip (c)
Assume now that the primal family is compatible and let
$\widehat\xi=(\widehat\xi^y,\widehat\xi^p)\in\ker(\widehat{\mathcal S}_\Gamma)$.
For each $i$, set $\widehat\xi_i:=\widetilde{\pi}_{\Gamma_i}\widehat\iota_\Gamma\widehat\xi$
and $(y_i,p_i,u_i,\mu_i):=\mathcal H_i\widehat\xi_i$. Since
$\widehat X_\Gamma=\widehat V_\Gamma^y\times\widehat V_\Gamma^p$ is a product of
linear spaces, the vector $(\widehat\xi^y,-\widehat\xi^p)$ again lies in
$\widehat X_\Gamma$ and is therefore an admissible test in
$\langle\widehat{\mathcal S}_\Gamma\widehat\xi,\cdot\rangle_{\widehat X_\Gamma}$.
Testing $\widehat{\mathcal S}_\Gamma\widehat\xi$ against it and applying the local harmonic
identity~\Cref{lem:local_harmonic}(d), together with the self-adjointness of
$\widetilde A_i$, yields
\begin{align*}
  0=\langle\widehat{\mathcal S}_\Gamma\widehat\xi,(\widehat\xi^y,-\widehat\xi^p)\rangle_{\widehat X_\Gamma}
   =\sum_{i=1}^N
    \bigl(\langle L_i y_i,y_i\rangle_{V_i}+\alpha\|u_i\|_{U_i}^2\bigr).
\end{align*}
Since $\langle L_i y_i,y_i\rangle_{V_i}\ge\|M_i y_i\|_{L^2(\Omega_i)}^2$ and
$M_i$ is injective, $y_i=0$ and $u_i=0$ for every $i=1,\dots,N$.
Compatibility (\Cref{def:compatible_family}) gives
$\widehat\xi^p\in\widehat V_\Gamma^y$, so $(\widehat\xi^p,0)\in\widehat X_\Gamma$
is also an admissible test. Testing $\widehat{\mathcal S}_\Gamma\widehat\xi$ against it and using
$y_i=0$ in~\Cref{lem:local_harmonic}(d),
\begin{align*}
  0=\langle\widehat{\mathcal S}_\Gamma\widehat\xi,(\widehat\xi^p,0)\rangle_{\widehat X_\Gamma}
   =\sum_{i=1}^N\langle\widetilde A_i^\star p_i,p_i\rangle_{V_i},
\end{align*}
and since $\widetilde A_i^\star$ is positive semidefinite,
$\widetilde A_i^\star p_i=0$ for every $i$. With $y_i=u_i=0$ and
$\widetilde A_i^\star p_i=0$, all four rows of
$\mathcal K_i\mathcal H_i\widehat\xi_i$ vanish, so
$\mathcal S_{\Gamma_i}\widehat\xi_i=0$ by
$\mathcal K_i\mathcal H_i=\tau_i^\star\mathcal S_{\Gamma_i}$
(\Cref{lem:local_harmonic}(d)) and the injectivity of $\tau_i^\star$.
Hence $\widetilde{\mathcal S}_\Gamma\widehat\iota_\Gamma\widehat\xi=0$, which
proves the reverse inclusion in~\eqref{eq:ker_equiv}. The result follows.
\end{proof}
We close this subsection with a sufficient condition on the family of primal
constraints, phrased in terms of the connectivity of a graph on the subdomains,
that ensures that $\widehat{\mathcal{S}}_\Gamma$ is an isomorphism.

\begin{definition}[Constraint graph]\label{def:constraint_graph}
For $k = 1,\dots,N_\Pi$, set $m_k := \ell_k^{p}\del[0]{1|_{\Gamma}}$. The
\emph{constraint graph} $\mathcal{G}_\Pi$ is the undirected graph with vertex
set $\cbr{1,\dots,N}$ in which two distinct vertices $i$ and $j$ are adjacent
whenever there is a $k \in \cbr{1,\dots,N_\Pi}$ with $i,j \in \mathcal{N}_k$ and
$m_k \ne 0$.
\end{definition}

\begin{theorem}[Graph condition]\label{thm:graph_property}
Let $\mathcal{N}_*$ be as in \Cref{thm:broken_fredholm}\textup{(b)} and set
$\mathcal{N}_0 := \cbr{1,\dots,N} \setminus \mathcal{N}_*$. If every connected
component of $\mathcal{G}_\Pi$ contains a vertex in $\mathcal{N}_0$, then
\begin{align*}
  \widehat{X}_\Gamma \cap \operatorname{ker}(\widetilde{\mathcal{S}}_\Gamma)
  = \cbr{0}.
\end{align*}
Consequently, if the primal constraint family is compatible in the sense of \Cref{def:compatible_family}, then
$\widehat{\mathcal{S}}_\Gamma : \widehat{X}_\Gamma \to \widehat{X}_\Gamma^\star$
is an isomorphism.
\end{theorem}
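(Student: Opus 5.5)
We argue directly from the kernel characterization in \Cref{thm:broken_fredholm}. Let $\widehat\xi\in\widehat X_\Gamma$ with $\widehat\iota_\Gamma\widehat\xi\in\ker(\widetilde{\mathcal S}_\Gamma)$. By \Cref{thm:broken_fredholm}(b) (or (a), if $\mathcal N_*=\emptyset$), each component satisfies $\widehat\xi_i=\widetilde\pi_{\Gamma_i}\widehat\iota_\Gamma\widehat\xi=(0,c_i\cdot 1|_{\Gamma_i})$. Here $c_i\in\mathbb R$ for $i\in\mathcal N_*$, while $\widehat\xi_i=0$, i.e.\ $c_i=0$, for $i\in\mathcal N_0$. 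Thus the state component vanishes identically. It suffices to show that $c_i=0$ for every $i$.

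\textbf{Step 1: agreement along each edge.} Take an edge $\{i,j\}$ of $\mathcal G_\Pi$, witnessed by some $k$ with $i,j\in\mathcal N_k$ and $m_k\neq0$. Membership of the adjoint component in $\widehat V_\Gamma^p$ gives
\[
\ell_k^{i,p}(c_i 1|_{\Gamma_i})=\ell_k^{j,p}(c_j 1|_{\Gamma_j}).
\]
The consistency condition \eqref{eq:primal_compatibility} gives
\[
\ell_k^{i,p}(1|_{\Gamma_i})=\ell_k^{i,p}(r_{\Gamma_i}1|_\Gamma)=\ell_k^{j,p}(r_{\Gamma_j}1|_\Gamma)=m_k.
\]
Here $1|_\Gamma\in V_\Gamma$ is the trace of a function equal to one near $\Gamma$. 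In case (DC) this function must vanish on $\partial\Omega$, so we use a cutoff, which is harmless since only traces on $\Gamma$ enter. Combining the two identities yields $c_i m_k=c_j m_k$, and since $m_k\ne0$ we get $c_i=c_j$.

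\textbf{Step 2: propagation through components.} By induction along paths, $c$ is constant on each connected component of $\mathcal G_\Pi$. By hypothesis every component contains a vertex in $\mathcal N_0$, where $c=0$. Hence all $c_i=0$, so $\widehat\xi=0$.

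\textbf{Step 3: conclusion.} The final claim follows immediately from the last sentence of \Cref{thm:partial_fredholm} under compatibility.

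The only delicate point is Step 1, namely making sense of $m_k=\ell_k^{p}(1|_\Gamma)$ and checking that the local values $\ell_k^{i,p}(1|_{\Gamma_i})$ coincide with it. This requires that the constant $1|_\Gamma$ is a legitimate element of $V_\Gamma$, including in the Dirichlet case, and this is where the consistency condition is used. Everything else is bookkeeping.
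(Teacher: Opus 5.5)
Your argument is the paper's proof. The kernel characterization gives $\widehat\xi_i=(0,c_i\cdot 1|_{\Gamma_i})$ with $c_i=0$ on $\mathcal N_0$. Consistency of the primal functionals gives $m_kc_i=m_kc_j$ across each edge of $\mathcal G_\Pi$. Constancy on components then forces all $c_i=0$, and compatibility with \Cref{thm:partial_fredholm} gives the isomorphism.

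The one thing to correct is your justification of the \emph{delicate point} in case (DC). It fails whenever $\overline{\Gamma}\cap\partial\Omega\neq\emptyset$, which holds for any square decomposition. In that case no $v\in H^1_0(\Omega)$ has $\operatorname{tr}_\Gamma v\equiv 1$. For an anchored $\Omega_j$, the trace of $v|_{\Omega_j}$ on $\partial\Omega_j$ would jump from $1$ to $0$ where $\overline{\Gamma_j}$ meets $\partial\Omega$, and such a step is not in $H^{1/2}(\partial\Omega_j)$. So in fact $1|_{\Gamma_j}\notin V_{\Gamma_j}$. A cutoff is also not harmless, because it changes the trace on $\Gamma$ near $\partial\Omega$, and hence the values of functionals attached to entities touching the boundary.

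The fix is to localize the cutoff.
\begin{itemize}
\item In (DC), $\mathcal N_*$ contains only floating subdomains (\Cref{thm:broken_fredholm}(b)). On a path from an $\mathcal N_*$ vertex to the first $\mathcal N_0$ vertex, every edge therefore has a floating endpoint $i$, for which $1|_{\Gamma_i}=\operatorname{tr}_{\Gamma_i}1$ is legitimate.
\item For an edge between two floating $\Omega_i,\Omega_j$, apply the consistency condition to $\operatorname{tr}_\Gamma\phi$, where $\phi\in C_c^\infty(\Omega)$ and $\phi\equiv1$ on $\overline{\Omega_i\cup\Omega_j}$. This is possible because that set is compact in $\Omega$, and it gives $\ell_k^{i,p}(1|_{\Gamma_i})=\ell_k^{j,p}(1|_{\Gamma_j})$.
\item For the final edge to some $j\in\mathcal N_0$, you already know $c_j=0$. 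You only need $\ell_k^{i,p}(1|_{\Gamma_i})\neq0$, which is how $m_k\neq0$ must be read there.
\item In (NC), $1\in V=H^1(\Omega)$, so nothing needs checking.
\end{itemize}
The paper's definition $m_k:=\ell_k^p(1|_\Gamma)$ passes over the same issue silently. This is a shared imprecision, not a defect specific to your route.
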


\begin{proof}
Let $\widehat{\xi} \in \widehat{X}_\Gamma \cap
\operatorname{ker}(\widetilde{\mathcal{S}}_\Gamma)$ and set
$\widehat{\xi}_i := (\widehat{\xi}_i^{\, y}, \widehat{\xi}_i^{\, p})
:= \widetilde{\pi}_{\Gamma_i} \widehat{\iota}_\Gamma \widehat{\xi}$.
By \Cref{thm:broken_fredholm} and \Cref{thm:local_fredholm}, there exist
$c_i \in \mathbb{R}$ with
\begin{align*}
  \widehat{\xi}_i = \del{0,\, c_i \cdot 1|_{\Gamma_i}},
  \qquad i = 1,\dots,N,
\end{align*}
and $c_i = 0$ for every $i \in \mathcal{N}_0$.
Suppose $i$ and $j$ are adjacent in $\mathcal{G}_\Pi$. By
\Cref{def:constraint_graph}, there is a $k$ with $i,j \in \mathcal{N}_k$ and
$m_k \ne 0$. Since $1|_{\Gamma_i} = r_{\Gamma_i}\del[0]{1|_{\Gamma}}$ and
$1|_{\Gamma_j} = r_{\Gamma_j}\del[0]{1|_{\Gamma}}$,
\eqref{eq:primal_global_functional} gives
$\ell_k^{i,p}\del[0]{1|_{\Gamma_i}} = \ell_k^{j,p}\del[0]{1|_{\Gamma_j}} = m_k$.
 As
$\widehat{\xi} \in \widehat{X}_\Gamma$, \eqref{eq:partially_assembled_factors}
therefore yields
\begin{align*}
  m_k\, c_i
  = \ell_k^{i,p}\del[0]{\widehat{\xi}_i^{\, p}}
  = \ell_k^{j,p}\del[0]{\widehat{\xi}_j^{\, p}}
  = m_k\, c_j,
\end{align*}
and $m_k \ne 0$ yields $c_i = c_j$. The coefficients are therefore constant on
every connected component of $\mathcal{G}_\Pi$. Each component contains a vertex
in $\mathcal{N}_0$, at which the coefficient vanishes, so $c_i = 0$ for every
$i = 1,\dots,N$ and hence $\widehat{\xi} = 0$.
If the primal constraint family is compatible, \Cref{thm:partial_fredholm} yields that
$\widehat{\mathcal{S}}_\Gamma$ is an isomorphism.
\end{proof}

\subsection{The BDDC preconditioner}
\label{ss:preconditioner}
With the partially assembled Schur complement operator in hand, we are now ready to give the definition of the BDDC preconditoner.
Suppose $\widehat{\mathcal{S}}_\Gamma: \widehat{X}_\Gamma \to \widehat{X}_\Gamma^\star$ is an isomorphism. By \Cref{prop:conforming_inclusion}, there is a bounded $R_\Gamma : X_\Gamma \to \widehat{X}_\Gamma$ with $\widehat{\iota}_\Gamma  R_\Gamma = \iota_\Gamma$, taking a conforming interface trace to the corresponding partially assembled trace.

\begin{definition}[The BDDC preconditioner] \label{def:bddc_preconditioner}
Let $\mathcal{W} : \widehat{X}_\Gamma \to X_\Gamma$ be a bounded linear operator satisfying
\begin{align} \label{eq:W_projection}
    \mathcal{W} R_\Gamma = I_{X_\Gamma}.
\end{align}
The BDDC preconditioner $\mathcal{P}_{\mathrm{BDDC}} : X_\Gamma^\star \to X_\Gamma$ is defined as the bounded linear operator
\begin{align} \label{eq:BDDC_preconditioner}
\mathcal{P}_{\mathrm{BDDC}} := \mathcal{W}\, \widehat{\mathcal{S}}_\Gamma^{-1}\, \mathcal{W}^\star.
\end{align}
\end{definition}

Since the partially assembled Schur complement operator $\widehat{\mathcal{S}}_{\Gamma}$ is indefinite, it is not automatic that the BDDC operator $\mathcal{P}_{\mathrm{BDDC}}$ is injective.  Thus, left preconditioning with $\mathcal{P}_{\mathrm{BDDC}}$ is potentially problematic. This is in stark contrast with BDDC methods for typical positive-definite problems. However, under a mild assumption on $\mathcal{W}$, injectivity is guaranteed as the following result will show.

\begin{proposition}\label{prop:bddc_injective}
Assume that
$\widehat{\mathcal S}_\Gamma:\widehat X_\Gamma\to
\widehat X_\Gamma^\star$ is an isomorphism and that
\begin{align*}
  \mathcal W(\widehat\xi^y,\widehat\xi^p)
  =(\mathcal W_V^y\widehat\xi^y,
    \mathcal W_V^p\widehat\xi^p),
\end{align*}
where $\mathcal{W}_V^y: \widehat{V}_\Gamma^y \to V_\Gamma$, $\mathcal{W}_V^p : \widehat{V}_\Gamma^p \to V_\Gamma$ are bounded linear operators with $\operatorname{ker}(\mathcal{W}_V^p) \subseteq \operatorname{ker}(\mathcal{W}_V^y)$.
Then
$\mathcal P_{\mathrm{BDDC}}
=\mathcal W\widehat{\mathcal S}_\Gamma^{-1}\mathcal W^\star$ is
injective. Consequently,
\begin{align*}
  \mathcal S_\Gamma x_\Gamma=g_\Gamma
  \quad\Longleftrightarrow\quad
  \mathcal P_{\mathrm{BDDC}}
  (\mathcal S_\Gamma x_\Gamma-g_\Gamma)=0.
\end{align*}
\end{proposition}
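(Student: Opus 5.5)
The plan is a kernel argument modelled on the proof of \Cref{thm:partial_fredholm}(c). The one change is that the test directions must now lie in $\ker(\mathcal W)$ instead of ranging over all of $\widehat X_\Gamma$.

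\emph{Setup.} Let $g\in X_\Gamma^\star$ satisfy $\mathcal P_{\mathrm{BDDC}}g=0$. Set $\widehat\xi:=\widehat{\mathcal S}_\Gamma^{-1}\mathcal W^\star g\in\widehat X_\Gamma$, so that $\mathcal W\widehat\xi=0$, that is, $\mathcal W_V^y\widehat\xi^y=0$ and $\mathcal W_V^p\widehat\xi^p=0$. For every $\widehat\eta\in\widehat X_\Gamma$,
\begin{align*}
  \langle\widehat{\mathcal S}_\Gamma\widehat\xi,\widehat\eta\rangle_{\widehat X_\Gamma}
  =\langle g,\mathcal W\widehat\eta\rangle_{X_\Gamma},
\end{align*}
so $\widehat{\mathcal S}_\Gamma\widehat\xi$ annihilates $\ker(\mathcal W)$.

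\emph{Step 1: $y_i=u_i=0$.} Because $\mathcal W$ acts componentwise, $\widehat\eta:=(\widehat\xi^y,-\widehat\xi^p)$ lies in $\ker(\mathcal W)$. Write $\widehat\xi_i:=\widetilde\pi_{\Gamma_i}\widehat\iota_\Gamma\widehat\xi$ and $(y_i,p_i,u_i,\mu_i):=\mathcal H_i\widehat\xi_i$. Testing with $\widehat\eta$ and using \Cref{lem:local_harmonic}(d) together with the self-adjointness of $\widetilde A_i$, exactly as in \Cref{thm:partial_fredholm}(c), gives
\begin{align*}
  0=\sum_{i=1}^N\bigl(\langle L_iy_i,y_i\rangle_{V_i}+\alpha\|u_i\|_{U_i}^2\bigr).
\end{align*}
Hence $y_i=0$ and $u_i=0$ for all $i$.

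\emph{Step 2: $\widetilde A_i^\star p_i=0$.} This is the main obstacle. We need $(\widehat\xi^p,0)$ to be an admissible test direction in $\ker(\mathcal W)$, and this is exactly the role of the hypothesis $\ker(\mathcal W_V^p)\subseteq\ker(\mathcal W_V^y)$. I read it as saying that $\widehat\xi^p\in\ker(\mathcal W_V^p)$ lies in $\widehat V_\Gamma^y$ and satisfies $\mathcal W_V^y\widehat\xi^p=0$. This plays the part that compatibility played before, but only on $\ker(\mathcal W)$. With this, $(\widehat\xi^p,0)\in\ker(\mathcal W)$, and testing with it using $y_i=0$ gives
\begin{align*}
  0=\sum_{i=1}^N\langle\widetilde A_i^\star p_i,p_i\rangle_{V_i}.
\end{align*}
Positive semidefiniteness then yields $\widetilde A_i^\star p_i=0$ for each $i$.

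\emph{Step 3: $\widehat\xi=0$.} With $y_i=u_i=0$ and $\widetilde A_i^\star p_i=0$, rows one and two of $\mathcal K_i\mathcal H_i\widehat\xi_i$ vanish. Rows three and four vanish by the harmonic condition $\iota_i^\star\mathcal K_i\mathcal H_i=0$, since $U_i$ and $U_{\mathcal A_i}$ are entirely interior. Hence $\tau_i^\star\mathcal S_{\Gamma_i}\widehat\xi_i=0$, and injectivity of $\tau_i^\star$ gives $\mathcal S_{\Gamma_i}\widehat\xi_i=0$ for each $i$. Therefore $\widehat{\mathcal S}_\Gamma\widehat\xi=\widehat\iota_\Gamma^{\;\star}\widetilde{\mathcal S}_\Gamma\widehat\iota_\Gamma\widehat\xi=0$, and $\widehat\xi=0$ because $\widehat{\mathcal S}_\Gamma$ is an isomorphism.

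\emph{Step 4: conclusion.} From $\widehat\xi=0$ we get $\mathcal W^\star g=\widehat{\mathcal S}_\Gamma\widehat\xi=0$. Since $\mathcal WR_\Gamma=I_{X_\Gamma}$, $\mathcal W$ is surjective, so $\mathcal W^\star$ is injective and $g=0$. Thus $\mathcal P_{\mathrm{BDDC}}$ is injective. The stated equivalence follows by applying injectivity to $\mathcal S_\Gamma x_\Gamma-g_\Gamma$.
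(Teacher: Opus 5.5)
Your proof is correct and follows essentially the same route as the paper's. You test against $(\widehat\xi^y,-\widehat\xi^p)$ and $(\widehat\xi^p,0)$ in $\ker(\mathcal W)$ to obtain $\widehat{\mathcal S}_\Gamma\widehat\xi=0$, then use surjectivity of $\mathcal W$; the only difference is that you work directly with $\widehat\xi:=\widehat{\mathcal S}_\Gamma^{-1}\mathcal W^\star g$ rather than passing through the saddle-point operator $\mathcal M$ and \Cref{prop:schur_fredholm}, which simply makes the paper's kernel isomorphism explicit (and your appeal to the harmonic condition to dispose of rows three and four is a welcome extra detail).
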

\begin{proof}
  Consider the following saddle-point operator defined by 
    \begin{align*}
      \mathcal{M} := \begin{bmatrix}
    -\widehat{\mathcal{S}}_\Gamma & \mathcal{W}^\star \\
        \mathcal{W} & 0 
        \end{bmatrix} : \widehat{X}_\Gamma \times X_\Gamma^\star \to \widehat{X}_\Gamma^\star \times X_\Gamma,
    \end{align*}
and observe that $\mathcal{P}_{\mathrm{BDDC}}$ is the Schur complement of $\mathcal{M}$ with respect to the $\widehat{\mathcal{S}}_\Gamma$-block. Hence by \Cref{prop:schur_fredholm}, 
  $ \ker(\mathcal{P}_{\mathrm{BDDC}})$ and $\ker(\mathcal{M})$ are isomorphic. Therefore, we must show $\ker(\mathcal{M}) = \cbr{0}$. 

  To this end, it suffices to show injectivity of $\iota_K^\star \widehat{\mathcal{S}}_\Gamma \iota_K $, where $\iota_K : \operatorname{ker}(\mathcal{W}) \hookrightarrow \widehat{X}_\Gamma$ is the canonical embedding. Indeed, suppose $\iota_K^\star \widehat{\mathcal{S}}_\Gamma \iota_K $ is injective and  $(\widehat{\xi},g) \in \operatorname{ker}(\mathcal{M})$, 
  \begin{subequations}
\begin{align}\label{eq:BDDC_injective.1}
-\widehat{\mathcal{S}}_\Gamma \widehat \xi + \mathcal{W}^\star g &= 0, \quad \text{in } \widehat{X}_\Gamma^\star, \\
      \mathcal{W} \widehat \xi &= 0, \quad \text{in } X_\Gamma. \label{eq:BDDC_injective.2}
  \end{align}
  \end{subequations}
By \eqref{eq:BDDC_injective.2}, $\widehat{\xi} \in \operatorname{ker}(\mathcal{W})$. Therefore, for all $\widehat{\eta} \in \operatorname{ker}(\mathcal{W})$, \eqref{eq:BDDC_injective.1} yields
\begin{align}
    \langle \iota_K^\star \widehat{\mathcal{S}}_\Gamma \iota_K \widehat \xi, \widehat \eta \rangle_{\operatorname{ker}(\mathcal{W})} = \langle \widehat{\mathcal{S}}_\Gamma \iota_K \widehat \xi, \iota_K \widehat \eta \rangle_{\widehat{X}_\Gamma} = \langle \mathcal{W}^\star g, \widehat \eta \rangle_{\widehat{X}_\Gamma} = 0,
\end{align}
from which we deduce that $\iota_K^\star \widehat{\mathcal{S}}_\Gamma \iota_K \widehat \xi = 0$, and thus $\widehat \xi = 0$. Since $\mathcal{W}$ is surjective, $\mathcal{W}^\star$ is injective, and \eqref{eq:BDDC_injective.1} then yields $g = 0$, so $\operatorname{ker}(\mathcal{M}) = \cbr{0}$.

It remains to prove the injectivity of $\iota_K^\star \widehat{\mathcal{S}}_\Gamma \iota_K$. By assumption, $\operatorname{ker}(\mathcal{W}) = \operatorname{ker}(\mathcal{W}_V^y)\times \operatorname{ker}(\mathcal{W}_V^p)$. Suppose $\widehat{\xi} = (\widehat{\xi}^y,\widehat{\xi}^p)\in \operatorname{ker}(\mathcal{W}_V^y)\times \operatorname{ker}(\mathcal{W}_V^p)$ and $\iota_K^\star \widehat{\mathcal{S}}_\Gamma \iota_K \widehat{\xi} = 0$. Since both $(\widehat{\xi}^y,-\widehat{\xi}^p)\in \operatorname{ker}(\mathcal{W}_V^y)\times \operatorname{ker}(\mathcal{W}_V^p)$ and $(\widehat{\xi}^p,0)\in \operatorname{ker}(\mathcal{W}_V^p)\times \operatorname{ker}(\mathcal{W}_V^p) \subseteq \operatorname{ker}(\mathcal{W}_V^y)\times \operatorname{ker}(\mathcal{W}_V^p)$ are admissible tests against $\iota_K^\star \widehat{\mathcal{S}}_\Gamma \iota_K \widehat{\xi}$, we can repeat the same argument as in the proof of \Cref{thm:partial_fredholm} to deduce that $\widehat{\mathcal{S}}_\Gamma \widehat{\xi} = 0$. Since $\widehat{\mathcal{S}}_\Gamma$ is an isomorphism, $\widehat{\xi} = 0$. The result follows.
\end{proof}

\begin{remark}
    While we have intentionally avoided placing further constraints on the operator $\mathcal{W}$, the choice of $\mathcal{W}$ can significantly impact the quality of the BDDC preconditioner. In the discrete setting, \eqref{eq:W_projection} reduces to the usual partition-of-unity property of the interface scaling; some common choices appearing in the literature are discussed below in \Cref{sss:discrete_BDDC}.
\end{remark}

The preceding definition characterizes the BDDC preconditioner entirely in
terms of the partially assembled interface problem. At this level, however,
its application still requires the solution of the globally coupled problem
associated with the partially assembled Schur complement operator $\widehat{\mathcal{S}}_\Gamma$. In the following subsection, we show
how this solve can be reduced to independent local subproblems supplemented by
a correction obtained from a finite-dimensional coarse problem, thereby
realizing the BDDC preconditioner as a two-level additive Schwarz method.

\subsection{The primal-dual splitting}
\label{sss:primal-dual_constraint}
Each partially assembled interface trace will be separated into a component (primal) representing the common state and adjoint interface quantities imposed by the primal constraints and a component (dual) on which all primal constraints vanish. The primal component provides the coupling between subdomains, whereas the dual component decomposes into independent local contributions. The dual variables can therefore be eliminated from the partially assembled Schur complement problem by local solves, analogously to the static condensation of the interior variables in \Cref{sec:dd}, leaving a finite-dimensional Schur complement problem for the primal variables. Here, \emph{dual} is used in the standard BDDC sense and is unrelated to the topological dual.

To this end, for each $i=1,\dots,N$ we denote $\mathcal{N}^i := \{ k : i \in \mathcal{N}_k \}$ for the set of primal
constraint indices corresponding to $\Omega_i$ and set $N_\Pi^i := \text{card} (\mathcal{N}^i)$. The values of the primal
constraints are collected
by the bounded operators
\begin{align}\label{eq:CPi_def}
C_\Pi &: \widehat{X}_\Gamma \to \mathbb{R}^{2N_\Pi},
\qquad
C_\Pi(\widehat{x}) := \big(\ell_k(\widehat{x})\big)_{k=1}^{N_\Pi}, \\
\label{eq:CPi_local_def}
C^i_\Pi &: X_{\Gamma_i} \to \mathbb{R}^{2N_{\Pi}^i},
\qquad
C^i_\Pi(\xi) := \big(\ell^i_k(\xi)\big)_{k \in \mathcal{N}^i}.
\end{align}
The dual subspace, its local factors, and the primal subspace, respectively, are defined as
\begin{equation}\label{eq:pd_split}
\widehat{X}_\Delta := \ker(C_\Pi),
\qquad
\widehat{X}^i_\Delta := \ker(C^i_\Pi) \subset X_{\Gamma_i},
\qquad
\widehat{X}_\Pi := \widehat{X}_\Delta^{\perp},
\end{equation}
the orthogonal complement taken with respect to the inner product of
$\widehat{X}_\Gamma$. We write
$\iota_\Pi : \widehat{X}_\Pi \hookrightarrow \widehat{X}_\Gamma$,
$\iota_\Delta : \widehat{X}_\Delta \hookrightarrow \widehat{X}_\Gamma$ for
the canonical inclusions and $\pi_\Pi : \widehat{X}_\Gamma \to  \widehat{X}_\Pi$, $\pi_\Delta:  \widehat{X}_\Gamma \to  \widehat{X}_\Delta$ for the orthogonal projections,
so that 
\begin{align} \label{eq:primal_dual_decomposition}
\widehat{X}_\Gamma = \widehat{X}_\Pi \oplus \widehat{X}_\Delta.
\end{align}

Analogously to the developments in \Cref{subsec:global_decomposition}, we now derive a block representation of the partially assembled Schur complement operator $\widehat{\mathcal{S}}_\Gamma$ relative to the splitting \eqref{eq:primal_dual_decomposition}. To this end, we define the mapping
\begin{align} \label{eq:JPiDelta_def}
    J_{\Pi\Delta} : \widehat{X}_\Pi \times \widehat{X}_\Delta \to \widehat{X}_\Gamma, \qquad J_{\Pi\Delta}(x_\Pi, x_\Delta) := \iota_\Pi x_\Pi + \iota_\Delta x_\Delta,
\end{align}
which is an isometric isomorphism with $J_{\Pi\Delta}^{-1} \widehat{x} = (\pi_\Pi \widehat{x}, \pi_\Delta \widehat{x})$, and let $\Phi_{\Pi\Delta} : \widehat{X}_\Pi^\star \times \widehat{X}_\Delta^\star \to \del[0]{\widehat{X}_\Pi \times \widehat{X}_\Delta}^\star$ be the canonical identification (\emph{cf}. \Cref{rem:notation}, so that $\Phi_{\Pi\Delta}^{-1} J_{\Pi\Delta}^\star f = \del[0]{\iota_\Pi^\star f, \iota_\Delta^\star f}$. Relative to this splitting,
\begin{align}\label{eq:Shat_blocks}
  \Phi_{\Pi\Delta}^{-1}\, J_{\Pi\Delta}^\star\, \widehat{\mathcal{S}}_\Gamma\, J_{\Pi\Delta}
  = \begin{bmatrix}
      \widehat{\mathcal{S}}_{\Pi\Pi} & \widehat{\mathcal{S}}_{\Pi\Delta} \\
      \widehat{\mathcal{S}}_{\Delta\Pi} & \widehat{\mathcal{S}}_{\Delta\Delta}
    \end{bmatrix},
  \qquad
  \widehat{\mathcal{S}}_{\bullet\circ} := \iota_\bullet^\star\, \widehat{\mathcal{S}}_\Gamma\, \iota_\circ,
  \quad \bullet, \circ \in \cbr{\Pi,\Delta}.
\end{align}
Note that $\widehat{\mathcal{S}}_{\Pi\Delta} = \widehat{\mathcal{S}}_{\Delta\Pi}^\star$ and since $\widehat{\mathcal{S}}_\Gamma$ is self-adjoint, so are $\widehat{\mathcal{S}}_{\Pi \Pi}$ and $\widehat{\mathcal{S}}_{\Delta\Delta}$.

Arguing similarly as for the interior space $X_I$ in \Cref{sss:local_product_spaces}, we identify the dual subspace with the product of its local factors. Define the \emph{broken dual subspace}
\begin{align} \label{eq:broken_dual_space}
    \widetilde{X}_\Delta := \prod_{i=1}^N \widehat{X}_\Delta^i,
\end{align}
equipped with the product norm and with coordinate projections $\widetilde{\pi}_{\Delta}^i : \widetilde{X}_\Delta \to \widehat{X}_\Delta^i$ and injections $\widetilde{\jmath}_{\Delta}^{\, i} : \widehat{X}_\Delta^i \to \widetilde{X}_\Delta$, and let $\iota_\Delta^i : \widehat{X}_\Delta^i \hookrightarrow X_{\Gamma_i}$ denote the canonical inclusions.
\begin{lemma}[Decomposition of the dual subspace] \label{lem:dual_subspace_decomp}
The map
\begin{align*}
    \mathcal{B}_\Delta : \widehat{X}_\Delta \to \widetilde{X}_\Delta, \qquad \mathcal{B}_\Delta \widehat{x} := \del[1]{\widetilde{\pi}_{\Gamma_i} \widehat{\iota}_\Gamma \iota_\Delta \widehat{x}}_{i=1}^N,
\end{align*}
is well defined and an isometric isomorphism. Moreover, the maps
\begin{align} \label{eq:dual_coordinate_maps}
    \pi_{\Delta}^i := \widetilde{\pi}_{\Delta}^i \mathcal{B}_\Delta : \widehat{X}_\Delta \to \widehat{X}_\Delta^i, \qquad
    \jmath_\Delta^i := \mathcal{B}_\Delta^{-1} \widetilde{\jmath}_{\Delta}^{\, i} : \widehat{X}_\Delta^i \to \widehat{X}_\Delta,
\end{align}
satisfy the following identities:
\begin{align} \label{eq:dual_ambient_identities}
    \iota_\Delta^i \pi_{\Delta}^i = \widetilde{\pi}_{\Gamma_i} \widehat{\iota}_\Gamma \iota_\Delta, \qquad
    \widehat{\iota}_\Gamma \iota_\Delta \jmath_\Delta^i = \jmath_{\Gamma_i} \iota_\Delta^i, \qquad i = 1,\dots,N.
\end{align}
\end{lemma}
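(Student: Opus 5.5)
The plan is to mirror the proof of \Cref{lem:interior_space_decomp}: verify the hypotheses of \Cref{prop:homogeneous_decomp} and then read off \eqref{eq:dual_ambient_identities} from its conclusions. We take $\mathcal{X} := \widehat{X}_\Gamma$, $\widetilde{\mathcal{X}} := \widetilde{X}_\Gamma$, $\mathcal{X}_0 := \widehat{X}_\Delta$, $\mathcal{X}_0^i := \widehat{X}_\Delta^i$, $C_i := C_\Pi^i$, and $\mathsf{R} := \widehat{\iota}_\Gamma$, so that $\widetilde{\pi}_{\Gamma_i}\mathsf{R} = \widetilde{\pi}_{\Gamma_i}\widehat{\iota}_\Gamma$. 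Then $\mathcal{B}_\Delta$ is exactly the map $\widehat{x} \mapsto (\widetilde{\pi}_{\Gamma_i}\mathsf{R}\,\iota_\Delta\widehat{x})_{i=1}^N$ produced by that proposition.

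\emph{Step 1: isometry.} The map $\mathsf{R}$ is an isometry because $\widehat{X}_\Gamma$ carries the norm inherited from $\widetilde{X}_\Gamma$, whose square is additive over the factors $X_{\Gamma_i}$ by \eqref{eq:broken_interface}. In particular, $\mathcal{B}_\Delta$ preserves norms.

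\emph{Step 2: characterization of $\widehat{X}_\Delta$.} We show that $\widehat{x} \in \widehat{X}_\Gamma$ lies in $\ker(C_\Pi)$ if and only if $C_\Pi^i \widetilde{\pi}_{\Gamma_i}\widehat{\iota}_\Gamma \widehat{x} = 0$ for every $i$. By \eqref{eq:primal_global_functional}, $\ell_k(\widehat{x}) = \ell_k^i(\widehat{x}_i)$ for any $i \in \mathcal{N}_k$. Hence the vanishing of all local primal values gives $C_\Pi\widehat{x} = 0$. Conversely, if $C_\Pi \widehat{x} = 0$, then for each $i$ and each $k \in \mathcal{N}^i$ we get $\ell_k^i(\widehat{x}_i) = \ell_k(\widehat{x}) = 0$. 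This also shows $\mathcal{B}_\Delta$ is well defined, i.e., that it lands in $\prod_i \widehat{X}_\Delta^i$.

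\emph{Step 3: gluing condition \eqref{eq:gluing_condition}.} Given $(\xi_i)_{i=1}^N \in \widetilde{X}_\Delta$, the tuple $\widetilde{x} := (\xi_i)_i \in \widetilde{X}_\Gamma$ must lie in $\widehat{X}_\Gamma$. This holds because every local primal value vanishes: $\ell_k^{i,\bullet}(\xi_i^\bullet) = 0 = \ell_k^{j,\bullet}(\xi_j^\bullet)$ for all $i,j \in \mathcal{N}_k$. The constraints in \eqref{eq:partially_assembled_factors} are therefore satisfied, so $\widetilde{x} = \widehat{\iota}_\Gamma \widehat{x}$ for a unique $\widehat{x}$. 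By Step 2, this $\widehat{x}$ lies in $\widehat{X}_\Delta$. This is the only step that differs in substance from the interior case, where one extended by zero. Here no extension is needed, because the broken dual tuples automatically satisfy the primal consistency conditions, all shared values being zero. I expect the main care to lie in tracking that the $y$- and $p$-components are handled separately under the coordinate-permutation identification $\widehat{X}_\Gamma = \widehat{V}_\Gamma^y \times \widehat{V}_\Gamma^p \subset \widetilde{X}_\Gamma$.

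\emph{Step 4: conclusion.} \Cref{prop:homogeneous_decomp} then yields that $\mathcal{B}_\Delta$ is an isometric isomorphism, together with the coordinate identities. These read $\iota_\Delta^i\pi_\Delta^i = \widetilde{\pi}_{\Gamma_i}\widehat{\iota}_\Gamma\iota_\Delta$, which is just the definition of $\mathcal{B}_\Delta$ composed with $\widetilde{\pi}_\Delta^i$. For the second identity, observe that $\widehat{\iota}_\Gamma\iota_\Delta\jmath_\Delta^i\xi$ is the tuple with $\xi$ in slot $i$ and zeros elsewhere, i.e., $\widetilde{\jmath}_{\Gamma_i}\iota_\Delta^i\xi$. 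This is the claimed second identity, interpreting $\jmath_{\Gamma_i}$ as the coordinate injection $\widetilde{\jmath}_{\Gamma_i}$. If one prefers not to invoke the proposition, this last identity also follows directly from injectivity of $\mathcal{B}_\Delta$, since both sides have the same image under $\widetilde{\pi}_{\Gamma_j}$ for every $j$.
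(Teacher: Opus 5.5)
Your proposal is correct and follows essentially the same route as the paper: it applies \Cref{prop:homogeneous_decomp} with $\mathsf{R}=\widehat{\iota}_\Gamma$ and $\mathsf{C}_i=C_\Pi^i$. It verifies the isometry, identifies $\ker(C_\Pi)$ with the set where all local constraint maps vanish (using single-valuedness of the primal values), and obtains the gluing condition from the vanishing of every local primal value on broken dual tuples. Your reading of $\jmath_{\Gamma_i}$ as the coordinate injection $\widetilde{\jmath}_{\Gamma_i}$ is the intended one.
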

\begin{proof}
After verifying its hypotheses, we aim to apply \Cref{prop:homogeneous_decomp} with the choices $\mathcal{X} := \widehat{X}_\Gamma$, $\mathcal{X}_i := X_{\Gamma_i}$, $\widetilde{\mathcal{X}} := \widetilde{X}_\Gamma$, $\mathsf{C}_i := C_\Pi^i$, and $\mathsf{R} := \widehat{\iota}_\Gamma$.
To this end, note that $\mathsf{R}$ is an isometry, since $\widehat{X}_\Gamma$ carries the norm inherited from $\widetilde{X}_\Gamma$.
Moreover, every primal constraint is single-valued on $\widehat{X}_\Gamma$ by \eqref{eq:partially_assembled_factors}, so that $\ell_k(\widehat{x}) = \ell_k^i\del[1]{\pi_{\Gamma_i} \widehat{\iota}_\Gamma \widehat{x}}$ for each $\widehat{x} \in \widehat{X}_\Gamma$, each $k = 1,\dots,N_\Pi$, and each $i \in \mathcal{N}_k$. Hence, by \eqref{eq:CPi_local_def}, if $\widehat{x} \in \widehat{X}_\Delta$, then $\mathsf{C}_i \widetilde{\pi}_i \mathsf{R} \widehat{x} = C_\Pi^i \del[1]{\pi_{\Gamma_i} \widehat{\iota}_\Gamma \widehat{x}} = 0$ for $i = 1,\dots,N$. Conversely, suppose $\widehat{x} \in \widehat{X}_\Gamma$ is such that
$\mathsf{C}_i \widetilde{\pi}_i \mathsf{R} \widehat{x} = 0$ for $i = 1,\dots,N$,
and fix $k \in \cbr{1,\dots,N_\Pi}$.
Since $C_\Pi^i$ collects only the constraints indexed by $\mathcal{N}^i$, we
choose $i \in \mathcal{N}_k$ so
that $k \in \mathcal{N}^i$ (which is possible as $|\mathcal{N}_k| \ge 2$).
By \eqref{eq:CPi_local_def}, the value
$\ell_k^i\del[1]{\pi_{\Gamma_i} \widehat{\iota}_\Gamma \widehat{x}}$ is one of the
components of $C_\Pi^i\del[1]{\pi_{\Gamma_i} \widehat{\iota}_\Gamma \widehat{x}}$,
which vanishes by assumption, and
\eqref{eq:primal_global_functional} then gives $\ell_k\del[0]{\widehat{x}} = 0$.
As $k$ was arbitrary, every component of $C_\Pi \widehat{x}$ vanishes by
\eqref{eq:CPi_def}, so $\widehat{x} \in \ker(C_\Pi) = \widehat{X}_\Delta$
by \eqref{eq:pd_split}.

Finally, we verify the gluing condition \eqref{eq:gluing_condition}.
Let $\widetilde{x} = (x_i)_{i=1}^N \in \widetilde{X}_\Delta$ and write
$x_i = \del[1]{x_i^y, x_i^p} \in X_{\Gamma_i}$ for $i = 1,\dots,N$.
By \eqref{eq:pd_split}, $x_i \in \ker(C_\Pi^i)$ for $i = 1,\dots,N$, so
\eqref{eq:CPi_local_def} yields
\begin{align*}
  \ell_k^{i,\bullet}\del[1]{x_i^\bullet} = 0,
  \qquad i = 1,\dots,N, \quad k \in \mathcal{N}^i, \quad \bullet \in \cbr{y,p}.
\end{align*}
Now let $k \in \cbr{1,\dots,N_\Pi}$ and $i,j \in \mathcal{N}_k$.
Then $k \in \mathcal{N}^i \cap \mathcal{N}^j$, so
$\ell_k^{i,\bullet}\del[1]{x_i^\bullet} = 0 = \ell_k^{j,\bullet}\del[1]{x_j^\bullet}$
for $\bullet \in \cbr{y,p}$.
Since $k$ and $i,j \in\mathcal{N}_k$ were arbitrary, $\widetilde{x} \in \widehat{X}_\Gamma$ by
\eqref{eq:partially_assembled_factors}.
Since $\mathsf{R}$ is the canonical embedding of $\widehat{X}_\Gamma$ into
$\widetilde{X}_\Gamma$, it holds that
$\operatorname{ran}(\mathsf{R}) = \widehat{X}_\Gamma$ and hence
$\widetilde{X}_\Delta \subseteq \operatorname{ran}(\mathsf{R})$.
The assertions now follow from \Cref{prop:homogeneous_decomp}.
\end{proof}
Via $\mathcal{B}_\Delta$, the operator $\widehat{\mathcal{S}}_{\Delta\Delta}$ is congruent to the direct sum of the local blocks
\begin{align} \label{eq:local_dual_subspace_blocks}
     \widehat{\mathcal{S}}_{\Delta\Delta}^i := (\iota_\Delta^i)^\star \mathcal{S}_{\Gamma_i} \iota_\Delta^i : \widehat{X}_\Delta^i \to (\widehat{X}_\Delta^i)^\star, \qquad i = 1,\dots,N.
\end{align}
\color{black}
More precisely, the following result holds:
\begin{proposition}[Block-diagonality of $\widehat{\mathcal{S}}_{\Delta\Delta}$]
\label{prop:SDD_diagonal}
It holds that
\begin{align} \label{eq:SDD_decomp}
  (\mathcal{B}_\Delta^{-1})^\star \widehat{\mathcal{S}}_{\Delta\Delta} \mathcal{B}_\Delta^{-1} = \bigoplus_{i=1}^N \widehat{\mathcal{S}}_{\Delta\Delta}^i.
\end{align}
Consequently, $\widehat{\mathcal{S}}_{\Delta\Delta}$ is an isomorphism if and only if every local block $\widehat{\mathcal{S}}_{\Delta\Delta}^i$, $i = 1,\dots,N$, is an isomorphism, in which case
\begin{align} \label{eq:SDD_inv_decomp}
  \widehat{\mathcal{S}}_{\Delta\Delta}^{-1} = \sum_{i=1}^N \jmath_\Delta^i \, (\widehat{\mathcal{S}}_{\Delta\Delta}^i)^{-1} (\jmath_\Delta^i)^\star.
\end{align}
\end{proposition}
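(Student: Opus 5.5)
The plan mirrors \Cref{cor:KII_decomp}. I would apply \Cref{cor:homogeneous_block} with the choices already used in \Cref{lem:dual_subspace_decomp}: $\mathcal{X} := \widehat{X}_\Gamma$, $\mathcal{X}_i := X_{\Gamma_i}$, $\widetilde{\mathcal{X}} := \widetilde{X}_\Gamma$, $\mathsf{R} := \widehat{\iota}_\Gamma$, $\mathsf{C}_i := C_\Pi^i$, and $\mathsf{T}_i := \mathcal{S}_{\Gamma_i}$. The roles are then $\mathcal{X}_0 = \widehat{X}_\Delta$, $\mathcal{X}_0^i = \widehat{X}_\Delta^i$, $\mathcal{B}_0 = \mathcal{B}_\Delta$, $\iota_0^i = \iota_\Delta^i$ and $\jmath_0^i = \jmath_\Delta^i$. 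The hypotheses of \Cref{prop:homogeneous_decomp} were verified in the proof of \Cref{lem:dual_subspace_decomp}, so only the identification of the assembled operator needs checking. In case the corollary's exact form is not available, I would also verify \eqref{eq:SDD_decomp} directly as follows.

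\emph{Step 1: identify the operator.} By \eqref{eq:partial_schur} and \eqref{eq:Shat_blocks},
\begin{align*}
\widehat{\mathcal{S}}_{\Delta\Delta} = \iota_\Delta^\star \widehat{\iota}_\Gamma^{\,\star} \widetilde{\mathcal{S}}_\Gamma \widehat{\iota}_\Gamma \iota_\Delta = \sum_{i=1}^N (\widetilde{\pi}_{\Gamma_i}\widehat{\iota}_\Gamma \iota_\Delta)^\star \mathcal{S}_{\Gamma_i} (\widetilde{\pi}_{\Gamma_i}\widehat{\iota}_\Gamma \iota_\Delta).
\end{align*}
The first identity of \eqref{eq:dual_ambient_identities} gives $\widetilde{\pi}_{\Gamma_i}\widehat{\iota}_\Gamma\iota_\Delta = \iota_\Delta^i \pi_\Delta^i$. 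Substituting this and using $\pi_\Delta^i = \widetilde{\pi}_\Delta^i \mathcal{B}_\Delta$ yields
\begin{align*}
\widehat{\mathcal{S}}_{\Delta\Delta} = \mathcal{B}_\Delta^\star \Big(\sum_{i=1}^N (\widetilde{\pi}_\Delta^i)^\star \widehat{\mathcal{S}}_{\Delta\Delta}^i \widetilde{\pi}_\Delta^i\Big) \mathcal{B}_\Delta.
\end{align*}
Multiplying on the left by $(\mathcal{B}_\Delta^{-1})^\star$ and on the right by $\mathcal{B}_\Delta^{-1}$ gives \eqref{eq:SDD_decomp}, by the definition \eqref{eq:block_diag_sum}.

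\emph{Step 2: isomorphism equivalence and inverse.} Since $\mathcal{B}_\Delta$ is an isomorphism by \Cref{lem:dual_subspace_decomp}, $\widehat{\mathcal{S}}_{\Delta\Delta}$ is an isomorphism if and only if $\bigoplus_i \widehat{\mathcal{S}}_{\Delta\Delta}^i$ is. By \Cref{rem:notation}, the latter holds if and only if each $\widehat{\mathcal{S}}_{\Delta\Delta}^i$ is an isomorphism. In that case \eqref{eq:block_diag_inv} gives
\begin{align*}
\widehat{\mathcal{S}}_{\Delta\Delta}^{-1} = \mathcal{B}_\Delta^{-1}\Big(\sum_{i=1}^N \widetilde{\jmath}_\Delta^{\,i} (\widehat{\mathcal{S}}_{\Delta\Delta}^i)^{-1} (\widetilde{\jmath}_\Delta^{\,i})^\star\Big)(\mathcal{B}_\Delta^{-1})^\star.
\end{align*}
Since $\jmath_\Delta^i = \mathcal{B}_\Delta^{-1}\widetilde{\jmath}_\Delta^{\,i}$, this is exactly \eqref{eq:SDD_inv_decomp}.

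The only delicate point is bookkeeping: the dual pairings and the canonical identification $\Phi$ of $\widetilde{X}_\Delta^\star$ with $\prod_i (\widehat{X}_\Delta^i)^\star$ must be handled consistently, so that the adjoints of $\widetilde{\pi}_\Delta^i$ and $\widetilde{\jmath}_\Delta^{\,i}$ are interpreted correctly. No analytic obstacle arises, since all the substantive work was done in \Cref{lem:dual_subspace_decomp}.
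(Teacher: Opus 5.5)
Your proposal is correct and matches the paper's proof, which is exactly the application of \Cref{cor:homogeneous_block} with $\mathsf{R}=\widehat{\iota}_\Gamma$, $\mathsf{C}_i=C_\Pi^i$, $\mathsf{T}_i=\mathcal{S}_{\Gamma_i}$, relying on the hypotheses already verified in \Cref{lem:dual_subspace_decomp}. Your Steps 1--2 just unpack the corollary's own proof for this instance.
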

\begin{proof}
Apply \Cref{cor:homogeneous_block} with $\mathcal{X} = \widehat{X}_\Gamma$, $\mathcal{X}_i = X_{\Gamma_i}$, $\mathsf{R} = \widehat{\iota}_\Gamma$, and $\mathsf{C}_i = C_\Pi^i$, so that $\mathcal{X}_0 = \widehat{X}_\Delta$, $\mathcal{X}_0^i = \widehat{X}_\Delta^i$, $\widetilde{\mathcal{X}}_0 = \widetilde{X}_\Delta$, $\iota_0 = \iota_\Delta$, $\iota_0^i = \iota_\Delta^i$, $\jmath_0^i = \jmath_\Delta^i$, $\mathcal{B}_0 = \mathcal{B}_\Delta$, and $\mathsf{T}_i := \mathcal{S}_{\Gamma_i}$. 
\end{proof}

\subsubsection{The coarse basis and block factorization} \label{sss:LDU_factor}

We now eliminate the dual component via static condensation. Throughout
\Cref{sss:LDU_factor} we assume that
$\widehat{\mathcal{S}}_{\Delta\Delta}$ is an isomorphism; sufficient
conditions are established in
\Cref{sss:SDD_wellposed} below. Eliminating the dual component in
\eqref{eq:Shat_blocks} as in \eqref{eq:xI_solve}--\eqref{eq:Schur_complement_E}
yields the \emph{coarse Schur complement}
\begin{align}\label{eq:S0_def}
  \widehat{\mathcal{S}}_0
  := \widehat{\mathcal{S}}_{\Pi\Pi}
   - \widehat{\mathcal{S}}_{\Pi\Delta}\,
     \widehat{\mathcal{S}}_{\Delta\Delta}^{-1}\,
     \widehat{\mathcal{S}}_{\Delta\Pi}
  : \widehat{X}_\Pi \to \widehat{X}_\Pi^\star.
\end{align}
 The elimination is realized by the following operator, the coarse space counterpart of the harmonic extension (\emph{cf}. \Cref{sec:dd}).

\begin{definition}[Coarse basis operator]\label{def:coarse_basis}
The \emph{coarse basis operator}
$\mathcal{H}_\Pi : \widehat{X}_\Pi \to \widehat{X}_\Gamma$ is
\begin{align}\label{eq:Phi_def}
  \mathcal{H}_\Pi := \iota_\Pi - \iota_\Delta\, \widehat{\mathcal{S}}_{\Delta\Delta}^{-1}\, \widehat{\mathcal{S}}_{\Delta\Pi}.
\end{align}
\end{definition}

\begin{proposition}\label{prop:Phi_props}
It holds that $\pi_\Pi \mathcal{H}_\Pi = I_{\widehat{X}_\Pi}$,
$\iota_\Delta^\star \widehat{\mathcal{S}}_\Gamma \mathcal{H}_\Pi = 0$, and
$\operatorname{ran}(\mathcal{H}_\Pi) = \operatorname{ker}\del[0]{\iota_\Delta^\star \widehat{\mathcal{S}}_\Gamma}$.
Moreover,
$\widehat{\mathcal{S}}_0 = \mathcal{H}_\Pi^\star\, \widehat{\mathcal{S}}_\Gamma\, \mathcal{H}_\Pi$,
and $\widehat{\mathcal{S}}_0$ is self-adjoint.
\end{proposition}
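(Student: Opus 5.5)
The plan is to verify each identity by direct computation from \eqref{eq:Phi_def}, using the orthogonal splitting \eqref{eq:primal_dual_decomposition}. Two facts will be used throughout. First, $\pi_\Pi\iota_\Pi = I_{\widehat X_\Pi}$ and $\pi_\Pi\iota_\Delta = 0$, because $\widehat X_\Pi = \widehat X_\Delta^\perp$. Second, $\widehat{\mathcal S}_{\Delta\Delta}$ is invertible, which is the standing assumption of \Cref{sss:LDU_factor}.

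\textbf{First identity.} Apply $\pi_\Pi$ to \eqref{eq:Phi_def}:
\[
\pi_\Pi\mathcal H_\Pi = \pi_\Pi\iota_\Pi - \pi_\Pi\iota_\Delta\widehat{\mathcal S}_{\Delta\Delta}^{-1}\widehat{\mathcal S}_{\Delta\Pi} = I_{\widehat X_\Pi}.
\]

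\textbf{Second identity.} Apply $\iota_\Delta^\star\widehat{\mathcal S}_\Gamma$ to \eqref{eq:Phi_def} and use the block notation \eqref{eq:Shat_blocks}:
\[
\iota_\Delta^\star\widehat{\mathcal S}_\Gamma\mathcal H_\Pi = \widehat{\mathcal S}_{\Delta\Pi} - \widehat{\mathcal S}_{\Delta\Delta}\widehat{\mathcal S}_{\Delta\Delta}^{-1}\widehat{\mathcal S}_{\Delta\Pi} = 0.
\]

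\textbf{Range characterization.} The inclusion $\operatorname{ran}(\mathcal H_\Pi)\subseteq\ker(\iota_\Delta^\star\widehat{\mathcal S}_\Gamma)$ is exactly the second identity. For the reverse inclusion, take $\widehat x$ with $\iota_\Delta^\star\widehat{\mathcal S}_\Gamma\widehat x = 0$ and write $\widehat x = \iota_\Pi x_\Pi + \iota_\Delta x_\Delta$ using $J_{\Pi\Delta}^{-1}$. The condition then reads
\[
\widehat{\mathcal S}_{\Delta\Pi}x_\Pi + \widehat{\mathcal S}_{\Delta\Delta}x_\Delta = 0 .
\]
Invertibility of $\widehat{\mathcal S}_{\Delta\Delta}$ gives $x_\Delta = -\widehat{\mathcal S}_{\Delta\Delta}^{-1}\widehat{\mathcal S}_{\Delta\Pi}x_\Pi$, and therefore $\widehat x = \mathcal H_\Pi x_\Pi$. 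This is the same static-condensation argument used in \Cref{thm:global_harmonic_extension}.

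\textbf{Coarse Schur complement.} Expand
\[
\mathcal H_\Pi^\star\widehat{\mathcal S}_\Gamma\mathcal H_\Pi = \iota_\Pi^\star\widehat{\mathcal S}_\Gamma\mathcal H_\Pi - \widehat{\mathcal S}_{\Delta\Pi}^\star\widehat{\mathcal S}_{\Delta\Delta}^{-\star}\,\iota_\Delta^\star\widehat{\mathcal S}_\Gamma\mathcal H_\Pi .
\]
The second term vanishes by the second identity. The first term equals $\widehat{\mathcal S}_{\Pi\Pi} - \widehat{\mathcal S}_{\Pi\Delta}\widehat{\mathcal S}_{\Delta\Delta}^{-1}\widehat{\mathcal S}_{\Delta\Pi}$, which is $\widehat{\mathcal S}_0$ by \eqref{eq:S0_def}.

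\textbf{Self-adjointness.} $\widehat{\mathcal S}_\Gamma = \widehat\iota_\Gamma^{\,\star}\widetilde{\mathcal S}_\Gamma\widehat\iota_\Gamma$ is self-adjoint because each $\mathcal S_{\Gamma_i} = \mathcal H_i^\star\mathcal K_i\mathcal H_i$ is, and $\mathcal K_i$ is self-adjoint. Hence the congruence $\mathcal H_\Pi^\star\widehat{\mathcal S}_\Gamma\mathcal H_\Pi$ is self-adjoint as well, using the reflexivity identification $\mathcal H_\Pi^{\star\star} = \mathcal H_\Pi$.

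\textbf{Main obstacle.} I expect no real obstacle. The only point needing care is the bookkeeping of adjoints, in particular that $\widehat{\mathcal S}_{\Pi\Delta} = \widehat{\mathcal S}_{\Delta\Pi}^\star$ and that $\widehat{\mathcal S}_{\Delta\Delta}^{-1}$ is self-adjoint. Both follow from the self-adjointness of $\widehat{\mathcal S}_\Gamma$ noted after \eqref{eq:Shat_blocks}.
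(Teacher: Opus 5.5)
Your proposal is correct and follows the paper's proof: each identity comes from applying $\pi_\Pi$ or $\iota_\Delta^\star\widehat{\mathcal S}_\Gamma$ to the definition of $\mathcal H_\Pi$, and the reverse range inclusion comes from solving the dual block equation using invertibility of $\widehat{\mathcal S}_{\Delta\Delta}$. The only difference is cosmetic. You kill the cross term in $\mathcal H_\Pi^\star\widehat{\mathcal S}_\Gamma\mathcal H_\Pi$ with the second identity and read off self-adjointness from the congruence, whereas the paper expands fully using $\widehat{\mathcal S}_{\Pi\Delta}=\widehat{\mathcal S}_{\Delta\Pi}^\star$ and the self-adjointness of $\widehat{\mathcal S}_{\Delta\Delta}^{-1}$.
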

\begin{proof}
The first identity follows from $\pi_\Pi \iota_\Pi = I_{\widehat{X}_\Pi}$ and
$\pi_\Pi \iota_\Delta = 0$, the second from
$\iota_\Delta^\star \widehat{\mathcal{S}}_\Gamma \mathcal{H}_\Pi = \widehat{\mathcal{S}}_{\Delta\Pi} - \widehat{\mathcal{S}}_{\Delta\Delta} \widehat{\mathcal{S}}_{\Delta\Delta}^{-1} \widehat{\mathcal{S}}_{\Delta\Pi}$.
Conversely, if
$\iota_\Delta^\star \widehat{\mathcal{S}}_\Gamma \widehat{x} = \widehat{\mathcal{S}}_{\Delta\Pi} \pi_\Pi \widehat{x} + \widehat{\mathcal{S}}_{\Delta\Delta} \pi_\Delta \widehat{x} = 0$,
then
$\pi_\Delta \widehat{x} = -\widehat{\mathcal{S}}_{\Delta\Delta}^{-1} \widehat{\mathcal{S}}_{\Delta\Pi} \pi_\Pi \widehat{x}$,
i.e.\ $\widehat{x} = \mathcal{H}_\Pi \pi_\Pi \widehat{x}$. The last claims
follow by expanding
$\mathcal{H}_\Pi^\star \widehat{\mathcal{S}}_\Gamma \mathcal{H}_\Pi$ with
\eqref{eq:Phi_def}, using
$\widehat{\mathcal{S}}_{\Pi\Delta} = \widehat{\mathcal{S}}_{\Delta\Pi}^\star$
and the self-adjointness of $\widehat{\mathcal{S}}_{\Delta\Delta}^{-1}$.
\end{proof}

\begin{theorem}[Block factorization]\label{thm:LDU}
Suppose $\widehat{\mathcal{S}}_{\Delta\Delta}$ is an isomorphism. Then the
operator
\begin{align}\label{eq:JH_def}
  J_{\mathcal{H}_\Pi} : \widehat{X}_\Pi \times \widehat{X}_\Delta \to \widehat{X}_\Gamma,
  \qquad
  J_{\mathcal{H}_\Pi}(x_\Pi, x_\Delta) := \mathcal{H}_\Pi x_\Pi + \iota_\Delta x_\Delta,
\end{align}
is an isomorphism, with inverse
$J_{\mathcal{H}_\Pi}^{-1}\widehat{x}
 = \del[1]{\pi_\Pi \widehat{x},\; \pi_\Delta \widehat{x} + \widehat{\mathcal{S}}_{\Delta\Delta}^{-1}\widehat{\mathcal{S}}_{\Delta\Pi}\pi_\Pi \widehat{x}}$,
and the partially assembled Schur complement is congruent to a block-diagonal
operator:
\begin{align}\label{eq:LDU}
  \Phi_{\Pi\Delta}^{-1}\, J_{\mathcal{H}_\Pi}^\star\, \widehat{\mathcal{S}}_\Gamma\, J_{\mathcal{H}_\Pi}
  = \begin{bmatrix}
      \widehat{\mathcal{S}}_0 & 0 \\
      0 & \widehat{\mathcal{S}}_{\Delta\Delta}
    \end{bmatrix}.
\end{align}
Consequently, $\widehat{\mathcal{S}}_\Gamma$ is an isomorphism if and only if
$\widehat{\mathcal{S}}_0$ is, in which case
\begin{align}\label{eq:Shat_inv}
  \widehat{\mathcal{S}}_\Gamma^{-1}
  = \mathcal{H}_\Pi\, \widehat{\mathcal{S}}_0^{-1}\, \mathcal{H}_\Pi^\star
  + \iota_\Delta\, \widehat{\mathcal{S}}_{\Delta\Delta}^{-1}\,
    \iota_\Delta^\star.
\end{align}
\end{theorem}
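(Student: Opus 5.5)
The proof is a direct block computation that mirrors the interior static condensation of \Cref{sec:dd}, with the orthogonal splitting \eqref{eq:primal_dual_decomposition} in place of the interior--interface splitting. It has three steps.

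\emph{Step 1: $J_{\mathcal{H}_\Pi}$ is an isomorphism.} Boundedness of $J_{\mathcal{H}_\Pi}$ is clear from \eqref{eq:Phi_def}, since $\widehat{\mathcal{S}}_{\Delta\Delta}^{-1}$ is bounded by assumption. We then check directly that the proposed map $G\widehat{x} := \bigl(\pi_\Pi \widehat{x},\, \pi_\Delta \widehat{x} + \widehat{\mathcal{S}}_{\Delta\Delta}^{-1}\widehat{\mathcal{S}}_{\Delta\Pi}\pi_\Pi \widehat{x}\bigr)$ is a two-sided inverse. For $J_{\mathcal{H}_\Pi} G = I$, expand $\mathcal{H}_\Pi \pi_\Pi \widehat{x} = \iota_\Pi\pi_\Pi\widehat{x} - \iota_\Delta \widehat{\mathcal{S}}_{\Delta\Delta}^{-1}\widehat{\mathcal{S}}_{\Delta\Pi}\pi_\Pi\widehat{x}$. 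The correction terms cancel, leaving $\iota_\Pi\pi_\Pi + \iota_\Delta\pi_\Delta = I_{\widehat{X}_\Gamma}$ by \eqref{eq:primal_dual_decomposition}. For $G J_{\mathcal{H}_\Pi} = I$, use $\pi_\Pi \mathcal{H}_\Pi = I$ from \Cref{prop:Phi_props}, together with $\pi_\Pi\iota_\Delta = 0$, $\pi_\Delta\iota_\Delta = I$ and $\pi_\Delta\iota_\Pi = 0$. This yields $\pi_\Delta \mathcal{H}_\Pi x_\Pi = -\widehat{\mathcal{S}}_{\Delta\Delta}^{-1}\widehat{\mathcal{S}}_{\Delta\Pi}x_\Pi$, which is cancelled exactly by the second component of $G$.

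\emph{Step 2: the congruence \eqref{eq:LDU}.} Test $\widehat{\mathcal{S}}_\Gamma J_{\mathcal{H}_\Pi}(x_\Pi,x_\Delta)$ against $J_{\mathcal{H}_\Pi}(v_\Pi,v_\Delta)$ and expand into four terms.
\begin{itemize}
\item The $(\Pi,\Pi)$ term is $\mathcal{H}_\Pi^\star\widehat{\mathcal{S}}_\Gamma\mathcal{H}_\Pi = \widehat{\mathcal{S}}_0$ by \Cref{prop:Phi_props}.
\item The $(\Delta,\Pi)$ term is $\iota_\Delta^\star\widehat{\mathcal{S}}_\Gamma\mathcal{H}_\Pi = 0$, again by \Cref{prop:Phi_props}.
\item The $(\Pi,\Delta)$ term is its adjoint, $\mathcal{H}_\Pi^\star\widehat{\mathcal{S}}_\Gamma\iota_\Delta = (\iota_\Delta^\star\widehat{\mathcal{S}}_\Gamma\mathcal{H}_\Pi)^\star = 0$, using self-adjointness of $\widehat{\mathcal{S}}_\Gamma$.
\item The $(\Delta,\Delta)$ term is $\widehat{\mathcal{S}}_{\Delta\Delta}$ by definition.
\end{itemize}
Applying $\Phi_{\Pi\Delta}^{-1}$ via \Cref{rem:notation} turns this into the block-diagonal form \eqref{eq:LDU}.

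\emph{Step 3: the equivalence and the inverse formula.} $J_{\mathcal{H}_\Pi}$, $J_{\mathcal{H}_\Pi}^\star$ and $\Phi_{\Pi\Delta}$ are isomorphisms, and $\widehat{\mathcal{S}}_{\Delta\Delta}$ is an isomorphism by hypothesis. By the observation in \Cref{rem:notation} on block-diagonal operators, \eqref{eq:LDU} therefore shows that $\widehat{\mathcal{S}}_\Gamma$ is an isomorphism if and only if $\widehat{\mathcal{S}}_0$ is. In that case, inverting \eqref{eq:LDU} gives
\begin{align*}
\widehat{\mathcal{S}}_\Gamma^{-1} = J_{\mathcal{H}_\Pi}\,\mathrm{diag}\bigl(\widehat{\mathcal{S}}_0^{-1},\widehat{\mathcal{S}}_{\Delta\Delta}^{-1}\bigr)\,\Phi_{\Pi\Delta}^{-1}J_{\mathcal{H}_\Pi}^\star .
\end{align*}
The identity $\Phi_{\Pi\Delta}^{-1}J_{\mathcal{H}_\Pi}^\star f = (\mathcal{H}_\Pi^\star f,\ \iota_\Delta^\star f)$ (the analogue of the formula stated for $J_{\Pi\Delta}$) then yields \eqref{eq:Shat_inv}.

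No step is a genuine obstacle; the only care needed is bookkeeping. First, the identifications $\Phi_{\Pi\Delta}$ must be tracked so that the off-diagonal vanishing is stated in the correct dual spaces. Second, the cancellation in Step 2 requires $\widehat{\mathcal{S}}_\Gamma$ to be self-adjoint. This holds because each $\mathcal{S}_{\Gamma_i} = \mathcal{H}_i^\star\mathcal{K}_i\mathcal{H}_i$ is self-adjoint by \eqref{eq:local_schur_harmonic}.
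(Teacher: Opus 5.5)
Your proposal is correct and follows the paper's proof essentially step for step. The off-diagonal blocks vanish by \Cref{prop:Phi_props} together with the self-adjointness of $\widehat{\mathcal{S}}_\Gamma$, and \eqref{eq:Shat_inv} follows by inverting the block-diagonal congruence and using $\Phi_{\Pi\Delta}^{-1}J_{\mathcal{H}_\Pi}^\star f = (\mathcal{H}_\Pi^\star f, \iota_\Delta^\star f)$. The only cosmetic difference is in Step~1: you show that the stated formula is a two-sided inverse, whereas the paper factors $J_{\mathcal{H}_\Pi}$ as $J_{\Pi\Delta}$ composed with a unipotent lower-triangular block operator and then checks the inverse formula directly.
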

\begin{proof}
From \eqref{eq:Phi_def}  and \eqref{eq:JPiDelta_def},
$J_{\mathcal{H}_\Pi}(x_\Pi, x_\Delta)
 = J_{\Pi\Delta}\del[1]{x_\Pi,\, x_\Delta - \widehat{\mathcal{S}}_{\Delta\Delta}^{-1}\widehat{\mathcal{S}}_{\Delta\Pi} x_\Pi}$, and thus it is the composition of $J_{\Pi\Delta}$ and the lower triangular block operator
\begin{align*}
    \begin{bmatrix}
     I_{\widehat{X}_\Pi} & 0  \\
     - \widehat{\mathcal{S}}_{\Delta\Delta}^{-1}\widehat{\mathcal{S}}_{\Delta\Pi} & I_{\widehat{X}_{\Delta}} 
 \end{bmatrix},
\end{align*}
both of which are isomorphisms. Thus, we deduce $J_{\mathcal{H}_\Pi}$ is an isomorphism. The stated formula for $J_{\mathcal{H}_\Pi}^{-1}$ can be verified by direct computation using that $\pi_\Pi \mathcal{H}_\Pi = I_{\widehat{X}_\Pi}$ and
$\pi_\Delta \mathcal{H}_\Pi = -\widehat{\mathcal{S}}_{\Delta\Delta}^{-1}\widehat{\mathcal{S}}_{\Delta\Pi}$. 
Next, since the following decomposition holds,
\begin{align}
  \Phi_{\Pi\Delta}^{-1}\, J_{\mathcal{H}_\Pi}^\star\, \widehat{\mathcal{S}}_\Gamma\, J_{\mathcal{H}_\Pi}
  = \begin{bmatrix}
      \mathcal{H}_\Pi^\star \widehat{\mathcal{S}}_\Gamma \mathcal{H}_\Pi & \mathcal{H}_\Pi^\star \widehat{\mathcal{S}}_\Gamma \iota_\Delta  \\
      \iota_\Delta^\star \widehat{\mathcal{S}}_\Gamma \mathcal{H}_\Pi &  \iota_\Delta^\star \widehat{\mathcal{S}}_\Gamma \iota_\Delta
    \end{bmatrix},
\end{align}
 \eqref{eq:LDU} follows from \Cref{prop:Phi_props} and the self-adjointness of
$\widehat{\mathcal{S}}_\Gamma$.
Moreover, since $J_{\mathcal{H}_\Pi}$ and $\Phi_{\Pi\Delta}$ are isomorphisms,
$\widehat{\mathcal{S}}_\Gamma$ is an isomorphism if and only if the right-hand
side of \eqref{eq:LDU} is, i.e.\ if and only if $\widehat{\mathcal{S}}_0$ is.
In that case, inverting \eqref{eq:LDU} gives
$\widehat{\mathcal{S}}_\Gamma^{-1}
 = J_{\mathcal{H}_\Pi} \operatorname{diag}\del[1]{\widehat{\mathcal{S}}_0^{-1}, \widehat{\mathcal{S}}_{\Delta\Delta}^{-1}} \Phi_{\Pi\Delta}^{-1} J_{\mathcal{H}_\Pi}^\star$,
and evaluating at $f \in \widehat{X}_\Gamma^\star$ via
$\Phi_{\Pi\Delta}^{-1} J_{\mathcal{H}_\Pi}^\star f = \del[1]{\mathcal{H}_\Pi^\star f, \iota_\Delta^\star f}$
yields \eqref{eq:Shat_inv}.
\color{black}
\end{proof}

\begin{remark}[Action of the coarse operator]\label{rem:local_KKT_Pi}
For $x_\Pi \in \widehat{X}_\Pi$, the dual component of
$\mathcal{H}_\Pi x_\Pi$ is
$-\widehat{\mathcal{S}}_{\Delta\Delta}^{-1} \widehat{\mathcal{S}}_{\Delta\Pi} x_\Pi$,
which is computed by the $N$ independent local dual problems involving the local dual subspace operator defined in
\eqref{eq:local_dual_subspace_blocks}. Applying
$\widehat{\mathcal{S}}_0 = \mathcal{H}_\Pi^\star \widehat{\mathcal{S}}_\Gamma \mathcal{H}_\Pi$
therefore consists of independent local solves followed by assembly of the
primal residuals, whereas each application of $\widehat{\mathcal{S}}_0^{-1}$
is a global solve on the finite-dimensional space $\widehat{X}_\Pi$.
\end{remark}

\subsubsection{Well-posedness of the local dual subspace problems}
\label{sss:SDD_wellposed}
We now determine conditions under which the local dual subspace operators $\widehat{\mathcal{S}}_{\Delta \Delta}^i: \widehat{X}_\Delta^i \to (\widehat{X}_\Delta^i)^\star$ are isomorphisms, and thus in light of \Cref{prop:SDD_diagonal}, conditions under which $\widehat{\mathcal{S}}_{\Delta \Delta}: \widehat{X}_\Delta \to \widehat{X}_\Delta^\star$ is an isomorphism.
To this end, we
collect the constraint values of a scalar trace by the \emph{scalar constraint map} for $\bullet \in \cbr{y,p}$,
\begin{align}\label{eq:scalar_constraint_map}
    c_{\Pi_i}^\bullet : V_{\Gamma_i} \to \mathbb{R}^{N_\Pi^i},
    \qquad
    c_{\Pi_i}^\bullet(v) := \del[1]{\ell_k^{i,\bullet}(v)}_{k \in \mathcal{N}^i},
\end{align}
so that, up to a reordering of components, $C_\Pi^i(\xi_i) = \del[1]{c_{\Pi_i}^y(\xi_i^y), c_{\Pi_i}^p(\xi_i^p)}$ for every $\xi_i = (\xi_i^y, \xi_i^p) \in X_{\Gamma_i}$, with $C_\Pi^i$ the product constraint map \eqref{eq:CPi_local_def}. For $\bullet \in \cbr{y,p}$, we define $ \widehat{V}_{\Delta_i}^\bullet := \ker(c_{\Pi_i}^\bullet) \subset V_{\Gamma_i}$.
The local dual subspace then factors as, 
\begin{align}\label{eq:dual_factor}
    \widehat{X}_\Delta^i = \widehat{V}_{\Delta_i}^y \times \widehat{V}_{\Delta_i}^p.
\end{align}
\begin{theorem}[Local Schur complement properties]
\label{thm:local_dual_fredholm}
For each $i = 1,\dots,N$, the local dual subspace operator
$\widehat{\mathcal{S}}_{\Delta \Delta}^i: \widehat{X}_\Delta^i \to (\widehat{X}_\Delta^i)^\star$ is
Fredholm with $\operatorname{ind}(\widehat{\mathcal{S}}_{\Delta \Delta}^i) = 0$ and
\begin{align}\label{eq:SDD_kernel}
    \ker\del[1]{\widehat{\mathcal{S}}_{\Delta\Delta}^i}
    = \cbr[1]{x_\Delta \in \widehat{X}_\Delta^i : \iota_\Delta^i\, x_\Delta \in \ker(\mathcal{S}_{\Gamma_i})}.
\end{align}
If the primal constraint family is compatible in the sense of
\Cref{def:compatible_family}, then $\widehat{\mathcal{S}}_{\Delta\Delta}^i$ is an
isomorphism if and only if
\begin{align}\label{eq:SDD_iso_condition}
    i \notin \mathcal{N}_*
    \qquad \text{or} \qquad
    c_{\Pi_i}^p\del[1]{1|_{\Gamma_i}} \neq 0,
\end{align}
with $\mathcal{N}_*$ defined as in \textup{\Cref{thm:broken_fredholm}(b)}.
\end{theorem}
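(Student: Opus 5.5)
Fredholmness follows the pattern of \Cref{thm:partial_fredholm}(a)--(b). The subspace $\widehat{X}_\Delta^i=\ker(C_\Pi^i)$ is closed and of finite codimension in $X_{\Gamma_i}$, since $C_\Pi^i$ is bounded with values in a finite-dimensional space. By \Cref{thm:local_fredholm}, $\mathcal{S}_{\Gamma_i}$ is Fredholm of index zero. Hence \Cref{prop:finite_rank_perturb} applies to the compression $(\iota_\Delta^i)^\star\mathcal{S}_{\Gamma_i}\iota_\Delta^i$ and gives $\operatorname{ind}(\widehat{\mathcal{S}}_{\Delta\Delta}^i)=0$.

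For the kernel identity \eqref{eq:SDD_kernel}, the inclusion "$\supseteq$" is immediate. If $\iota_\Delta^i x_\Delta\in\ker(\mathcal{S}_{\Gamma_i})$, then $\langle \widehat{\mathcal{S}}_{\Delta\Delta}^i x_\Delta, v\rangle=\langle\mathcal{S}_{\Gamma_i}\iota_\Delta^i x_\Delta,\iota_\Delta^i v\rangle=0$ for every $v$. For "$\subseteq$" I would repeat the energy argument of \Cref{thm:partial_fredholm}(c) locally. Take $x_\Delta=(\xi^y,\xi^p)\in\ker(\widehat{\mathcal{S}}_{\Delta\Delta}^i)$ and set $(y_i,p_i,u_i,\mu_i):=\mathcal{H}_i\iota_\Delta^i x_\Delta$. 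Since $\widehat{X}_\Delta^i=\widehat V_{\Delta_i}^y\times\widehat V_{\Delta_i}^p$ by \eqref{eq:dual_factor} is a product, $(\xi^y,-\xi^p)$ is an admissible test. Using \Cref{lem:local_harmonic}(d), this test gives $\langle L_iy_i,y_i\rangle+\alpha\|u_i\|^2=0$, hence $y_i=0$ and $u_i=0$. Next I test with $(\xi^p,0)$, which gives $\langle\widetilde A_i^\star p_i,p_i\rangle=0$, so $\widetilde A_i^\star p_i=0$. With these, all rows of $\mathcal K_i\mathcal H_i\iota_\Delta^i x_\Delta$ vanish, and injectivity of $\tau_i^\star$ gives $\mathcal S_{\Gamma_i}\iota_\Delta^i x_\Delta=0$.

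The main obstacle is the admissibility of $(\xi^p,0)$, which requires $\xi^p\in\widehat V_{\Delta_i}^y$, i.e.\ $\ker(c_{\Pi_i}^p)\subseteq\ker(c_{\Pi_i}^y)$. Compatibility in the sense of \Cref{def:compatible_family} is stated globally on $\widehat V_\Gamma^p\subseteq\widehat V_\Gamma^y$. I would localize it as follows. Given $v\in\ker(c_{\Pi_i}^p)$, extend it to $\widetilde V_\Gamma$ by placing $v$ in slot $i$ and $0$ in all other slots. Every adjoint constraint $k$ then has value $0$ at every incident subdomain, so the tuple lies in $\widehat V_\Gamma^p$. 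By compatibility it lies in $\widehat V_\Gamma^y$, so $\ell_k^{i,y}(v)=\ell_k^{j,y}(0)=0$ for $j\in\mathcal N_k\setminus\{i\}$, which is nonempty since $|\mathcal N_k|\ge 2$. Thus $v\in\ker(c_{\Pi_i}^y)$. This same step is the only place where compatibility is used, so I read the theorem as placing the full identity \eqref{eq:SDD_kernel} under that hypothesis.

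Finally, with index zero, $\widehat{\mathcal S}_{\Delta\Delta}^i$ is an isomorphism iff its kernel is trivial, i.e.\ iff $\widehat X_\Delta^i\cap\ker(\mathcal S_{\Gamma_i})=\{0\}$. If $i\notin\mathcal N_*$, then $\ker(\mathcal S_{\Gamma_i})=\{0\}$ by \Cref{thm:broken_fredholm} and we are done. If $i\in\mathcal N_*$, the kernel is spanned by $(0,1|_{\Gamma_i})$ by \eqref{eq:local_schur_kernel}. The vector $(0,c\,1|_{\Gamma_i})$ lies in $\widehat X_\Delta^i$ iff $c\,c_{\Pi_i}^p(1|_{\Gamma_i})=0$, using $c_{\Pi_i}^y(0)=0$. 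The intersection is therefore trivial exactly when $c_{\Pi_i}^p(1|_{\Gamma_i})\ne0$, which is \eqref{eq:SDD_iso_condition}.
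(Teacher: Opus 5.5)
Your proposal is correct and follows the paper's proof essentially step for step: Fredholmness comes from applying \Cref{prop:finite_rank_perturb} to the finite-codimensional subspace $\widehat X_\Delta^i\subset X_{\Gamma_i}$, the kernel identity from the two energy tests $(\xi^y,-\xi^p)$ and $(\xi^p,0)$ combined with \Cref{lem:local_harmonic}(d), and the isomorphism criterion from \eqref{eq:local_schur_kernel}. Your localization of compatibility to $\ker(c_{\Pi_i}^p)\subseteq\ker(c_{\Pi_i}^y)$, obtained by extending by zero and using $|\mathcal N_k|\ge 2$, fills in a step the paper only asserts, and you are right that the paper's own argument for the inclusion $\subseteq$ in \eqref{eq:SDD_kernel} also relies on compatibility.
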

\begin{proof}
 \emph{Step 1: Fredholm property.}  Since $\widehat{\mathcal{S}}_{\Delta\Delta}^i
  =
  (\iota_\Delta^i)^\star
  \mathcal{S}_{\Gamma_i}
  \iota_\Delta^i$  and $\widehat{X}_\Delta^i \subset \widehat{X}_\Gamma$ has finite codimension, the fact that $\widehat{\mathcal{S}}_{\Delta\Delta}^i$ is Fredholm with $\textup{ind}(\widehat{\mathcal{S}}_{\Delta\Delta}^i) = 0$ follows from \Cref{thm:local_fredholm} and \Cref{prop:finite_rank_perturb}.

  \medskip
  \emph{Step 2: Kernel.} The inclusion $\supseteq$ in \eqref{eq:SDD_kernel} is
immediate. Conversely, let
$\widehat \xi_i = (\widehat\xi_i^y, \widehat\xi_i^p) \in \ker(\widehat{\mathcal{S}}_{\Delta\Delta}^i)$ and set
$(\widehat{y}_i, \widehat{p}_i, \widehat{u}_i, \widehat{\mu}_i)
 := \mathcal{H}_i\, \iota_\Delta^i\, \widehat\xi_i$.
Testing $\widehat{\mathcal{S}}_{\Delta\Delta}^i\, \widehat\xi_i$ against
$\widehat \eta_i := (\widehat \xi_i^y, -\widehat\xi_i^p) \in \widehat{X}_\Delta^i$ and applying \Cref{lem:local_harmonic}(d), we find
\begin{align*}
    0 = \langle \widehat{\mathcal{S}}_{\Delta\Delta}^i \widehat\xi_i, \widehat\eta_i \rangle_{\widehat{X}_\Delta^i}
      = \langle \mathcal{S}_{\Gamma_i} \iota_\Delta^i \widehat\xi_i, \iota_\Delta^i \widehat\eta_i \rangle_{X_{\Gamma_i}}
      = \langle L_i \widehat{y}_i, \widehat{y}_i \rangle_{V_i}
        + \alpha \|\widehat{u}_i\|_{U_i}^2.
\end{align*}
Since
$\langle L_i \widehat{y}_i, \widehat{y}_i \rangle_{V_i} \ge \|M_i \widehat{y}_i\|_{L^2(\Omega_i)}^2$
and $M_i$ is injective, $\widehat{y}_i = 0$ and $\widehat{u}_i = 0$. Next, since the primal constraint family is assumed compatible,
$(\widehat\xi_i^p, 0) \in \widehat{X}_\Delta^i$. Testing $\mathcal{S}_{\Gamma_i}\, \iota_\Delta^i \widehat\xi_i$ with it and using
\Cref{lem:local_harmonic}(d) $(\widehat{p}_i, 0)$,
\begin{align*}
    0 = \langle \mathcal{S}_{\Gamma_i}\, \iota_\Delta^i \widehat\xi_i, (\xi^p, 0) \rangle_{X_{\Gamma_i}}
      = \langle L_i\, \widehat{y}_i + \widetilde{A}_i^\star\, \widehat{p}_i, \widehat{p}_i \rangle_{V_i}
      = \langle \widetilde{A}_i^\star\, \widehat{p}_i, \widehat{p}_i \rangle_{V_i}.
\end{align*}
As $\widetilde{A}_i^\star$ is self-adjoint and positive semidefinite, 
$\widetilde{A}_i^\star\, \widehat{p}_i = 0$. Consequently,  $\iota_\Delta^i \xi \in \ker(\mathcal{S}_{\Gamma_i})$, proving
\eqref{eq:SDD_kernel}.

\medskip
\emph{Step 3: Invertibility.} By Step 1,
$\widehat{\mathcal{S}}_{\Delta\Delta}^i$ is an isomorphism if and only if its
kernel is trivial. If $i \notin \mathcal{N}_*$, then
$\ker(\mathcal{S}_{\Gamma_i}) = \cbr{0}$ and \eqref{eq:SDD_kernel} is trivial.
If $i \in \mathcal{N}_*$, then
$\ker(\mathcal{S}_{\Gamma_i}) = \cbr[1]{(0, c_i \cdot 1|_{\Gamma_i}) : c_i \in \mathbb{R}}$
by \eqref{eq:local_schur_kernel}, and
$(0, c_i \cdot 1|_{\Gamma_i}) \in \widehat{X}_\Delta^i$ if and only if
$c_i c_\Pi^i(1|_{\Gamma_i}) = 0$. Hence \eqref{eq:SDD_kernel} is trivial if
and only if $c_\Pi^i(1|_{\Gamma_i}) \neq 0$.
 Hence \eqref{eq:SDD_kernel} is trivial if
and only if \eqref{eq:SDD_iso_condition} holds.
\end{proof}

\begin{corollary}\label{cor:SDD_iso}
$\widehat{\mathcal{S}}_{\Delta\Delta}$ is an isomorphism if and only if
\eqref{eq:SDD_iso_condition} holds for every $i = 1,\dots,N$. 
\end{corollary}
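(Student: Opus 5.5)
The plan is to combine \Cref{prop:SDD_diagonal} with \Cref{thm:local_dual_fredholm}. \Cref{prop:SDD_diagonal} states that $\widehat{\mathcal{S}}_{\Delta\Delta}$ is congruent, via the isometric isomorphism $\mathcal{B}_\Delta$ of \Cref{lem:dual_subspace_decomp}, to the block-diagonal operator $\bigoplus_{i=1}^N \widehat{\mathcal{S}}_{\Delta\Delta}^i$. Congruence by an isomorphism preserves invertibility. Also, by \Cref{rem:notation}, a direct sum $\bigoplus_i T_i$ is an isomorphism if and only if each $T_i$ is. Therefore $\widehat{\mathcal{S}}_{\Delta\Delta}$ is an isomorphism if and only if $\widehat{\mathcal{S}}_{\Delta\Delta}^i$ is an isomorphism for every $i=1,\dots,N$.

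The second step reduces this to condition \eqref{eq:SDD_iso_condition}, applying \Cref{thm:local_dual_fredholm} to each $i$. Under compatibility (\Cref{def:compatible_family}), that theorem says $\widehat{\mathcal{S}}_{\Delta\Delta}^i$ is an isomorphism if and only if $i\notin\mathcal{N}_*$ or $c_{\Pi_i}^p(1|_{\Gamma_i})\neq 0$. Chaining the two equivalences over all $i$ gives the corollary.

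The main point needing care is that the ``if and only if'' in \Cref{thm:local_dual_fredholm} was proved under the compatibility assumption, so the corollary should be read with that standing hypothesis. Without compatibility, I would still record two facts. First, the Fredholm index-zero property holds for each block. Second, the inclusion ``$\supseteq$'' in \eqref{eq:SDD_kernel} holds, and it shows that a violation of \eqref{eq:SDD_iso_condition} at some $i$ produces the nonzero kernel element $(0,1|_{\Gamma_i})\in\widehat{X}_\Delta^i$. Hence necessity of \eqref{eq:SDD_iso_condition} holds in general. Sufficiency is then exactly the compatible case of \Cref{thm:local_dual_fredholm}, and no further argument is needed.
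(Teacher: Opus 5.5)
Your proof is correct and follows the paper's argument: the block-diagonal congruence of \Cref{prop:SDD_diagonal} reduces invertibility of $\widehat{\mathcal{S}}_{\Delta\Delta}$ to invertibility of each $\widehat{\mathcal{S}}_{\Delta\Delta}^i$, and \Cref{thm:local_dual_fredholm} characterizes the latter. Your remark on the compatibility hypothesis is also accurate: the equivalence tacitly requires it, while necessity of \eqref{eq:SDD_iso_condition} holds regardless via the nonzero kernel element $(0,1|_{\Gamma_i})$, and the paper's statement leaves this assumption implicit.
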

\begin{proof}
Combine \Cref{thm:local_dual_fredholm} with \Cref{prop:SDD_diagonal}.
\end{proof}

\subsubsection{Two-level additive Schwarz form}
Finally, we note that the BDDC preconditioner can be interpreted as a two-level additive Schwarz method:
\begin{theorem}[Two-level additive Schwarz form]\label{thm:twolevel}
Suppose $\widehat{\mathcal{S}}_{\Delta \Delta}$ and $\widehat{\mathcal{S}}_0$ are isomorphisms. Then, defining  $\mathcal{W}_i := \mathcal{W}\, \iota_\Delta\, \jmath_{\Delta}^i$, we have
\begin{align}\label{eq:twolevel}
  \mathcal{P}_{\mathrm{BDDC}}
  = \sum_{i=1}^N \mathcal{W}_i\, (\widehat{\mathcal{S}}_{\Delta\Delta}^i)^{-1}\,
    \mathcal{W}_i^\star
  + \del[1]{\mathcal{W} \mathcal{H}_\Pi}\, \widehat{\mathcal{S}}_0^{-1}\, \del[1]{\mathcal{W} \mathcal{H}_\Pi}^\star.
\end{align}
\end{theorem}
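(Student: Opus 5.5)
The plan is to insert the block inverse \eqref{eq:Shat_inv} of \Cref{thm:LDU} into the definition \eqref{eq:BDDC_preconditioner} and then split the dual term into local contributions. First I will verify the hypotheses of \Cref{thm:LDU}. By assumption $\widehat{\mathcal{S}}_{\Delta\Delta}$ is an isomorphism. Since $\widehat{\mathcal{S}}_0$ is also an isomorphism, \Cref{thm:LDU} shows that $\widehat{\mathcal{S}}_\Gamma$ is an isomorphism, so $\mathcal{P}_{\mathrm{BDDC}}$ is well defined. It also gives
\begin{align*}
  \mathcal{P}_{\mathrm{BDDC}}
  = \mathcal{W}\mathcal{H}_\Pi\,\widehat{\mathcal{S}}_0^{-1}\,\mathcal{H}_\Pi^\star\mathcal{W}^\star
  + \mathcal{W}\iota_\Delta\,\widehat{\mathcal{S}}_{\Delta\Delta}^{-1}\,\iota_\Delta^\star\mathcal{W}^\star.
\end{align*}
The first term equals $(\mathcal{W}\mathcal{H}_\Pi)\widehat{\mathcal{S}}_0^{-1}(\mathcal{W}\mathcal{H}_\Pi)^\star$, because $(\mathcal{W}\mathcal{H}_\Pi)^\star=\mathcal{H}_\Pi^\star\mathcal{W}^\star$. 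This is exactly the coarse term in \eqref{eq:twolevel}.

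For the dual term I will use \Cref{prop:SDD_diagonal}. Since $\widehat{\mathcal{S}}_{\Delta\Delta}$ is an isomorphism, every local block $\widehat{\mathcal{S}}_{\Delta\Delta}^i$ is an isomorphism, and \eqref{eq:SDD_inv_decomp} gives $\widehat{\mathcal{S}}_{\Delta\Delta}^{-1}=\sum_i \jmath_\Delta^i(\widehat{\mathcal{S}}_{\Delta\Delta}^i)^{-1}(\jmath_\Delta^i)^\star$. Substituting this expression produces
\begin{align*}
\sum_{i=1}^N (\mathcal{W}\iota_\Delta\jmath_\Delta^i)(\widehat{\mathcal{S}}_{\Delta\Delta}^i)^{-1}(\jmath_\Delta^i)^\star\iota_\Delta^\star\mathcal{W}^\star .
\end{align*}
The adjoint of the composition is $(\mathcal{W}\iota_\Delta\jmath_\Delta^i)^\star=(\jmath_\Delta^i)^\star\iota_\Delta^\star\mathcal{W}^\star$, so each summand equals $\mathcal{W}_i(\widehat{\mathcal{S}}_{\Delta\Delta}^i)^{-1}\mathcal{W}_i^\star$. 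Adding the two contributions yields \eqref{eq:twolevel}.

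The argument is essentially bookkeeping, and the only delicate point is the adjoint and duality identifications. The spaces $\widehat{X}_\Pi$, $\widehat{X}_\Delta$ and $\widehat{X}^i_\Delta$ are not identified with their duals. Each adjoint must therefore act between the correct dual spaces, for example $\mathcal{W}_i^\star:X_\Gamma^\star\to(\widehat{X}^i_\Delta)^\star$. In addition, \eqref{eq:Shat_inv} must be read as an operator identity on $\widehat{X}_\Gamma^\star$, not only through the pairing with $\Phi_{\Pi\Delta}$. I would check both points by pairing each side against arbitrary $g,h\in X_\Gamma^\star$. Beyond this, no further obstacle arises.
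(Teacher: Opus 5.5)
Your proposal is correct and follows the paper's proof exactly: you substitute the block inverse \eqref{eq:Shat_inv} into \eqref{eq:BDDC_preconditioner}, then expand the dual term using \eqref{eq:SDD_inv_decomp} together with $(\mathcal{W}\iota_\Delta\jmath_\Delta^i)^\star=(\jmath_\Delta^i)^\star\iota_\Delta^\star\mathcal{W}^\star$. You also use \Cref{thm:LDU} to check that $\widehat{\mathcal{S}}_\Gamma$ is invertible, so that $\mathcal{P}_{\mathrm{BDDC}}$ is well defined; the paper leaves this step implicit, and it is worth having.
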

\begin{proof}
Insert \eqref{eq:Shat_inv} into \eqref{eq:BDDC_preconditioner} and expand $\iota_\Delta \widehat{\mathcal{S}}_{\Delta\Delta}^{-1} \iota_\Delta^\star$ with \eqref{eq:SDD_inv_decomp}.
\end{proof}
An application of $\mathcal{P}_{\mathrm{BDDC}}$ thus consists of $N$ independent local dual subspace problems and a finite dimensional global coarse solve of dimension at most $2N_\Pi$.

\section{Numerical results}\label{sec:numerics_results}
We next describe the finite element realization of the interface Schur
complement and BDDC preconditioner, followed by three numerical studies.
The nonlinear optimality system is solved by a semi-smooth Newton method,
and each Newton equation is reduced to the state--adjoint interface and
solved by preconditioned GMRES.  We report \emph{LI}, the average number
of GMRES iterations per Newton step for each nonlinear solve.  The
experiments assess mesh dependence, growth with $H/h$, the effect of the
primal constraints, and sensitivity to $\alpha$, $\beta$, and the
Moreau--Yosida parameter $\gamma$.

\subsection{Discretization and primal constraints}\label{sec:discretization_primal}

All experiments are carried out on the unit square $\Omega=(0,1)^2$.
For a refinement level $\ell\in\mathbb N$, let $\mathcal T_h$ denote
the triangulation of $\Omega$ obtained by subdividing a uniform
$n\times n$ Cartesian grid, $n:=2^{\ell+1}$, into $2n^2$ congruent
triangles, with mesh size $h:=1/n$, and let $\mathcal S_h$ denote the
induced triangulation of $\partial\Omega$. The subdomain partition
$\overline\Omega=\bigcup_{i=1}^N\overline\Omega_i$ of \Cref{sec:dd}
consists of $N=1/H^2$ congruent squares of side $H$, where $1/H$ and
$H/h$ are integers, so that every subdomain boundary is resolved by
$\mathcal T_h$. We vary $H/h\in\cbr{4,8,16}$. In analogy with the
control domain $\mathcal U$ of \Cref{ss:spaces_ops}, we introduce the
control mesh
\begin{align*}
  \mathcal T_h^{\mathcal U} :=
  \begin{cases}
    \mathcal T_h, & \textup{(DC)}, \\
    \mathcal S_h, & \textup{(NC)},
  \end{cases}
\end{align*}
whose cells partition $\mathcal U$. 
The discretization is defined by
the conforming pair
\begin{align*}
  V_h &:= \cbr[1]{v\in C(\overline\Omega)\cap V :
    v|_T \text{ is affine for all } T\in\mathcal T_h} \subset V, \\
  U_h &:= \cbr[1]{w\in L^\infty(\mathcal U) :
    w|_T \text{ is constant for all }
    T\in\mathcal T_h^{\mathcal U}} \subset U.
\end{align*}
The state and adjoint are approximated in $V_h$, and the control and
multiplier in $U_h$. 

\subsubsection{Finite element matrices}
Let $\cbr{\phi_i}_{i=1}^{N_V}$ denote the nodal
basis of $V_h$, associated with the interior vertices of
$\mathcal T_h$ in case (DC) and with all vertices in case (NC), and
let $\cbr{\chi_{T_j}}_{j=1}^{N_U}$ denote the indicator basis of
$U_h$, where $T_1,\dots,T_{N_U}$ enumerate the cells of
$\mathcal T_h^{\mathcal U}$. Discrete functions and their coefficient
vectors, the latter written in boldface, are related by
\begin{align*}
  y_h=\sum_{i=1}^{N_V}(\bm y)_i\,\phi_i,
  \quad
  p_h=\sum_{i=1}^{N_V}(\bm p)_i\,\phi_i,
  \quad
  u_h=\sum_{j=1}^{N_U}(\bm u)_j\,\chi_{T_j},
  \quad
  \mu_h=\sum_{j=1}^{N_U}(\bm \mu)_j\,\chi_{T_j}.
\end{align*}
The stiffness matrix
$A_h$, the control Gram matrix $Q_h$, and the control-to-state
coupling matrix $B_h$ are
\begin{alignat*}{2}
  (A_h)_{ij}
    &:= \langle A\phi_j,\phi_i\rangle_{V}
     = \int_\Omega \nabla\phi_j\cdot\nabla\phi_i\dif x,
    &\quad& i,j\in\cbr{1,\dots,N_V}, \\
  (Q_h)_{ij}
    &:= (\chi_{\kappa_j},\chi_{\kappa_i})_{U}
     = \delta_{ij}\,|\kappa_i|,
    &\quad& i,j\in\cbr{1,\dots,N_U}, \\
  (B_h)_{ij}
    &:= \langle B\chi_{\kappa_j},\phi_i\rangle_{V}
     = (\chi_{\kappa_j},\phi_i)_{L^2(\mathcal U)},
    &\quad& i\in\cbr{1,\dots,N_V},\; j\in\cbr{1,\dots,N_U},
\end{alignat*}
where $|\kappa|$ denotes the measure of the control cell $\kappa$. The lumped mass
matrix is
\begin{align*}
  (M_h)_{ij} := \delta_{ij}\int_\Omega \phi_i\dif x,
\end{align*}
and $\widetilde A_h := A_h+\sigma M_h$. Since $M_h$ is diagonal, so is
the discrete Moreau--Yosida penalty below.

\subsubsection{Symmetrized Newton system}
At a discrete Newton iterate
$\bar x_h=(\bar y_h,\bar p_h,\bar u_h,\bar\mu_h)$, the active sets of
\Cref{ss:active_sets} become index sets. The state active set
$\mathcal A_{\bar y,h}\subset\cbr{1,\dots,N_V}$ consists of the state
nodes lying in $\mathcal A_{\bar y}$, and, since $\bar u_h$ and
$\bar\mu_h$ are constant on each control cell, the active and inactive
control sets are unions of control cells, indexed by
$\mathcal A_{\bar u,\bar\mu,h}$ and $\mathcal I_{\bar u,\bar\mu,h}$ in
$\cbr{1,\dots,N_U}$. For an index set $\mathfrak A$, let
$\Pi_{\mathfrak A}$ be the diagonal projection onto the coordinates it
indexes. The discrete counterpart of
$L=M^\star\del[1]{I+\gamma^{-1}
\mathcal P_{\mathcal A_{\bar y}}^\star
\mathcal P_{\mathcal A_{\bar y}}}M$ is then
\begin{align*}
  L_h := \del[1]{I+\gamma^{-1}\Pi_{\mathcal A_{\bar y,h}}}M_h,
\end{align*}
which reduces to $M_h$ when no state constraint is active.

We set
$U_{\mathcal A,h}:=\operatorname{span}\cbr[1]{\chi_{\kappa_j} :
j\in\mathcal A_{\bar u,\bar\mu,h}}\subset U_{\mathcal A}$ and
$X_h := V_h\times V_h\times U_h\times U_{\mathcal A,h}\subset X$.
With $n_{\mathcal A}:=|\mathcal A_{\bar u,\bar\mu,h}|$, let
$P_{\mathcal A}\in\cbr{0,1}^{n_{\mathcal A}\times N_U}$ denote the
matrix selecting the active cells, so that
$P_{\mathcal A}^\top P_{\mathcal A}
=\Pi_{\mathcal A_{\bar u,\bar\mu,h}}$
~\cite{Porcelli2017Preconditioning}. It is the coefficient
representation of the restriction
$\mathcal P_{\mathcal A}|_{U_h}:U_h\to U_{\mathcal A,h}$. The
discrete symmetrized system on $X_h$ reads
\begin{align}\label{eq:discrete_KKT}
  \underbrace{\begin{bmatrix}
    L_h & \widetilde A_h & 0 & 0 \\
    \widetilde A_h & 0 & -B_h & 0 \\
    0 & -B_h^\top & \alpha\,Q_h & Q_h P_{\mathcal A}^\top \\
    0 & 0 & P_{\mathcal A} Q_h & 0
  \end{bmatrix}}_{=:\,\mathcal K_h}
  \begin{bmatrix} \bm y \\ \bm p \\ \bm u \\ \bm\mu_{\mathcal A}\end{bmatrix}
  =
  \begin{bmatrix}\bm f_1 \\ \bm f_2 \\ \bm f_3 \\ \bm f_4\end{bmatrix},
\end{align}
where $\bm y,\bm p,\bm u,\bm\mu_{\mathcal A}$ are the coefficient
vectors of the Newton increment and $\bm f_1,\dots,\bm f_4$ the
coefficient vectors of the right-hand side $f$
of~\eqref{eq:block_KKT} in the nodal and cellwise bases. Note that the matrix
$\mathcal K_h$ is symmetric.

\subsubsection{Subdomain restrictions and interface unknowns}
Since the subdomain boundaries are resolved by $\mathcal T_h$, each
$\Omega_i$ inherits the triangulation
$\mathcal T_{h,i}:=\cbr[1]{T\in\mathcal T_h :
T\subset\overline\Omega_i}$ and the control mesh
$\mathcal T_{h,i}^{\mathcal U}:=\cbr[1]{T\in
\mathcal T_h^{\mathcal U} : T\subset\overline{\mathcal U}_i}$, where
$\mathcal U_i=\Omega_i$ in case (DC) and
$\mathcal U_i=\partial\Omega_i\cap\partial\Omega$ in case (NC), as in
\Cref{ss:spaces}. The local finite element spaces
\begin{align*}
  V_{i,h} := \cbr[1]{v|_{\Omega_i} : v\in V_h} \subset V_i,
  \qquad
  U_{i,h} := \cbr[1]{w|_{\mathcal U_i} : w\in U_h} \subset U_i
\end{align*}
are conforming in the local spaces of \Cref{ss:spaces}, with
$U_{i,h}=\cbr{0}$ for non-control subdomains. 
Each subdomain $\Omega_i$ carries local matrices $A_{i,h}$,
$M_{i,h}$, $Q_{i,h}$, $B_{i,h}$, $\widetilde A_{i,h}$, and $L_{i,h}$,
given by the formulas above with all integrals restricted to
$\Omega_i$ and $\mathcal U_i$, where $\mathcal U_i=\Omega_i$ in case
(DC) and $\mathcal U_i=\partial\Omega_i\cap\partial\Omega$ in case
(NC), and with the control blocks void when $\Omega_i$ is a
non-control subdomain. 
They form the discrete
local KKT matrix $\mathcal K_{i,h}$. With $R_i$ the Boolean matrix
that maps global unknowns to the local ones on $\Omega_i$, the discrete analogue of the assembly identity in
\Cref{cor:KKT_operator_assembly} becomes
\begin{align*}
  \mathcal K_h=\sum_{i=1}^N R_i^\top \mathcal K_{i,h} R_i.
\end{align*}

Controls and multipliers are cellwise constant and carry no
interface degrees of freedom.  The interface
unknowns are therefore the state and adjoint nodal values at the
vertices on $\Gamma$. Writing
$\bm z=(\bm z_I,\bm z_\Gamma)$ for a global coefficient vector
partitioned accordingly, where $\bm z_\Gamma$ collects the interface
values and $\bm z_I$ the remaining interior unknowns,
$\mathcal K_h$ takes the $2\times 2$ block form
\begin{align*}
  \mathcal K_h=\begin{bmatrix}
    \mathcal K_{II,h} & \mathcal K_{I\Gamma,h}\\
    \mathcal K_{\Gamma I,h} & \mathcal K_{\Gamma\Gamma,h}
  \end{bmatrix},
\end{align*}
in which each block is the submatrix of $\mathcal K_h$ with rows and
columns drawn from the corresponding index sets. Interior basis functions
supported in distinct subdomains do not interact, so
$\mathcal K_{II,h}$ is block diagonal with local
interior blocks $\mathcal K_{II,h}^i$.
Each Newton step is reduced to
the interface by the Schur complement
\begin{align}\label{eq:discrete_schur}
  \mathcal S_{\Gamma,h}
  := \mathcal K_{\Gamma\Gamma,h}
   - \mathcal K_{\Gamma I,h}\,\mathcal K_{II,h}^{-1}\,
     \mathcal K_{I\Gamma,h}.
\end{align}
 The reduced interface system
$\mathcal S_{\Gamma,h}\bm z_\Gamma=\bm g_\Gamma$ is solved by
preconditioned GMRES. 

\subsubsection{The BDDC preconditioner}
\label{sss:discrete_BDDC}
The preconditioner for~\eqref{eq:discrete_schur} is built from the
subdomain interface blocks. On each $\Omega_i$, eliminating the
interior unknowns from $\mathcal K_{i,h}$ leaves the local interface
Schur complement $\mathcal S_{\Gamma_i,h}$, and, with $R_{\Gamma_i}$
the Boolean matrix that maps global interface unknowns to the local
ones on $\Gamma_i$,
\begin{align*}
  \mathcal S_{\Gamma,h}
  = \sum_{i=1}^N R_{\Gamma_i}^\top \mathcal S_{\Gamma_i,h} R_{\Gamma_i}.
\end{align*}
The $\mathcal S_{\Gamma_i,h}$ are formed once per Newton step from
factorizations of the local interior blocks, and every product
$\mathcal S_{\Gamma,h}\bm z_\Gamma$ is evaluated by this sum rather
than by reforming the global matrix.

The preconditioner enforces continuity only of primal quantities,
the corner values and edge averages of the interface traces. With
$m:=1/H$ subdomains per side there are $(m-1)^2$ interior corners,
each shared by four subdomains, and $2m(m-1)$ edges, each shared by
two. Both define
primal constraints in the sense of \Cref{def:primal_constraints} over
the discrete trace spaces, with equal state and adjoint functionals
$\ell_k^{i,y}=\ell_k^{i,p}$. The first assigns to each vertex $\nu$
the sharing set $\mathcal N_\nu:=\cbr[1]{j : \nu\in\partial\Omega_j}$
of subdomains meeting $\nu$, $\mathcal N_k:=\mathcal N_\nu$, and the
point evaluations
\begin{align}\label{eq:corner_constraint}
  \ell_k^{i,\bullet}\in V_{\Gamma_i,h}^\star,
  \qquad
  \ell_k^{i,\bullet}(\xi):=\xi(\nu),
  \qquad
  i\in\mathcal N_k,\;\bullet\in\cbr{y,p}.
\end{align}
The second adds, for each edge
$\lambda$ shared by $\mathcal N_k=\cbr{i,j}$, the nodal averages
\begin{align}\label{eq:edge_constraint}
  \ell_k^{i,\bullet}\in V_{\Gamma_i,h}^\star,
  \qquad
  \ell_k^{i,\bullet}(\xi)
    :=\frac{1}{|\mathcal V_\lambda|}
      \sum_{\nu\in\mathcal V_\lambda}\xi(\nu),
  \qquad
  i\in\mathcal N_k,\;\bullet\in\cbr{y,p},
\end{align}
over the mesh vertices $\mathcal V_\lambda$ interior to $\lambda$. Both quantities are single-valued on
conforming traces and identical for state and adjoint and thus compatible in the sense of \Cref{def:compatible_family}.

The functionals form the rows of $C_{\Pi,h}$ on the broken interface
vector $\widetilde{\bm z}=(\bm z_1,\dots,\bm z_N)$, and
$C_{\Pi,h}\widetilde{\bm z}=0$ enforces agreement of the primal
quantities. The partially assembled space $\widehat X_{\Gamma,h}$ enforces the
primal constraints and leaves the dual part $\ker(C_{\Pi,h})$ broken.
On it, $\widehat{\mathcal S}_{\Gamma,h}$ couples subdomains only
through the primal variables. The conforming injection
$R_{\Gamma,h}:X_{\Gamma,h}\to\widehat X_{\Gamma,h}$ is
$\del[0]{R_{\Gamma,h}\xi}_i:=R_{\Gamma_i}\xi$. The averaging
$\mathcal W_h:\widehat X_{\Gamma,h}\to X_{\Gamma,h}$ applies the same
scalar weights to the state and adjoint traces, for
$\widehat\xi=(\widehat\xi_1,\dots,\widehat\xi_N)
\in\widehat X_{\Gamma,h}$ and an interface node $\nu$
\begin{align}\label{eq:discrete_averaging}
  \del[1]{\mathcal W_h\widehat\xi\,}^{\!\bullet}(\nu)
  := \sum_{j\in\mathcal N_\nu}\delta_j(\nu)\,
     \widehat\xi_j^{\;\bullet}(\nu),
  \qquad \bullet\in\cbr{y,p},
\end{align}
with the stiffness-scaling weights
\cite{TosellWidlund2005,KlawonnRheinbach2006}
\begin{align}\label{eq:stiffness_scaling}
  \delta_i(\nu)
  := \frac{\del[1]{A_{i,h}}_{\nu\nu}}
          {\sum_{j\in\mathcal N_\nu}\del[1]{A_{j,h}}_{\nu\nu}},
  \qquad i\in\mathcal N_\nu.
\end{align}
Since $\sum_{i\in\mathcal N_\nu}\delta_i(\nu)=1$,
$\mathcal W_h R_{\Gamma,h}=I_{X_{\Gamma,h}}$. Assembled over subdomains,
$\mathcal W_h=\sum_{i=1}^N R_{\Gamma_i}^\top D_i$ with
$D_i=\operatorname{diag}\del[0]{\delta_i(\nu)}$.

The discrete preconditioner
$\mathcal P_{\mathrm{BDDC},h}
:=\mathcal W_h\,\widehat{\mathcal S}_{\Gamma,h}^{-1}\,\mathcal W_h^\star$
inherits the two-level structure of \Cref{thm:twolevel}. Eliminating
the dual variables by static condensation, as with the interior
variables earlier, splits the solve into $N$ independent local dual
blocks and one coarse problem of dimension at most $2N_\Pi$. Writing
$\widehat{\mathcal S}_{\Delta\Delta,h}^{\,i}$ for the local dual
block, $\mathcal H_{\Pi,h}$ for the matrix of coarse basis vectors,
$\widehat{\mathcal S}_{0,h}
=\mathcal H_{\Pi,h}^\top\widehat{\mathcal S}_{\Gamma,h}
\mathcal H_{\Pi,h}$ for the coarse matrix, and $\mathcal W_{i,h}$ for
$\mathcal W_h$ restricted to the dual unknowns of $\Omega_i$,
\begin{align*}
  \mathcal P_{\mathrm{BDDC},h}
  = \sum_{i=1}^N \mathcal W_{i,h}\,
    \del[1]{\widehat{\mathcal S}_{\Delta\Delta,h}^{\,i}}^{-1}
    \mathcal W_{i,h}^\top
  + \del[1]{\mathcal W_h\mathcal H_{\Pi,h}}\,
    \widehat{\mathcal S}_{0,h}^{-1}\,
    \del[1]{\mathcal W_h\mathcal H_{\Pi,h}}^\top.
\end{align*}
The dual block $\widehat{\mathcal S}_{\Delta\Delta,h}^{\,i}$ is the
restriction of $\mathcal S_{\Gamma_i,h}$ to $\ker(C_{\Pi,h}^i)$, with
$C_{\Pi,h}^i$ the rows of $C_{\Pi,h}$ local to $\Omega_i$. Rather
than form this restriction, the implementation applies
$\del[1]{\widehat{\mathcal S}_{\Delta\Delta,h}^{\,i}}^{-1}$ by
solving the augmented saddle-point system
\begin{align*}
  \begin{bmatrix}
    \mathcal S_{\Gamma_i,h} & (C_{\Pi,h}^i)^\top \\
    C_{\Pi,h}^i & 0
  \end{bmatrix}
  \begin{bmatrix} \bm z \\ \bm\lambda \end{bmatrix}
  =
  \begin{bmatrix} \bm r \\ \bm c \end{bmatrix}
\end{align*}
with $\bm c=0$, enforcing $C_{\Pi,h}^i\bm z=0$ through the Lagrange
multiplier $\bm\lambda$ and keeping $\mathcal S_{\Gamma_i,h}$ in
sparse form. The same factorization yields the local block
$\mathcal H_{\Pi,h}^i$ of columns of $\mathcal H_{\Pi,h}$, the
discrete harmonic extensions of the primal data, from
\begin{align*}
  \begin{bmatrix}
    \mathcal S_{\Gamma_i,h} & (C_{\Pi,h}^i)^\top \\
    C_{\Pi,h}^i & 0
  \end{bmatrix}
  \begin{bmatrix} \mathcal H_{\Pi,h}^i \\ \bm\Lambda_i \end{bmatrix}
  =
  \begin{bmatrix} 0 \\ I \end{bmatrix}.
\end{align*}

\subsection{Numerical experiments}
We consider three representative problems. Experiments~1 and~2 treat
control-constrained distributed and Neumann boundary control,
respectively, and examine the effects of mesh refinement, $H/h$,
$\alpha$, $\beta$, and the choice of coarse constraints. Experiment~3
treats state-constrained distributed control along the continuation
$\gamma=h^2$. The principal findings of the numerical experiments are summarized as follows.
In Experiments~1 and~2, the iteration counts remain nearly constant
under mesh refinement at fixed $H/h$ and increase moderately as $H/h$ grows. Adding edge-average constraints reduces the counts by factors of
$2.8$--$4.1$ in Experiment~1 and $2.4$--$3.3$ in Experiment~2, while
also substantially weakening their dependence on $\alpha$ and $\beta$.
In Experiment~3, the counts for all three methods stabilize after the first few continuation steps; BDDC with corner and edge constraints
remains below $10$ iterations throughout. 

%----------------------------------------------------------------------
\subsubsection{Experiment 1: Control-constrained distributed control}%
\label{sec:experiment1}
%--------------------------------------------

We first consider the benchmark
from~\cite{Stadler2009Elliptic,Porcelli2017Preconditioning} on
$\Omega=(0,1)^2$:
\begin{gather*}
  \min_{(y,u)} J(y,u) :=
    \tfrac{1}{2}\|y-y_d\|_{L^2(\Omega)}^2
  + \tfrac{\alpha}{2}\|u\|_{L^2(\Omega)}^2
  + \beta\|u\|_{L^1(\Omega)} \\[-0.3em]
  \text{subject to } -30\le u\le30 \text{ a.e.\ in }\Omega,\qquad
  -\Delta y=u \text{ in }\Omega,\quad y=0 \text{ on }\partial\Omega.
\end{gather*}
Here
$y_d(x_1,x_2)=\tfrac16\sin(2\pi x_1)\sin(2\pi x_2)\exp(2x_1)$.
There are no state constraints and $\sigma=0$.  We test
$\alpha\in\{10^{-4},10^{-6},10^{-8}\}$ and
$\beta\in\{0,0.0018\}$; the positive value of $\beta$ produces a
nontrivial sparse-control region~\cite{Stadler2009Elliptic}.  We also
report iteration counts for the closely related Balancing Neumann Neumann (BNN) method in~\cite{heinkenschloss2006neumann}, extended to the present setting, which differs from BDDC only in the coarse space.

We collect the results in
\Cref{tab:distributed_control}, and make a number of observations.
First, for fixed $H/h$, the iteration counts remain essentially constant as the number of subdomains increases. Quadrupling the number of subdomains changes the average counts by at most 3.6 iterations for BNN, 1.8 for BDDC with corner constraints, and 2.0 for BDDC with corner and edge constraints. This behavior indicates good scalability with respect to the number of subdomains.

Second, the iteration counts increase with $H/h$ for all three methods.
For BNN, they range from $13.8$--$20.5$ at $H/h=4$, from
$19.6$--$31.1$ at $H/h=8$, and from $26.4$--$41.3$ at $H/h=16$.
The corresponding ranges are $18.5$--$21.3$, $23.2$--$28.8$, and
$28.6$--$36.0$ for BDDC with corner constraints, and $4.8$--$6.3$,
$6.2$--$9.0$, and $9.0$--$12.0$ for BDDC with corner and edge
constraints. Thus, increasing $H/h$ from $4$ to $16$ approximately
doubles the iteration counts for all three methods. This moderate growth
appears consistent with the polylogarithmic dependence on
$H/h$ established for the unconstrained problem
in~\cite{LiuZhang2025b}.

Third, the enriched coarse space for BDDC with corner and edge constraints also exhibits substantially weaker sensitivity
to the regularization parameters. As $\alpha$ decreases from $10^{-4}$ to $10^{-8}$, the average iteration counts change by as much as $12.8$ for BNN and $5.8$ for BDDC with corner constraints, compared with at most $2.8$ for BDDC with corner and edge constraints. A similar contrast is observed for $\beta$: increasing $\beta$ from 0 to 0.0018 reduces the average iteration counts by as much as $8.3$ for BNN and $2.4$ for BDDC with corner constraints, whereas the counts for BDDC with corner and edge constraints change by at most $0.8$.

Finally, BDDC with corner constraints yields iteration counts comparable
to those of BNN, with neither method uniformly outperforming the other.
Adding edge-average constraints substantially improves the performance,
reducing the iteration counts by factors of $2.8$--$4.1$ relative to
both methods and yielding markedly weaker sensitivity to the
regularization parameters.
\color{black}

\begin{table}[h]
\centering
\caption{Average GMRES iteration counts (LI) across
Newton iterations for control-constrained distributed control. Three methods are compared: the BNN method of \cite{heinkenschloss2006neumann}, the proposed BDDC method with only corner constraints, and the proposed BDDC method with both corner and edge constraints.}
\label{tab:distributed_control}
\small
\setlength{\tabcolsep}{4pt}
\renewcommand{\arraystretch}{1.05}
\begin{tabular}{ccc c c c c c c}
\toprule
\multirow{2}{*}{$H/h$} & \multirow{2}{*}{$\ell$} & \multirow{2}{*}{$\log_{10}\alpha$} & \multicolumn{2}{c}{BNN} & \multicolumn{2}{c}{Corners} & \multicolumn{2}{c}{Corners \& Edges} \\
\cmidrule(lr){4-5}\cmidrule(lr){6-7}\cmidrule(lr){8-9}
 & & & $\beta=0$ & $\beta=0.0018$ & $\beta=0$ & $\beta=0.0018$ & $\beta=0$ & $\beta=0.0018$ \\
\midrule
\multirow{6}{*}{4}
 & \multirow{3}{*}{8} & $-4$ & 15.2 & 14.8 & 18.8 & 18.5 & 5.2 & 5.2 \\
 &                    & $-6$ & 17.8 & 16.8 & 21.0 & 19.7 & 6.0 & 6.0 \\
 &                    & $-8$ & 20.3 & 17.4 & 21.3 & 20.3 & 6.1 & 6.3 \\
\cmidrule(lr){2-9}
 & \multirow{3}{*}{9} & $-4$ & 14.3 & 13.8 & 19.5 & 19.0 & 4.8 & 4.8 \\
 &                    & $-6$ & 17.8 & 15.8 & 21.0 & 20.0 & 5.5 & 5.7 \\
 &                    & $-8$ & 20.5 & 17.2 & 21.0 & 20.8 & 6.0 & 6.0 \\
\midrule
\multirow{9}{*}{8}
 & \multirow{3}{*}{8} & $-4$ & 23.0 & 22.0 & 24.0 & 23.2 & 7.2 & 7.2 \\
 &                    & $-6$ & 28.9 & 25.5 & 27.1 & 26.3 & 8.0 & 8.2 \\
 &                    & $-8$ & 27.5 & 26.0 & 28.8 & 26.4 & 9.0 & 8.7 \\
\cmidrule(lr){2-9}
 & \multirow{3}{*}{9} & $-4$ & 21.5 & 20.4 & 24.5 & 24.0 & 6.8 & 6.8 \\
 &                    & $-6$ & 28.6 & 24.5 & 27.3 & 25.5 & 7.6 & 7.7 \\
 &                    & $-8$ & 30.1 & 25.3 & 27.3 & 26.4 & 8.2 & 8.3 \\
\cmidrule(lr){2-9}
 & \multirow{3}{*}{10}& $-4$ & 22.0 & 19.6 & 25.0 & 24.0 & 6.2 & 6.2 \\
 &                    & $-6$ & 27.8 & 23.2 & 27.0 & 25.5 & 7.8 & 7.4 \\
 &                    & $-8$ & 31.1 & 24.7 & 27.0 & 26.3 & 8.0 & 8.0 \\
\midrule
\multirow{9}{*}{16}
 & \multirow{3}{*}{8} & $-4$ & 31.5 & 29.6 & 30.0 & 29.2 & 9.2  & 9.4  \\
 &                    & $-6$ & 37.1 & 34.5 & 33.9 & 32.5 & 10.4 & 10.3 \\
 &                    & $-8$ & 38.5 & 35.3 & 34.7 & 33.3 & 12.0 & 11.2 \\
\cmidrule(lr){2-9}
 & \multirow{3}{*}{9} & $-4$ & 30.0 & 27.8 & 30.2 & 28.6 & 9.2  & 9.0  \\
 &                    & $-6$ & 40.0 & 33.0 & 33.8 & 31.9 & 10.0 & 10.1 \\
 &                    & $-8$ & 38.6 & 33.7 & 36.0 & 33.6 & 11.0 & 10.3 \\
\cmidrule(lr){2-9}
 & \multirow{3}{*}{10}& $-4$ & 28.5 & 26.4 & 30.3 & 29.0 & 9.0  & 9.0  \\
 &                    & $-6$ & 37.0 & 31.4 & 34.5 & 32.5 & 10.0 & 10.0 \\
 &                    & $-8$ & 41.3 & 33.0 & 34.4 & 32.7 & 10.0 & 10.0 \\
\bottomrule
\end{tabular}
\end{table}

%----------------------------------------------------------------------
\subsubsection{Experiment 2: Control-constrained Neumann control}%
\label{sec:experiment2}
%----------------------------------------------------------------------

We next consider the boundary-control analogue of Experiment~1. We
retain the same desired state, impose the control bounds $u_a=-2$,
$u_b=3$ on $\partial\Omega$, and take the reaction coefficient
$\sigma=10^{-2}$ to ensure solvability of the local Neumann problems on
floating subdomains without special treatment of the constant mode:
\begin{gather*}
  \min_{(y,u)} J(y,u) :=
    \tfrac{1}{2}\|y-y_d\|_{L^2(\Omega)}^2
  + \tfrac{\alpha}{2}\|u\|_{L^2(\partial\Omega)}^2
  + \beta\|u\|_{L^1(\partial\Omega)} \\[-0.3em]
  \text{subject to } u_a \le u \le u_b \text{ a.e.\ on } \partial\Omega, \quad
  -\Delta y + \sigma y = 0 \text{ in }\Omega,\ \partial_n y = u \text{ on }\partial\Omega.
\end{gather*}
The control variables occur only on subdomains adjacent to
$\partial\Omega$.  We test
$\alpha\in\{10^{-4},10^{-5},10^{-6}\}$ and
$\beta\in\{0,0.0018\}$ and compare the two BDDC coarse spaces.
\Cref{tab:neumann_control} reports the results. The iteration counts are
higher than in Experiment~1, but the same general trends are observed.

First, for fixed $H/h$, the counts remain nearly constant as the number
of subdomains increases. Under refinement, they vary by at most $5.6$
iterations for BDDC with corner constraints and by at most $1.6$ for
BDDC with corner and edge constraints.

Second, the iteration counts increase moderately with $H/h$. For BDDC
with corner constraints, they range from $26.6$--$33.6$ at $H/h=4$,
from $30.5$--$39.8$ at $H/h=8$, and from $36.7$--$45.2$ at $H/h=16$.
The corresponding ranges for BDDC with corner and edge constraints are
$10.0$--$11.4$, $11.4$--$13.6$, and $13.4$--$15.6$, respectively.
Thus, both coarse spaces exhibit only moderate dependence on $H/h$ over
the tested range.

 Third, BDDC with corner and edge constraints is considerably less
sensitive to the regularization parameters. As $\alpha$ decreases from
$10^{-4}$ to $10^{-6}$, the average iteration counts change by as much
as $5.6$ for BDDC with corner constraints, compared with at most $1.6$
for BDDC with corner and edge constraints. Increasing $\beta$ from $0$
to $0.0018$ changes the counts by as much as $9.3$ for BDDC with corner
constraints, typically reducing them, whereas the counts for BDDC with
corner and edge constraints change by at most $0.8$.

Taken together, these results show that adding edge-average constraints
reduces the iteration counts pointwise by factors of $2.4$--$3.3$ and
substantially weakens their sensitivity to both $\alpha$ and $\beta$.

\begin{table}[h]
\centering
\caption{Average GMRES iteration counts (LI) across
Newton iterations for control-constrained distributed control. Two methods are compared: the proposed BDDC method with only corner constraints, and the proposed BDDC method with both corner and edge constraints.}
\label{tab:neumann_control}
\small
\setlength{\tabcolsep}{4pt}
\renewcommand{\arraystretch}{1.05}
\begin{tabular}{ccc c c c c}
\toprule
\multirow{2}{*}{$H/h$} & \multirow{2}{*}{$\ell$} & \multirow{2}{*}{$\log_{10}\alpha$} & \multicolumn{2}{c}{Corners} & \multicolumn{2}{c}{Corners \& Edges} \\
\cmidrule(lr){4-5}\cmidrule(lr){6-7}
 & & & $\beta=0$ & $\beta=0.0018$ & $\beta=0$ & $\beta=0.0018$ \\
\midrule
\multirow{6}{*}{4}
 & \multirow{3}{*}{8} & $-4$ & 29.0 & 28.2  & 10.0 & 10.0 \\
 &                    & $-5$ & 31.0 & 28.6  & 10.2 & 10.4 \\
 &                    & $-6$ & 33.6 & 26.6 & 11.4 & 10.9 \\
\cmidrule(lr){2-7}
 & \multirow{3}{*}{9} & $-4$ & 28.0 & 27.2 & 10.0 & 10.0 \\
 &                    & $-5$ & 29.0 & 27.6 & 10.0 & 10.0 \\
 &                    & $-6$ & 30.8 & 28.4 & 10.8 & 11.2 \\
\midrule
\multirow{9}{*}{8}
 & \multirow{3}{*}{8}  & $-4$ & 37.8 & 36.2 & 12.0 & 12.0 \\
 &                     & $-5$ & 38.8 & 36.4 & 12.8 & 13.0 \\
 &                     & $-6$ & 37.0 & 33.1 & 13.6 & 13.1 \\
\cmidrule(lr){2-7}
 & \multirow{3}{*}{9}  & $-4$ & 36.0 & 34.6 & 12.0 & 12.0 \\
 &                     & $-5$ & 37.8 & 35.2 & 12.0 & 12.0 \\
 &                     & $-6$ & 38.2 & 35.0 & 13.0 & 13.4 \\
\cmidrule(lr){2-7}
 & \multirow{3}{*}{10} & $-4$ & 34.4 & 33.2 & 11.4 & 11.7 \\
 &                     & $-5$ & 36.2 & 33.7 & 12.0 & 12.0 \\
 &                     & $-6$ & 39.8 & 30.5 & 12.0 & 12.1 \\
\midrule
\multirow{9}{*}{16}
 & \multirow{3}{*}{8}  & $-4$ & 44.0 & 43.6 & 14.0 & 14.0 \\
 &                     & $-5$ & 40.6 & 42.2 & 14.8 & 15.0 \\
 &                     & $-6$ & 38.6 & 38.1 & 15.6 & 14.8 \\
\cmidrule(lr){2-7}
 & \multirow{3}{*}{9}  & $-4$ & 45.0 & 43.2 & 14.0 & 14.0 \\
 &                     & $-5$ & 44.6 & 42.2 & 14.0 & 14.0 \\
 &                     & $-6$ & 39.4 & 39.5 & 14.8 & 15.0 \\
\cmidrule(lr){2-7}
 & \multirow{3}{*}{10} & $-4$ & 43.0 & 41.5 & 13.4 & 13.5 \\
 &                     & $-5$ & 45.2 & 42.0 & 14.0 & 14.0 \\
 &                     & $-6$ & 44.2 & 36.7 & 14.0 & 14.1 \\
\bottomrule
\end{tabular}
\end{table}

%----------------------------------------------------------------------
\subsubsection{Experiment 3: State-constrained distributed control}%
\label{sec:experiment3}
%----------------------------------------------------------------------

Finally, we test a distributed-control, state-constrained benchmark
from~\cite[Section 4, Example 1]{HintermuellerHinze2009} on
$\Omega=(0,1)^2$ with homogeneous Dirichlet boundary conditions:
\begin{gather*}
  \min_{(y,u)} J(y,u) :=
    \tfrac{1}{2}\|y-y_d\|_{L^2(\Omega)}^2
  + \tfrac{\alpha}{2}\|u\|_{L^2(\Omega)}^2
  \\[-0.3em]
  \text{subject to } y \le y_b \text{ a.e.\ in }\Omega,\quad
  -\Delta y = u \text{ in }\Omega,\ y = 0 \text{ on }\partial\Omega.
\end{gather*}
We take $\alpha=0.1$, $y_b=0.01$, and desired state
$y_d(x_1,x_2)=10\bigl(\sin(2\pi x_1)+x_2\bigr)$; no control constraints
are imposed and $\beta = 0$. 
Following~\cite{HintermuellerHinze2009}, we balancing the regularization and discretization contributions
by selecting the parameter continuation strategy $\gamma=h^2$, with $h$ the mesh size of the finite element grid.
The continuation loop proceeds as follows. Starting from $\ell=4$
($2048$ elements, $\gamma\approx 9.77\times10^{-4}$), we uniformly
refine the mesh at each step through $\ell=11$ ($3.36\times 10^7$
elements, $\gamma\approx 5.96\times 10^{-8}$). At each refinement, the
coarse-grid iterate is prolonged to the fine grid, and $\gamma$ is updated to $h^2$. The
subdomain partition is chosen to maintain the fixed ratio $H/h=16$
throughout.

\Cref{fig:state_constraints} shows that, after an initial increase on
the coarser meshes, the iteration counts for all three methods become
essentially independent of further refinement along the continuation
$\gamma=h^2$. From $\ell=7$ onward, the counts remain close to $29$--$30$
for BNN and BDDC with corner constraints, and between $8$ and $9$ for
BDDC with corner and edge constraints. The latter method also exhibits
only mild variation over the full continuation and requires roughly one
third as many iterations as the other two methods. Note that, since $h$ and
$\gamma$ are varied simultaneously, these results demonstrate stability
along the coupled continuation path rather than independence of
$\gamma$ alone.

\begin{figure}[ht]
    \centering
\includegraphics[width=0.72\linewidth]{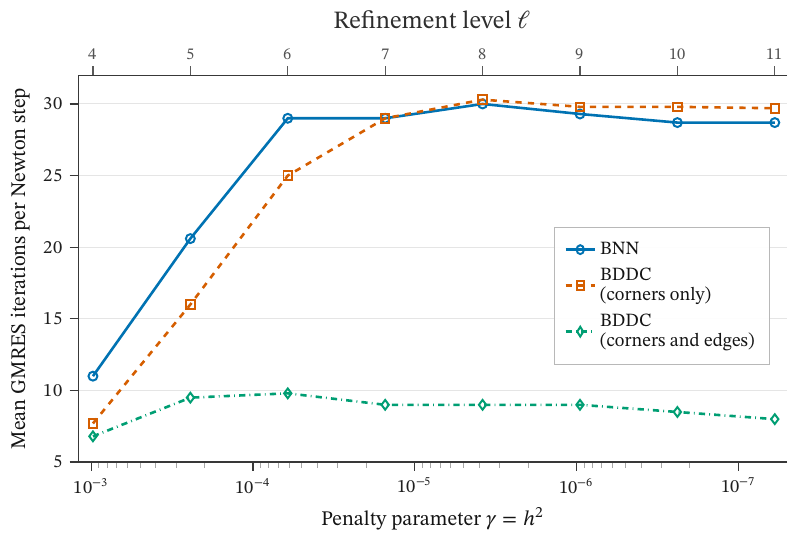}
    \caption{Experiment~3: state-constrained benchmark following the
    continuation strategy $\gamma=h^2$ with $H/h=16$ fixed. Starting
    from mesh refinement $\ell=4$ ($2048$ elements,
    $\gamma\approx 9.77\times 10^{-4}$) through $\ell=11$
    ($3.36\times 10^7$ elements,
    $\gamma\approx 5.96\times 10^{-8}$), the average number of GMRES
    iterations per Newton step remains nearly constant for each
    preconditioner.}
    \label{fig:state_constraints}
\end{figure}

%======================================================================
\section{Conclusion}\label{sec:conclusion}
%======================================================================

We developed a function-space BDDC preconditioner for the
state--adjoint interface systems produced by active-set semi-smooth
Newton linearizations of elliptic optimal control problems with box
control constraints, $L^1$ sparsity, and Moreau--Yosida regularized
state constraints.  The linearized KKT system is reduced by eliminating the control and multiplier, together with the interior state and adjoint variables, leaving a global interface Schur complement system posed on the state and adjoint traces. The BDDC preconditioner is constructed by partially assembling the operator formed from the local subdomain Schur complements, enforcing continuity only of selected primal trace quantities.
The analysis establishes the global--local equivalence and assembly of
the interface operator, characterizes the kernels of the local and
broken Schur complements, and gives conditions under which the
partially assembled problem and local dual-subspace problems are
well posed.

The interface Schur complement is self-adjoint but generally
indefinite, whereas left BDDC preconditioning produces a generally
nonsymmetric operator; we therefore solve the preconditioned system
with GMRES. Across the distributed- and Neumann-control experiments,
the iteration counts are nearly independent of mesh refinement and of
the number of subdomains at fixed $H/h$, and increase only moderately
as $H/h$ grows. Enriching the coarse space with edge-average
constraints substantially reduces the iteration counts and weakens
their sensitivity to $\alpha$ and $\beta$. For the state-constrained
problem, continuation with $\gamma=h^2$ yields nearly constant
iteration counts along the tested path, indicating little sensitivity
to the Moreau--Yosida parameter in this regime.

A rigorous convergence theory for GMRES applied to these indefinite
preconditioned interface systems remains open. Further directions
include parallel scalability studies measuring wall-clock time as a
function of $H/h$ and the number of processor cores, a sharper
analysis of parameter dependence, adaptive selection of primal
constraints, and multilevel BDDC, as well as extensions to more complicated models including optimal flow control
problems.

\bibliographystyle{siamplain}
\bibliography{references_new}

\appendix

\section{Auxiliary block-operator results}
\label{s:block_theory}

This appendix collects several results on block operator theory that are used repeatedly throughout the paper and may also be of independent interest. It first gives an abstract identification of an assembled homogeneous space with the product of its local homogeneous components, together with the resulting block-diagonal representation and inverse formula. It then records a number of results on the Fredholm theory of saddle-point operators, Schur complements, and restrictions to finite-codimensional subspaces.

\subsection{Homogeneous decompositions of assembled spaces}
\label{ss:homogeneous_assembly}

A common theme throughout this work involves global spaces assembled from subdomain components defined by compatibility conditions requiring selected local trace quantities to agree across shared interfaces. If these quantities vanish on every subdomain, the compatibility conditions are satisfied automatically, and any collection of local homogeneous components can be assembled into a global homogeneous element. Restriction therefore identifies the global homogeneous space with the product of the corresponding local kernels. 

The following proposition formulates this argument in an abstract setting.
Throughout this subsection, suppose that $\mathcal{X}$ and
$\mathcal{X}_1,\dots,\mathcal{X}_N$, $\mathcal{Z}_1,\dots,\mathcal{Z}_N$ are given
Hilbert spaces, and set
\begin{align*}
\widetilde{\mathcal{X}}:=\prod_{i=1}^N\mathcal{X}_i, \quad \|\widetilde{x}\|_{\widetilde{\mathcal{X}}}^2 = \sum_{i=1}^N \| \widetilde{x}_i \|_{\mathcal{X}_i}^2,
\end{align*}
 with coordinate
projections
$\widetilde{\pi}_i:\widetilde{\mathcal{X}}\to\mathcal{X}_i$
and injections
$\widetilde{\jmath}_i:\mathcal{X}_i\to\widetilde{\mathcal{X}}$.

\begin{proposition}
\label{prop:homogeneous_decomp}
 Let
$\mathsf{R} \in \mathcal{L}(\mathcal{X}, \widetilde{\mathcal{X}})$ be an isometry and let
$\mathsf{C}_i \in \mathcal{L}(\mathcal{X}_i, \mathcal{Z}_i)$
for $i = 1,\dots,N$. Define
\begin{align*}
  \mathcal{X}_0 := \bigl\{ x \in \mathcal{X} : \mathsf{C}_i \widetilde{\pi}_i \mathsf{R} x = 0, \; i = 1,\dots,N \bigr\},
  \quad
  \mathcal{X}_0^i := \ker(\mathsf{C}_i),
  \quad
  \widetilde{\mathcal{X}}_0 := \prod_{i=1}^N \mathcal{X}_0^i,
\end{align*}
where $\widetilde{\mathcal{X}}_0$ is equipped with the norm
inherited from $\widetilde{\mathcal{X}}$
and with coordinate projection $\widetilde{\pi}_{0}^i: \widetilde{\mathcal{X}}_0 \to \mathcal{X}_0^i$ and injection $\widetilde{\jmath}_{0}^{\, i}: \mathcal{X}_0^i \to \widetilde{\mathcal{X}}_0$,
and let $\iota_0 : \mathcal{X}_0 \hookrightarrow \mathcal{X}$,
$\iota_0^i : \mathcal{X}_0^i \hookrightarrow \mathcal{X}_i$, and
$\widetilde{\iota}_0 : \widetilde{\mathcal{X}}_0 \hookrightarrow \widetilde{\mathcal{X}}$
denote the canonical inclusions. Assume the \emph{gluing condition}
\begin{align} \label{eq:gluing_condition}
  \widetilde{\mathcal{X}}_0 \subseteq \operatorname{ran}(\mathsf{R}).
\end{align}
Then $\mathsf{R}\iota_0$ maps $\mathcal{X}_0$ bijectively onto
$\widetilde{\mathcal{X}}_0$, and its corestriction
$\mathcal{B}_0 : \mathcal{X}_0 \to \widetilde{\mathcal{X}}_0$, defined by
$\widetilde{\iota}_0 \mathcal{B}_0 = \mathsf{R}\iota_0$, is an isometric
isomorphism. The induced maps
\begin{align*}
  \pi_0^i := \widetilde{\pi}_{0}^i \mathcal{B}_0 : \mathcal{X}_0 \to \mathcal{X}_0^i,
  \qquad
  \jmath_0^i := \mathcal{B}_0^{-1} \widetilde{\jmath}_{0}^{\, i} : \mathcal{X}_0^i \to \mathcal{X}_0
\end{align*}
satisfy, for all $i,j = 1,\dots,N$,
\begin{align}
  \sum_{i=1}^N \jmath_0^i\, \pi_0^i &= I_{\mathcal{X}_0},
  \quad
  \pi_0^j\, \jmath_0^i = \delta_{ij} I_{\mathcal{X}_0^i},
  \label{eq:homogeneous_coordinate_identities} \quad 
  \iota_0^i \pi_0^i = \widetilde{\pi}_i \mathsf{R} \iota_0,
  \quad
  \mathsf{R} \iota_0 \jmath_0^i = \widetilde{\jmath}_i \iota_0^i.
 % \label{eq:homogeneous_ambient_identities}
\end{align}
\end{proposition}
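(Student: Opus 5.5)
We first show that $\mathsf{R}\iota_0$ takes values in $\widetilde{\mathcal{X}}_0$. Given $x \in \mathcal{X}_0$, the defining conditions $\mathsf{C}_i \widetilde{\pi}_i \mathsf{R} x = 0$ say exactly that $\widetilde{\pi}_i \mathsf{R} x \in \ker(\mathsf{C}_i) = \mathcal{X}_0^i$ for each $i$. Hence $\mathsf{R}x \in \widetilde{\mathcal{X}}_0$, where $\widetilde{\mathcal{X}}_0$ is viewed as a subspace of $\widetilde{\mathcal{X}}$. The corestriction $\mathcal{B}_0$ is therefore well defined.

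Next we establish that $\mathcal{B}_0$ is an isometric isomorphism. It is isometric, because $\mathsf{R}$ is an isometry and both $\mathcal{X}_0$ and $\widetilde{\mathcal{X}}_0$ carry the inherited norms. In particular $\mathcal{B}_0$ is injective.

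Surjectivity is the only step that uses a hypothesis beyond the definitions, namely the gluing condition \eqref{eq:gluing_condition}. Let $\widetilde{x} \in \widetilde{\mathcal{X}}_0$. By \eqref{eq:gluing_condition} there is $x \in \mathcal{X}$ with $\mathsf{R}x = \widetilde{\iota}_0\widetilde{x}$. Then $\widetilde{\pi}_i \mathsf{R} x \in \mathcal{X}_0^i$, so $\mathsf{C}_i \widetilde{\pi}_i \mathsf{R}x = 0$ for every $i$. This shows $x \in \mathcal{X}_0$ and $\mathcal{B}_0 x = \widetilde{x}$.

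A bijective isometry between Hilbert spaces is an isometric isomorphism; its inverse is bounded and also isometric. We also note that $\mathcal{X}_0$ is closed in $\mathcal{X}$, since it is an intersection of kernels of bounded maps, and that each $\mathcal{X}_0^i$ is closed in $\mathcal{X}_i$. Both are therefore Hilbert spaces, so the statement makes sense.

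The coordinate identities then follow by conjugating the product-space identities \eqref{eq:resolution} for $\widetilde{\mathcal{X}}_0$ through $\mathcal{B}_0$. These are $\sum_i \widetilde{\jmath}_0^{\,i}\widetilde{\pi}_0^i = I$ and $\widetilde{\pi}_0^j\widetilde{\jmath}_0^{\,i} = \delta_{ij}I$. Multiplying them on the left by $\mathcal{B}_0^{-1}$ and on the right by $\mathcal{B}_0$ gives $\sum_i \jmath_0^i\pi_0^i = I_{\mathcal{X}_0}$. The relation $\pi_0^j\jmath_0^i = \widetilde{\pi}_0^j\mathcal{B}_0\mathcal{B}_0^{-1}\widetilde{\jmath}_0^{\,i} = \delta_{ij}I$ is immediate.

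For the two ambient identities we use the compatibility of the inclusions with coordinates:
\[
\widetilde{\pi}_i \widetilde{\iota}_0 = \iota_0^i \widetilde{\pi}_0^i, \qquad \widetilde{\iota}_0 \widetilde{\jmath}_0^{\,i} = \widetilde{\jmath}_i \iota_0^i .
\]
Both hold because $\widetilde{\mathcal{X}}_0$ is a product of subspaces. The first identity then follows:
\[
\iota_0^i\pi_0^i = \iota_0^i\widetilde{\pi}_0^i\mathcal{B}_0 = \widetilde{\pi}_i\widetilde{\iota}_0\mathcal{B}_0 = \widetilde{\pi}_i\mathsf{R}\iota_0 .
\]
So does the second:
\[
\mathsf{R}\iota_0\jmath_0^i = \widetilde{\iota}_0\mathcal{B}_0\mathcal{B}_0^{-1}\widetilde{\jmath}_0^{\,i} = \widetilde{\jmath}_i\iota_0^i .
\]
All of this is routine bookkeeping.
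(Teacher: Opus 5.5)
Your proof is correct and follows essentially the same argument as the paper. You get the corestriction from the kernel conditions and surjectivity from the gluing condition. You get isometry and injectivity from $\mathsf{R}$. You get the coordinate identities by conjugating the product-space resolution identities through $\mathcal{B}_0$ and using $\widetilde{\pi}_i\widetilde{\iota}_0 = \iota_0^i\widetilde{\pi}_0^i$ and $\widetilde{\iota}_0\widetilde{\jmath}_0^{\,i} = \widetilde{\jmath}_i\iota_0^i$.
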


\begin{proof}
Note that $\mathsf{R} \iota_0$ is injective as the composition of two injections. To show it is also surjective, we must show that $\mathsf{R} \iota_0(\mathcal{X}_0) = \widetilde{\mathcal{X}}_0$. To this end, suppose $x \in \mathcal{X}_0$. Then, the $i^{\text{th}}$ component of
$\mathsf{R} \iota_0 x \in \widetilde{\mathcal{X}}$ is $\widetilde{\pi}_i \mathsf{R} \iota_0 x$,
and $\mathsf{C}_i \widetilde{\pi}_i \mathsf{R} \iota_0 x = 0$ by the
definition of $\mathcal{X}_0$; hence every component $(\mathsf{R} \iota_0)_i$ lies in
$\mathcal{X}_0^i$ and we conclude that $\mathsf{R} \iota_0(\mathcal{X}_0) \subseteq \widetilde{\mathcal{X}}_0$.
Conversely, let $\widetilde{x} \in \widetilde{\mathcal{X}}_0$. By
\eqref{eq:gluing_condition} and the injectivity of $\mathsf{R}$, there exists a unique $x \in \mathcal{X}$ such that $\widetilde{x} = \mathsf{R} x$. Then
$\mathsf{C}_i \widetilde{\pi}_i \mathsf{R} x
 = \mathsf{C}_i \widetilde{\pi}_i \widetilde{x} = 0$
for every $i$, since $\widetilde{\pi}_i \widetilde{x} \in \ker(\mathsf{C}_i)$.
Consequently, $x \in \mathcal{X}_0$ and $\widetilde{x} = \mathsf{R} \iota_0 x$ from which we conclude $\widetilde{\mathcal{X}}_0 \subseteq \mathsf{R} \iota_0(\mathcal{X}_0)$. 
Thus, $\mathsf{R} \iota_0$ maps $\mathcal{X}_0$ bijectively onto
$\widetilde{\mathcal{X}}_0$.
Consequently, the corestriction $\mathcal{B}_0 : \mathcal{X}_0 \to \widetilde{\mathcal{X}}_0$ is well defined,
bijective, and therefore an isometric isomorphism (owing to the fact that $\mathsf{R}$, $\iota_0$, and
$\widetilde{\iota}_0$ are isometries).

Finally, we prove the identities \eqref{eq:homogeneous_coordinate_identities}.  
The first two identities follow from \eqref{eq:resolution} in \Cref{rem:notation}: 
\begin{align*}
   \sum_{i=1}^N \jmath_0^i\, \pi_0^i &= \mathcal{B}_0^{-1} \del[2]{\sum_{i=1}^N  \widetilde{\jmath}_{0}^{\, i}\widetilde{\pi}_{0}^i} \mathcal{B}_0 = \mathcal{B}_0^{-1} I_{\widetilde{X}_0} \mathcal{B}_0 = I_{\mathcal{X}_0}, \\
       \pi_0^j \jmath_0^{\, j} &= \widetilde{\pi}_0^i \mathcal{B}_0 \mathcal{B}_0^{-1} \widetilde{\jmath}_0^{\, i} =  \widetilde{\pi}_0^i \widetilde{\jmath}_0^{\, i} = \delta_{ij} I_{\mathcal{X}_0^i}.
\end{align*}
The remaining two identities are consequences of fact that $\widetilde{\iota}_0 \mathcal{B}_0 = \mathsf{R}\iota_0$ and the following elementary  identities: for
$i = 1,\dots,N$,
\begin{align} \label{eq:product_inclusion_identities}
  \widetilde{\pi}_i \widetilde{\iota}_0 = \iota_0^i\, \widetilde{\pi}_{0}^i,
  \qquad
  \widetilde{\iota}_0 \widetilde{\jmath}_{0}^{\, i} = \widetilde{\jmath}_i \iota_0^i.
\end{align}
Indeed, it holds that
\begin{align*}
  \iota_0^i \pi_0^i
  &= \iota_0^i \widetilde{\pi}_{0}^i \mathcal{B}_0
  = \widetilde{\pi}_i \widetilde{\iota}_0 \mathcal{B}_0
  = \widetilde{\pi}_i \mathsf{R} \iota_0,
 \\
  \mathsf{R} \iota_0 \jmath_0^i
  &= \widetilde{\iota}_0 \mathcal{B}_0 \mathcal{B}_0^{-1} \widetilde{\jmath}_{0}^i
  = \widetilde{\iota}_0 \widetilde{\jmath}_{0}^i
  = \widetilde{\jmath}_i \iota_0^i. 
\end{align*}
\end{proof}

\begin{corollary}
\label{cor:homogeneous_block}
Retaining the setting of \Cref{prop:homogeneous_decomp}, let
$\mathsf{T}_i \in \mathcal{L}(\mathcal{X}_i, \mathcal{X}_i^\star)$ for
$i = 1,\dots,N$ and define
$\mathsf{T}_0 := \iota_0^\star \mathsf{R}^\star \bigl( \bigoplus_{i=1}^N \mathsf{T}_i \bigr) \mathsf{R} \iota_0
 : \mathcal{X}_0 \to \mathcal{X}_0^\star$. Then
\begin{align} \label{eq:homogeneous_block}
  (\mathcal{B}_0^{-1})^\star \mathsf{T}_0 \mathcal{B}_0^{-1}
  = \bigoplus_{i=1}^N (\iota_0^i)^\star \mathsf{T}_i \iota_0^i.
\end{align}
In particular, $\mathsf{T}_0$ is an isomorphism if and only if $(\iota_0^i)^\star\, \mathsf{T}_i\, \iota_0^i$ is for each $i=1,\dots,N$, in which case
\begin{align*}
  \mathsf{T}_0^{-1}
  = \sum_{i=1}^N \jmath_0^i\, \bigl( (\iota_0^i)^\star\, \mathsf{T}_i\, \iota_0^i \bigr)^{-1} (\jmath_0^i)^\star.
\end{align*}
\end{corollary}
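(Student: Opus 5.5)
The plan is to reduce the statement to the block-diagonal case of \Cref{rem:notation} by transporting $\mathsf{T}_0$ through the isometric isomorphism $\mathcal{B}_0$ of \Cref{prop:homogeneous_decomp}. The key observation is that $\mathsf{R}\iota_0 = \widetilde{\iota}_0\mathcal{B}_0$. Substituting this into the definition of $\mathsf{T}_0$ gives
\begin{align*}
  (\mathcal{B}_0^{-1})^\star \mathsf{T}_0 \mathcal{B}_0^{-1}
  = (\mathcal{B}_0^{-1})^\star \mathcal{B}_0^\star\, \widetilde{\iota}_0^{\,\star} \Bigl( \bigoplus_{i=1}^N \mathsf{T}_i \Bigr) \widetilde{\iota}_0\, \mathcal{B}_0 \mathcal{B}_0^{-1}
  = \widetilde{\iota}_0^{\,\star} \Bigl( \bigoplus_{i=1}^N \mathsf{T}_i \Bigr) \widetilde{\iota}_0 .
\end{align*}
This reduces the claim to an identity between operators on the product space $\widetilde{\mathcal{X}}_0$ alone.

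Next I will expand $\bigoplus_i \mathsf{T}_i = \sum_i \widetilde{\pi}_i^\star \mathsf{T}_i \widetilde{\pi}_i$ using \eqref{eq:block_diag_sum}. The elementary identity \eqref{eq:product_inclusion_identities}, namely $\widetilde{\pi}_i\widetilde{\iota}_0 = \iota_0^i \widetilde{\pi}_0^i$, then gives
\begin{align*}
  \widetilde{\iota}_0^{\,\star} \widetilde{\pi}_i^\star \mathsf{T}_i \widetilde{\pi}_i \widetilde{\iota}_0
  = (\widetilde{\pi}_0^i)^\star (\iota_0^i)^\star \mathsf{T}_i \iota_0^i \widetilde{\pi}_0^i .
\end{align*}
Summing over $i$ yields exactly $\bigoplus_i (\iota_0^i)^\star \mathsf{T}_i \iota_0^i$ on $\widetilde{\mathcal{X}}_0$, again by \eqref{eq:block_diag_sum}. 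This proves \eqref{eq:homogeneous_block}.

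For the isomorphism claim, $\mathcal{B}_0$ is an isomorphism, and hence so is $(\mathcal{B}_0^{-1})^\star$. Therefore $\mathsf{T}_0$ is an isomorphism if and only if the block-diagonal operator is. By \Cref{rem:notation}, the block-diagonal operator is an isomorphism if and only if each diagonal block is. In that case I will invert \eqref{eq:homogeneous_block}, using \eqref{eq:block_diag_inv}:
\begin{align*}
  \mathsf{T}_0^{-1}
  = \mathcal{B}_0^{-1}\Bigl(\sum_{i=1}^N \widetilde{\jmath}_0^{\,i}\bigl((\iota_0^i)^\star\mathsf{T}_i\iota_0^i\bigr)^{-1}(\widetilde{\jmath}_0^{\,i})^\star\Bigr)(\mathcal{B}_0^{-1})^\star .
\end{align*}
Finally, $\jmath_0^i = \mathcal{B}_0^{-1}\widetilde{\jmath}_0^{\,i}$ implies $(\jmath_0^i)^\star = (\widetilde{\jmath}_0^{\,i})^\star(\mathcal{B}_0^{-1})^\star$, which gives the stated formula.

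There is no real obstacle here; the argument is bookkeeping with adjoints. The one point needing care is the identification of $\widetilde{\mathcal{X}}_0^\star$ with $\prod_i(\mathcal{X}_0^i)^\star$ through the map $\Phi$ of \Cref{rem:notation}. This has to be applied consistently when the direct sum is written as $\sum_i\widetilde{\pi}_i^\star(\cdot)\widetilde{\pi}_i$, so that both sides of \eqref{eq:homogeneous_block} are read as maps $\widetilde{\mathcal{X}}_0\to\widetilde{\mathcal{X}}_0^\star$.
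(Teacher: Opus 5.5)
Your proposal is correct and follows essentially the same route as the paper. You transport $\mathsf{T}_0$ through $\mathcal{B}_0$ via $\mathsf{R}\iota_0 = \widetilde{\iota}_0\mathcal{B}_0$, apply $\widetilde{\pi}_i\widetilde{\iota}_0 = \iota_0^i\widetilde{\pi}_0^i$ term by term, and then invoke \eqref{eq:block_diag_inv} together with $\jmath_0^i = \mathcal{B}_0^{-1}\widetilde{\jmath}_0^{\,i}$; your explicit inversion step simply spells out the paper's one-line conclusion.
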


\begin{proof}
Since $\mathsf{R} \iota_0 \mathcal{B}_0^{-1} = \widetilde{\iota}_0$ and
$\widetilde{\pi}_i \widetilde{\iota}_0 = \iota_0^i \widetilde{\pi}_{0}^i$,
\begin{align*}
  (\mathcal{B}_0^{-1})^\star \mathsf{T}_0 \mathcal{B}_0^{-1}
  = \widetilde{\iota}_0^{\,\star} \del[2]{ \sum_{i=1}^N \widetilde{\pi}_i^\star \mathsf{T}_i\, \widetilde{\pi}_i } \widetilde{\iota}_0
  = \sum_{i=1}^N (\widetilde{\pi}_{0}^i )^\star  (\iota_0^i)^\star \mathsf{T}_i \iota_0^i \widetilde{\pi}_{0}^i,
\end{align*}
which is \eqref{eq:homogeneous_block}. The equivalence of invertibility and
the inverse formula follow from \eqref{eq:block_diag_inv} of
\Cref{rem:notation}, using $\mathcal{B}_0^{-1} \widetilde{\jmath}_{0}^{\, i} = \jmath_0^i$.
\end{proof}

\subsection{Fredholm theory of block operators}
\label{ss:fredholm_block}

\begin{proposition}[Fredholm double saddle-point operators]
\label{prop:KKT_fredholm}
Let $\mathcal{X}_1,\mathcal{X}_2,\mathcal{X}_3,\mathcal{X}_4$ be
Hilbert spaces, set
$\mathcal{X}:=\mathcal{X}_1\times\mathcal{X}_2\times
\mathcal{X}_3\times\mathcal{X}_4$, and define
\[
  \mathsf{K}
  :=
  \begin{bmatrix}
    \mathsf{L} & \mathsf{A}^\star & 0 & 0 \\
    \mathsf{A} & 0 & -\mathsf{B} & 0 \\
    0 & -\mathsf{B}^\star & \mathsf{D} & \mathsf{P}^\star \\
    0 & 0 & \mathsf{P} & 0
  \end{bmatrix}
  :
  \mathcal{X}\to\mathcal{X}^\star.
\]
Assume that $\mathsf{A} \in \mathcal{L} (\mathcal{X}_1,\mathcal{X}_2^\star)$ is
Fredholm, that $\mathsf{D} \in \mathcal{L}(\mathcal{X}_3,\mathcal{X}_3^\star)$ and
$\mathsf{P}\mathsf{D}^{-1}\mathsf{P}^\star
\in \mathcal{L}(\mathcal{X}_4,\mathcal{X}_4^\star)$ are isomorphisms, and that
$\mathsf{L} \in \mathcal{L}(\mathcal{X}_1,\mathcal{X}_1^\star)$ and
$\mathsf{B} \in \mathcal{L}(\mathcal{X}_3,\mathcal{X}_2^\star)$ are compact.
Then $\mathsf{K}$ is Fredholm with $\operatorname{ind}(\mathsf{K})=0$.
When $\mathcal{X}_3=\mathcal{X}_4=\{0\}$, the same conclusion holds for the reduced operator
\begin{align*}
  \begin{bmatrix}
    \mathsf{L} & \mathsf{A}^\star \\
    \mathsf{A} & 0
  \end{bmatrix}
  :
  \mathcal{X}_1\times\mathcal{X}_2
  \to
  \mathcal{X}_1^\star\times\mathcal{X}_2^\star.
\end{align*}
\end{proposition}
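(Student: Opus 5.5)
The plan is to split $\mathsf{K}=\mathsf{K}_0+\mathsf{C}$. The principal part is
\begin{align*}
  \mathsf{K}_0 :=
  \begin{bmatrix}
    0 & \mathsf{A}^\star & 0 & 0 \\
    \mathsf{A} & 0 & 0 & 0 \\
    0 & 0 & \mathsf{D} & \mathsf{P}^\star \\
    0 & 0 & \mathsf{P} & 0
  \end{bmatrix},
\end{align*}
and $\mathsf{C}$ collects the blocks $\mathsf{L}$, $-\mathsf{B}$ and $-\mathsf{B}^\star$. Since $\mathsf{L}$ and $\mathsf{B}$ are compact, Schauder's theorem makes $\mathsf{B}^\star$ compact as well, so $\mathsf{C}$ is compact. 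Stability of the Fredholm property and index under compact perturbations then reduces the claim to showing that $\mathsf{K}_0$ is Fredholm of index zero.

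The operator $\mathsf{K}_0$ is block diagonal with respect to $(\mathcal{X}_1\times\mathcal{X}_2)\times(\mathcal{X}_3\times\mathcal{X}_4)$. Kernels, ranges and cokernels split along such a decomposition, so the index is additive over the two diagonal blocks.

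\emph{Lower block.} Since $\mathsf{D}$ is an isomorphism, block elimination gives the factorization
\begin{align*}
  \begin{bmatrix} \mathsf{D} & \mathsf{P}^\star \\ \mathsf{P} & 0 \end{bmatrix}
  =
  \begin{bmatrix} I & 0 \\ \mathsf{P}\mathsf{D}^{-1} & I \end{bmatrix}
  \begin{bmatrix} \mathsf{D} & 0 \\ 0 & -\mathsf{P}\mathsf{D}^{-1}\mathsf{P}^\star \end{bmatrix}
  \begin{bmatrix} I & \mathsf{D}^{-1}\mathsf{P}^\star \\ 0 & I \end{bmatrix}.
\end{align*}
The outer factors are triangular with identity diagonal, hence isomorphisms. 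The middle factor is an isomorphism by the hypothesis on $\mathsf{P}\mathsf{D}^{-1}\mathsf{P}^\star$. So the lower block is an isomorphism, with index $0$.

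\emph{Upper block.} Swap the two output components, i.e.\ compose with the isomorphism $(f_1,f_2)\mapsto(f_2,f_1)$. This turns $\begin{bmatrix}0&\mathsf{A}^\star\\\mathsf{A}&0\end{bmatrix}$ into $\operatorname{diag}(\mathsf{A},\mathsf{A}^\star)$, viewed as a map $\mathcal{X}_1\times\mathcal{X}_2\to\mathcal{X}_2^\star\times\mathcal{X}_1^\star$. Here $\mathsf{A}^\star$ means the Banach adjoint $\mathcal{X}_2\to\mathcal{X}_1^\star$, restricted via reflexivity. For a Fredholm $\mathsf{A}$, the adjoint is Fredholm with $\operatorname{ind}(\mathsf{A}^\star)=-\operatorname{ind}(\mathsf{A})$, because $\ker\mathsf{A}^\star=\operatorname{ran}(\mathsf{A})^\circ$ and $\operatorname{ran}\mathsf{A}^\star=(\ker\mathsf{A})^\circ$ is closed. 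Therefore $\operatorname{diag}(\mathsf{A},\mathsf{A}^\star)$ is Fredholm with index $\operatorname{ind}\mathsf{A}-\operatorname{ind}\mathsf{A}=0$. Composing with an isomorphism preserves the index.

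Adding the two blocks gives $\operatorname{ind}(\mathsf{K}_0)=0$, and the compact perturbation step gives $\operatorname{ind}(\mathsf{K})=0$. The reduced case $\mathcal{X}_3=\mathcal{X}_4=\{0\}$ is just the upper block perturbed by the compact $\operatorname{diag}(\mathsf{L},0)$, so the same argument applies.

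The only step needing care is the adjoint index identity, with $\mathsf{A}^\star$ understood through the reflexive identification $\mathcal{X}_2^{\star\star}\cong\mathcal{X}_2$. Everything else is standard perturbation theory and block elimination.
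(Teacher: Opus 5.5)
Your proposal is correct and follows essentially the same route as the paper: you split off the same compact part (the blocks $\mathsf{L}$, $-\mathsf{B}$, $-\mathsf{B}^\star$), use the same block factorization to show $\begin{bmatrix}\mathsf{D}&\mathsf{P}^\star\\\mathsf{P}&0\end{bmatrix}$ is an isomorphism, and reduce the upper block to $\operatorname{ind}(\mathsf{A})+\operatorname{ind}(\mathsf{A}^\star)=0$. The only cosmetic difference is that you swap the output components and prove the adjoint index identity via annihilators, whereas the paper reads off $\ker$ and $\operatorname{ran}$ of the anti-diagonal block directly and cites Conway for $\operatorname{ind}(\mathsf{A}^\star)=-\operatorname{ind}(\mathsf{A})$.
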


\begin{proof}
Write $\mathsf{K}=\mathsf{T}+\mathsf{C}$, where
\begin{align*}
  \mathsf{T}
  :=
  \begin{bmatrix}
    0 & \mathsf{A}^\star & 0 & 0 \\
    \mathsf{A} & 0 & 0 & 0 \\
    0 & 0 & \mathsf{D} & \mathsf{P}^\star \\
    0 & 0 & \mathsf{P} & 0
  \end{bmatrix},
  \qquad
  \mathsf{C}
  :=
  \begin{bmatrix}
    \mathsf{L} & 0 & 0 & 0 \\
    0 & 0 & -\mathsf{B} & 0 \\
    0 & -\mathsf{B}^\star & 0 & 0 \\
    0 & 0 & 0 & 0
  \end{bmatrix}.
\end{align*}
By assumption, $\mathsf{C}$ is compact.
Moreover, $\mathsf{T}=\mathsf{T}_1\oplus\mathsf{T}_2$, where
\begin{align*}
  \mathsf{T}_1
  :=
  \begin{bmatrix}
    0 & \mathsf{A}^\star \\
    \mathsf{A} & 0
  \end{bmatrix},
  \qquad
  \mathsf{T}_2
  :=
  \begin{bmatrix}
    \mathsf{D} & \mathsf{P}^\star \\
    \mathsf{P} & 0
  \end{bmatrix}.
\end{align*}
Since $\mathsf{A}$ is Fredholm,~\cite[Proposition~XI.3.4]{conway2007course} implies that $\mathsf{A}^\star$ is Fredholm
and $\operatorname{ind}(\mathsf{A}^\star)
=-\operatorname{ind}(\mathsf{A})$. 
Also,
$\ker(\mathsf{T}_1)=\ker(\mathsf{A})\times\ker(\mathsf{A}^\star)$ and
$\operatorname{ran}(\mathsf{T}_1)
=\operatorname{ran}(\mathsf{A}^\star)\times\operatorname{ran}(\mathsf{A})$.
Hence $\mathsf{T}_1$ is Fredholm and $\operatorname{ind}(\mathsf{T}_1)
  = \operatorname{ind}(\mathsf{A})
    + \operatorname{ind}(\mathsf{A}^\star)
  = 0.$
Next, $\mathsf{T}_2$ is an isomorphism, since it admits the block
factorization
\[
  \mathsf{T}_2
  =
  \begin{bmatrix}
    I & 0 \\
    \mathsf{P}\mathsf{D}^{-1} & I
  \end{bmatrix}
  \begin{bmatrix}
    \mathsf{D} & 0 \\
    0 & -\mathsf{P}\mathsf{D}^{-1}\mathsf{P}^\star
  \end{bmatrix}
  \begin{bmatrix}
    I & \mathsf{D}^{-1}\mathsf{P}^\star \\
    0 & I
  \end{bmatrix},
\]
and each factor is an isomorphism by hypothesis. Therefore $\mathsf{T}$ is Fredholm. Since $\mathsf{T}_2$ is an
isomorphism, $\operatorname{ind}(\mathsf{T}_2)=0$, and~\cite[Proposition~XI.3.4]{conway2007course} gives
$ \operatorname{ind}(\mathsf{T})
  = \operatorname{ind}(\mathsf{T}_1)
    + \operatorname{ind}(\mathsf{T}_2)
  = 0.$ Then,~\cite[Theorem~XI.3.11]{conway2007course} yields that the
compact perturbation $\mathsf{K}=\mathsf{T}+\mathsf{C}$ is Fredholm and $\operatorname{ind}(\mathsf{K})
  = \operatorname{ind}(\mathsf{T})
  = 0$. 
Finally, if $\mathcal{X}_3=\mathcal{X}_4=\{0\}$, the same argument applies to
\[
  \begin{bmatrix}
    \mathsf{L} & \mathsf{A}^\star \\
    \mathsf{A} & 0
  \end{bmatrix}
  =
  \begin{bmatrix}
    0 & \mathsf{A}^\star \\
    \mathsf{A} & 0
  \end{bmatrix}
  +
  \begin{bmatrix}
    \mathsf{L} & 0 \\
    0 & 0
  \end{bmatrix},
\]
since the first summand is Fredholm of index zero and the second is
compact.
\end{proof}

\begin{proposition}[Fredholm stability under Schur complementation] \label{prop:schur_fredholm}
Let $\mathcal{Y}$ and $\mathcal{Z}$ be Banach spaces, let
$\mathsf{A}  \in \mathcal{L}(\mathcal{Y},\mathcal{Y}^\star)$,
$\mathsf{B} \in \mathcal{L}(\mathcal{Z}, \mathcal{Y}^\star)$,
$\mathsf{C} \in \mathcal{L}( \mathcal{Y} ,\mathcal{Z}^\star)$, and
$\mathsf{D} \in \mathcal{L}( \mathcal{Z}, \mathcal{Z}^\star)$, and let
$\widehat{\mathsf{K}} : \mathcal{Y} \times \mathcal{Z}
 \to \mathcal{Y}^\star \times \mathcal{Z}^\star$
denote the block operator,
\begin{align*}
  \widehat{\mathsf{K}}
  := \begin{bmatrix}
      \mathsf{A} & \mathsf{B} \\
      \mathsf{C} & \mathsf{D}
    \end{bmatrix},
  \qquad
  \widehat{\mathsf{K}}(y,z)
  := \del[1]{\mathsf{A}\,y + \mathsf{B}\,z,\;
             \mathsf{C}\,y + \mathsf{D}\,z}.
\end{align*}
Let $\mathcal{X}$ be a Banach space and suppose that
$\mathsf{K} : \mathcal{X} \to \mathcal{X}^\star$ is \emph{congruent}
to $\widehat{\mathsf{K}}$, in the sense that there exists an
isomorphism $J : \mathcal{Y} \times \mathcal{Z} \to \mathcal{X}$ with
\begin{align}\label{eq:schur_congruence}
  \widehat{\mathsf{K}} = \Phi^{-1} J^\star \mathsf{K} J,
\end{align}
where $\Phi : \mathcal{Y}^\star \times \mathcal{Z}^\star
\to (\mathcal{Y} \times \mathcal{Z})^\star$ is the canonical
identification \textup{(}cf. \textup{\Cref{rem:notation})}.
If $\mathsf{A}$ is an isomorphism, then the Schur complement
$\mathsf{S} := \mathsf{D}
 - \mathsf{C}\,\mathsf{A}^{-1}\mathsf{B}
 : \mathcal{Z} \to \mathcal{Z}^\star$
satisfies the following properties:
\begin{enumerate}[label=\textup{(\alph*)}]
\item $\mathsf{S}$ is Fredholm if and only if $\mathsf{K}$ is
  Fredholm, and
  $\operatorname{ind}(\mathsf{S}) = \operatorname{ind}(\mathsf{K})$.

\item The map
  $\Theta : \ker(\mathsf{S}) \to \ker(\mathsf{K})$ defined by
  \begin{align}\label{eq:kernel_lift}
    \Theta(z) := J\del[1]{-\mathsf{A}^{-1}\mathsf{B}\,z,\; z}
  \end{align}
  is an isomorphism.  In particular,
  $\dim\ker(\mathsf{K}) = \dim\ker(\mathsf{S})$.

\item $\mathsf{K}$ is an isomorphism if and only if $\mathsf{S}$ is
  an isomorphism.
\end{enumerate}
\end{proposition}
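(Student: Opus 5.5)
The plan is to reduce everything to the classical block factorization of $\widehat{\mathsf K}$. Since $\mathsf A$ is an isomorphism, we have
\begin{align*}
  \widehat{\mathsf K}
  = \begin{bmatrix} I & 0 \\ \mathsf C\mathsf A^{-1} & I \end{bmatrix}
    \begin{bmatrix} \mathsf A & 0 \\ 0 & \mathsf S \end{bmatrix}
    \begin{bmatrix} I & \mathsf A^{-1}\mathsf B \\ 0 & I \end{bmatrix},
\end{align*}
which can be checked by multiplying out. The outer triangular factors, $\mathsf L_\flat$ and $\mathsf U_\flat$, are isomorphisms of $\mathcal Y^\star\times\mathcal Z^\star$ and $\mathcal Y\times\mathcal Z$ respectively; their inverses are obtained by flipping the sign of the off-diagonal block. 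By \eqref{eq:schur_congruence} we then get $\mathsf K = (J^\star)^{-1}\Phi\,\mathsf L_\flat\,\operatorname{diag}(\mathsf A,\mathsf S)\,\mathsf U_\flat\,J^{-1}$. Here $J^\star$ is an isomorphism because $J$ is, and $\Phi$ is an isometric isomorphism. So $\mathsf K$ is obtained from $\operatorname{diag}(\mathsf A,\mathsf S)$ by composing on both sides with isomorphisms.

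For (a), Fredholmness and the index are invariant under composition with isomorphisms, since an isomorphism is Fredholm of index zero and indices add under composition (\cite[Proposition~XI.3.4]{conway2007course} or its composition rule). It therefore suffices to treat $\operatorname{diag}(\mathsf A,\mathsf S)$. Its kernel is $\{0\}\times\ker\mathsf S$ and its range is $\mathcal Y^\star\times\operatorname{ran}\mathsf S$. Hence it is Fredholm exactly when $\mathsf S$ is, with the same kernel dimension and cokernel codimension. Closedness of the range also transfers, because $\mathcal Y^\star\times\operatorname{ran}\mathsf S$ is closed iff $\operatorname{ran}\mathsf S$ is closed. This gives $\operatorname{ind}\mathsf K=\operatorname{ind}\mathsf S$.

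For (b), first note that $x\in\ker\mathsf K$ iff $J^{-1}x\in\ker\widehat{\mathsf K}$, since $\Phi^{-1}J^\star$ is injective. Writing $J^{-1}x=(y,z)$, the first row gives $\mathsf Ay+\mathsf Bz=0$, i.e.\ $y=-\mathsf A^{-1}\mathsf Bz$. The second row then reduces to $\mathsf Sz=0$. So $\ker\widehat{\mathsf K}=\{(-\mathsf A^{-1}\mathsf Bz,z): z\in\ker\mathsf S\}$, and $\Theta$ is the composition of the bijection $z\mapsto(-\mathsf A^{-1}\mathsf Bz,z)$ onto $\ker\widehat{\mathsf K}$ with $J$. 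The map $\Theta$ is bounded and has bounded inverse $x\mapsto$ (second component of $J^{-1}x$), so it is an isomorphism. The dimension statement follows immediately.

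For (c), the factorization shows $\mathsf K$ is an isomorphism iff $\operatorname{diag}(\mathsf A,\mathsf S)$ is, iff $\mathsf S$ is, because $\mathsf A$ is already an isomorphism. The bounded inverse of $\mathsf S$ is then automatic, either from the factorization or from the open mapping theorem. I do not expect a real obstacle. The only care needed is bookkeeping the dual identifications $\Phi$ and $J^\star$ so that the triangular factors act between the correct spaces, and noting that Banach (rather than Hilbert) spaces cause no difficulty since only compositions with isomorphisms are used.
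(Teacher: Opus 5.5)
Your proposal is correct and follows the paper's own argument: you use the same block LDU factorization of $\widehat{\mathsf K}$ with $\operatorname{diag}(\mathsf A,\mathsf S)$ as the middle factor, transfer it to $\mathsf K=(J^\star)^{-1}\Phi\,\widehat{\mathsf K}\,J^{-1}$ through the congruence, and read off (a)--(c) from the diagonal factor. The differences are cosmetic. You compute the kernel and range of $\operatorname{diag}(\mathsf A,\mathsf S)$ directly rather than citing the direct-sum index rule, and you solve the two block rows for $\ker\widehat{\mathsf K}$ directly rather than passing through the upper-triangular factor $\mathsf U$.
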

\begin{proof}
By \eqref{eq:schur_congruence},
$\mathsf{K} = (J^{-1})^\star\,\Phi\,\widehat{\mathsf{K}}\,J^{-1}$.
Since $(J^{-1})^\star$, $\Phi$, and $J^{-1}$ are isomorphisms,
\cite[Proposition~XI.3.4 and Theorem~XI.3.7]{conway2007course} give:
$\mathsf{K}$ is Fredholm if and only if $\widehat{\mathsf{K}}$ is,
with $\operatorname{ind}(\mathsf{K}) = \operatorname{ind}(\widehat{\mathsf{K}})$;
$\mathsf{K}$ is an isomorphism if and only if
$\widehat{\mathsf{K}}$ is; and
$\ker(\mathsf{K}) = J\del[1]{\ker(\widehat{\mathsf{K}})}$.

 \medskip
\noindent\textup{(a)}\;
It suffices to show that $\widehat{\mathsf{K}}$  is Fredholm if and only if $\mathsf{S}$ is
  Fredholm, and
  $\operatorname{ind}(\widehat{\mathsf{K}}) = \operatorname{ind}(\mathsf{S}) $.
To this end, since $\mathsf{A}$ is an isomorphism, the following block LDU factorization holds:
\begin{align}\label{eq:block_fredholm_LDU}
  \widehat{\mathsf{K}}
  = \underbrace{
    \begin{bmatrix}
      I_{\mathcal{Y}^\star} & 0 \\
      \mathsf{C}\,\mathsf{A}^{-1} & I_{\mathcal{Z}^\star}
    \end{bmatrix}
    }_{\mathsf{L}}
    \underbrace{
    \begin{bmatrix}
      \mathsf{A} & 0 \\
      0 & \mathsf{S}
    \end{bmatrix}
    }_{\widetilde{\mathsf{D}}}
    \underbrace{
    \begin{bmatrix}
      I_{\mathcal{Y}} & \mathsf{A}^{-1}\mathsf{B} \\
      0 & I_{\mathcal{Z}}
    \end{bmatrix}
    }_{\mathsf{U}}.
\end{align}  
Since $\mathsf{A}^{-1}$ is bounded, $\mathsf{L} : \mathcal{Y}^\star \times \mathcal{Z}^\star
 \to \mathcal{Y}^\star \times \mathcal{Z}^\star$ and
$\mathsf{U} : \mathcal{Y} \times \mathcal{Z}
 \to \mathcal{Y} \times \mathcal{Z}$ are isomorphisms and thus Fredholm with index~$0$. If $\mathsf{S}$ is Fredholm with
$\operatorname{ind}(\mathsf{S}) = m$, then
$\widetilde{\mathsf{D}} = \mathsf{A} \oplus \mathsf{S}$ is Fredholm
with $\operatorname{ind}(\widetilde{\mathsf{D}}) = m$
by~\cite[Proposition~XI.3.4]{conway2007course}, since $\mathsf{A}$
is an isomorphism.  Consequently,
$\widehat{\mathsf{K}} = \mathsf{L}\,\widetilde{\mathsf{D}}\,\mathsf{U}$ is Fredholm
with $\operatorname{ind}(\widehat{\mathsf{K}}) = m$
by~\cite[Theorem~XI.3.7]{conway2007course}.
Conversely, if
$\mathsf{K}$ is Fredholm, then
$\widetilde{\mathsf{D}} = \mathsf{L}^{-1}\,\mathsf{K}\,\mathsf{U}^{-1}$
is a product of Fredholm operators and thus itself Fredholm.  Since
$\mathsf{A}$ is invertible,
\cite[Proposition~XI.3.4]{conway2007course} gives that
$\mathsf{S}$ is Fredholm with
$\operatorname{ind}(\mathsf{S})
 = \operatorname{ind}(\widetilde{\mathsf{D}})
 = \operatorname{ind}(\mathsf{K})$. 

 \medskip
\noindent\textup{(b)}\;
Since $\mathsf{L}$ and $\mathsf{U}$ are isomorphisms,
$(y, z) \in \ker(\widehat{\mathsf{K}})$ if and only if
$\mathsf{U}(y, z) \in \ker(\widetilde{\mathsf{D}})$.  Writing
$\mathsf{U}(y, z) = (y + \mathsf{A}^{-1}\mathsf{B}\,z,\; z)$
and noting that
$\ker(\widetilde{\mathsf{D}})
 = \ker(\mathsf{A}) \times \ker(\mathsf{S})
 = \{0\} \times \ker(\mathsf{S})$,
we obtain $(y, z) \in \ker(\widehat{\mathsf{K}})$ if and only if
$z \in \ker(\mathsf{S})$ and
$y = -\mathsf{A}^{-1}\mathsf{B}\,z$.  Hence the map $z \mapsto \del[1]{-\mathsf{A}^{-1}\mathsf{B}\,z,\; z}$ is a bijection from
$\ker(\mathsf{S})$ onto $\ker(\widehat{\mathsf{K}})$ and the result now follows from $\ker(\mathsf{K}) = J\del[1]{\ker(\widehat{\mathsf{K}})}$.

\medskip
\noindent\textup{(c)}\;
It suffices to show that $\mathsf{K}$ is an isomorphism if and only if $\mathsf{S}$ is an isomorphism, which immediately follows from the fact that $A$ is an isomorphism and the decomposition \eqref{eq:block_fredholm_LDU}.

\end{proof}

\begin{proposition}[Fredholm stability under finite-codimensional
restriction]\label{prop:finite_rank_perturb}
Let $\mathcal{Z}$ be a Banach space and let
$\mathcal{Y} \subseteq \mathcal{Z}$ be a closed subspace with
$\operatorname{codim}(\mathcal{Y}) < \infty$.  Let
$\iota_{\mathcal{Y}} : \mathcal{Y} \to \mathcal{Z}$ denote the
canonical injection.  If
$\mathsf{T} \in \mathcal{L}( \mathcal{Z},\mathcal{Z}^\star)$ is Fredholm with
$\operatorname{ind}(\mathsf{T}) = 0$, then
$\widehat{\mathsf{T}}
 := \iota_{\mathcal{Y}}^\star  \mathsf{T}  \iota_{\mathcal{Y}}
 \in \mathcal{L}( \mathcal{Y}, \mathcal{Y}^\star)$
is Fredholm with $\operatorname{ind}(\widehat{\mathsf{T}}) = 0$.
\end{proposition}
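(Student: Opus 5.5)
The plan is to realize $\widehat{\mathsf{T}}$ as a finite-rank perturbation of a Schur complement, or more directly to apply \Cref{prop:schur_fredholm} to a bordered operator. Since $\mathcal{Y}$ is closed with finite codimension, choose a finite-dimensional complement $\mathcal{F}$ with $\mathcal{Z} = \mathcal{Y}\oplus\mathcal{F}$, $\dim\mathcal{F}=m<\infty$. Let $J:\mathcal{Y}\times\mathcal{F}\to\mathcal{Z}$, $J(y,f) := y+f$. This is an isomorphism by the bounded inverse theorem. Then $\Phi^{-1}J^\star\mathsf{T}J$ has block form
\[
\begin{bmatrix} \widehat{\mathsf{T}} & \iota_{\mathcal{Y}}^\star\mathsf{T}\iota_{\mathcal{F}} \\ \iota_{\mathcal{F}}^\star\mathsf{T}\iota_{\mathcal{Y}} & \iota_{\mathcal{F}}^\star\mathsf{T}\iota_{\mathcal{F}}\end{bmatrix},
\]
and it is Fredholm of index $0$ by \cite[Proposition~XI.3.4, Theorem~XI.3.7]{conway2007course}, since $J$, $J^\star$, and $\Phi$ are isomorphisms.

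Next, I would observe that the off-diagonal blocks and the $(2,2)$ block all have finite rank, because each either maps into or out of the finite-dimensional space $\mathcal{F}$ (or $\mathcal{F}^\star$). Subtracting them leaves the block-diagonal operator $\widehat{\mathsf{T}}\oplus 0_{\mathcal{F}}$. This is a compact perturbation of a Fredholm index-zero operator, so it is again Fredholm with index $0$ by \cite[Theorem~XI.3.11]{conway2007course}.

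Finally, I would read off the index of $\widehat{\mathsf{T}}$ from that of $\widehat{\mathsf{T}}\oplus 0$, where $0:\mathcal{F}\to\mathcal{F}^\star$ is the zero map. This zero map is trivially Fredholm, with kernel of dimension $m$ and cokernel of dimension $m$, so its index is $0$. The direct sum $\widehat{\mathsf{T}}\oplus 0$ has kernel $\ker\widehat{\mathsf{T}}\times\mathcal{F}$ and range $\operatorname{ran}\widehat{\mathsf{T}}\times\{0\}$. Hence $\operatorname{ran}\widehat{\mathsf{T}}$ is closed and finite-codimensional in $\mathcal{Y}^\star$, and $\ker\widehat{\mathsf{T}}$ is finite-dimensional, which makes $\widehat{\mathsf{T}}$ Fredholm. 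The additivity of the index then gives $0=\operatorname{ind}(\widehat{\mathsf{T}})+0$.

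The main point needing care is identifying the dual of the product, $(\mathcal{Y}\times\mathcal{F})^\star\cong\mathcal{Y}^\star\times\mathcal{F}^\star$, via $\Phi$ in the Banach setting, and confirming that the $(1,1)$ block is exactly $\iota_{\mathcal{Y}}^\star\mathsf{T}\iota_{\mathcal{Y}}$. Both follow from \Cref{rem:notation}-type computations, and the rest is routine.
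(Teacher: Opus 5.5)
Your proposal is correct and follows essentially the same route as the paper. Both arguments split $\mathcal{Z}=\mathcal{Y}\oplus\mathcal{W}$ with $\mathcal{W}$ finite-dimensional, pass to the congruent $2\times 2$ block operator, and remove the finite-rank blocks. The only difference is the $(2,2)$ slot: you leave the zero map there and read off the kernel and range of $\widehat{\mathsf{T}}\oplus 0$ directly, whereas the paper inserts a Riesz isomorphism $\mathcal{R}$ so that it can cite index additivity with an invertible summand.
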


\begin{proof}
Since $\mathcal{Y}$ is closed and of finite codimension, there
exists a finite-dimensional subspace
$\mathcal{W} \subset \mathcal{Z}$ such that
$\mathcal{Z} = \mathcal{Y} \oplus \mathcal{W}$ topologically.
Denote by $\iota_{\mathcal{W}} : \mathcal{W} \to \mathcal{Z}$ the
canonical injection and define
\begin{align*}
  J : \mathcal{Y} \times \mathcal{W} \to \mathcal{Z},
  \qquad
  J(y,w) := \iota_{\mathcal{Y}}\, y + \iota_{\mathcal{W}}\, w,
\end{align*}
which is an isomorphism because the direct sum is topological.
Let $\Phi : \mathcal{Y}^\star \times \mathcal{W}^\star
\to (\mathcal{Y} \times \mathcal{W})^\star$ be the canonical
identification and $\tilde{\jmath}_{\mathcal{Y}}, \tilde{\jmath}_{\mathcal{W}}$ the
coordinate injections of $\mathcal{Y} \times \mathcal{W}$
(\emph{cf}. \Cref{rem:notation}). The operator $\widetilde{\mathsf{T}} := \Phi^{-1} J^\star \mathsf{T} J
  : \mathcal{Y} \times \mathcal{W}
  \to \mathcal{Y}^\star \times \mathcal{W}^\star$ is congruent to $\mathsf{T}$, and since
$J \tilde{\jmath}_{\mathcal{Y}} = \iota_{\mathcal{Y}}$ and
$J \tilde{\jmath}_{\mathcal{W}} = \iota_{\mathcal{W}}$, it has the following block representation:
\begin{align*}
  \widetilde{\mathsf{T}}
  = \begin{bmatrix}
      \iota_{\mathcal{Y}}^\star  \mathsf{T}  \iota_{\mathcal{Y}}
        & \iota_{\mathcal{Y}}^\star \mathsf{T}\, \iota_{\mathcal{W}} \\
      \iota_{\mathcal{W}}^\star \mathsf{T}\, \iota_{\mathcal{Y}}
        & \iota_{\mathcal{W}}^\star \mathsf{T}\, \iota_{\mathcal{W}}
    \end{bmatrix}.
\end{align*}
As $J$, $J^\star$, and $\Phi$ are isomorphisms and $\mathsf{T}$ is
Fredholm of index~$0$, 
$\widetilde{\mathsf{T}}$ is Fredholm with
$\operatorname{ind}(\widetilde{\mathsf{T}}) = 0$
by~\cite[Theorem~XI.3.7]{conway2007course}.
Equip $\mathcal{W}$ with any inner product and let
$\mathcal{R} : \mathcal{W} \to \mathcal{W}^\star$ denote the
associated Riesz isomorphism.  Then
\begin{align*}
  \begin{bmatrix}
    \widehat{\mathsf{T}} & 0 \\
    0 & \mathcal{R}
  \end{bmatrix}
  = \widetilde{\mathsf{T}}
  - \underbrace{
    \begin{bmatrix}
      0
        & \iota_{\mathcal{Y}}^\star \mathsf{T}\, \iota_{\mathcal{W}} \\
      \iota_{\mathcal{W}}^\star \mathsf{T}\, \iota_{\mathcal{Y}}
        & \iota_{\mathcal{W}}^\star \mathsf{T}\, \iota_{\mathcal{W}}
          - \mathcal{R}
    \end{bmatrix}
    }_{\mathsf{F}},
\end{align*}
Each block of $\mathsf{F}$ has either a finite-dimensional domain or
codomain and is therefore finite-rank.  Hence the operator
$\widehat{\mathsf{T}} \oplus \mathcal{R}
 = \widetilde{\mathsf{T}} - \mathsf{F}$
is a finite-rank (and thus compact) perturbation of a Fredholm operator and hence Fredholm
of index~$0$
by~\cite[Theorem~XI.3.11]{conway2007course}. 
Since $\mathcal{R}$ is
an isomorphism (hence Fredholm of index~$0$), it follows
from~\cite[Proposition~XI.3.4(c)]{conway2007course} that
$\widehat{\mathsf{T}}$ is Fredholm with
$\operatorname{ind}(\widehat{\mathsf{T}}) = 0$.
\end{proof}

\end{document}